\def\qed {{
\parfillskip=0pt        
\widowpenalty=10000     
\displaywidowpenalty=10000  
\finalhyphendemerits=0  
%
\leavevmode             
\unskip                 
\nobreak                
\hfil                   
\penalty50              
\hskip.2em              
\null                   
\hfill                  
$\square$
%
\par}}                  
\DeclareFontFamily{U}{matha}{\hyphenchar\font45}
\DeclareFontShape{U}{matha}{m}{n}{
      <5> <6> <7> <8> <9> <10> gen * matha
      <10.95> matha10 <12> <14.4> <17.28> <20.74> <24.88> matha12
      }{}
\DeclareSymbolFont{matha}{U}{matha}{m}{n}
\DeclareMathSymbol{\varleftarrow}{3}{matha}{"D0}
\DeclareMathSymbol{\varrightarrow}{3}{matha}{"D1}
\numberwithin{equation}{chapter}
\newtheorem{theoreme}			     {Théorème}	[chapter]
\newtheorem{proposition}[theoreme]	 {Proposition}	
\newtheorem{corollaire}	  [theoreme]	 {Corollaire}	
\newtheorem{lemme}	      [theoreme]  {Lemme}
\newtheorem{definition}	        [theoreme]  {Définition}
\newtheorem{theorem} {Theorem}
\theoremstyle{definition}
\newtheorem{exemple}			 [theoreme]     {Exemple}
\newtheorem{remarque}			     [theoreme] {Remarque}	
\newtheorem{example} {Example}
\newcommand{\R}{\mathbb{R}}
\newcommand{\Q}{\mathbb{Q}}
\newcommand{\Z}{\mathbb{Z}}
\newcommand{\N}{\mathbb{N}}
\newcommand{\C}{\mathbb{C}}
\newcommand{\Hno}{OH^n}
\newcommand{\HSt}{H(\widetilde S)}
\newcommand\minidiag[1]{\raisebox{-0.25\height}{\includegraphics[height=0.5cm]{#1}}}
\newcommand\pitidiag[1]{\raisebox{-0.25\height}{\includegraphics[height=1cm]{#1}}}
\newcommand\deuxdiag[1]{\raisebox{-0.5\height}{\includegraphics[height=1.33cm]{#1}}}
\newcommand\smalldiag[1]{\raisebox{-0.5\height}{\includegraphics[height=1.5cm]{#1}}}
\newcommand\diagg[1]{\raisebox{-0.5\height}{\includegraphics[height=2cm]{#1}}}
\newcommand\bigdiag[1]{\raisebox{-0.5\height}{\includegraphics[height=4cm]{#1}}}
\newcommand\bbigdiag[1]{\raisebox{-0.5\height}{\includegraphics[height=6cm]{#1}}}
\newcommand\middiag[1]{\raisebox{-0.5\height}{\includegraphics[height=3cm]{#1}}}
\newcommand\diagsc[1]{\raisebox{-0.5\height}{\includegraphics[height=1.75cm]{#1}}}
\DeclareMathOperator{\rank}{rk}
\DeclareMathOperator{\Hom}{Hom}
\DeclareMathOperator{\Eq}{Eq}
\DeclareMathOperator{\img}{im}
\DeclareMathOperator{\ext}{Ext}
\DeclareMathOperator{\Inv}{Inv}
\DeclareMathOperator{\Id}{Id}
\newcommand{\Ext}{\mathchoice{{\textstyle\bigwedge}}%
    {{\bigwedge}}%
    {{\textstyle\wedge}}%
    {{\scriptstyle\wedge}}}
\newcommand{\X}[1]{\ifmeasuring@ #1\else\fgnu@X{#1}\fi}
\newcommand{\Y}[1]{\ifmeasuring@ {}:#1\else\fgnu@Y{#1}\fi}
\def\fgnu@X#1{\hbox to \ifcase1\maxcolumn@widths\fi{\hfil$\displaystyle#1$\hfil}}
\def\fgnu@Y#1{\hbox to \ifcase2\maxcolumn@widths\fi{\qquad$\displaystyle#1$\hfil}}
\begin{document}


\begin{titlepage}

\Large

\quad
\vspace{\stretch{1}}

\begin{center}
\hrule
\vspace{10mm}
{\huge 
Construction impaire et \'etude de l'anneau des arcs de Khovanov
}
\vspace{10mm}
\hrule

\vspace*{\stretch{1}}

{\LARGE
Grégoire Naisse
}

\vspace*{\stretch{1}}

Promoteur: Pedro Vaz

\vspace*{\stretch{2}}

Université catholique de Louvain\\
Faculté des Sciences\\
École de Mathématique

\vspace{1cm}

{\LARGE
2014--2015}
\end{center}
\end{titlepage}


\chapter*{English summary}
\addcontentsline{toc}{chapter}{English summary}

In this master thesis, we give an oddification of the Khovanov's arc rings $H^n$ from \cite{Khovanov}. Our construction is based on the odd Khovanov homology from P. Ozsvath, J. Rasmussen and Z. Szabo (see \cite{OddKhovanov}) and thus depends on some choices of signs. More precisely we have to choose an order and an orientation for the saddle points. Set $\mathcal C^n$ be the set of all such choices.

\begin{theorem}
For each $n \in \N$ we have a family of rings $\{OH^n_C\}_{C \in \mathcal C^n}$.
\end{theorem}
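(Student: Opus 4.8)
The plan is to give the construction explicitly and then verify the ring axioms, the only delicate point being associativity. First I would fix the combinatorial setup: for $2n$ marked points let $B^n$ denote the set of crossingless matchings, and for $a,b\in B^n$ let $\bar a b$ be the closed $1$-manifold obtained by gluing the reflection $\bar a$ of $a$ on top of $b$. The odd construction developed earlier assigns to such a collection of circles an exterior algebra $\mathcal{F}(\bar a b)=\Lambda^\bullet V$, where $V$ is free on the set of circles, and assigns to each elementary saddle cobordism a merge or split map whose sign is dictated by an order and an orientation of the saddle. Writing $(OH^n_C)_{a,b}:=\mathcal{F}(\bar a b)$, I set as underlying abelian group
\[
OH^n_C \;=\; \bigoplus_{a,b\in B^n}(OH^n_C)_{a,b}.
\]

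Next I would define the multiplication $(OH^n_C)_{a,b}\otimes(OH^n_C)_{b',c}\to(OH^n_C)_{a,c}$ to be zero unless $b=b'$, and otherwise to be the image under $\mathcal{F}$ of the cobordism from $\bar a b\sqcup\bar b c$ to $\bar a c$ that contracts the nested circles formed by the adjacent copies of $b$ and $\bar b$ through a sequence of saddle moves. The datum $C\in\mathcal{C}^n$ fixes the order and orientation of each such saddle, hence pins down the sign of every elementary map and makes the composite canonically defined. Bilinearity over the ground ring is immediate, and the element $\sum_a 1_a$, with $1_a$ the unit of the exterior algebra $\mathcal{F}(\bar a a)$, should be a two-sided unit: checking this reduces to observing that contracting against the identity matching introduces no sign, which follows from the normalization recorded in $C$.

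The main obstacle is associativity. Both $(xy)z$ and $x(yz)$ are computed by applying $\mathcal{F}$ to cobordisms with diffeomorphic underlying surfaces (rel boundary), so in the classical even setting they would coincide automatically; the whole difficulty is to show that the signs accumulated along the two contraction orders agree for the fixed choice $C$. I would isolate this as a sign-coherence lemma and expect it to reduce to the commutation relations between odd merge and split maps already established for odd Khovanov homology — namely that saddles supported on disjoint regions commute up to the prescribed sign, and that a merge followed by a split agrees up to sign with the reverse. Granting these relations, associativity becomes the finite verification that the permutation of saddles relating the two composition orders acts trivially on $\mathcal{F}$ once the signs dictated by $C$ are inserted. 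The genuinely new bookkeeping, compared with the even case, is maintaining one coherent global sign convention across all pairs of matchings, and this is precisely the information that the index $C\in\mathcal{C}^n$ is designed to record; different choices of $C$ yield the a priori distinct members of the family.
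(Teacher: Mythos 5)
Your first two paragraphs (underlying graded group, multiplication indexed by $C$, unit $\sum_a 1_a$) match the paper's construction. The associativity paragraph, however, is a genuine gap --- and not a fixable one: the paper shows by explicit computation that $OH^n_C$ is \emph{not} associative for any $n \ge 2$ and any choice $C \in \mathcal C^n$ (for instance, with $x = a_1 \wedge 1$, $y = {_a1_b}$, $z = {_b1_a}$ in $OH^2_C$ one gets $(xy)z = -x(yz)$). The sign-coherence lemma you hope to invoke is false: for the odd functor, equivalent cobordisms yield maps that agree only up to a sign which can genuinely be $-1$ (a split followed by a merge does not equal the reverse order on the nose), and no global convention removes this. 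Worse, one of the two sources of sign failure cannot even in principle be absorbed into $C$: when computing $x(yz)$, the element $x$ must anticommute past the factors $(a_1 - a_2)$ created by the splits occurring in the product $yz$, so the sign relating $(xy)z$ to $x(yz)$ depends on the exterior degree of $x$, whereas the data recorded by $C$ (orders and orientations of saddles) depends only on the diagrams. The second source, changes of chronology, does depend only on $C$, but the paper shows it too cannot be made globally trivial within the given family of multiplication rules.

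Consequently the theorem does not assert associativity: ``ring'' here means a unital, distributive, possibly non-associative ring, as the summary states immediately after (``for all $n \ge 2$ and all $C$, the ring $OH^n_C$ is non-associative''). The paper's proof of the statement is essentially your first two paragraphs made precise, and the real work lies elsewhere than where you placed it: one must check that the odd functor $OF$ is well defined on chronological cobordisms (i.e. respects Putyra's generators-and-relations presentation), that the product is degree-preserving (the cobordism $C_{cba}$ has Euler characteristic $-n$, matching the shift $\{n\}$), and that the unit property holds --- which follows because the multiplications against $1_a$ decompose into merges only, hence introduce no signs. You should delete the associativity argument and replace it by these verifications.
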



By explicit computations, we show that for all $n \ge 2$ and all $C$, the ring $OH^n_C$ is non-associative. Of course, it is associative up to sign and by tensoring with $\Z/2\Z$ we get the same ring as the mod $2$ reduction of Khovanov's rings.

\begin{example}
As an example of non-associative elements in $OH^2_C$ for arbitrary $C \in \mathcal C^2$ consider $a,b \in B^2$ (see \cite{Khovanov} for a definition of $B^n$) such that
\begin{align*}
a &= \smalldiag{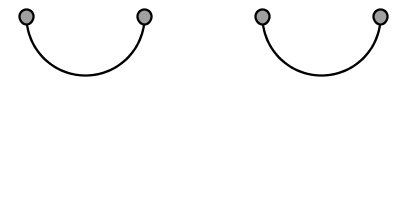}, & b &= \smalldiag{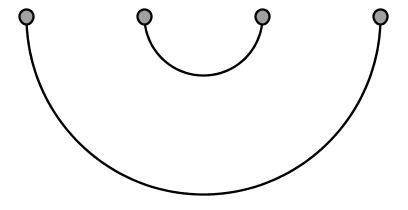}.
\end{align*}
If we take $x = a_1 \wedge 1 \in a(OH^n_C)a, y = 1 \in a(OH^n_C)b$ and $z = 1 \in b(OH^n_C)a$, where $a_1$ is the element in the exterior algebra generated by one of the circles from the diagram $W(a)a$ and $W$ is the involution which flips the diagram vertically, we get
$$(xy)z = -x(yz).$$
There is also an example of a choice $C \in \mathcal C^3$ with $a, b, c,d \in B^3$ such that if we take $x = 1 \in a(OH^3_C)b$, $y = 1 \in b(OH^3_C)c$ and $z = 1 \in c(OH^3_C)d$ we get $x(yz) = -(xy)z$.\\
\end{example}

Like in the even case (see \cite{HnCenter}), there is a link between these rings and the cohomology of the $(n,n)$ Springer varieties. 
In the context of odd theory we consider an extended version of the center which includes the anti-commutative elements and we call it the "odd center" or "supercenter". We write it $OZ(OH^n_C)$. We show that there is a link with the oddification of the cohomology of the $(n,n)$ Springer variety constructed by A. Lauda and H. Russell in \cite{OddSpringer}, denoted $OH(\mathfrak{B}_{n,n})$ in our work. 

\begin{theorem}
The odd center of $OH^n_C$ is an associative ring and does not depends on the choice of $C \in \mathcal C^n$. Furthermore, there is an isomorphism of graded rings
$$OZ(OH^n_C) \simeq OH(\mathfrak{B}_{n,n}).$$
\end{theorem}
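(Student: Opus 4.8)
The plan is to follow the strategy of the even case \cite{HnCenter}, identifying the supercentral elements explicitly as dotted identity diagrams and then matching them against the generators of the Lauda--Russell ring $OH(\mathfrak{B}_{n,n})$. First I would show that the supercentral elements form an associative subring. A supercentral element is a family $z = (z_a)_{a \in B^n}$ supported on the diagonal idempotents, with $z_a \in a(OH^n_C)a$, satisfying the graded commutation $z\,w = (-1)^{|z||w|} w\,z$ against every $w \in a(OH^n_C)b$. The non-associativity of $OH^n_C$ is governed by a sign $\varepsilon(x,y,z)$ with $(xy)z = \varepsilon(x,y,z)\,x(yz)$, and the key point is to check that $\varepsilon \equiv 1$ as soon as one of the three factors is supercentral, so that the product restricted to $OZ(OH^n_C)$ is genuinely associative; closure under multiplication follows because the supercommutation condition is preserved by products.

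Next, for each $i \in \{1,\dots,2n\}$ I would introduce $X_i \in OH^n_C$ obtained by summing over all $a \in B^n$ the identity cobordism on $W(a)a$ decorated by a single dot on the $i$-th strand, with the sign prescribed by the order and orientation data of $C$. A diagrammatic computation---sliding the dot through saddles and using the odd Frobenius relations inherited from \cite{OddKhovanov}---should show that each $X_i$ is supercentral and that the $X_i$ satisfy the defining relations of $OH(\mathfrak{B}_{n,n})$: the skew-commutation $X_i X_j = -X_j X_i$ for $i \neq j$, together with the odd analogues of the symmetric-function relations that cut $OH(\mathfrak{B}_{n,n})$ out of the odd polynomial ring. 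These relations are precisely those forced by the arcs of the crossingless matchings, and I expect them to reproduce the Lauda--Russell presentation.

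This produces a graded ring homomorphism $\Phi \colon OH(\mathfrak{B}_{n,n}) \to OZ(OH^n_C)$ sending each generator to $X_i$. To prove $\Phi$ is an isomorphism I would argue surjectivity by decomposing an arbitrary diagonal supercentral element into dotted identities and using centrality to eliminate off-diagonal contributions, so that every element of $OZ(OH^n_C)$ is a skew-polynomial in the $X_i$; and injectivity by a graded-rank comparison, computing and matching the Poincar\'e series of both sides, or equivalently by reducing mod $2$ to recover the even isomorphism of \cite{HnCenter} and lifting it by a freeness argument over $\Z$. Independence of the choice $C$ then follows, since the resulting presentation of $OZ(OH^n_C)$ coincides with the $C$-independent ring $OH(\mathfrak{B}_{n,n})$; alternatively one exhibits, for $C, C' \in \mathcal C^n$, a sign-twisted isomorphism $OH^n_C \to OH^n_{C'}$ restricting to the identity on the generators $X_i$.

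The main obstacle I anticipate is the sign bookkeeping in the second step: verifying that the order-and-orientation data of $C$ makes the dotted identities genuinely supercentral and yields exactly the signs of the Lauda--Russell relations, uniformly in $C$. Both the associativity of $OZ(OH^n_C)$ and its independence of $C$ are ultimately consequences of pinning down this sign behaviour correctly.
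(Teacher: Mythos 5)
Your overall skeleton coincides with the paper's: the generators you call dotted identities are exactly the paper's $s_0(x_i)=\sum_{a\in B^n}a_i$, the key construction step is showing the Lauda--Russell ideal dies in $OZ(OH^n_C)$, and your injectivity argument (reduce mod $2$, use the even isomorphism of \cite{HnCenter} and freeness of the Lauda--Russell basis) is precisely the paper's. But there are two genuine gaps. First, your associativity argument rests on a false claim: the associator is \emph{not} trivial as soon as one factor is supercentral. In $\widetilde{OH}^2$ take the supercentral element $z=a_1+b_2$, $y={_a1_b}$ and $w={_b1_a}$; then $yw=a_2-a_1$, so $z(yw)=a_1\wedge a_2$, while $zy=c_1$ and $(zy)w=-a_1\wedge a_2$. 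The sign appears because the exterior-degree-one element $z$ must be permuted past the splitting factor created when computing $yw$. What is true, and what the paper uses, is weaker: supercentral elements are supported on the diagonal blocks $a(OH^n_C)a$, and every product among diagonal elements is computed by merge-only cobordisms, i.e.\ by exterior products after renaming, hence is associative.

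The second and main gap is surjectivity of $\Phi$. Your first route (decompose an arbitrary diagonal supercentral element into dotted identities ``using centrality'') is circular: showing that the compatibility conditions $z_a\,{_a1_b}={_a1_b}\,z_b$ over all pairs $(a,b)$ force $z$ to be a skew polynomial in the $X_i$ \emph{is} the content of surjectivity, and you give no mechanism for it. Your second route fails as stated: injectivity over $\Z$ together with bijectivity mod $2$ does not imply surjectivity over $\Z$ (multiplication by $3$ on $\Z$ is injective and a mod-$2$ isomorphism but not onto), and you offer no way to compute the Poincar\'e series or even the rank of $OZ(OH^n_C)$. This is exactly what the entire second half of the paper's proof supplies: an auxiliary space $\widetilde T=\bigcup_{a\in B^n}T_a$ built from tori, an epimorphism $H(\widetilde T)\rightarrow OZ(OH^n_C)$ obtained from Mayer--Vietoris sequences and compatible cellular decompositions, and the count $\rank H(\widetilde T)=\binom{2n}{n}=\rank OH(\mathfrak{B}_{n,n})$ via the identity $\sum_{a\in B^n}2^{t(a)}=\binom{2n}{n}$. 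Without this (or a substitute upper bound on $\rank OZ(OH^n_C)$) the argument cannot close. Finally, note that killing the relations $\epsilon^I_r$ is not mere sign bookkeeping: in the paper it is a genuine combinatorial argument (free versus non-free points of $I$, a parity attached to each arc, and a pairwise cancellation of terms), so that step, which you correctly identify but only assert, also carries real content.
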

The proof of this theorem is split in three main steps :
\begin{enumerate}
\item We construct a graded morphism $s_0$ from the ring of odd polynomials in $2n$ variables (see $\text{OPol}_{2n}$  in \cite{OddSpringer}) to $OZ(OH^n_C)$ which is similar to the isomorphism from \cite{HnCenter}. Then, by showing that the ideal used to define $OH(\mathfrak{B}_{n,n})$ is in the kernel of $s_0$, we get an induced graded morphism
$$s : OH(\mathfrak{B}_{n,n}) \rightarrow OZ(OH^n_C).$$
\item Using the equivalence modulo $2$ between the odd and the even case, the existence of a basis for $OH(\mathfrak{B}_{n,n})$ and the isomorphism $H(\mathfrak{B}_{n,n}) \simeq Z(H^n)$, we prove that $s$ is injective.
\item Finally, we show that the rank of $OH(\mathfrak{B}_{n,n})$ is the same as that of $OZ(OH^n_C)$ and consequently that $s$ is bijective. To do so, we construct a variety $\widetilde T$, in a way similar to the $\widetilde S$ from \cite{HnCenter} but using circles instead of spheres. By employing similar arguments as Khovanov, we get an isomorphism of non-graded rings between the cohomology $H(\widetilde T)$ and $OZ(OH^n_C)$ and we prove that the rank of $H(\widetilde T)$ is what we are looking for.
\end{enumerate}
Also, we get a non-graded isomorphism between $H(\widetilde T)$ and $OH(\mathfrak{B}_{n,n})$ and thus this proves that the construction of A. Lauda and H. Russell gives a presentation for the cohomology ring of $\widetilde T$ if we replace the degrees of the generators by 1 instead of 2.\\


In the last section of the thesis, thanks to Krzysztof Putyra, we construct an associator for $OH^n_C$ which is based on the degrees and the diagrams of the elements. Then, we find a sufficient condition for having isomorphic rings with two different choices of order and orientations for the saddle points.

\begin{theorem}
Let $C$ and $C'$ be in $\mathcal C^n$. If the associator of $OH^n_C$ and $OH^n_{C'}$ are the same, then there is an isomorphism of graded rings
$$OH^n_C \simeq OH^n_{C'}.$$
\end{theorem}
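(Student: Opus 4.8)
The plan is to realize the desired isomorphism as a \emph{diagonal} (sign-rescaling) map. All the rings $OH^n_C$ share one and the same underlying graded $\Z$-module, equipped with the same distinguished homogeneous basis built from the diagrams in $B^n$ together with the generators of the exterior algebras attached to their circles; by construction the multiplications differ only by signs and agree, after reduction modulo $2$, with the even ring $H^n\otimes\Z/2\Z$. Writing the structure constants as $c^k_{ij}(C)=\mu^k_{ij}\,\epsilon^k_{ij}(C)$, where $\mu^k_{ij}\ge 0$ is the common (even) absolute value and $\epsilon^k_{ij}(C)\in\{\pm 1\}$, I look for a morphism of the form $\phi_\lambda(e_i)=\lambda_i e_i$ with $\lambda_i\in\{\pm1\}$. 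Since each $\lambda_i$ is a unit and each $e_i$ is homogeneous, any such $\phi_\lambda$ is automatically a grading-preserving bijection, so it remains only to force it to be multiplicative; I would anchor $\lambda$ to be trivial on the local units $1_a$ so that the idempotent relations are respected.

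Next I translate multiplicativity into a cohomological condition. Imposing $\phi_\lambda(e_i\cdot_C e_j)=\phi_\lambda(e_i)\cdot_{C'}\phi_\lambda(e_j)$ on every nonzero (bimodule-compatible) product yields, for all $i,j,k$ with $\mu^k_{ij}\neq 0$, the relation
\[
\lambda_i\,\lambda_j\,\lambda_k=\epsilon^k_{ij}(C)\,\epsilon^k_{ij}(C').
\]
Identifying $\{\pm1\}$ with $\Z/2\Z$, the right-hand side defines a $2$-cochain $\beta$ on the combinatorial data indexing these products, and the displayed equation says exactly that $\beta=\delta\lambda$ is a coboundary. Thus a diagonal graded isomorphism exists if and only if $\beta$ bounds.

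I then connect the obstruction to the associator. Expanding a triple product $(e_ie_j)e_l$ and $e_i(e_je_l)$ in both rings and using that the absolute values $\mu$ are governed by the associative even theory, the associator $\alpha_C$ is precisely the ratio of the signs $\epsilon(C)$ along the two bracketings; hence $\alpha_{C'}/\alpha_C$ equals the coboundary $\delta\beta$ of the sign $2$-cochain $\beta$. By hypothesis the two associators coincide, so $\delta\beta=1$, i.e. $\beta$ is a $2$-cocycle. The main obstacle is the final step, since a cocycle need not a priori be a coboundary: I must genuinely solve the sign system $\delta\lambda=\beta$. I would do this by hand, exploiting the connectivity of the graph whose vertices are the basis elements and whose edges record the nonzero products (the elementary saddle cobordisms). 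Fixing the values of $\lambda$ at the local units and propagating along a spanning tree determines $\lambda$ uniquely on each connected component, and the cocycle condition guarantees that the value obtained around any loop is trivial, so the propagation is globally consistent. This produces the required $\lambda$ and hence the graded ring isomorphism $OH^n_C\simeq OH^n_{C'}$.
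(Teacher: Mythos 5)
Your overall strategy is the same as the paper's: realize the isomorphism as a diagonal sign rescaling, translate multiplicativity into an equation $\delta\lambda=\beta$ where $\beta$ records the sign discrepancy between the two multiplications, and use equality of the associators to show that $\beta$ is a $2$-cocycle (your identification of $\alpha_{C'}/\alpha_C$ with $\delta\beta$ is correct and matches the paper's computation). The genuine gap is the final step, which you yourself flag as the main obstacle: you claim that the cocycle condition makes your spanning-tree propagation consistent around every loop, so that $\beta$ bounds. That implication is false as a general principle. The obstruction to ``cocycle $\Rightarrow$ coboundary'' is precisely a second cohomology group, and connectivity of the product graph does not kill it: for a one-object groupoid, i.e.\ a group $G$, the analogous graph is connected, yet a $2$-cocycle bounds only if its class in $H^2(G,\Z/2\Z)$ vanishes, which fails for instance for $G=\Z/2\Z\times\Z/2\Z$. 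The cocycle identity only gives consistency around the ``elementary'' loops coming from triple products; to get consistency around \emph{all} loops one must prove that the relevant classifying space has trivial $H^2$, and nothing in your argument does that.

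What the paper supplies, and what your proposal is missing, are two structural facts. First, since the multiplications $\times_C$ and $\times_{C'}$ on $c(OH^n_C)b \times b(OH^n_C)a$ agree up to a single sign $\gamma(c,b,a)$ depending only on the triple $(c,b,a)$ (Corollaire \ref{prop:multisignepres}), the discrepancy $\beta$ descends to a $2$-cochain $\eta$ on the groupoid $\ddot B^n$ whose objects are the elements of $B^n$, with exactly one morphism $W(b)a : a \to b$ between any two objects; this same fact is also what makes your linear system consistent at all, and forces your $\lambda_i$ to be constant on each block $b(OH^n_C)a$, so the natural home of the unknown is a $1$-cochain on $\ddot B^n$, not a function on individual basis elements. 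Second --- the key input --- the nerve of $\ddot B^n$ realizes geometrically to a simplex, hence is contractible, so $H^2(\ddot B^n,\Z/2\Z)=0$ and every $2$-cocycle is a coboundary; this produces $\epsilon$ on morphisms of $\ddot B^n$ with $\delta\epsilon=\eta$, and $\theta(x)=(-1)^{\epsilon(|x|_2)}x$ is the desired graded isomorphism. With these two points added, your argument closes; without them the last step is unjustified.
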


\paragraph{Acknowledgment} 
None of this would have been possible without the help of my thesis supervisor Pedro Vaz.

\tableofcontents


%
\chapter*{Remerciements}
\addcontentsline{toc}{chapter}{Remerciements} 

Je remercie tout d'abord mon promoteur, Pedro Vaz, pour ses qualités de mentor, son soutien durant l'écriture de ce mémoire et sa grande disponibilité ainsi que pour m'avoir ouvert la voie à la théorie des n\oe uds et à toutes les mathématiques qu'elle implique.\\ 

Je remercie aussi Krzysztof Putyra pour ses explications et ses idées qui ont permis d'enrichir fortement le Chapitre \ref{chap:allerplusloin}.\\

Je souhaite remercier tous mes professeurs de l'Université catholique de Louvain pour avoir partagé avec moi leur savoir et leur amour des mathématiques. \\

Je remercie également mes parents Anne et Frédéric pour leur soutien et pour m'avoir permis de faire ces études. De même je remercie mon frère Corentin pour avoir toujours été un modèle exemplaire pour moi. \\

Enfin je voudrais remercier Isaure pour son affection et son soutien, ainsi que tous mes amis pour être géniaux.

\chapter*{Introduction}
\addcontentsline{toc}{chapter}{Introduction} 

La théorie des n\oe uds étudie principalement le plongement de cercles dans l'espace, qu'on appelle des entrelacs. La grande question est de pouvoir déterminer si deux entrelacs sont isotopes. Actuellement, on ne peut répondre que partiellement à cette question, et ce, en utilisant des invariants, c'est-à-dire des objets algébriques associés à chaque entrelacs : deux entrelacs ayant des invariants différents étant assurément non-isotopes.

Le monde de la théorie des n\oe uds fut révolutionné en 1985 par V. F. R. Jones \cite{Jones} qui a construit un invariant polynomial très simple à calculer puisqu'il s'obtient à partir de la relation d'écheveau locale :
$$(q^{1/2} - q^{-1/2})V\left(\smalldiag{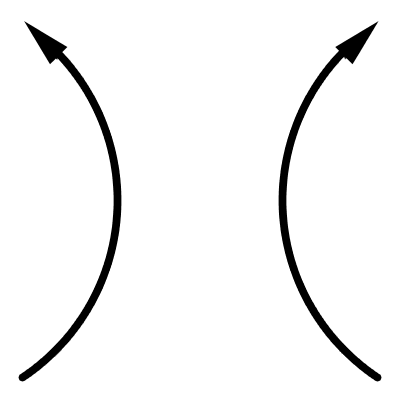} \right)= q^{-1} V\left(\smalldiag{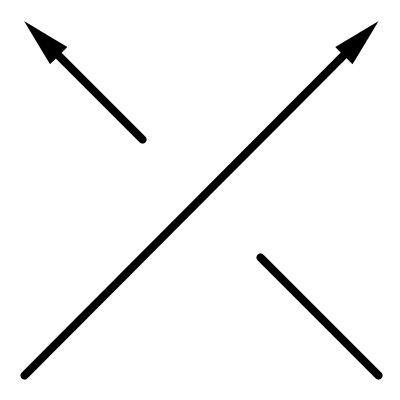}\right) - q V\left(\smalldiag{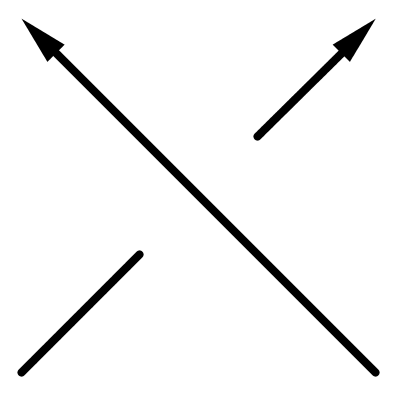}\right).$$
Cela fut révolutionnaire puisqu'avant cette date, les seuls invariants connus étaient soient très peu puissants (comme par exemple le nombre d'entrelacements), soit difficilement calculables (comme le groupe fondamental du complément d'un n\oe ud). Par après, on a vu apparaitre toute une série d'autres invariants du même type (par exemple le polynôme de HOMFLY ou encore celui de Kauffman) jusqu'à une seconde révolution dans le milieu due à M. Khovanov \cite{KhovanovHomology} qu'il qualifia de "catégorification du polynôme de Jones". 

La catégorification imaginée par M. Khovanov consiste en un invariant homologique, c'est-à-dire qu'on calcule l'homologie d'un certain complexe de chaines associé à un diagramme d'entrelacs et cette homologie permet de distinguer certains entrelacs non-isotopes. Pour construire ce complexe, on transforme un diagramme d'entrelacs en une famille de collections de cercles et ces collections sont reliées par des cobordismes. On définit alors un foncteur qui envoie une collection de $n$ cercles vers le produit tensoriel de $n$ copies d'un certain groupe abélien gradué $A$ et un cobordisme entre deux telles collections vers un homomorphismes de modules gradués sur $\Z$.
 \`A un diagramme d'entrelacs $L$, on associe donc des groupes de cohomologie doublement gradués, noté $H^{i,j}(L)$, $i$ étant l'indice du groupe de cohomologie et $j$ le degré induit par $A$. M. Khovanov a montré que ces groupes de cohomologie constituent des invariants d'entrelacs. De plus, l'homologie de Khovanov forme un foncteur de la catégorie des entrelacs (avec les flèches données par des cobordismes plongés dans $\R^4$) vers les groupes abéliens. On peut définir une caractéristique d'Euler graduée pour ces groupes de cohomologie par :
$$\sum_{i,j} (-1)^i q^j \dim_\Q (H^{i,j}(L) \otimes \Q) \in \Z[q,q^{-1}]$$
et par la construction de l'homologie de Khovanov, cela livre le polynôme de Jones de $L$ (à renormalisation près). Cette situation est comparable au cas des CW-complexes, où la caractéristique d'Euler n'est que l'ombre d'une structure beaucoup plus riche que sont les groupes d'homologie cellulaire du complexe.

En plus de cette structure additionnelle, l'intérêt de l'invariant de M. Khovanov réside dans le fait qu'il est strictement plus fort que celui de Jones \cite{khstrongerJ}, qu'il permet de détecter le n\oe ud trivial \cite{unknot} (ce qui est une conjecture pour le polynôme de Jones) et qu'il ouvre la porte à toute une série de nouveaux invariants homologiques (on cite par exemple la généralisation de l'homologie de Khovanov pour des polynômes coloriés). 

De façon générale, L. Crane et I. Frenkel définissent une catégorification comme "une procédure informelle qui transforme les entiers en groupes abéliens, les espaces vectoriels en catégories abéliennes ou triangulées et les opérateurs en foncteurs entre ces catégories", dans \cite{categorification}. On obtient alors une structure additionnelle donnée par les transformations naturelles de foncteurs qu'on ne trouve pas avant la catégorification. L'objectif de cette procédure était de transformer des invariants quantiques de variétés de dimension 3 en invariants de variétés de dimension 4. L. Crane propose aussi dans son article \cite{clockcate} d'utiliser le concept de catégorification pour construire une théorie quantique de la gravitation de dimension 3+1 à partir de théories qui fonctionnent pour la dimension 3. La construction de M. Khovanov est donc bien une catégorification puisque les entiers du polynôme de Jones sont transformés en groupes abéliens gradués (les groupes de cohomologie avec la graduation de $A$), la somme directe de ces groupes formant donc une catégorie bigraduée dont une des deux graduations correspond aux puissances de $q$ du polynôme de Jones.

Par ailleurs, on peut étendre le polynôme de Jones pour en faire un foncteur $J$ de la catégorie des enchevêtrements (qui consistent en le plongement de cercles et d'intervalles dans l'espace, reliant $n$ points fixés sur une droite à $m$ points fixés sur une autre) vers la catégorie des espaces vectoriels. Ce foncteur envoie les $n$ points vers $V^{\otimes n}$, où $V$ est la représentation irréductible de dimension 2 du groupe quantique $U_q(\mathfrak{sl}_2)$, et les $m$ points vers $V^{\otimes m}$. \`A un enchevêtrement orienté $T$ reliant $n$ points à $m$ points, ce foncteur associe un opérateur $J(T) :V^{\otimes n} \rightarrow V^{\otimes m}$ qui entrelace l'action de $U_q(\mathfrak{sl}_2)$ (voir sources \cite{quantumj},\cite{quantumsl}). Ce foncteur est tel que si on prend $n=m=0$, on obtient le polynôme de Jones.
De plus, $J$ se restreint à un foncteur $J'$ sur les $(m,n)-$enchevêtrements, c'est-à-dire les enchevêtrements reliant $2n$ points à $2m$ points, envoyant les $2n$ points sur l'espace $\Inv(n) := \Inv(V^{\otimes 2n})$ des applications multilinéaires sur $V^{\otimes 2n}$ invariantes par rapport à l'action de $U_q(\mathfrak{sl}_2)$ et envoyant un $(m,n)-$enchevêtrement $L$ sur un entrelacement $J'(L) : \Inv(n) \rightarrow \Inv(m)$ en prenant la restriction de $J(L)$ sur ces espaces (voir \cite{quantumj},\cite{spider2}).

M. Khovanov a étendu son invariant aux enchevêtrements dans \cite{Khovanov} en catégorifiant $J'$. Cette catégorification transforme $\Inv(n)$ en la catégorie triangulée $\mathcal K^n$ des complexes de modules gradués (à homotopie de chaine près) sur un certain anneau $H^n$ et $J'(L) $ en le foncteur $\mathcal K^n \rightarrow \mathcal K^m$ donné par la tensorisation par un certain complexe de bimodules sur ($H^m$,$H^n$). On appelle ces anneaux $H^n$ les \emph{anneaux des arcs de Khovanov}. On associe donc à un enchevêtrement un complexe de bimodules sur ($H^m$,$H^n$), dont la classe d'équivalence à homotopie de chaines près donne un invariant d'enchevêtrements. Cette categorification étend l'homologie de Khovanov puisque si on prend $n=m=0$, le complexe obtenu est celui utilisé pour calculer les groupes d'homologies $H^{i,j}(L)$, où $L$ est un enchevêtrement sans point d'extrémités, c'est-à-dire un entrelacs.




En 2013, P. Ozsvath, J. Rasmussen et Z. Szabo ont construit dans \cite{OddKhovanov} une "oddification" de l'homologie de Khovanov, le terme "odd", qu'on traduit par impair, signifiant qu'on retrouve une certaine antisymétrie dans les objets. Cette construction impaire utilise un foncteur projectif différent de l'homologie de Khovanov, le terme projectif signifiant qu'il n'est bien défini qu'à signe près. Ce foncteur envoie les collections de cercles, non pas sur des produits tensoriels, mais sur des produits extérieurs. On peut montrer qu'on obtient alors un nouvel invariant d'entrelacs qui ne dépend pas des signes et qui forme une autre catégorification du polynôme de Jones. Cette homologie impaire permet de distinguer des n\oe uds que la version paire ne distingue pas, comme l'a montré A. Shumakovitch dans \cite{shumakovich}.  J. Bloom a montré dans \cite{bloom} que l'homologie impaire est invariante sous opération de mutation alors qu'il existe des exemples d'entrelacs mutants qui ont des homologies paires différentes (voir \cite{wehrli}). De plus, l'homologie de Khovanov est équivalente à sa version impaire si on les considère toutes deux en modulo 2. Récemment, K. Putyra a construit dans \cite{Covering} un cadre qui permet de retrouver l'homologie de Khovanov et sa version impaire, avec un paramètre permettant d'obtenir l'une ou l'autre. 

On se demande alors ce que donnerait une construction similaire à celle des anneaux des arcs de Khovanov, mais en utilisant le foncteur de  P. Ozsvath, J. Rasmussen et Z. Szabo.

Cela livre une "oddification" des anneaux $H^n$ et constitue le premier objectif de ce travail. Puisqu'il y a un choix de signes à faire, on construit des familles d'anneaux $OH^n_C$, chacun étant caractérisé par des choix de signes notés $C$. De plus, pour que cela soit bien défini et que le foncteur projectif de \cite{OddKhovanov} devienne un foncteur au sens usuel, on a besoin d'une catégorie plus structurée que celle des cobordismes et on utilise donc la catégorie des cobordismes avec chronologies, définie par K. Putyra dans \cite{Putyra} (il utilise aussi cette catégorie pour construire son cadre dans \cite{Covering}). 

Par ailleurs, dans \cite{HnCenter}, M. Khovanov a relié ses anneaux $H^n$ à la cohomologie de la variété de Springer pour une partition $(n,n)$, montrant que le centre de l'anneau $H^n$ est isomorphe en tant qu'anneau gradué à l'anneau de cohomologie de cette variété. En 2014, A. Lauda et H. Russell ont proposé de leur côté, dans \cite{OddSpringer}, une construction impaire de la cohomologie de la variété de Springer pour une partition quelconque, construction basée sur les polynômes impairs et donnant par conséquent une antisymétrie aux éléments.

On s'interroge donc légitiment quant à l'existence d'un lien entre la construction impaire de $H^n$ introduite dans ce travail et la construction de A. Lauda et H. Russell.

On répond par l'affirmative et, en étendant le centre de $OH^n_C$ aux éléments anticommutatifs, ce qu'on appelle le centre impair, on obtient un anneau qui est isomorphe à la construction impaire de la cohomologie de Springer pour une partition $(n,n)$.

\paragraph{Plan général} Ce travail est séparé en 4 chapitres et est muni d'un ensemble d'annexes. Le premier chapitre vise à définir les anneaux des arcs de Khovanov $H^n$, en définissant d'abord la catégorie de Temperley-Lieb et la catégorie des cobordismes de dimension 2 qui sont des bases nécessaires à la construction de ces anneaux. On définit ensuite le foncteur $F$ de la catégorie des cobordismes vers la catégorie des espaces vectoriels, utilisé par M. Khovanov pour construire son homologie, et enfin on définit les anneaux $H^n$.

Une construction impaire des anneaux $H^n$ étant l'objectif principal du Chapitre $2$, on définit d'abord le foncteur de P. Ozsvath, J. Rasmussen et Z. Szabo en utilisant le travail de K. Putyra sur les cobordismes avec chronologies. On établit ensuite la construction impaire des anneaux des arcs, notée $OH^n_C$, avec des choix de signes $C$. On propose un système de calculs basé sur des diagrammes coloriés afin de faciliter les calculs dans $OH^n_C$. Finalement, dans la dernière section du chapitre, on montre quelques résultats sur les anneaux $OH^n_C$, notamment qu'ils sont non-associatifs pour $n\ge 2$ et qu'ils sont équivalents aux anneaux $H^n$ quand on les tensorise tous deux par $\Z/2\Z$.

Dans le Chapitre $3$, on rappelle dans une première section quelques propriétés du centre de $H^n$ et des variétés de Springer pour une partition $(n,n)$. On étudie ensuite dans la deuxième section les propriétés du centre de $OH^n_C$ et on introduit la notion de centre impair. On définit dans la section suivante la construction impaire de la cohomologie de la variété de Springer due à A. Lauda et H. Russell et, dans la section finale, on démontre le résultat principal de ce travail qui consiste à construire un isomorphisme entre $OZ(OH^n_C)$ et cette construction impaire pour une partition $(n,n)$.

Dans le dernier chapitre, on propose une série de questions avec pistes de réflexions afin de construire de nouveaux objets à partir des anneaux $OH^n_C$ et d'approfondir la compréhension de ceux-ci, notamment en étudiant les classes d'isomorphismes de $OH^n_C$ pour des choix de signes $C$ différents ou encore en transformant $OH^n_C$ en anneau associatif. Ensuite, on propose une idée de construction imaginée par K. Putyra afin de définir une notion de modules et bimodules qui aurait du sens sur $OH^n_C$ et qui pourrait mener à une catégorification impaire de $J'$ et donc potentiellement à un nouvel invariant d'enchevêtrements.

Enfin, les annexes sont destinées à rappeler des définitions et des résultats qui facilitent la compréhension de ce travail.

%



\chapter{Anneaux des arcs de Khovanov}

L'objectif de ce chapitre est de définir les anneaux des arcs de Khovanov $H^n$, comme introduits par M. Khovanov dans \cite{Khovanov}. \`A cette fin, on rappelle d'abord les définitions et quelques propriétés de la catégorie de Temperley-Lieb, notée $\mathcal{TL}$, et de la catégorie $Cob$ des cobordismes de dimension $2$. On décrit ensuite le foncteur $F : Cob \rightarrow \Z-Mod$ utilisé par M. Khovanov pour construire son homologie et finalement on construit les anneaux $H^n$ grâce à ce foncteur.

\section{Catégorie de Temperley-Lieb}\label{sec:TL}

L'objectif de cette section est de définir la catégorie $\mathcal{TL}$ de Temperley-Lieb dont les objets sont des collections de paires de points et les morphismes $2n \rightarrow 2m$ sont des enchevêtrements plats à isotopie près. 



\begin{definition}
Un $(m,n)$-enchevêtrement plat entre deux collections de $2n$ et $2m$ points consiste en le plongement dans le plan d'une collection de $n+m$ intervalles qui ne se croisent pas, $p_i : [0,1] \rightarrow \R \times [0,1]$, appelés brins, et d'un nombre fini de composantes de cercles libres. On demande en plus que les bords des brins soient envoyés sur tous les points de $ \{1, \dots, 2n\} \times  \{0\}$ et de $\{1, \dots, 2m\} \times  \{1\}$, qu'on appelle points de base. On demande aussi que les brins soient perpendiculaires autour de ces points.\\
On note $\widehat B^m_n$ comme étant l'ensemble des classes de $(m,n)-$enchevêtrements plats, à isotopie préservant les points de base près, et $B^m_n$ comme étant ceux ne possédant pas de composante cercle. On note aussi $B^m := B_0^m$ et $B_n := B_n^0$.
\end{definition}

Le terme plat vient du fait qu'on ne permet pas aux brins de se croiser contrairement à l'appellation d'enchevêtrement au sens classique. Par ailleurs, il existe aussi une définition pour un enchevêtrement plat avec des collections impaires de $2n+1$ et $2m+1$ points, mais on n'en a pas besoin dans ce travail. 

On dessine toujours un $(m,n)-$enchevêtrement comme allant de bas en haut, c'est-à-dire qu'on place les $2m$ points en haut et les $2n$ en bas. On donne un exemple d'élément de $\widehat B^3_2$, donc de $(3,2)$-enchevêtrement plat, en Figure \ref{fig:32tangle}.

\begin{figure}[h]
    \center
    \includegraphics[width=6cm]{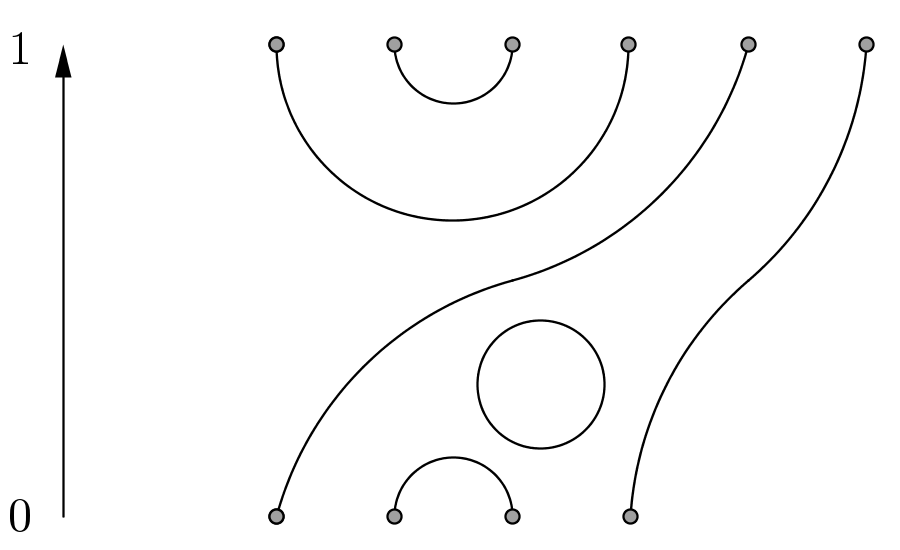}
    \caption{\label{fig:32tangle}$(3,2)$-enchevêtrement plat, la flèche à gauche représente le sens dans lequel on parcourt l'enchevêtrement et les chiffres représentent les abscisses des points.}
\end{figure}

\begin{exemple}\label{ex:B2}
Etant donné qu'on utilise principalement $B^n$ dans ce travail, on observe que $B^2$ est composé des deux classes d'enchevêtrements plats données par
\begin{align*}
 \deuxdiag{Images_arxiv/B2_1.png}, &&&  \deuxdiag{Images_arxiv/B2_2.png}.
\end{align*}
\end{exemple}
\begin{exemple}\label{ex:B3}
De même $B^3$ est composé des cinq classes d'enchevêtrements plats suivant
\begin{align*}
 \diagg{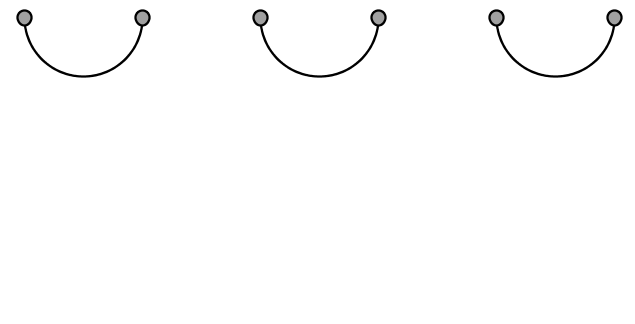}, &&&  \diagg{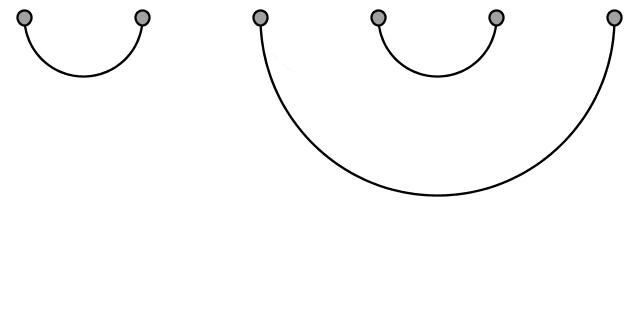}, \\  \diagg{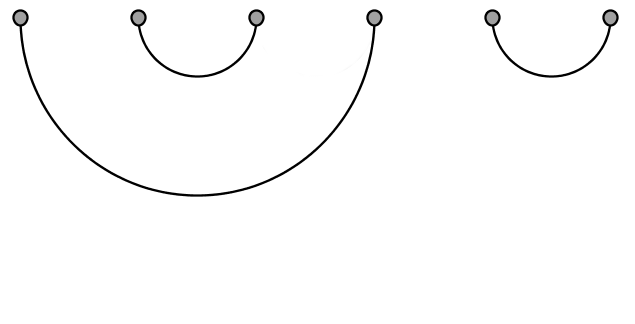}, &&&  \diagg{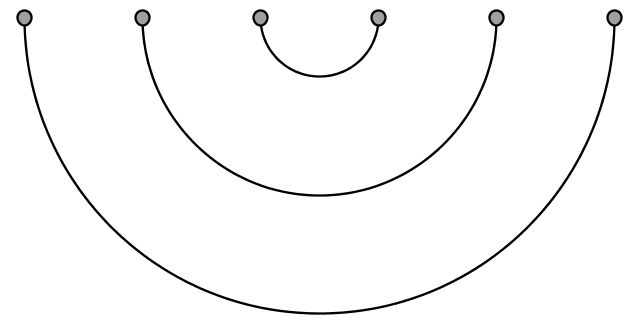}, \\
\diagg{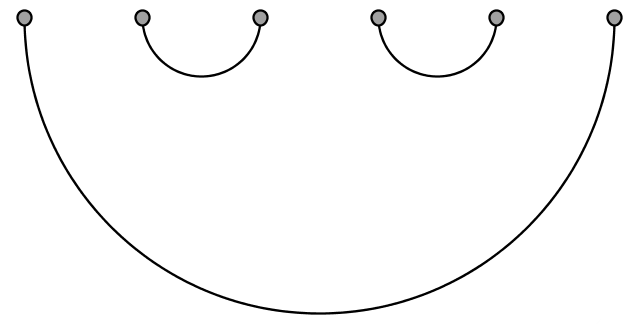}.
\end{align*}
\end{exemple}

\begin{definition}
On définit la composition d'un $(m,n)$ et d'un $(n,p)$-enchevêtrements plats en plaçant le premier au dessus du second et en faisant une homothétie divisant la hauteur totale par deux, c'est-à-dire que pour $a \in \widehat B^{m}_n$ et $b \in \widehat B^{n}_p$ on a 
$$\raisebox{-0.25\height}{\young(a)} \circ \raisebox{-0.25\height}{\young(b)} = \raisebox{-0.4\height}{\young(a,b)}  \in \widehat B^{m}_p.$$
\end{definition}

Il est clair que cette composition donne un $(m,p)$-enchevêtrement plat puisque, par la condition de perpendicularité, ils se recollent de façon lisse, comme illustré en Figure \ref{fig:tanglescomp}. Il faut noter qu'en général, une composition d'un élément de $B_n^m$ avec un de $B_p^n$ n'est pas dans $B_m^p$ mais dans $\widehat B_m^p$ puisqu'on peut faire apparaitre des composantes de cercles libres.

\begin{figure}[h]
    \center
    \includegraphics[width=12cm]{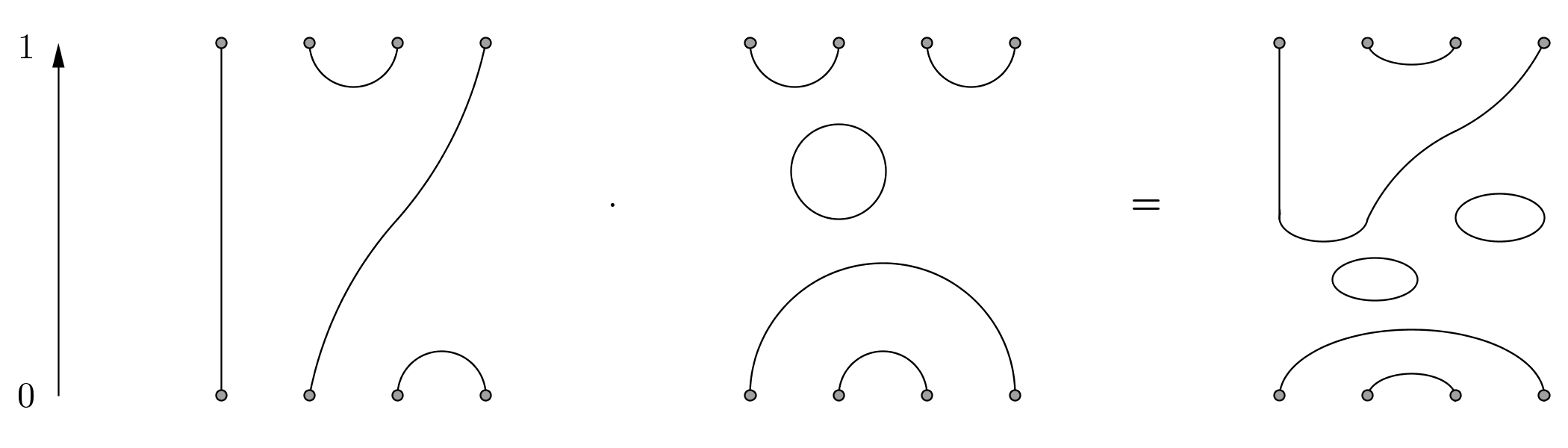}
    \caption{\label{fig:tanglescomp}Composition de deux $(2,2)-$enchevêtrements plats.}
\end{figure}

On pose $Vert_{2n}$ comme étant le $(n,n)$-enchevêtrement plat donné les $2n$ segments de droites $\{i\}\times[0,1]$ pour $1 \le i \le 2n$. Par exemple on a
$$Vert_{6} := \middiag{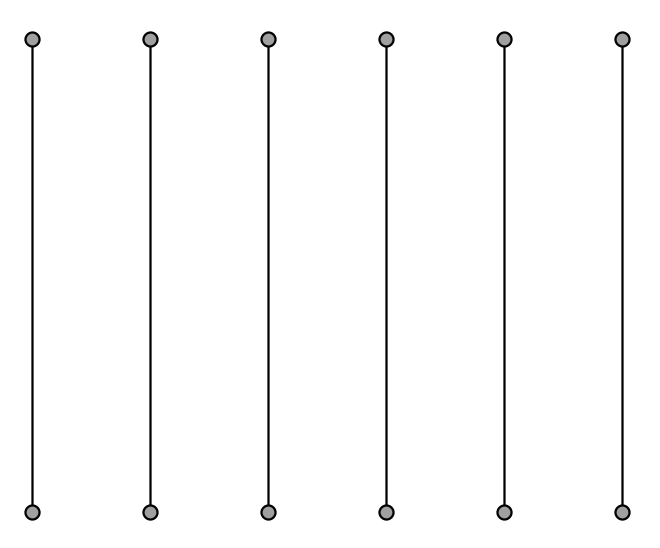}.$$
Il est facile de voir que pour $x \in B_n^m$ et $y \in B_m^n$ on a $xVert_{2n} = x$ et $Vert_{2n}y = y$. 

\begin{definition}
On définit  la catégorie de Temperley-Lieb, notée $\mathcal{TL}$, comme étant la catégorie ayant pour objets les collections de $2n$ points $\{1, \dots, 2n\}$,  pour tout $n \ge 0$, et pour flèches $2n \rightarrow 2m$ les  $(m,n)$-enchevêtrements plats, à isotopie préservant les points de base près, et munis de la composition définie au dessus. L'identité $2n \rightarrow 2n$ est donnée par $Vert_{2n}$.


\end{definition}

Par ailleurs, on remarque que $B^n$ est caractérisé par la façon de relier $2n$ points ensemble par $n$ demi-cercles vers le bas qui ne peuvent pas se croiser, livrant la propriété suivante : 

\begin{proposition}
La cardinalité de $B^n$ est donnée par le $n$-ème nombre de Catalan $C_n$
$$|B^n| = C_n := \frac{1}{n+1} \begin{pmatrix}2n \\ n\end{pmatrix}.$$
\end{proposition}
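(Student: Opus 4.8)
Le plan est d'\'etablir que la suite $(|B^n|)_{n\ge 0}$ v\'erifie exactement la relation de r\'ecurrence qui caract\'erise les nombres de Catalan, puis de conclure par r\'ecurrence. Rappelons d'abord que les nombres de Catalan peuvent \^etre d\'efinis par $C_0 = 1$ et la r\'ecurrence
$$C_n = \sum_{k=0}^{n-1} C_k\, C_{n-1-k}, \qquad n \ge 1,$$
et qu'il est classique que cette r\'ecurrence, jointe \`a la condition initiale, d\'etermine la formule close $C_n = \frac{1}{n+1}\binom{2n}{n}$. Je prendrais ce dernier fait comme acquis (il se d\'emontre par s\'eries g\'en\'eratrices) et concentrerais l'effort sur la combinatoire des enchev\^etrements, en utilisant la caract\'erisation de $B^n$ d\'ej\`a mentionn\'ee, \`a savoir une fa\c con de relier $2n$ points par $n$ demi-cercles qui ne se croisent pas.

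Le c\oe ur de la preuve consiste \`a analyser \`a quel point est reli\'e le point d'abscisse $1$. Dans un \'el\'ement de $B^n$, le point $1$ est reli\'e \`a un certain point $j$. L'arc joignant $1$ \`a $j$ s\'epare les points d'abscisses $2, \dots, j-1$ des points $j+1, \dots, 2n$ ; la condition de non-croisement force les points int\'erieurs \`a \^etre appari\'es entre eux, ce qui impose que leur nombre $j-2$ soit pair, autrement dit $j = 2k$ pour un certain $1 \le k \le n$. Les $2(k-1)$ points int\'erieurs forment alors un \'el\'ement de $B^{k-1}$ et les $2(n-k)$ points ext\'erieurs un \'el\'ement de $B^{n-k}$.

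Il reste \`a v\'erifier que l'application qui envoie un \'el\'ement de $B^n$ sur le triplet (valeur de $k$, configuration int\'erieure, configuration ext\'erieure) est une bijection sur $\coprod_{k=1}^{n} B^{k-1} \times B^{n-k}$. La surjectivit\'e est claire en recollant un arc de $1$ \`a $2k$ autour d'une configuration int\'erieure et en pla\c cant une configuration ext\'erieure \`a droite ; l'injectivit\'e r\'esulte de ce que la donn\'ee de $k$ et des deux configurations reconstruit sans ambigu\"it\'e l'appariement. En comptant les cardinalit\'es, on obtient
$$|B^n| = \sum_{k=1}^{n} |B^{k-1}|\,|B^{n-k}| = \sum_{i=0}^{n-1} |B^i|\,|B^{n-1-i}|,$$
qui est exactement la r\'ecurrence de Catalan ; comme $|B^0| = 1$ (l'enchev\^etrement vide), une r\'ecurrence imm\'ediate donne $|B^n| = C_n$.

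La principale difficult\'e est de rendre rigoureuse l'\'etape de d\'ecoupage : il faut justifier proprement que la condition d'absence de croisement, formul\'ee \`a isotopie pr\'eservant les points de base pr\`es, se traduit bien par le fait que tout arc est enti\`erement int\'erieur ou enti\`erement ext\'erieur \`a l'arc issu de $1$, et que le passage aux classes d'isotopie ne fait pas perdre d'information dans la bijection. Une alternative serait de construire directement une bijection explicite entre $B^n$ et les parenth\'esages bien form\'es de longueur $2n$, en lisant les points de base de gauche \`a droite et en associant une parenth\`ese ouvrante au point de gauche de chaque arc et une fermante \`a son point de droite ; le caract\`ere non-croisant correspond alors exactement au bon parenth\'esage, ce qui redonne $C_n$ sans passer par la r\'ecurrence.
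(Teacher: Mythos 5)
Your proof is correct, but it takes a genuinely different route from the paper's. The paper proves the proposition by a direct bijection between $B^n$ and balanced strings of $2n$ parentheses: reading the base points from left to right, each arc contributes an opening parenthesis at its left endpoint and a closing one at its right endpoint; the non-crossing condition corresponds exactly to well-formedness, and the fact that such strings are counted by $C_n$ is quoted from the literature. You instead decompose an element of $B^n$ according to the arc attached to the point $1$: that arc must end at an even position $2k$, splitting the matching into an inner element of $B^{k-1}$ and an outer element of $B^{n-k}$, which yields the Segner recurrence $|B^n| = \sum_{k=1}^{n} |B^{k-1}|\,|B^{n-k}|$, and you then invoke the classical fact that this recurrence together with $|B^0|=1$ forces the closed form $\frac{1}{n+1}\binom{2n}{n}$. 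Both arguments outsource one classical step (the paper: Dyck words are Catalan-many; you: the recurrence determines the closed form), so neither is more self-contained than the other. Your decomposition has the side benefit of being exactly the one the paper itself sets up later, in the proof of Lemme \ref{lem:ta2n} (see Figure \ref{fig:reccurrenceBn}), so it fits naturally with the rest of the text; the paper's bijection is shorter and sidesteps the (routine but real) verification, which you rightly flag, that cutting along the first arc is well defined on isotopy classes. Note finally that the alternative you sketch in your last sentence --- the explicit bijection with well-formed parenthesizations --- is precisely the paper's proof.
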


\begin{proof}
Comme expliqué dans \cite[Chapitre 6]{catalan}, $C_n$ calcule le nombre de chaines de caractères de taille $2n$ qu'on peut engendrer avec les caractères $'('$ et $')'$ telles qu'aucun segment initial ne contient plus de $')'$ que de $'('$, c'est-à-dire les chaines bien parenthésées. On peut voir un élément de $B^n$ comme une telle chaine en notant un $'('$ quand un arc part d'un point et un $')'$ quand il y arrive (dans l'exemple précédent, pour $B^2$, on obtient $``()()"$ et $``(())"$) puisqu'on ne peut pas fermer plus d'arcs que ce qu'on en a ouvert. De même, toute telle chaine de caractères peut être attribuée à un élément de $B^n$ en parcourant cette chaine et en reliant par un demi-cercle inférieur tout $')'$ rencontré au précédent $'('$ qui n'a pas encore été relié à un $')'$. 
\end{proof}

\begin{definition}
On définit $W$ l'application d'involution sur un enchevêtrement plat qui retourne l'enchevêtrement en envoyant $(x,y)\in \R\times[0,1] \mapsto (x,1-y) \in  \R\times[0,1]$. On a alors
\begin{align*}
W(\widehat B^m_n) &= \widehat B^n_m, & &\text{et} & W(B^m_n) &= B^n_m.
\end{align*}
\end{definition}

On observe que $W(B^n)B^n \subset \widehat{B_0^0}$ et donne donc une collection de cercles. Tous ces cercles intersectent l'axe $(-, 1/2)$, chacun en au moins deux points de $\{(1,1/2), \dots, (2n,1/2)\}$, et donc on peut induire un ordre total sur ces cercles en considérant les abscisses minimales  de ces points (un cercle est plus petit qu'un autre si son point d'intersection d'abscisse minimale est plus petit que celui de l'autre), comme le montre par exemple la Figure \ref{fig:ordrebn}.

\begin{figure}[h]
    \center
    \includegraphics[width=10cm]{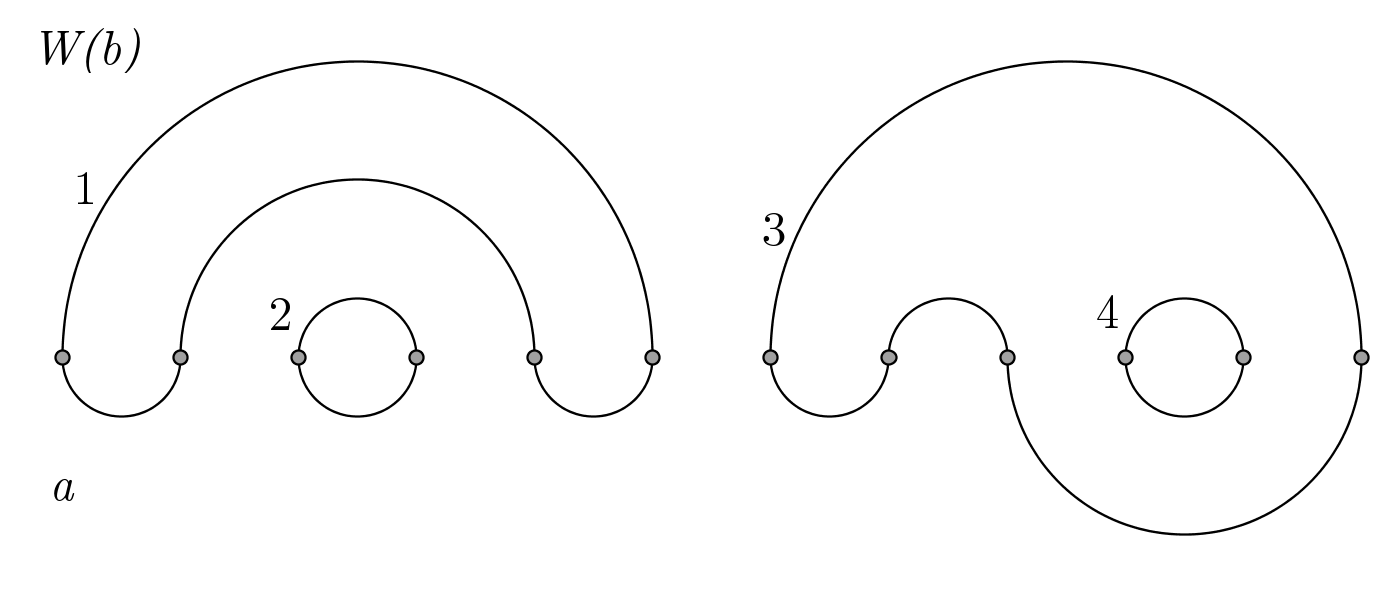}
    \caption{\label{fig:ordrebn}Ordre des composantes de $W(b)a$ induit par l'ordre des points qui intersectent l'axe $(-,1/2)$.}
\end{figure}

\section{Catégorie des cobordismes}

Cette section vise à définir la catégorie des cobordismes de dimension $2$. Toutes les variétés considérées dans cette section sont lisses.

\begin{definition}
Un \emph{cobordisme} $C$ entre deux variétés $V_1$ et $V_2$ de même dimension est une variété à bord notée $C : V_1 \rightarrow V_2$ telle que son bord soit difféomorphe à l'union disjointe de $V_1$ et $V_2$
\begin{align*}
\partial C &\simeq V_1 \sqcup V_2.
\end{align*} 
\end{definition}

On peut définir une notion de composition pour ces cobordismes.

\begin{definition}On définit le recollement de deux cobordismes $C,C'$ ayant des bords $V_1, V_2$ et $V_1', V_2'$ tels que $V_2 \simeq V_1'$ par le cobordisme obtenu en prenant
$$\frac{C \sqcup C'}{V_2 \sim V_1'}.$$
\end{definition}

On a une notion d'équivalence de cobordismes donnée par des difféomorphismes qui préservent les bords.

\begin{definition} On dit que deux cobordismes $C_1$ et $C_2$ ayant comme bords les même variétés $V_1$ et $V_2$ sont équivalents s'il existe un difféomorphisme $\phi : C_1 \rightarrow C_2$ tel que le diagramme 
$$ \xymatrix{
&C_1 \ar[dd]^\phi& \\
V_1 \ar[ru] \ar[rd] & & V_2 \ar[lu] \ar[ld] \\
&C_2 &
}$$
commute et où les morphismes de gauche et de droite sont les restrictions des difféomorphismes $\partial C_1 \simeq  V_1 \sqcup V_2$ et  $\partial C_2 \simeq  V_1 \sqcup V_2$. 
\end{definition}

On est maintenant en mesure de définir la catégorie des cobordismes.

\begin{definition} On note $Cob$ la catégorie monoïdale dont les objets sont des variétés de dimension un, compactes et sans-bord (c'est-à-dire des unions de cercles disjointes et finies) et les flèches sont des cobordismes orientables compacts entre ces variétés, à équivalence près, munis de la composition donnée par recollement de cobordismes et de la multiplication monoïdale donnée par l'union disjointe. \end{definition}

Par ailleurs, la catégorie $Cob$ possède une structure algébrique intéressante puisque par la théorie de Morse et le théorème de classification des surfaces on a le Théorème \ref{thm:cobelem}, dont on peut trouver une preuve dans \cite[Section 1.3]{Cobordismes} par exemple. Ce théorème permet de représenter les cobordismes sous forme de diagrammes qu'on lit de bas en haut, qu'on compose en les plaçant les uns au-dessus des autres et qu'on multiplie en les juxtaposant. 

\begin{theoreme}\label{thm:cobelem}
La catégorie des cobordismes $Cob$ est générée par les multiplications et compositions de l'identité, donnée par un cylindre, et des 5 cobordismes élémentaires : la naissance de cercle, la fusion, la scission, la mort de cercle et la permutation
\begin{align*}
\smalldiag{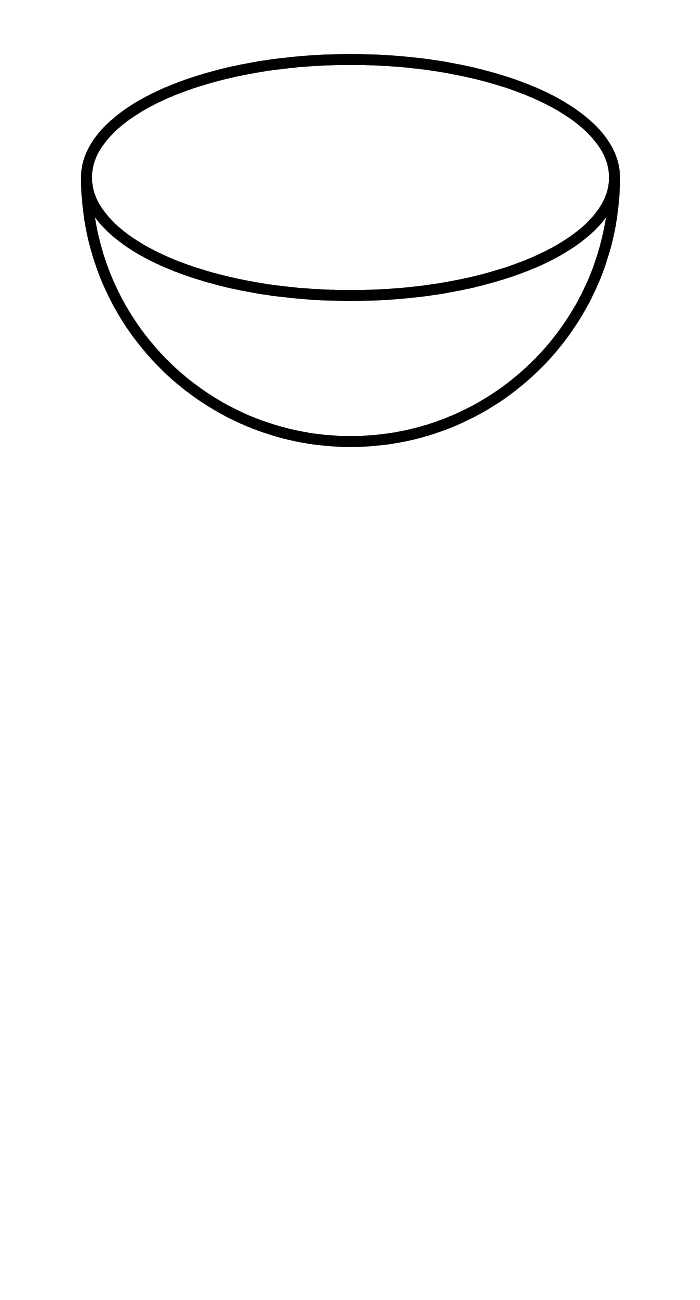} && \smalldiag{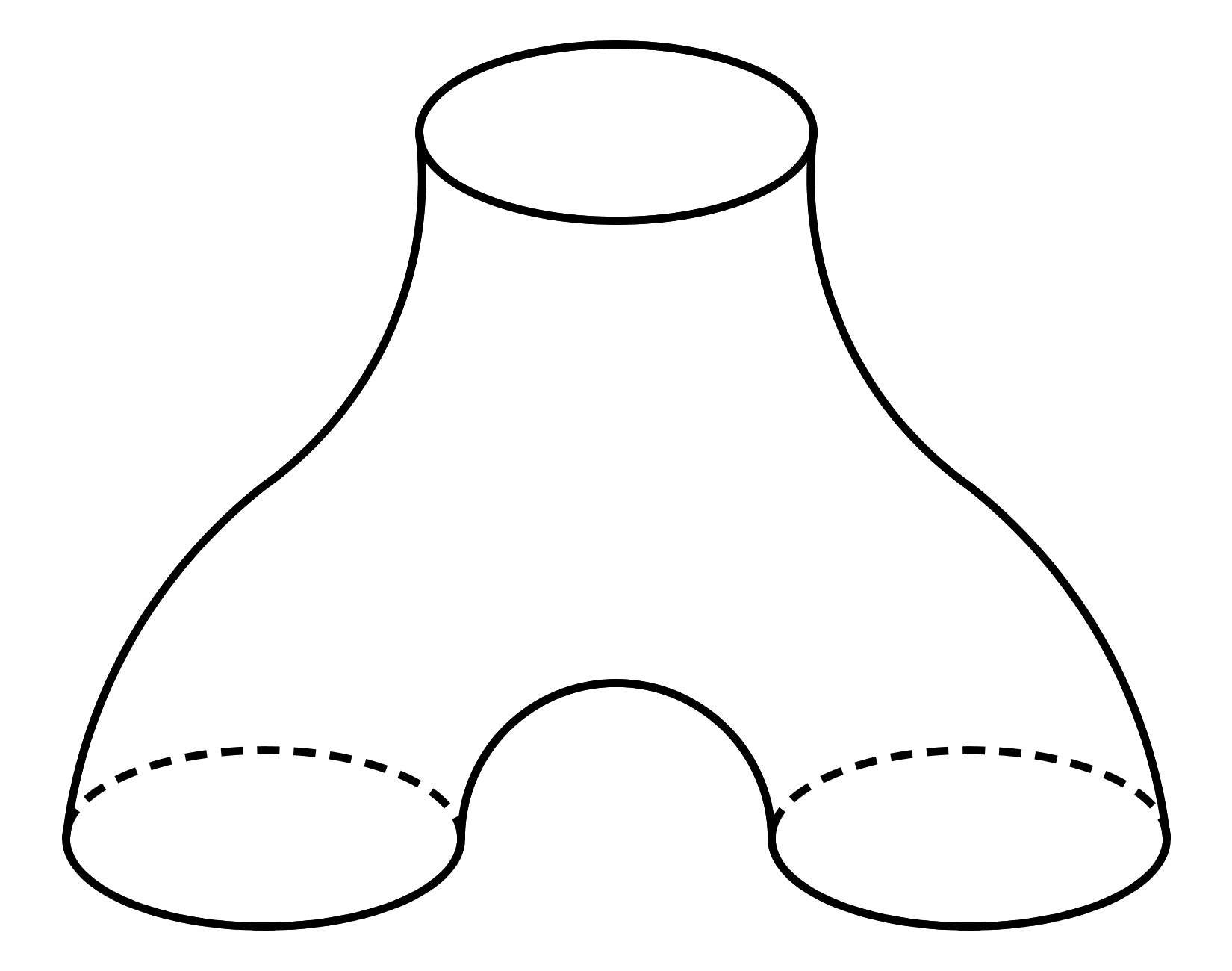} & & \smalldiag{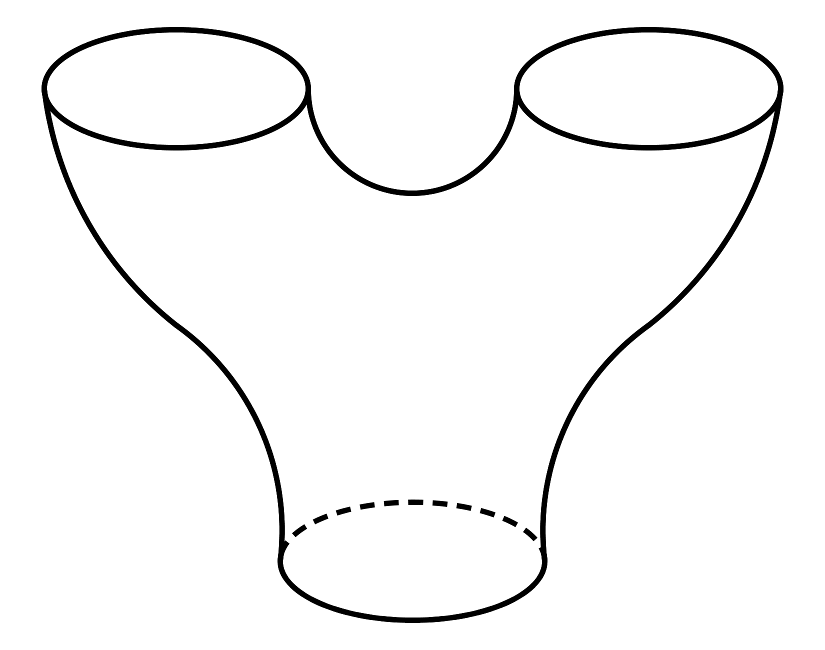} &&
\smalldiag{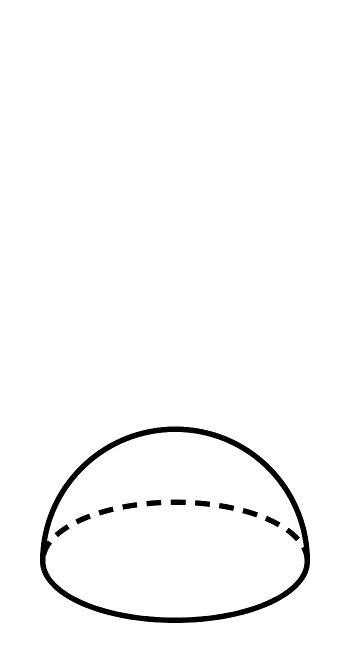} && \smalldiag{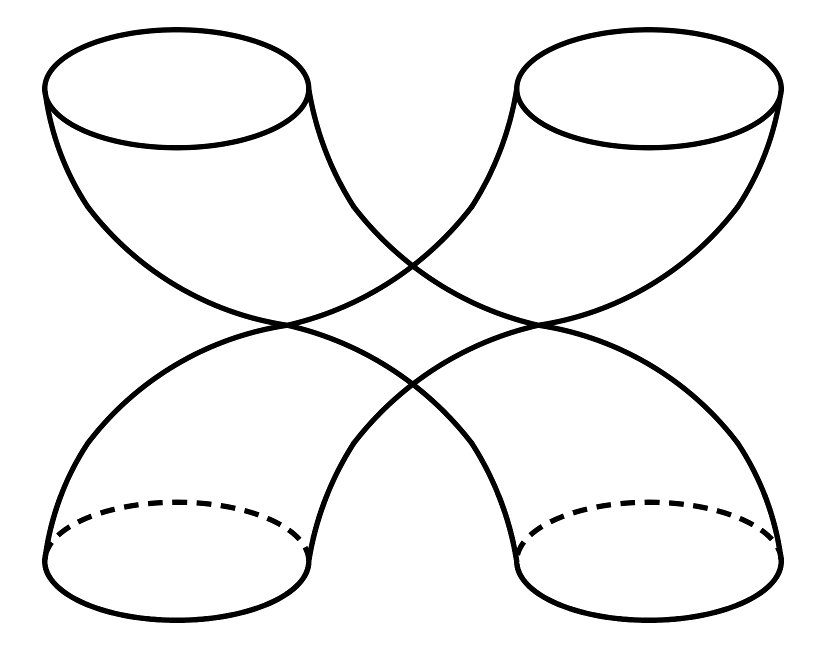}  
\end{align*}
sous les relations suivantes :
\begin{enumerate}
\item Commutativité et co-commutativité
\begin{align*}\middiag{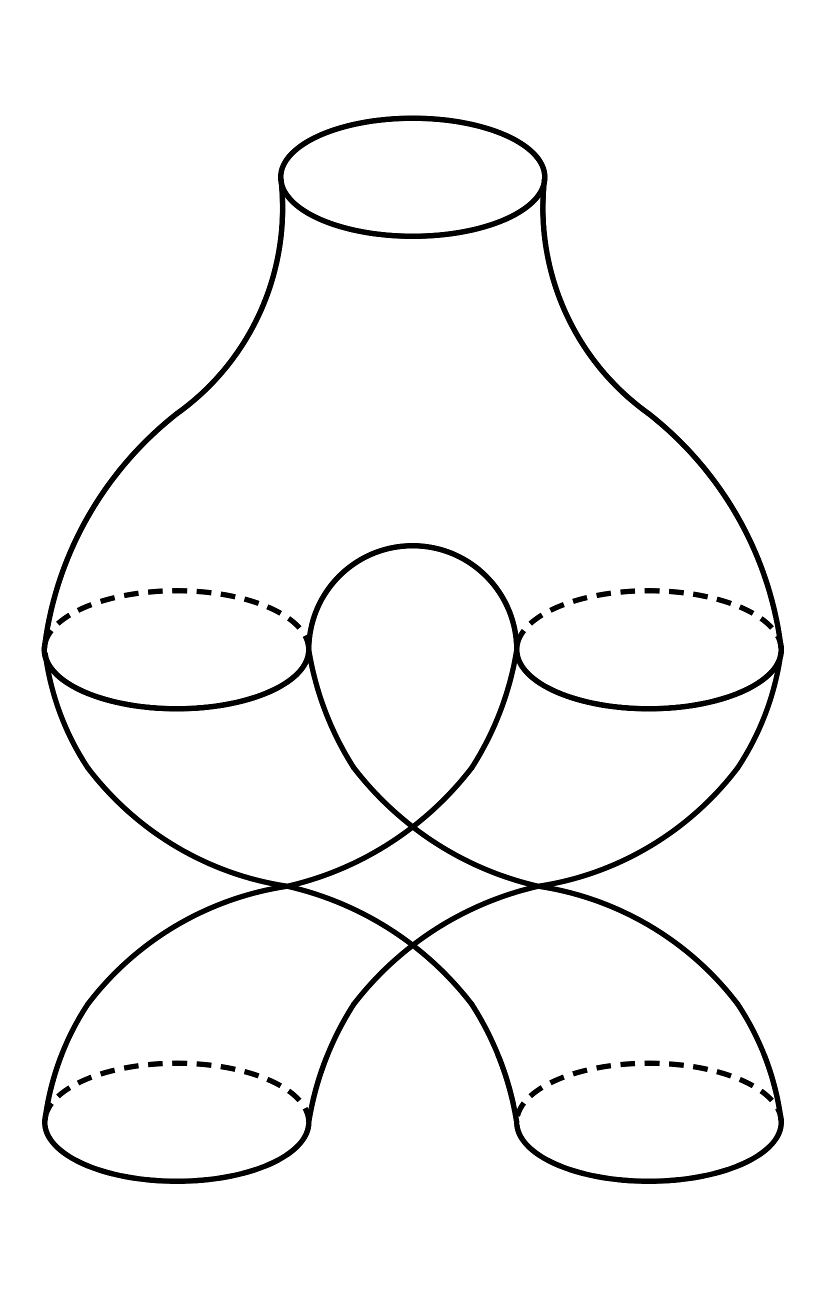} &= \smalldiag{Images_arxiv/Cob_merge.png}
&&,& \smalldiag{Images_arxiv/Cob_split.png} &= \middiag{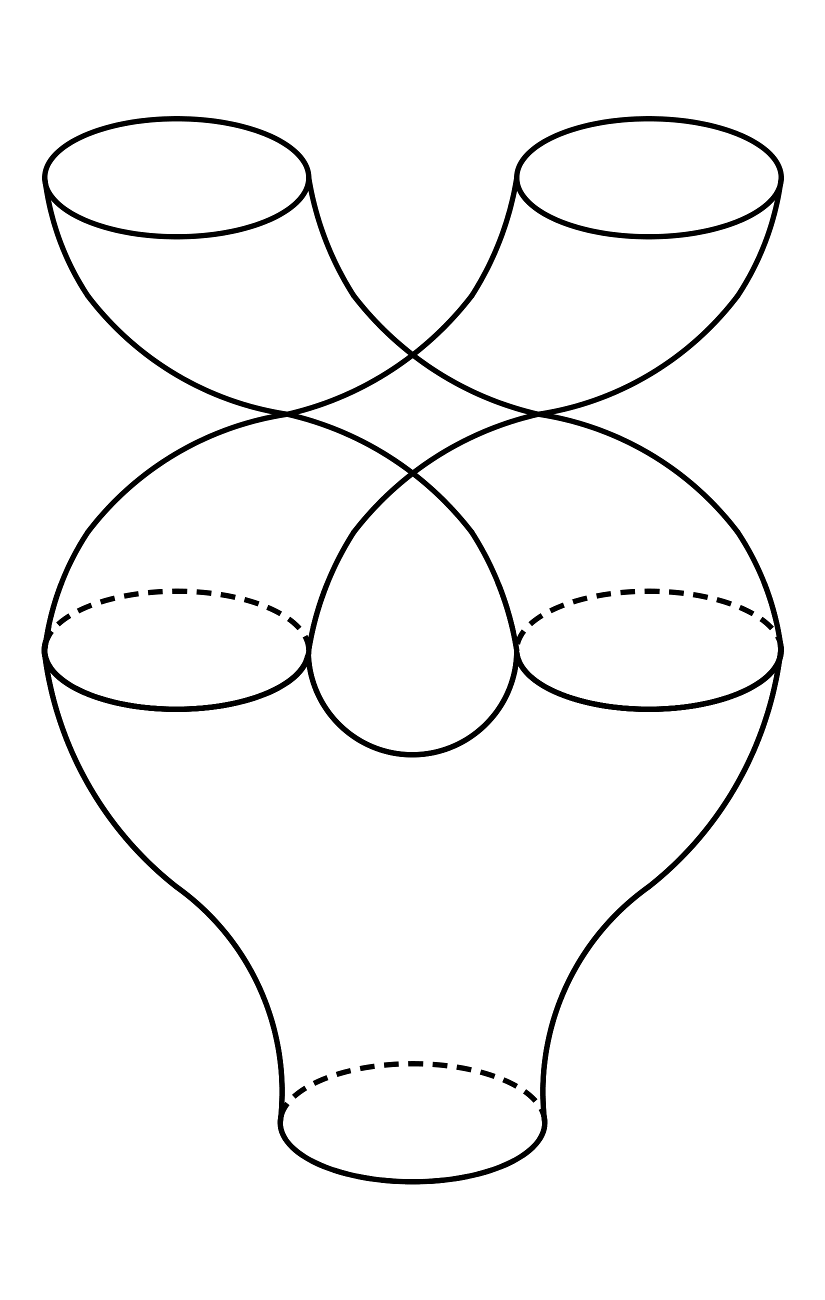}\end{align*}
\item Associativité et coassociativité
\begin{align*}\middiag{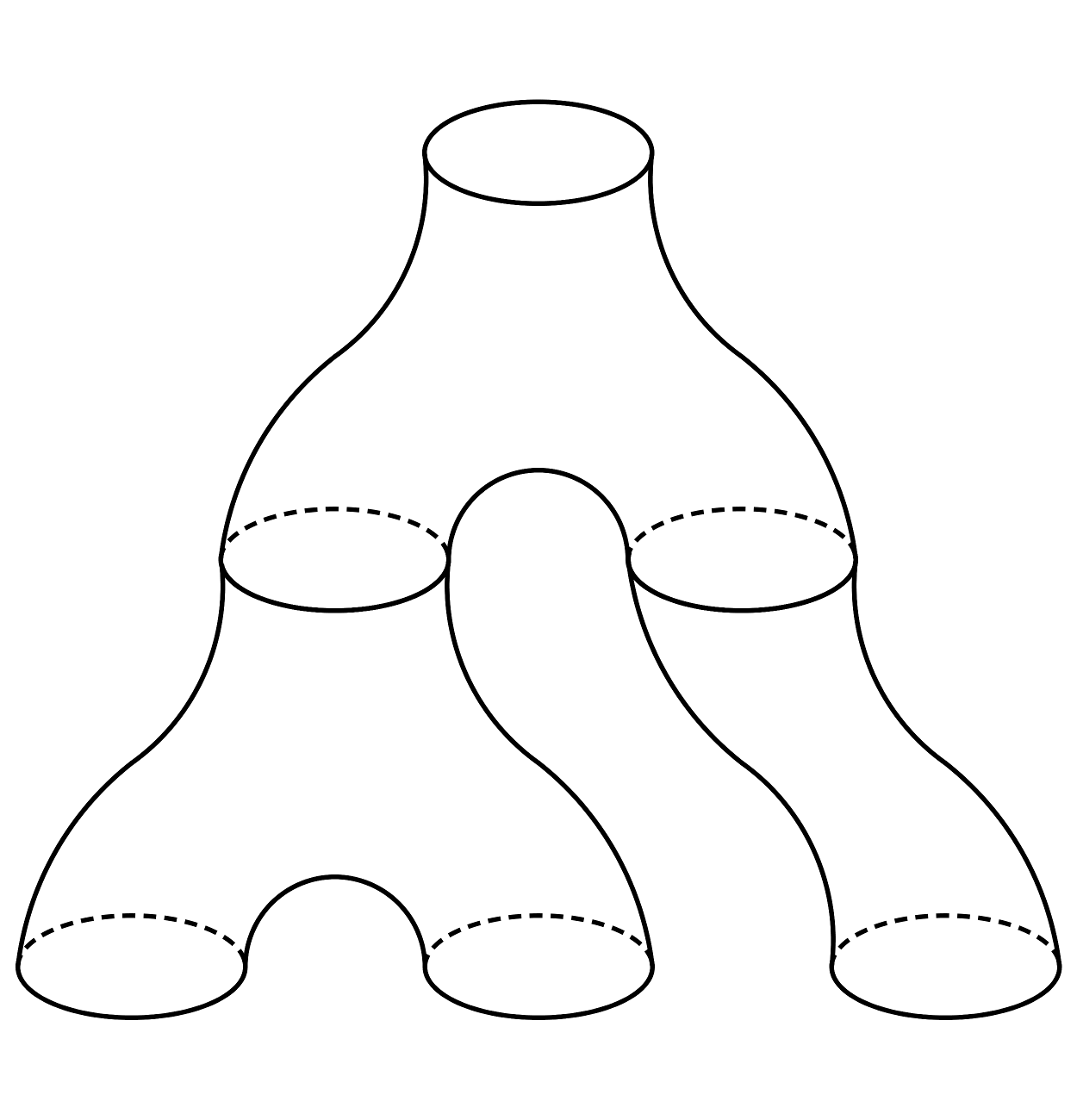} &= \middiag{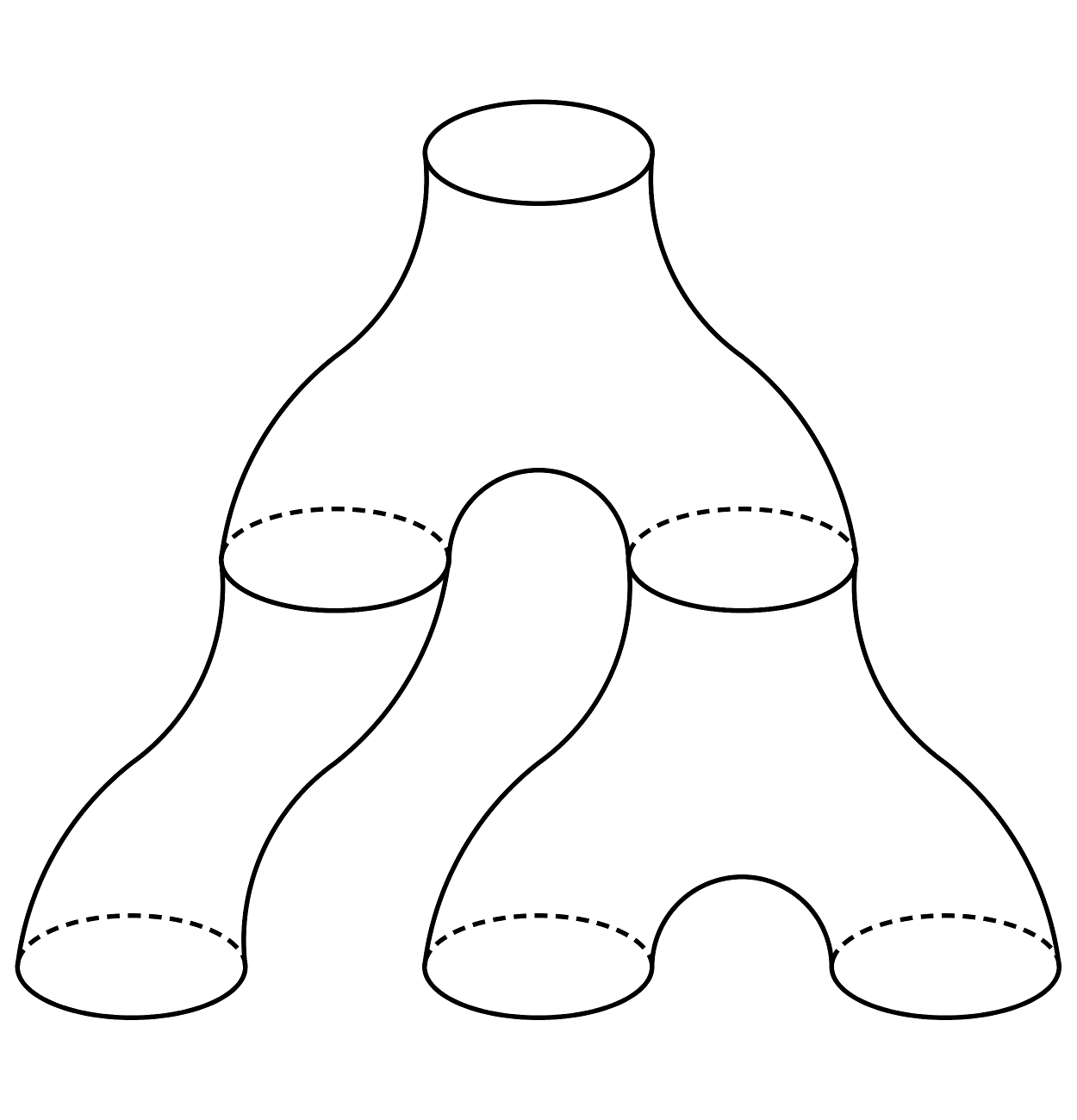}
&&,& \middiag{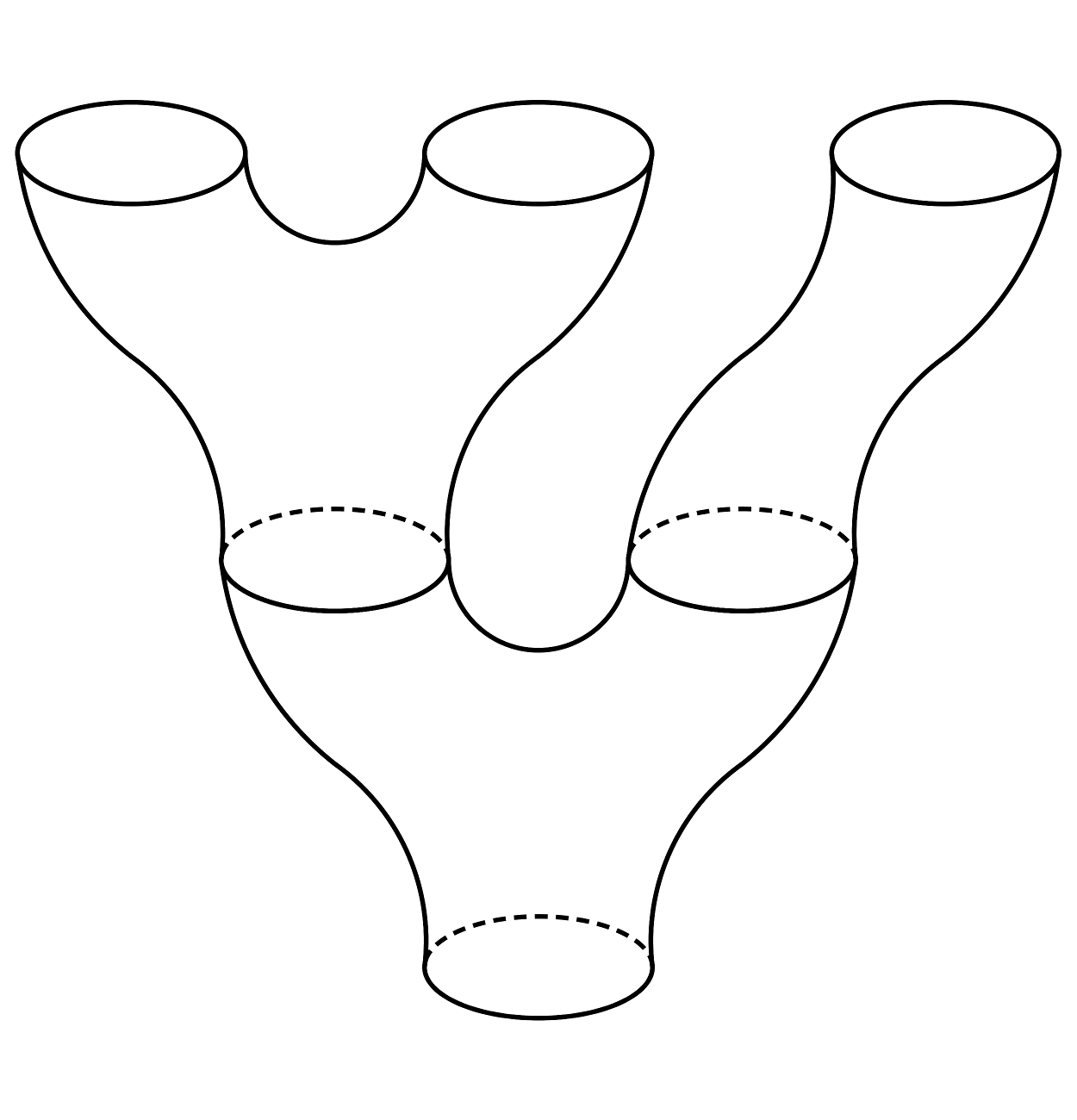} &= \middiag{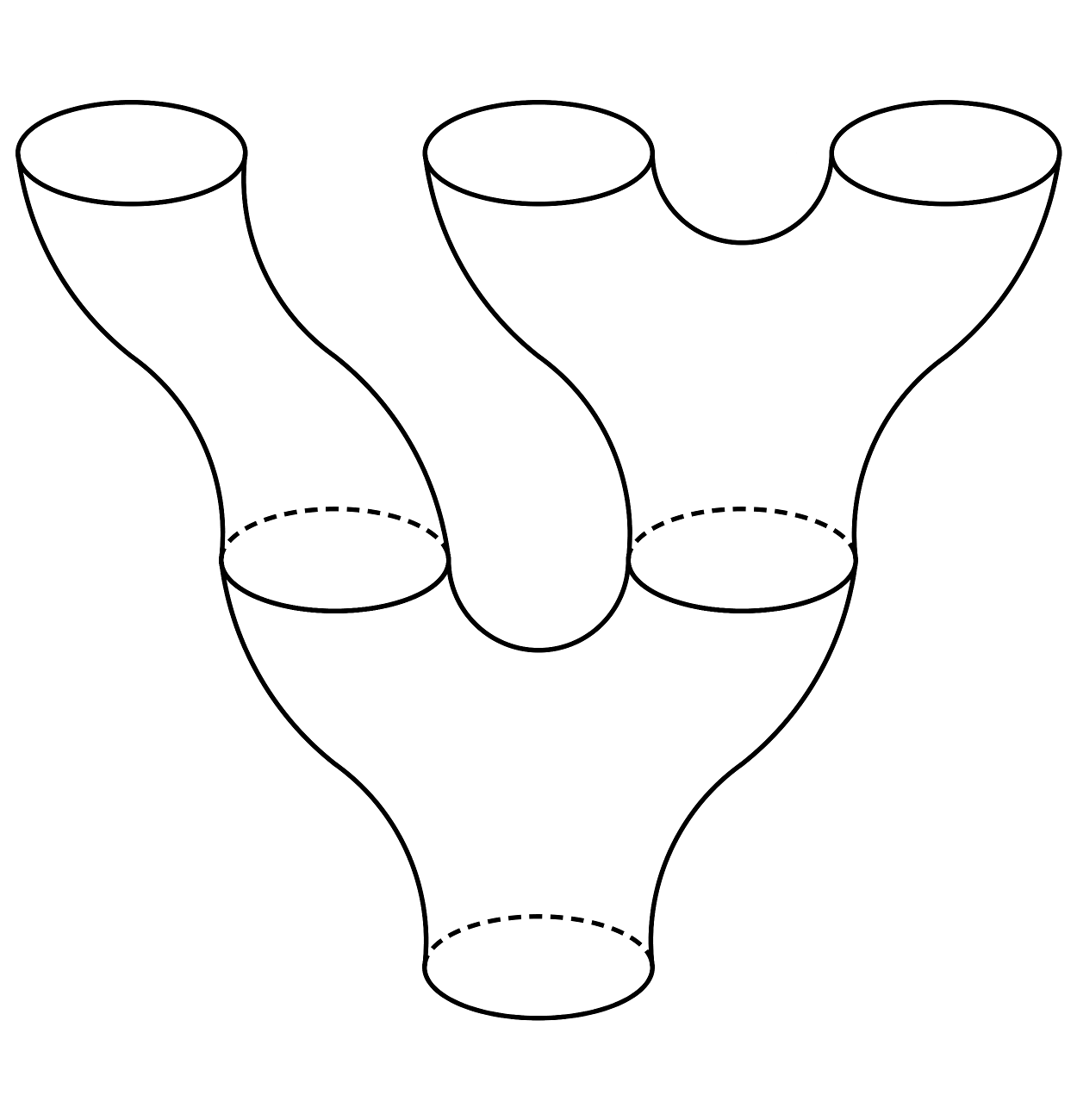}\end{align*}
\item Relations de Frobenius
$$ \middiag{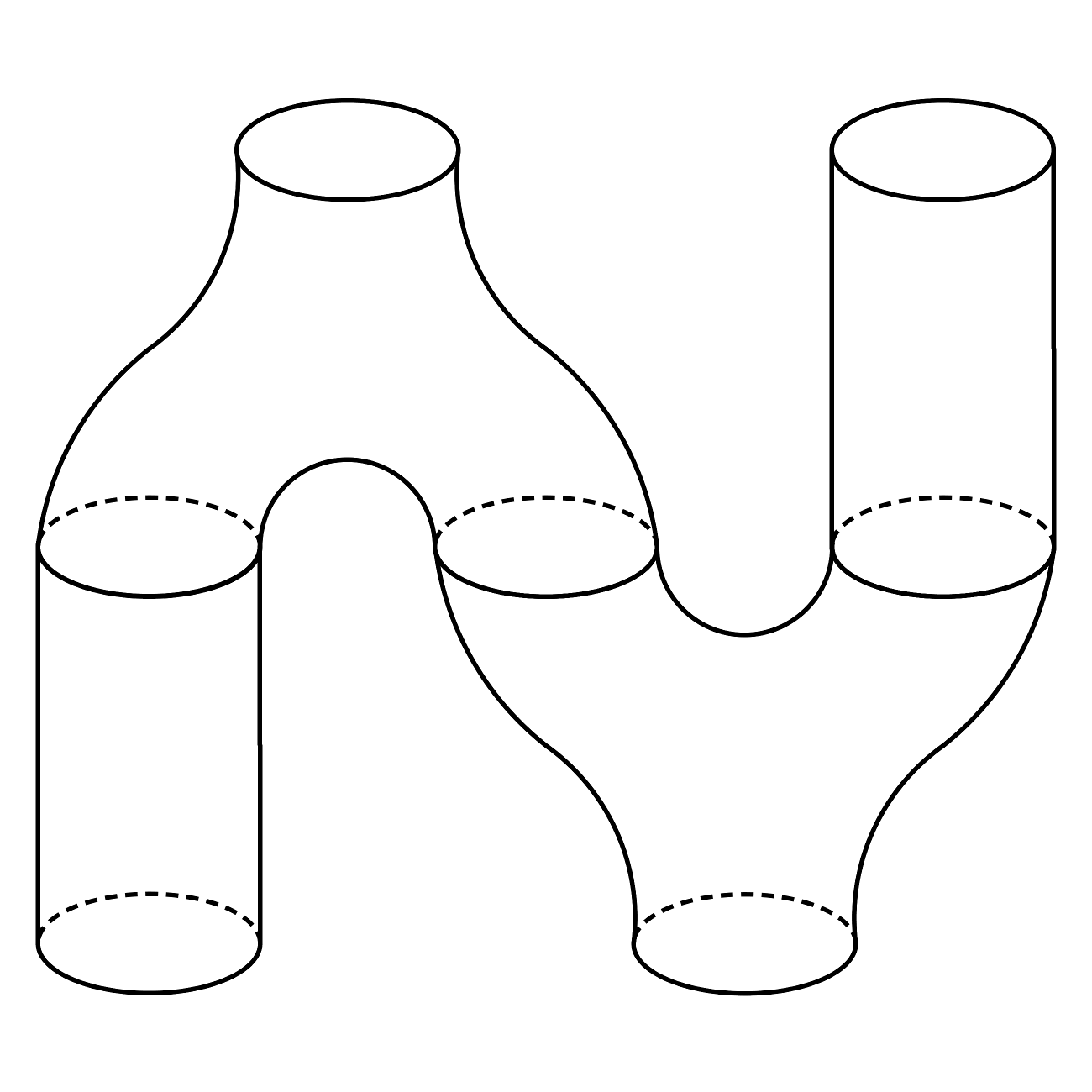} = \middiag{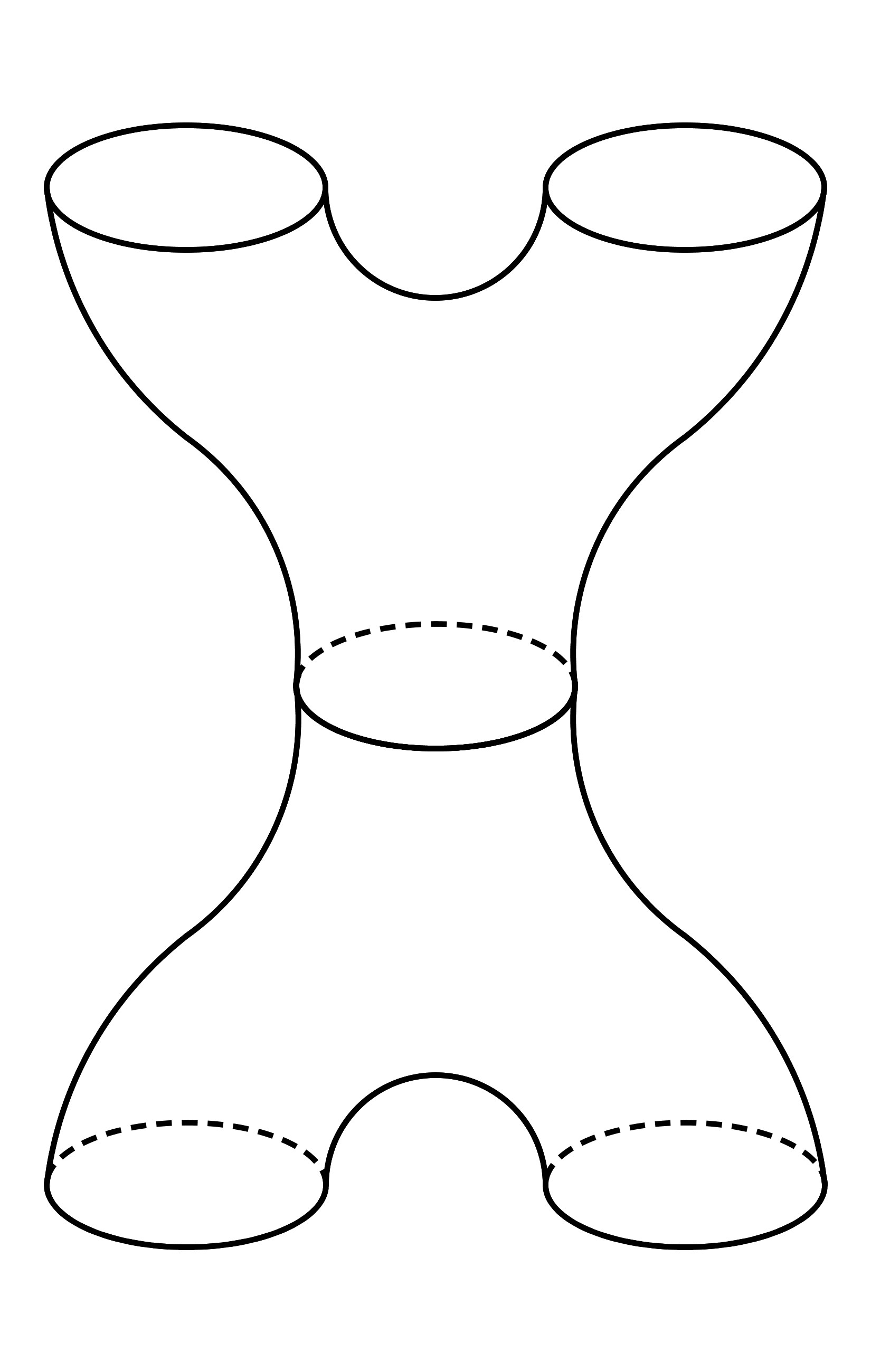} = \middiag{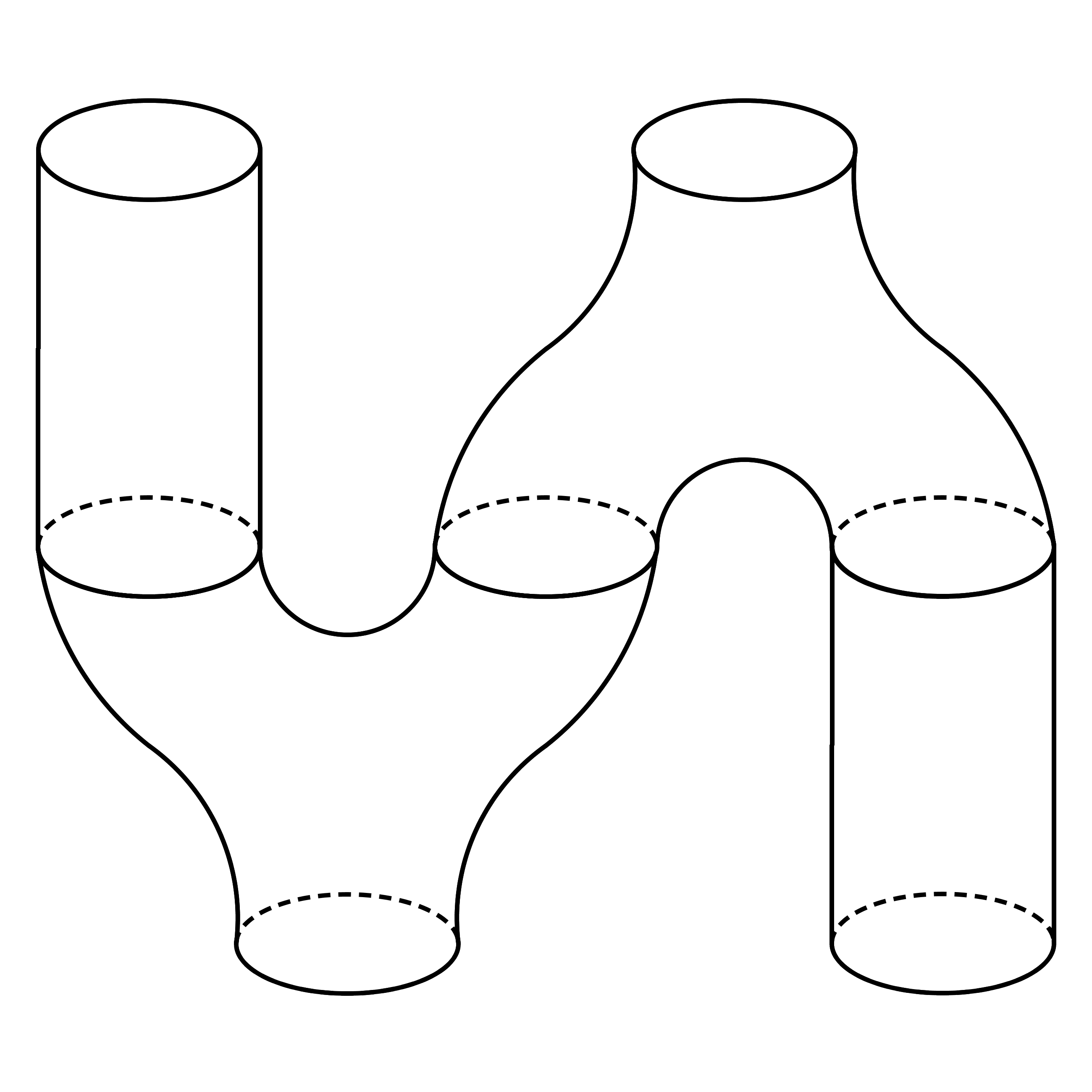}  $$
\item Unité et counité
\begin{align*}\middiag{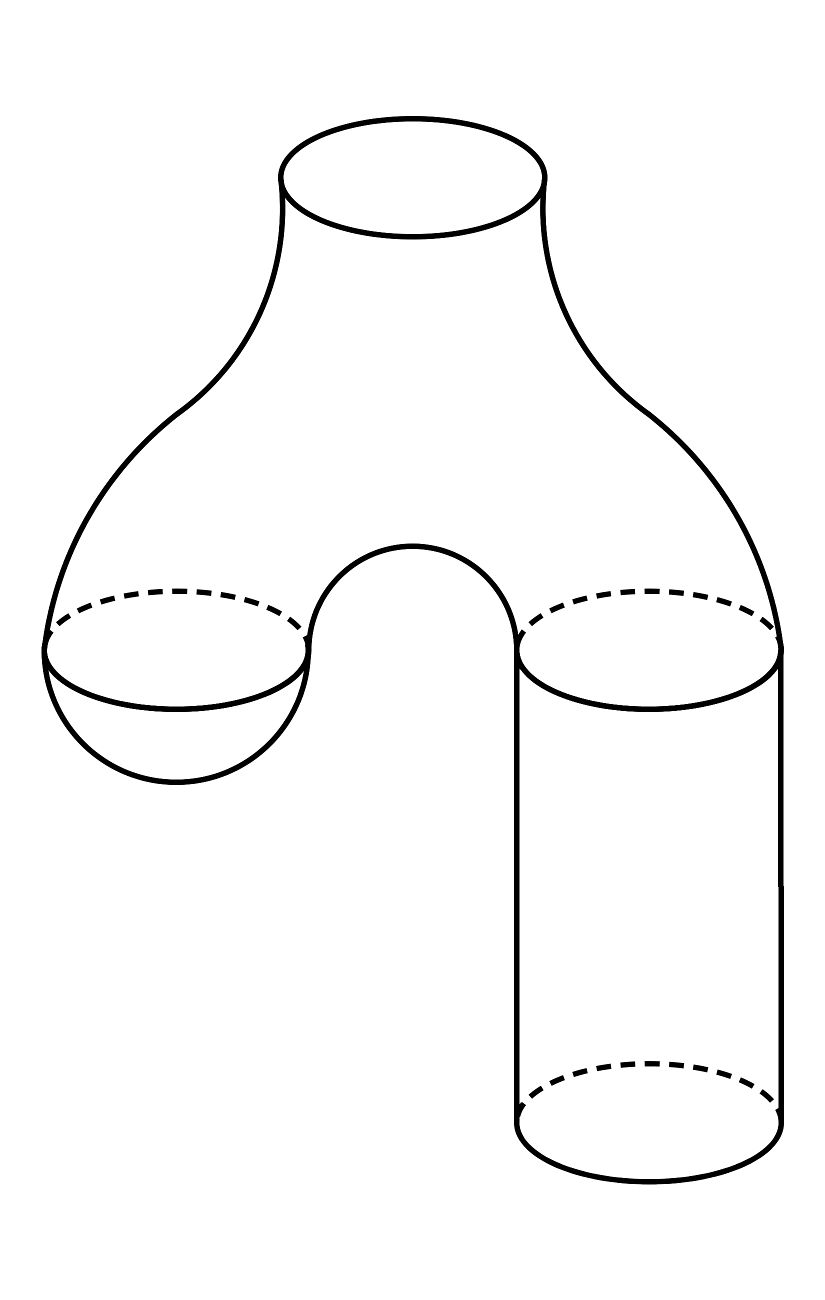} &= \smalldiag{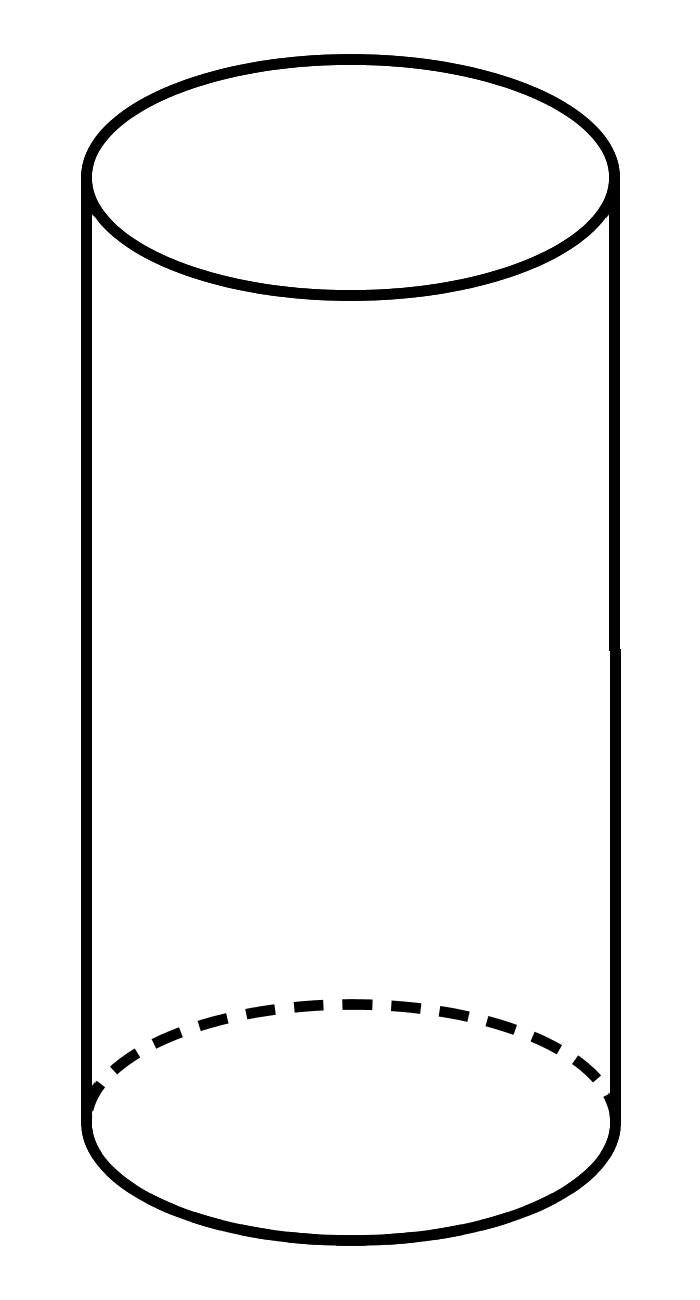}
&&,& \smalldiag{Images_arxiv/Cob_identity.png} &= \middiag{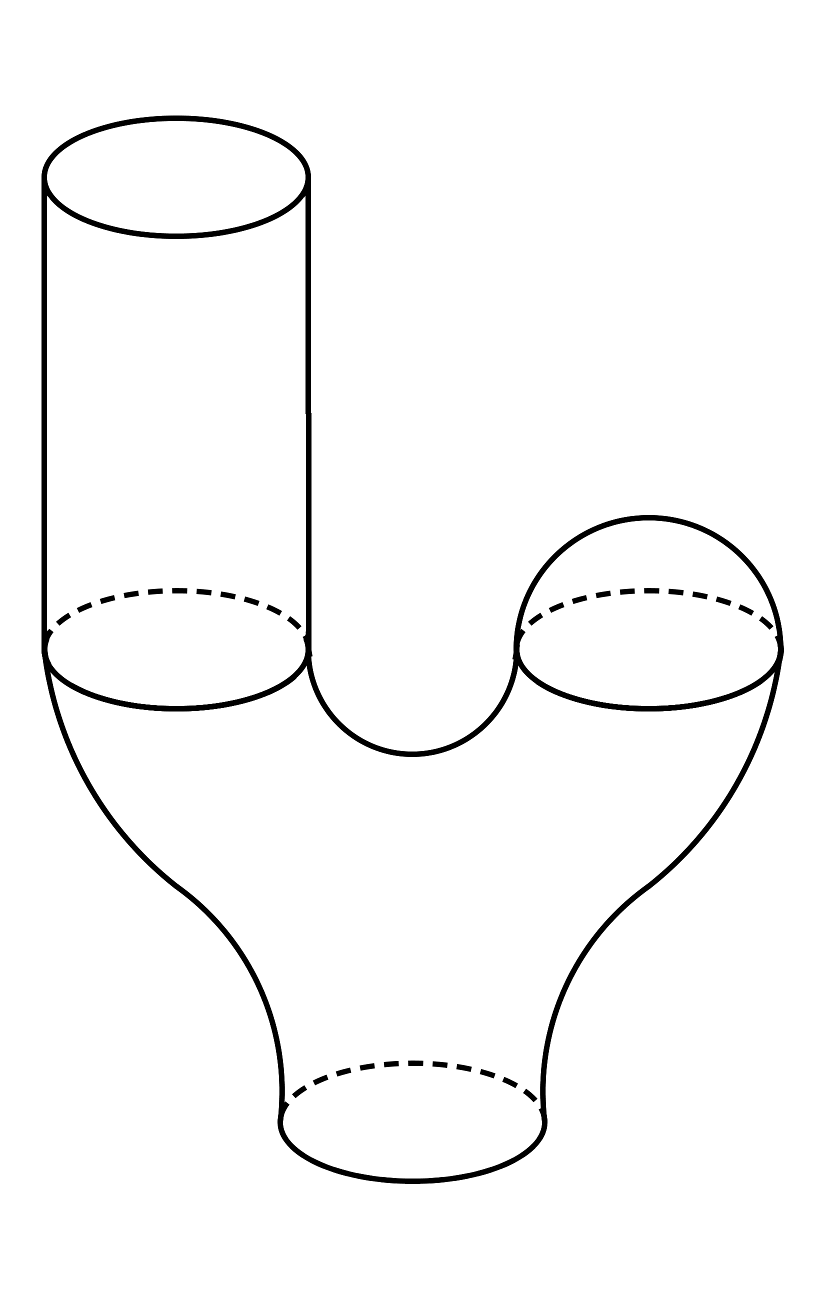}\end{align*}
\item Relations de permutations
\begin{align*}
\middiag{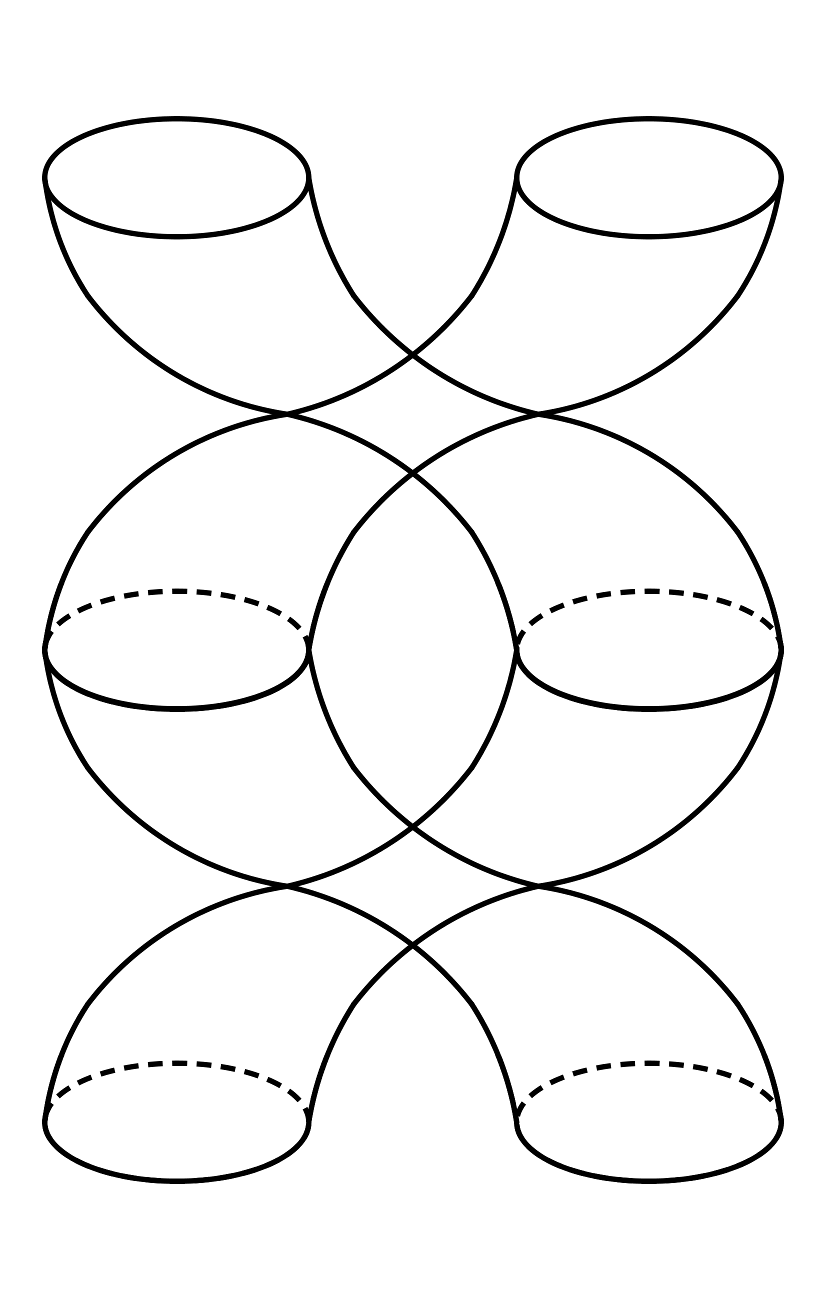} &= \smalldiag{Images_arxiv/Cob_identity.png}\smalldiag{Images_arxiv/Cob_identity.png}
&&,& \bigdiag{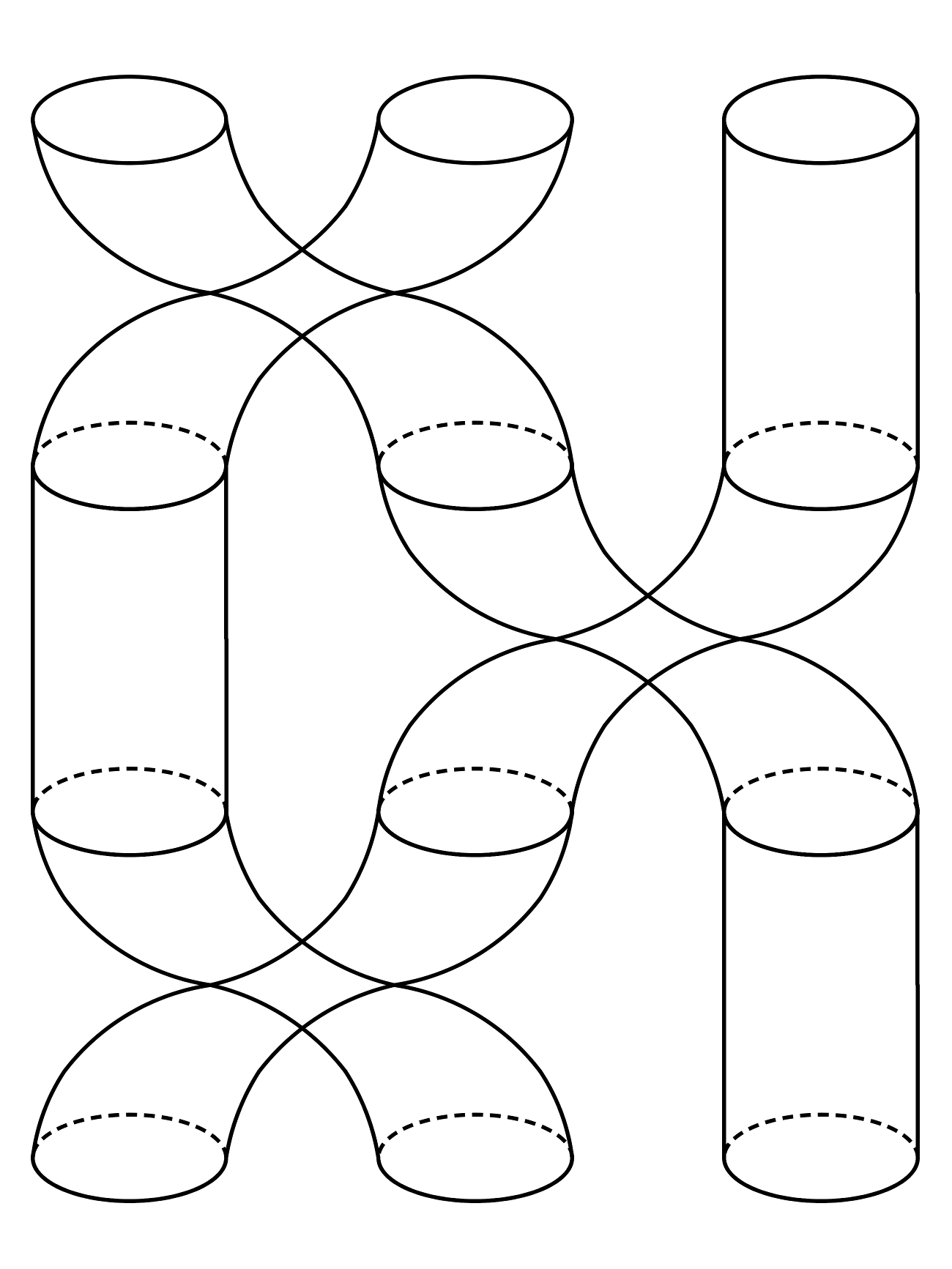}  &= \bigdiag{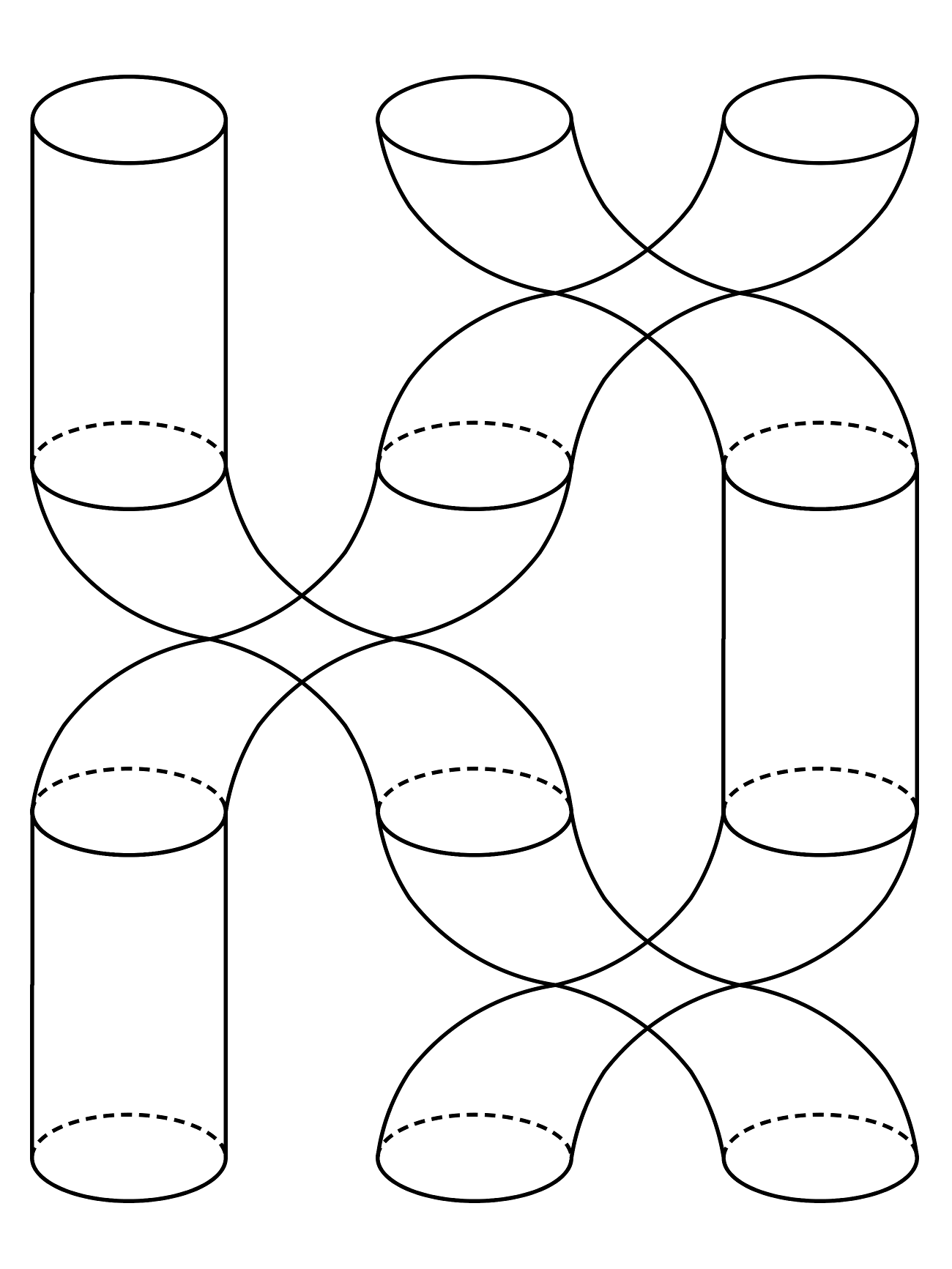} 
\end{align*}
\item Permutations de l'unité et de la counité
\begin{align*}\middiag{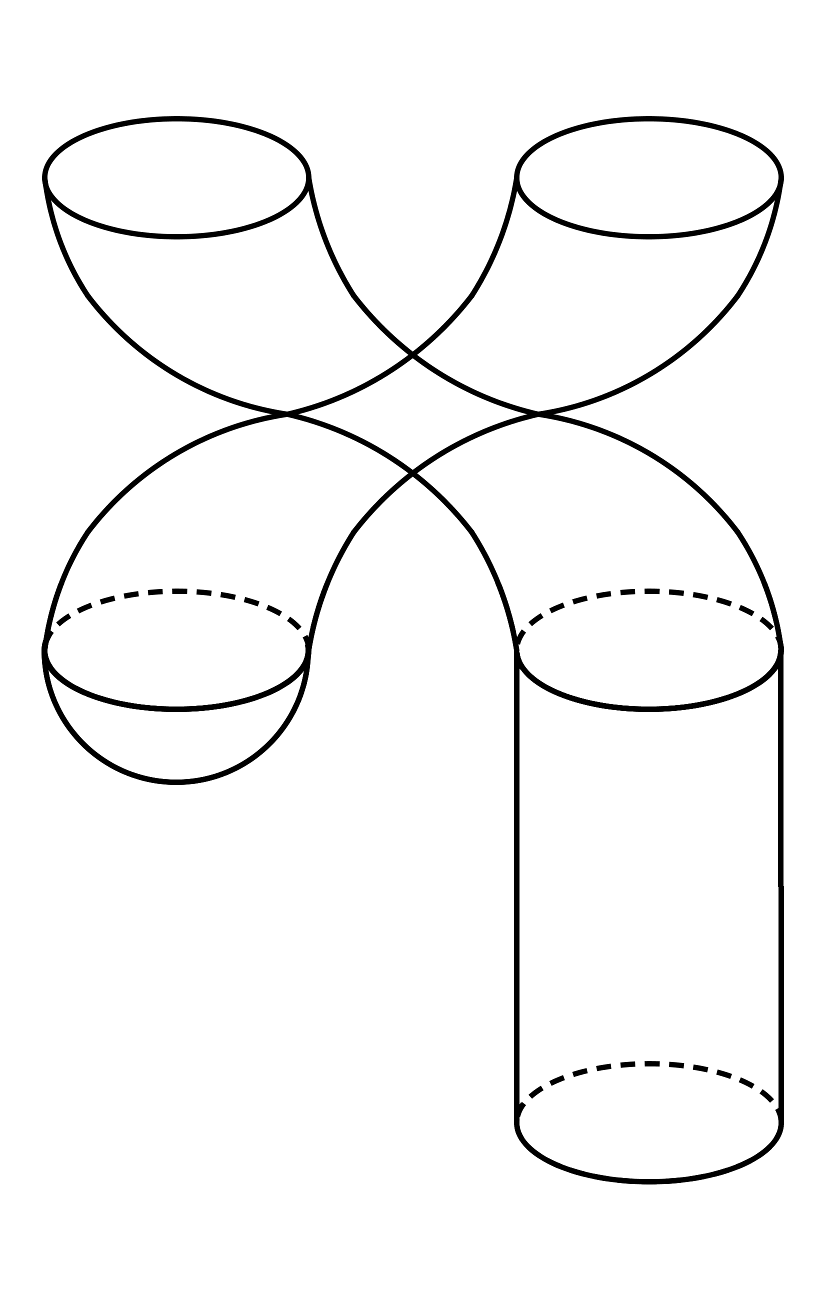} &= \smalldiag{Images_arxiv/Cob_identity.png} \smalldiag{Images_arxiv/Cob_birth.png}
&&,& \smalldiag{Images_arxiv/Cob_death.png} \smalldiag{Images_arxiv/Cob_identity.png} &= \middiag{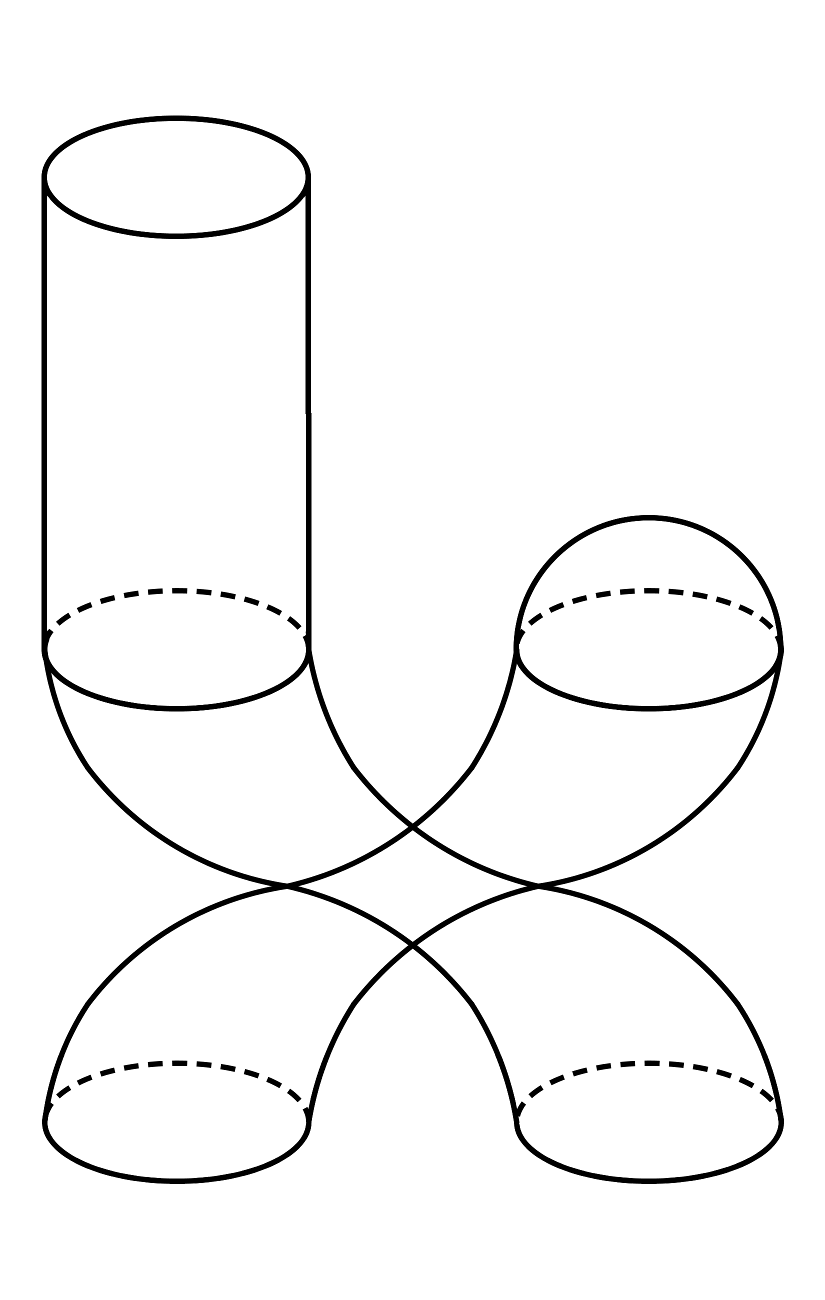}\end{align*}
\item Permutations de la fusion et de la scission
\begin{align*}
\middiag{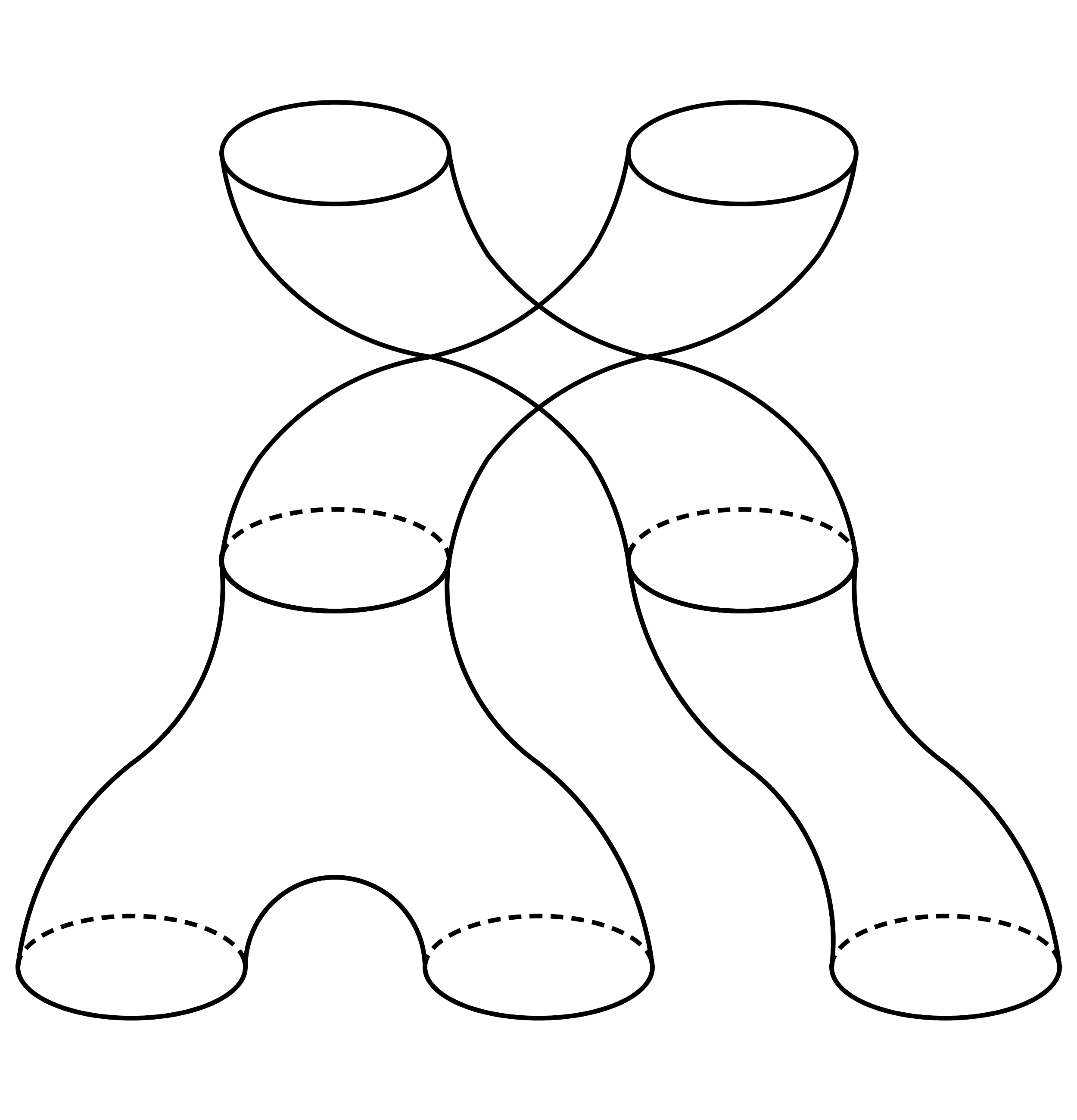} &= \bigdiag{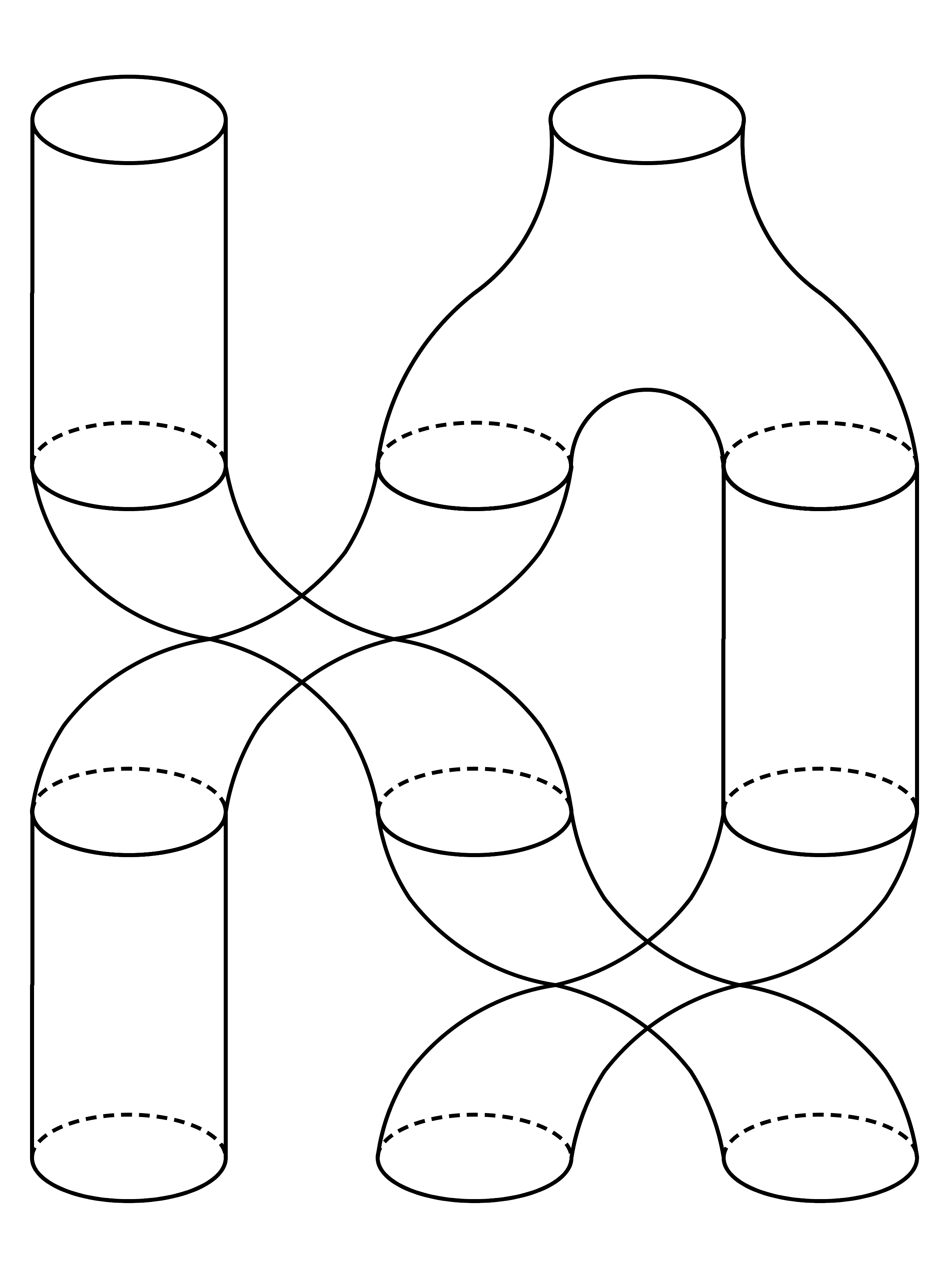}
&&,& \bigdiag{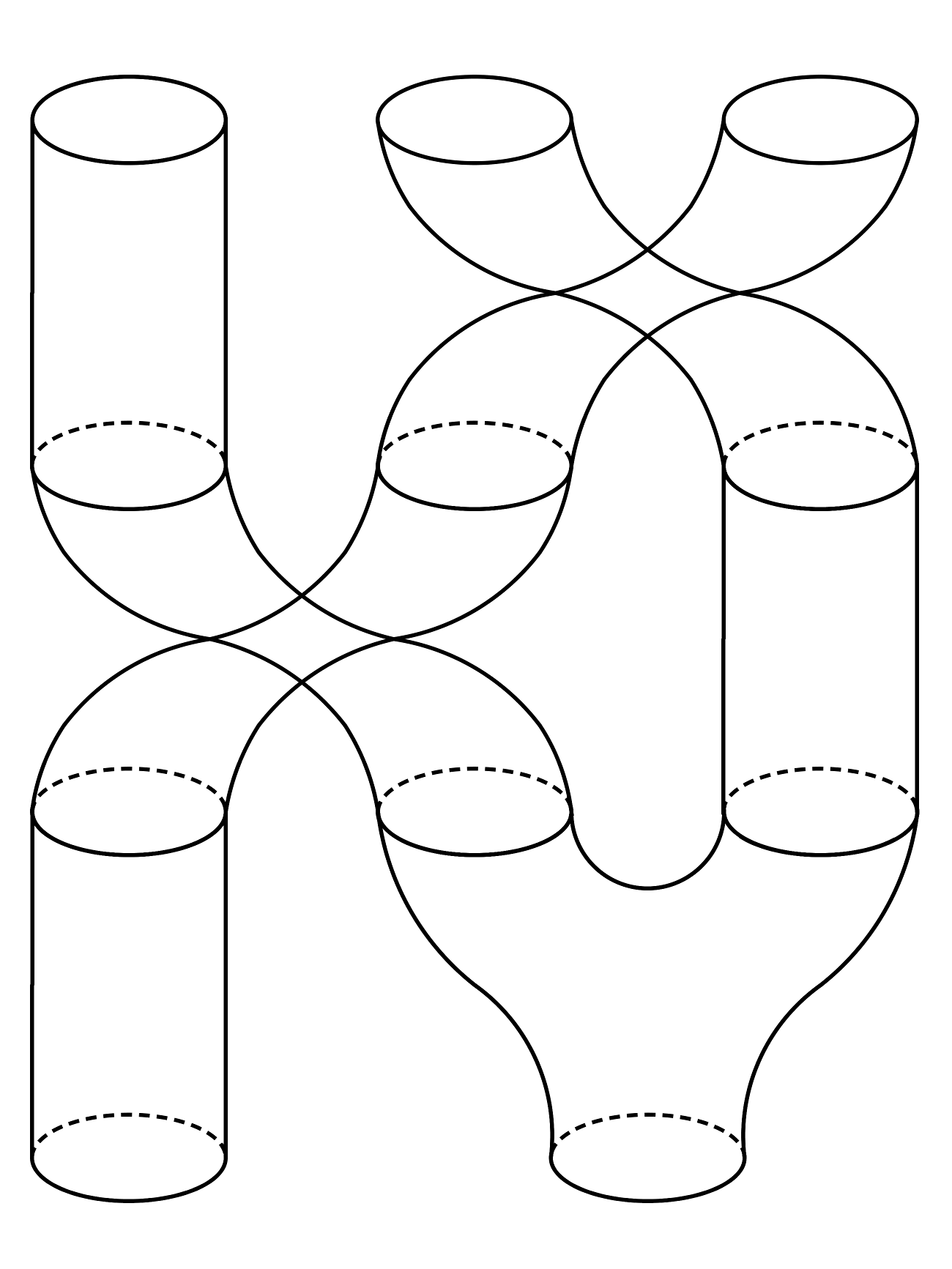}  &= \middiag{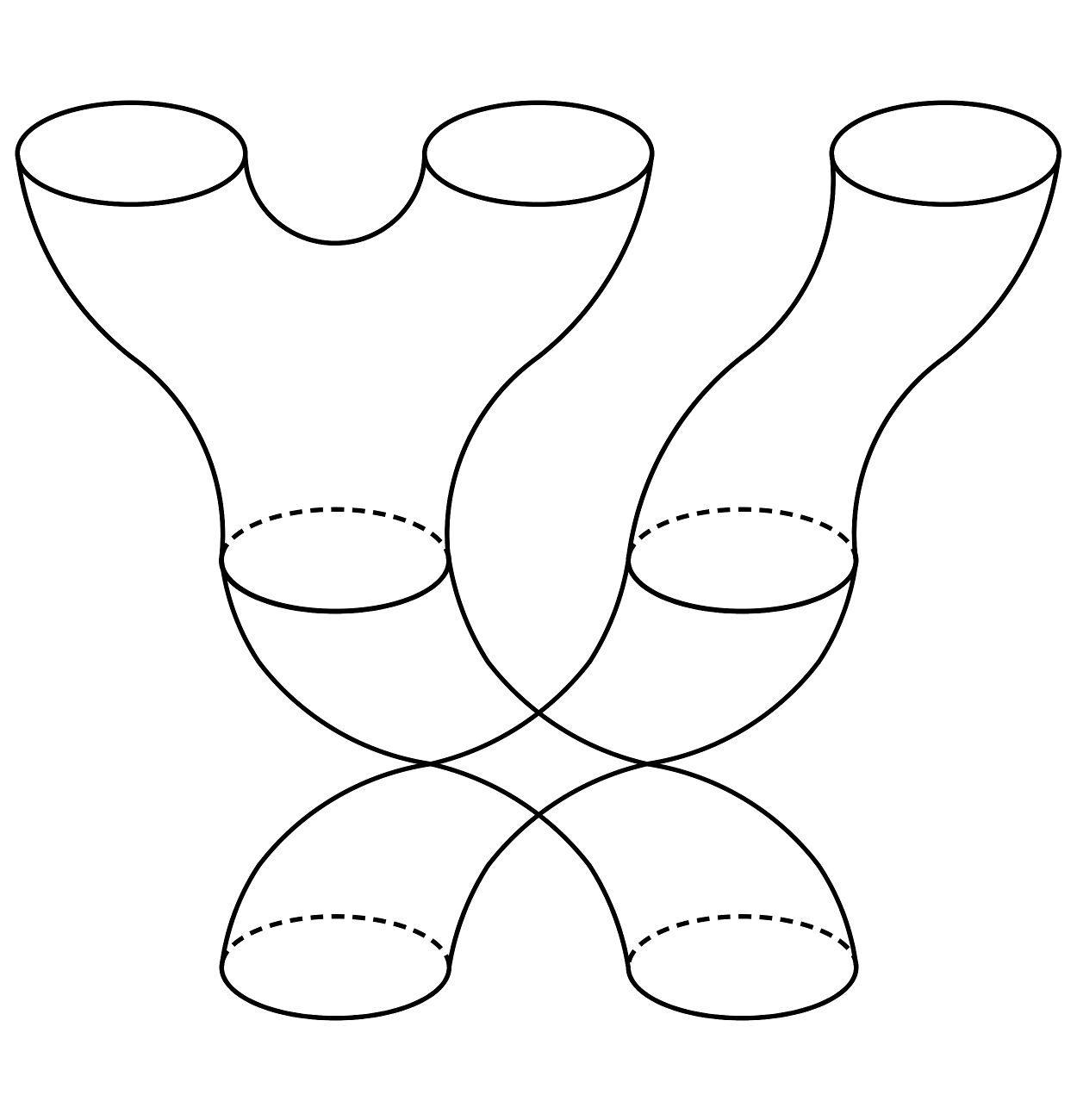} 
\end{align*}
\end{enumerate}
\end{theoreme}


\section{Le foncteur $F$}

M. Khovanov définit un foncteur $F : Cob \rightarrow \Z-Mod$ dans \cite{Khovanov} (qui se trouve aussi dans  \cite{KhovanovHomology} pour construire l'homologie de Khovanov) de la catégorie des cobordismes vers la catégorie des modules gradués sur $\Z$. Ce foncteur associe à une collection de $n$ cercles le module donné par le produit tensoriel sur $\Z$ de $n$ fois un certain module $A$ défini comme suit :
 \begin{align*}
A &:= \frac{ \Z[t]}{t^2}\{-1\}, & F\left(\underset{n}{\underbrace{\minidiag{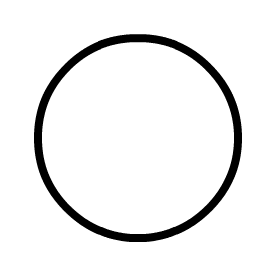} \dots \minidiag{Images_arxiv/Cercle.png}}}\right) &:= A^{\otimes n} = \underset{n}{\underbrace{A \otimes_\Z \dots \otimes_\Z A}},
\end{align*}
où $\deg(t) = 2$ et $\{-1\}$ est le décalage du degré par $-1$ comme expliqué dans les Annexes à la Section \ref{sec:gradue}. Autrement dit, $A$ est le groupe abélien libre gradué engendré par $1$ et $t$ avec $\deg(1) = -1$ et $\deg(t) = 1$.

Par le Théorème \ref{thm:cobelem}, il suffit de définir ce foncteur sur chacun des cobordismes élémentaires pour qu'il soit défini sur tous. La permutation et l'identité sont bien évidemment envoyés respectivement sur la permutation des $A$ dans le produit tensoriel et sur l'identité. \`A la naissance de cercle, on associe l'application unité 
$$F\left(\pitidiag{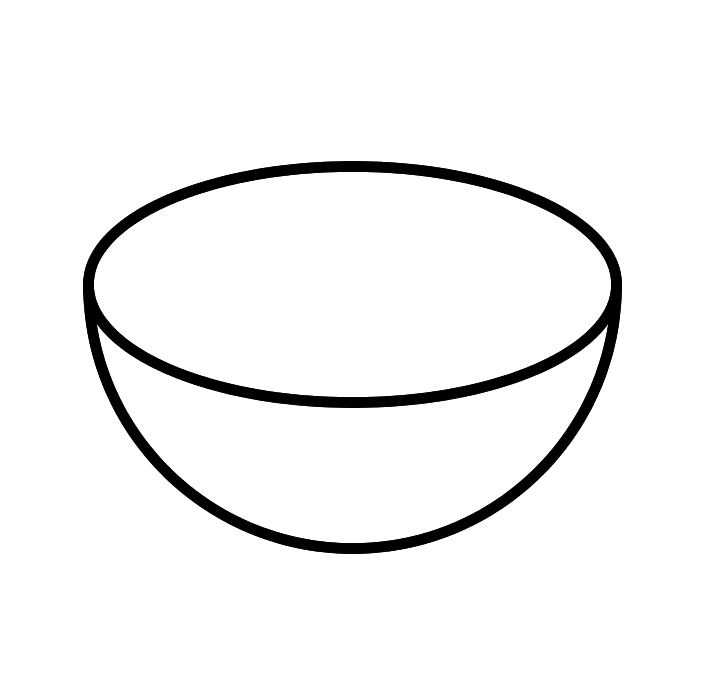}\right) :=  \imath : \Z \rightarrow A : 1 \mapsto 1.$$
\`A la fusion on associe la multiplication dans $A$ en remplaçant le produit tensoriel par le produit de polynômes
$$F\left(\diagg{Images_arxiv/Cob_merge.png}\right) := m : A \otimes A \rightarrow A : \begin{cases}
1 \otimes 1 &\mapsto 1, \\
t \otimes 1, 1\otimes t &\mapsto t, \\
t \otimes t &\mapsto t^2 = 0. \end{cases}$$
La scission est envoyée sur l'application de comultiplication
$$F\left(\diagg{Images_arxiv/Cob_split.png}\right) := \Delta : A \rightarrow A \otimes A : \begin{cases}1 &\mapsto 1\otimes t + t\otimes 1,\\
t &\mapsto t \otimes t. \end{cases}$$
Et enfin à la mort de cercle on associe la trace 
$$F\left(\pitidiag{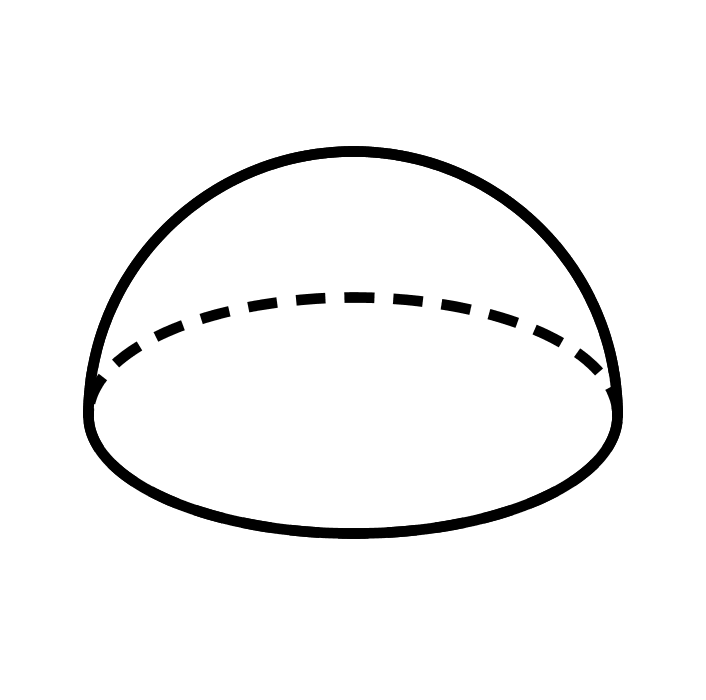}\right) := \epsilon : A \rightarrow  \Z : \begin{cases} 1 &\mapsto 0, \\ t &\mapsto 1. \end{cases}$$

On note que $A$ muni de $m$ comme multiplication, $\epsilon$ comme forme de Frobenius et $\Delta$ comme comultiplication forme une algèbre de Frobenius. On obtient donc une "two dimensional topological quantum field theory" (2D-TQFT) qui donne la fonctorialité de $F$. On renvoie vers \cite{TQFT} pour plus de détails mais, grossièrement, une 2D-TQFT est un foncteur monoïdal de la catégorie des cobordismes $Cob$ vers une catégorie algébrique. Il est aussi possible de simplement montrer que $F$ est bien défini pour les relations du Théorème \ref{thm:cobelem} si on ne veut pas parler de TQFT. De plus on obtient aisément le résultat suivant :

\begin{proposition} \label{eq:degchi}
$F$ associe à un cobordisme $C$ un homomorphisme de degré égal à moins la caractéristique d'Euler du cobordisme
$$\deg F(C) = -\chi(C). $$
\end{proposition}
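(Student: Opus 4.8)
The plan is to reduce the whole statement to the five elementary cobordisms of Th\'eor\`eme~\ref{thm:cobelem}, by exploiting that \emph{both} quantities $\deg F(C)$ and $-\chi(C)$ are additive under the two operations that generate $Cob$: composition (vertical stacking) and the monoidal product (disjoint union, juxtaposition). Once additivity is established on each side and the identity $\deg F = -\chi$ is checked on the generators, it propagates automatically to every cobordism.

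First I would record the additivity of the left-hand side. Since $F$ is a functor, $F(C' \circ C) = F(C') \circ F(C)$, and the degree of a composite of homogeneous graded homomorphisms is the sum of the degrees, so $\deg F(C' \circ C) = \deg F(C') + \deg F(C)$. Likewise, $F$ being monoidal gives $F(C \sqcup C') = F(C) \otimes F(C')$, and the degree of a tensor product of homogeneous maps is again the sum, whence $\deg F(C \sqcup C') = \deg F(C) + \deg F(C')$. Along the way one checks that each of $\imath, m, \Delta, \epsilon$ is homogeneous, so that all the $F(C)$ are genuinely homogeneous and have a well-defined degree.

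Next I would establish the matching additivity of $-\chi$. For disjoint union it is immediate that $\chi(C \sqcup C') = \chi(C) + \chi(C')$. For composition, gluing $C : V_1 \to V_2$ to $C' : V_2 \to V_3$ along $V_2$ gives, by the inclusion--exclusion formula for the Euler characteristic, $\chi(C' \circ C) = \chi(C) + \chi(C') - \chi(V_2)$; but $V_2$ is a disjoint union of circles, each of Euler characteristic $0$, so $\chi(V_2) = 0$ and $\chi$ is strictly additive under composition as well. Hence $-\chi$ enjoys exactly the same additivity as $\deg F$.

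It then remains only to verify the identity on generators, which is a direct computation using $\deg 1 = -1$ and $\deg t = 1$ in $A$:
\begin{align*}
\deg \imath &= -1 = -\chi(\text{disque}), & \deg \epsilon &= -1 = -\chi(\text{disque}), \\
\deg m &= +1 = -\chi(\text{pantalon}), & \deg \Delta &= +1 = -\chi(\text{pantalon}),
\end{align*}
where the disk has $\chi = 1$ and the pair of pants has $\chi = -1$, while the identity (a cylinder) and the permutation (two cylinders) both have degree $0$ and Euler characteristic $0$. Since by Th\'eor\`eme~\ref{thm:cobelem} every cobordism is a composition of monoidal products of these generators, and both sides are additive under the two operations and agree on the generators, they agree on all of $Cob$. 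Note that no separate compatibility with the relations of Th\'eor\`eme~\ref{thm:cobelem} is required: $\deg F(\cdot)$ is well defined because $F$ is a functor, and $\chi$ is a diffeomorphism invariant, so both sides are already functions on equivalence classes. I expect no serious obstacle; the only point needing care is the gluing formula for $\chi$ together with the observation that its correction term vanishes precisely because the objects of $Cob$ are one-dimensional (unions of circles).
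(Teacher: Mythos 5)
Votre démonstration est correcte et suit essentiellement la même démarche que celle du texte : vérifier l'égalité $\deg F = -\chi$ sur les cobordismes élémentaires ($\deg(\imath)=\deg(\epsilon)=-1$, $\deg(m)=\deg(\Delta)=1$) et laisser l'additivité propager le résultat. Vous explicitez simplement ce que le texte laisse implicite, à savoir l'additivité du degré sous composition et produit tensoriel, et celle de $-\chi$ sous recollement (le terme correctif $\chi(V_2)$ s'annulant car les objets de $Cob$ sont des unions de cercles), ce qui est un complément soigné mais non une approche différente.
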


\begin{proof}
On vérifie aisément avec les définitions de $\imath, m, \Delta$ et $\epsilon$ que l'équation tient puisque $\deg(\imath) = \deg(\epsilon) = -1$ et $\deg(m) = \deg(\Delta) = 1$.
\end{proof}

\begin{exemple}\label{ex:cobS2}
On calcule $F(S^2) : \Z \rightarrow \Z$ avec
 $$S^2 \simeq \smalldiag{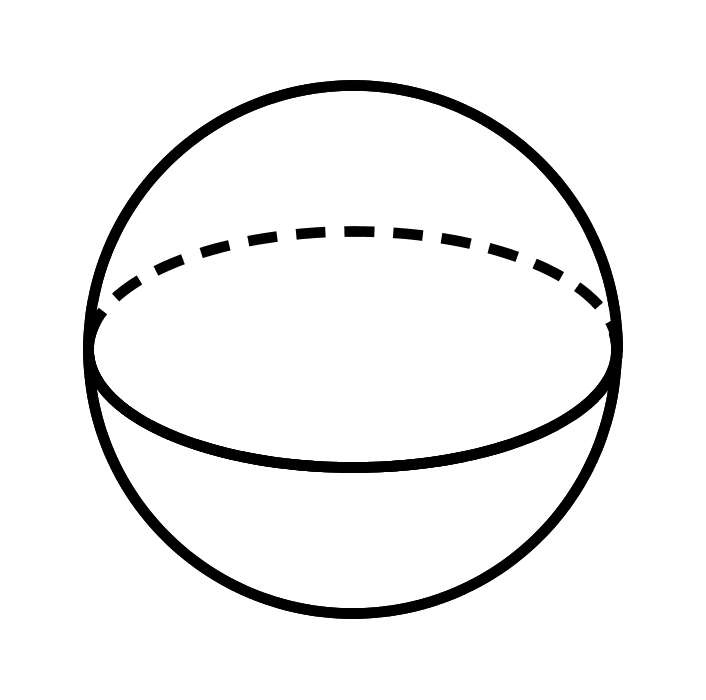}$$
qui est donc une naissance suivie d'une mort. On obtient alors
$$F(S^2) = \epsilon \circ \imath : \Z \rightarrow A \rightarrow \Z : 1 \mapsto 1 \mapsto 0$$
et donc $F(S^2) = 0$.
\end{exemple}

\begin{exemple}\label{ex:cobT2}
On calcule $F(T^2) : \Z \rightarrow \Z$ avec
 $$T^2 \simeq \bigdiag{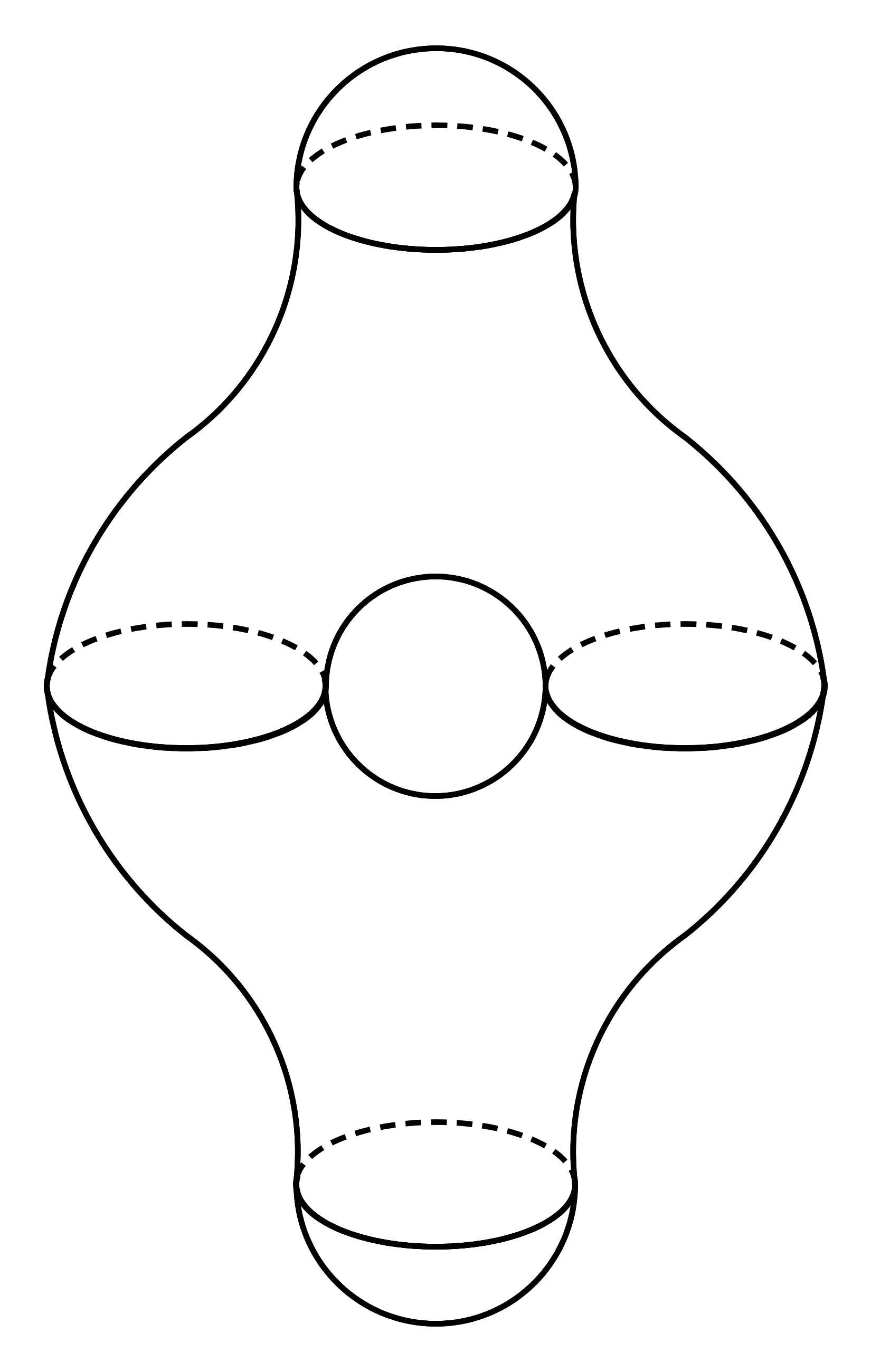}$$
qui est donc une naissance suivie d'une scission, d'une fusion et enfin d'une mort. On obtient alors
\begin{align*}
F(T^2) = \epsilon \circ m \circ \Delta \circ \imath : &\Z \rightarrow A \rightarrow A \otimes A \rightarrow A \rightarrow \Z :\\
&1 \mapsto 1 \mapsto 1 \otimes t + t \otimes 1  \mapsto 2t \mapsto 2
\end{align*}
et donc $F(T^2)$ est donné par la multiplication par $2$ dans $\Z$.
\end{exemple}

\section{Définition des anneaux des arcs}

Pour $n\ge 0$, on définit l'anneau des arcs de Khovanov d'indice $n$ vu comme groupe abélien gradué par la somme directe
\begin{align}
H^n &:= \bigoplus_{a,b\in B^n} b(H^n)a, & &\text{où}&
b(H^n)a &:= F(W(b)a)\{n\}. \label{eq:defhn}\end{align}
Comme remarqué dans la Section \ref{sec:TL}, $W(b)a \in W(B^n)B^n \subset \widehat{B}_0^0$ et $W(b)a$ est donc une union disjointe de cercles, d'où le fait qu'on puisse lui appliquer le foncteur $F$.

\begin{remarque} On note que décaler le degré des éléments par $n$ permet d'obtenir un groupe positivement gradué. En effet, l'élément $W(b)a$ qui contient le plus grand nombre de composantes de cercles est donné par un élément de la forme $W(a)a$ et contient donc $n$ composantes. Dès lors, le plus grand produit tensoriel contient $n$ éléments et l'élément de degré minimal de $F(W(a)a)$ est $1 \otimes \dots \otimes 1$ de degré $-n$.
\end{remarque}

Pour faire de $H^n$ un anneau, il reste à définir une multiplication et une unité. On pose $xy = 0$ pour $x \in d(H^n)c$ et $y \in b(H^n)a$ si $c \ne b$. Il reste encore à définir la multiplication
$$m_{cba} : c(H^n)b \otimes b(H^n) a \rightarrow c(H^n)a.$$
L'idée est de construire un cobordisme de $W(c)bW(b)a$ vers $W(c)a$ afin d'obtenir une multiplication donnée par l'image de ce cobordisme par $F$. On observe que $bW(b)$ donne tous des demi cercles, chacun possédant sa symétrie horizontale en face, comme par exemple :
\begin{align*}
b &= \diagg{Images_arxiv/B3_3}, & bW(b) &= \middiag{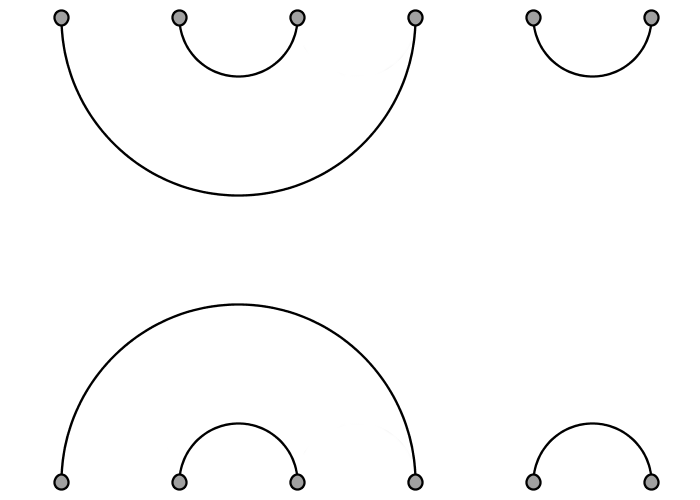}.
\end{align*}
On peut donc construire des ponts donnant des "cobordismes avec bords" possédant un unique point de selle et qui envoient à chaque fois une paire de demi cercles vers deux segments de droite, comme représenté en Figure \ref{fig:bridge}. 

\begin{figure}[h]
    \center
    \includegraphics[width=12cm]{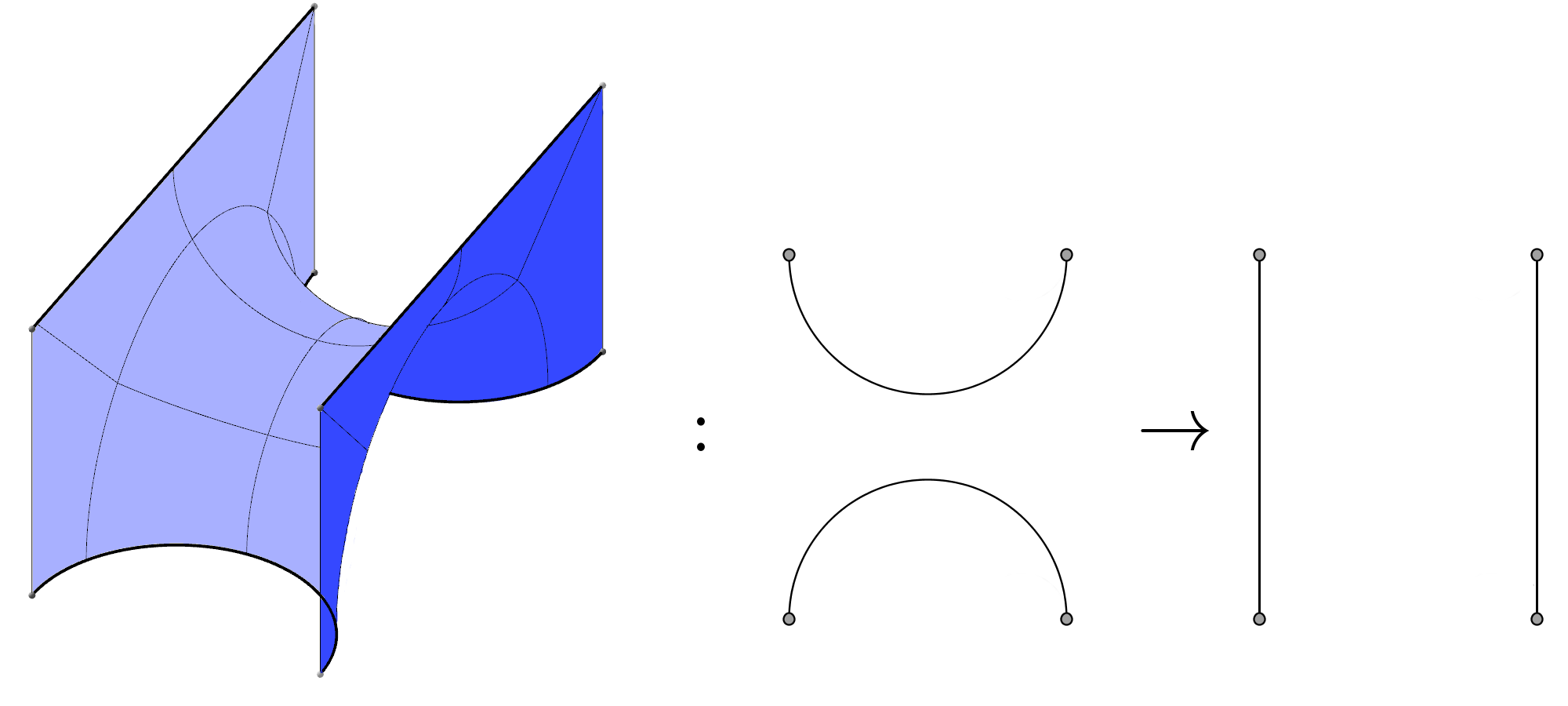}
    \caption{\label{fig:bridge}Pont : cobordisme entre deux arcs miroirs et deux lignes possédant un unique point de selle.}
\end{figure}

En composant ces ponts, on obtient un cobordisme de $bW(b)$ vers $Vert_{2n}$ possédant $n$ points de selle (un pour chaque pont), qu'on nomme $C(b)$. On donne un exemple de film en Figure \ref{fig:decoupecob} où chaque vignette représente la coupe horizontale du corbordisme en différentes hauteurs.

\begin{figure}[h]
    \center
    \includegraphics[width=14cm]{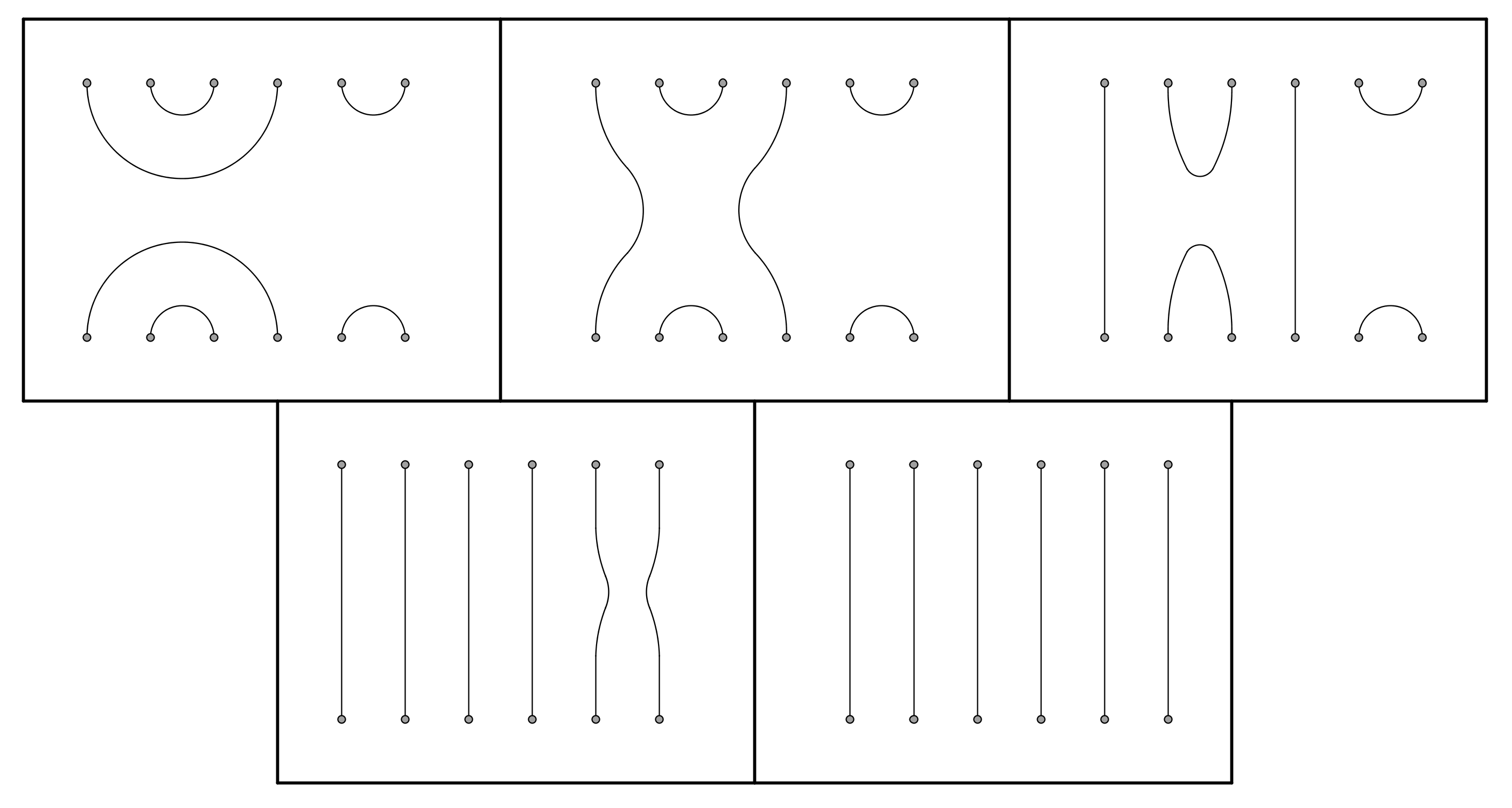}
    \caption{\label{fig:decoupecob}Film du cobordisme $C(b)$ entre $bW(b)$ et $Vert_{2n}$ : chaque vignette représente une hauteur différente en commençant par celle en haut à gauche de $bW(b)$ et en terminant en bas à droite par $Vert_{2n}$ (illustration venant de \cite{Khovanov}).}
\end{figure}

En prolongeant ce "cobordisme avec bord" $C(b)$ avec l'identité sur $a$ et sur $W(c)$, on obtient un cobordisme  de $Cob$
$$\Id_{W(c)}C(b)\Id_a : W(c)bW(b)a \rightarrow W(c)a$$
qui donne un homomorphisme de modules en lui appliquant $F$ 
\begin{equation} F\bigl(\Id_{W(c)}C(b)\Id_a\bigr) : F\bigl(W(c)bW(b)a\bigr) \rightarrow F\bigl(W(c)a\bigr). \label{eq:hommulti} \end{equation}
Puisque  $W(c)b$ et $W(b)a$ sont tous deux des unions disjointes de cercles, on a un isomorphisme canonique
$$F\bigl(W(c)b\bigr) \otimes F\bigl(W(b)a\bigr)  \simeq F\bigl(W(c)bW(b)a\bigr).$$
On obtient en composant cet isomorphisme avec le morphisme (\ref{eq:hommulti}) un homomorphisme
$$F\bigl(W(c)b\bigr)  \otimes F\bigl(W(b)a\bigr) \rightarrow F\bigl(W(c)a\bigr).$$ 
Puisque le cobordisme a $n$ points de selles, il a une caractéristique d'Euler valant $-n$ et par la Proposition \ref{eq:degchi} le morphisme obtenu a un degré $n$. On obtient alors un homomorphisme
\begin{equation}F\bigl(W(c)b\bigr)\{n\} \otimes F\bigl(W(b)a\bigr)\{n\} \rightarrow F\bigl(W(c)a\bigr)\{n\} \label{eq:hommulti2} \end{equation}
qui préserve le degré. On définit la multiplication en utilisant le diagramme commutatif suivant où les isomorphismes viennent de (\ref{eq:defhn}) :
$$\xymatrix{
c(H^n)b \otimes b(H^n)a \ar[r]^-{m_{cba}} \ar[d]_{\simeq} & c(H^n)a \\
F\bigl(W(c)b\bigr)\{n\} \otimes F\bigl(W(b)a\bigr)\{n\} \ar[r]_-{(\ref{eq:hommulti2})}  & F\bigl(W(c)a\bigr)\{n\} \ar[u]_{\simeq}.
}$$
On définit pour $a\in B^n$, 
$$1_a = 1^{\otimes n}\{n\} \in A^{\otimes n}\{n\} \simeq a(H^n)a$$
où $ 1^{\otimes n}\{n\}$ est un abus de notation pour dire qu'on prend l'unité dans $A^{\otimes n}$ dont on a décalé le degré par $n$. On vérifie que pour $x \in b(H^n)a$ on a $x.1_a = x$ et $1_a.x = 0$ si $a\ne b$ et pour $y \in a(H^n)b$, on a $y.1_a = 0$ et $1_a.y = y$. On définit alors l'unité $1\in H^n$ comme la somme
$$1 = \sum_{a\in B^n} 1_a.$$
On obtient l'associativité par fonctorialité de $F$ et associativité/co-associativité des cobordismes. Finalement, la distributivité est claire par définition.\\

On termine le chapitre par quelques exemples de calculs dans $H^n$ afin de familiariser le lecteur avec ces anneaux.

\begin{exemple}\label{ex:H1}
On peut montrer que $H^1 \simeq A\{1\}$ puisque $B^1$ ne contient qu'un seul élément $a$ donné par un demi-cercle qui relie les deux points. On a alors un isomorphisme de groupes gradués $H^1 \simeq_{ab} F(W(a)a)\{1\}$ et, $W(a)a$ n'étant composé que d'un seul cercle, on a $F(W(a)a) = A$. Par ailleurs, la multiplication 
$$a(H^1)a \times a(H^1)a \rightarrow a(H^1)a$$
est composée uniquement d'une fusion et est donc équivalente à la multiplication polynomiale, montrant le résultat voulu.
\end{exemple}

\begin{exemple}
On prend $H^2$ et on pose
\begin{align*}
a &=\deuxdiag{Images_arxiv/B2_2.png},& 
b &= \deuxdiag{Images_arxiv/B2_1.png}.
\end{align*}
On observe
\begin{align*}
W(a)a &= \diagg{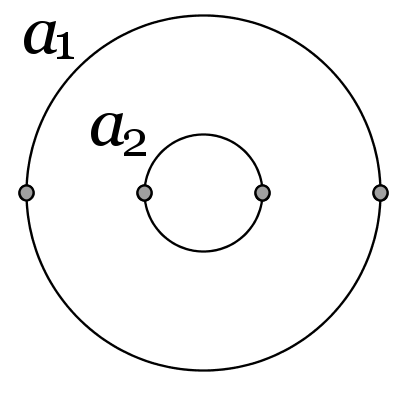},&
W(a)b &= \diagg{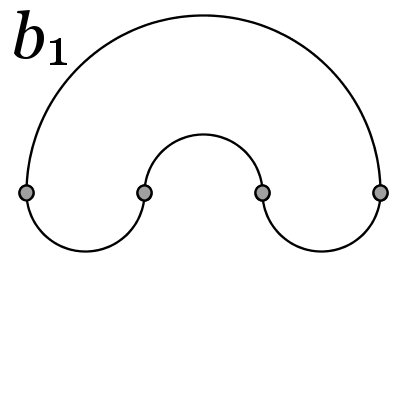},&
W(b)a &= \diagg{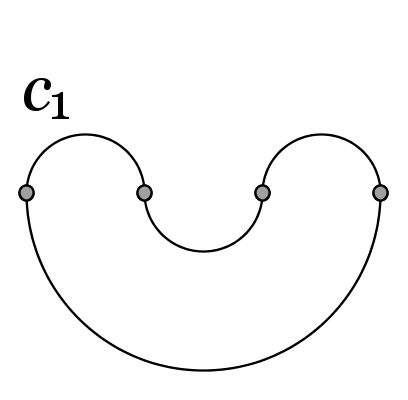}.
\end{align*}
où on note $a_1$ et $a_2$ les cercles de $W(a)a$ avec $a_1$ le cercle extérieur et $a_2$ le cercle intérieur. De même on note $b_1$ le cercle de $W(a)b$ et $c_1\in W(b)a$. Pour éviter toute confusion, on note $t_{x}$ la variable $t$ du $A$ associé par le foncteur $F$ à la composante de cercle d'étiquette $x$ et de même pour $1_{x}$. On considère la multiplication
$$a(H^2)a \times a(H^2)b \rightarrow b(H^2)a$$
et on calcule
$$(t_{a_1} \otimes 1_{a_2}).{1_{b_1}} = t_{b_1}.$$
En effet, on a
\begin{gather*}
t_{a_1} \otimes 1_{a_2} \otimes 1_{b_1}  \xmapsto{m} t_{x_1} \otimes 1_{a_2} \xmapsto{m}  t_{b_1}, \\
\bigdiag{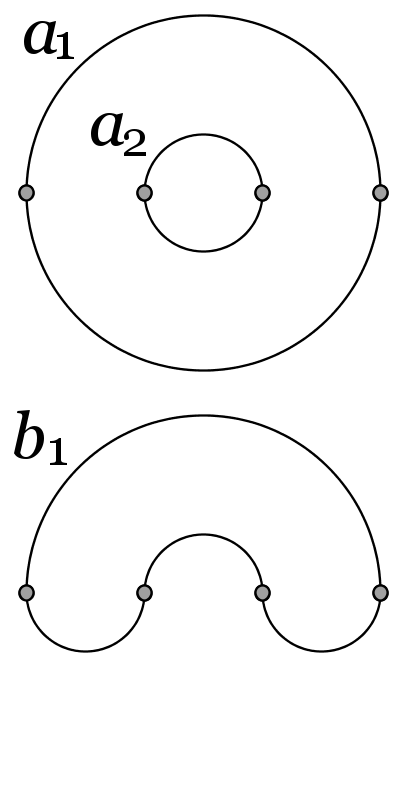} 
\xrightarrow{(a_1,b_1) \mapsto x_1} \bigdiag{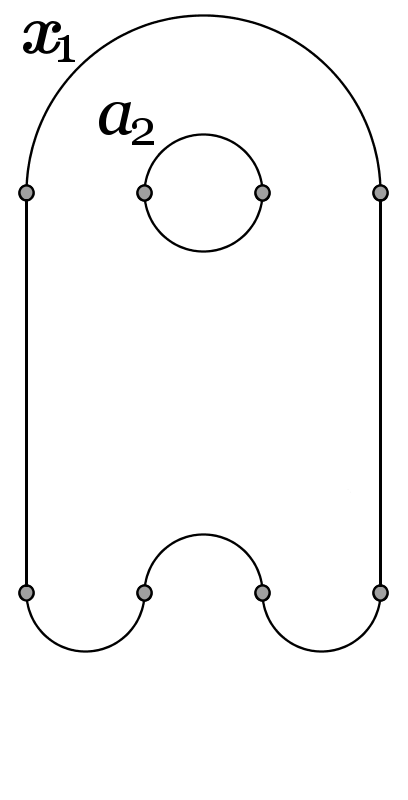}
\xrightarrow{(a_2,x_1) \mapsto b_1} \bigdiag{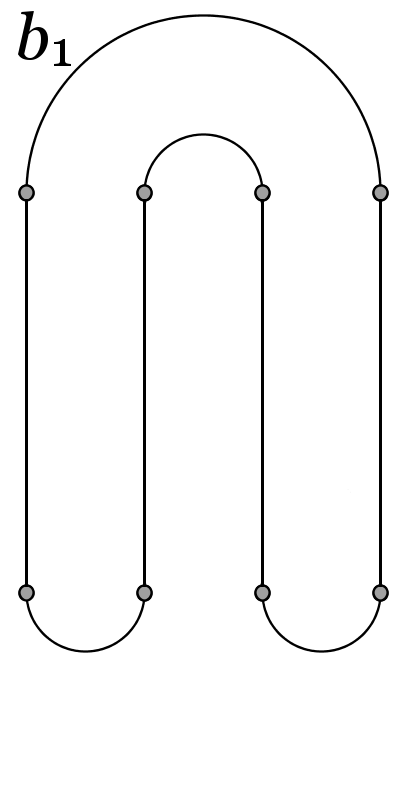}
\end{gather*}
puisqu'on a juste deux fusions.
\end{exemple}

\begin{exemple}En prenant les même notations que l'exemple précédent, on calcule maintenant 
$$1_{b_1}.t_{c_1} = t_{a_1} \otimes t_{a_2}$$
dans
$$a(H^2)b \times b(H^2)a \rightarrow a(H^2)a.$$
En effet, on a
$$
1_{b_1} \otimes t_{c_1}  \xmapsto{m} t_{x_1} \xmapsto{\Delta} t_{a_1} \otimes t_{a_2}, $$
$$\bigdiag{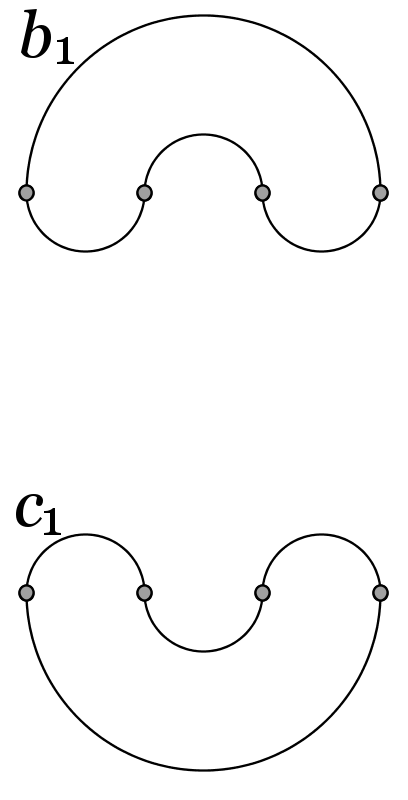} 
\xrightarrow{(b_1,c_1) \mapsto x_1} \bigdiag{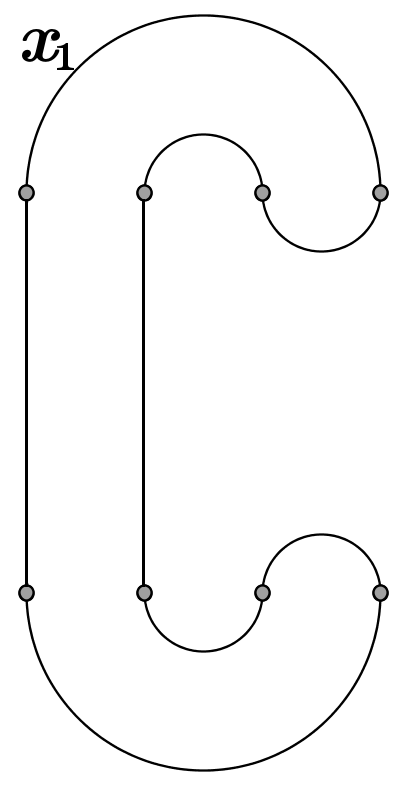}
\xrightarrow{x_1 \mapsto (a_1,a_2)} \bigdiag{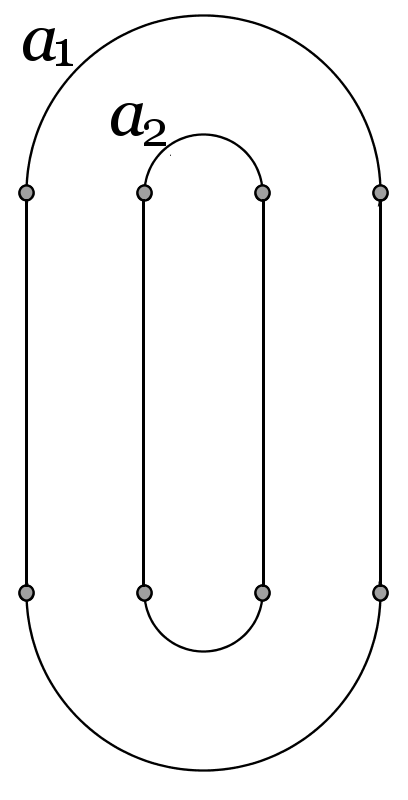}
$$
puisqu'on a une fusion suivie d'une scission.
\end{exemple}

\chapter{Constructions impaires des anneaux des arcs de Khovanov}

Dans \cite{OddKhovanov},  P. Ozsvath, J. Rasmussen et Z. Szabo construisent une version impaire de l'homologie de Khovanov. Pour ce faire, ils définissent un foncteur projectif de la catégorie des cobordismes vers la catégorie des modules sur $\Z$. Ce foncteur est projectif dans le sens où deux cobordismes équivalents peuvent livrer des morphismes de signes différents et qu'il y a un choix d'orientations à faire pour les composantes de cercles des scissions. Ils montrent ensuite que ces signes n'influencent pas l'homologie qu'on obtient. On propose dans ce chapitre de construire des anneaux similaires à $H^n$ en utilisant le foncteur de \cite{OddKhovanov} à la place de celui de M. Khovanov. Le choix de signes devenant important et pouvant potentiellement livrer des anneaux différents, on redéfinit ce foncteur sur une catégorie un peu plus structurée que celle des cobordismes : la catégorie des cobordismes avec chronologies, introduite par K. Putyra dans \cite{Putyra}.

\section{Cobordismes avec chronologies}

Les cobordismes avec chronologies (ou cobordismes chronologiques) sont des cobordismes où on donne un ordre sur les points critiques (donc un ordre sur une fonction de Morse possédant un seul point critique par niveau), ce qu'on appelle une chronologie, et un choix d'orientation pour ceux-ci (c'est-à-dire pour les fusions et les scissions, les naissances et morts de cercles ayant des orientations constantes). Plus précisément, l'orientation des points critiques d'indice 1 est donnée par le choix d'une base pour l'espace propre de la sous-matrice définie négative de la matrice hessienne de la chronologie et ceux d'indice 2 ont l'orientation induite par l'orientation du cobordisme. On donne un exemple de cobordisme avec chronologie en Figure \ref{fig:cob_ch}. Deux cobordismes chronologiques sont dits équivalents s'il existe un difféomorphisme préservant les bords entre les deux et qui conserve l'ordre de la chronologie ainsi que l'orientation des points critiques. Tout cela se définit de façon formelle et on renvoie à \cite{Putyra} pour plus de détails.

\begin{figure}[h]
    \center
    \includegraphics[width=4cm]{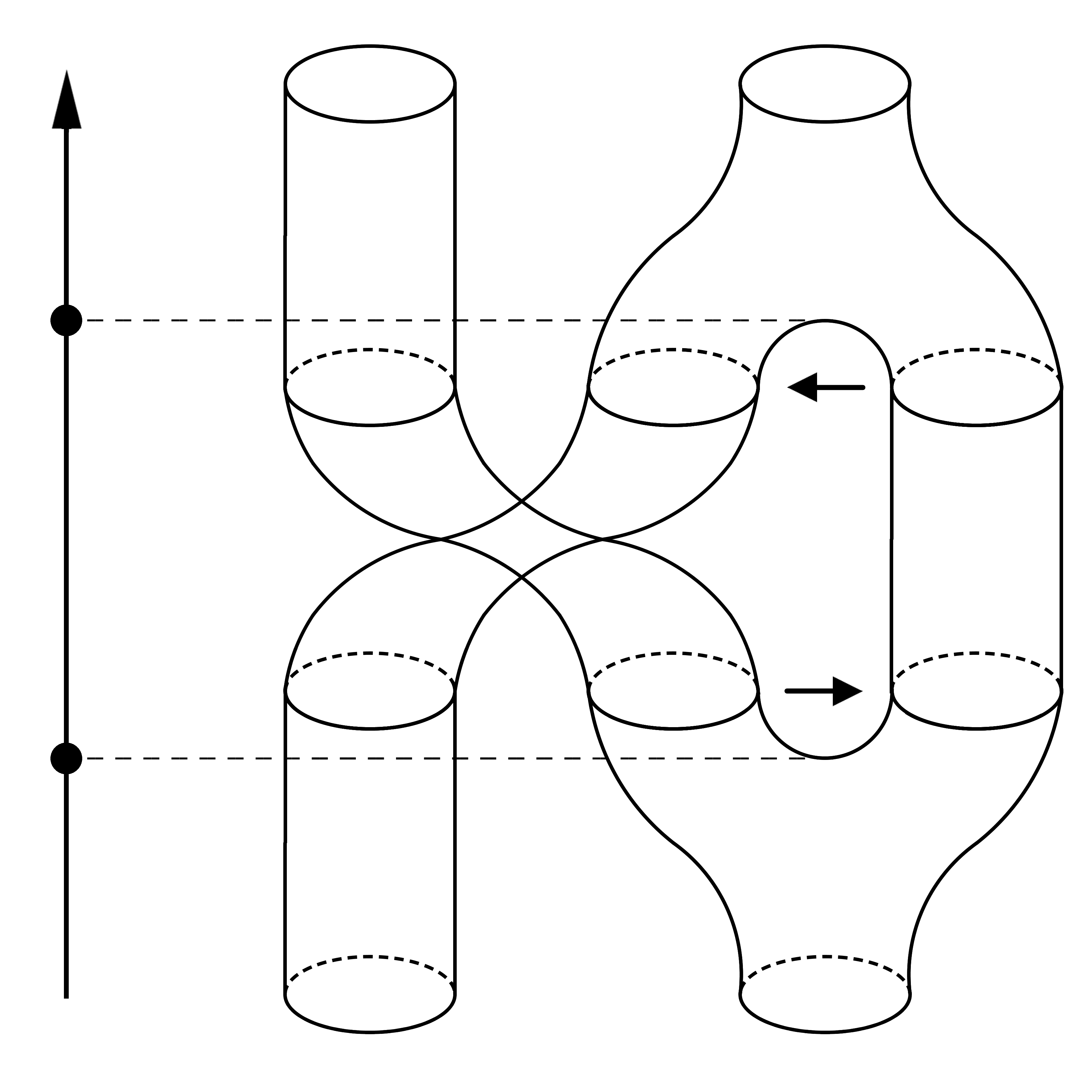}
    \caption{\label{fig:cob_ch}Exemple de cobordisme avec chronologie, les flèches sur les scissions et fusions représentent l'orientation des points critiques d'indice 1 (c'est-à-dire le "sens" de la base choisie pour la sous-matrice hessienne) et la flèche de gauche représente la chronologie, les gros points noir étant les points critiques.}
\end{figure}

Il y a une autre différence par rapport aux cobordismes sans chronologie qui se situe dans la multiplication monoïdale. En effet, on ne peut pas simplement prendre l'union disjointe des deux cobordismes en les mettant "côte à côte" car on pourrait obtenir plusieurs points critiques au même niveau de leurs chronologies. On définit alors la multiplication à gauche de $M_1 : S_1 \rightarrow S_1'$ et $M_2 : S_2 \rightarrow S_2'$ comme $M_1$ union disjointe l'identité sur $S_2$ composé avec l'identité sur $S_1'$ union disjointe $M_2$, c'est à dire qu'on décale $M_2$ pour avoir tous ses points critiques après ceux de $M_1$, comme représenté en Figure \ref{fig:multich}. De même on définit une multiplication à droite où on décale $M_1$ cette fois. 

\begin{figure}[h]
    \center
    \includegraphics[width=14cm]{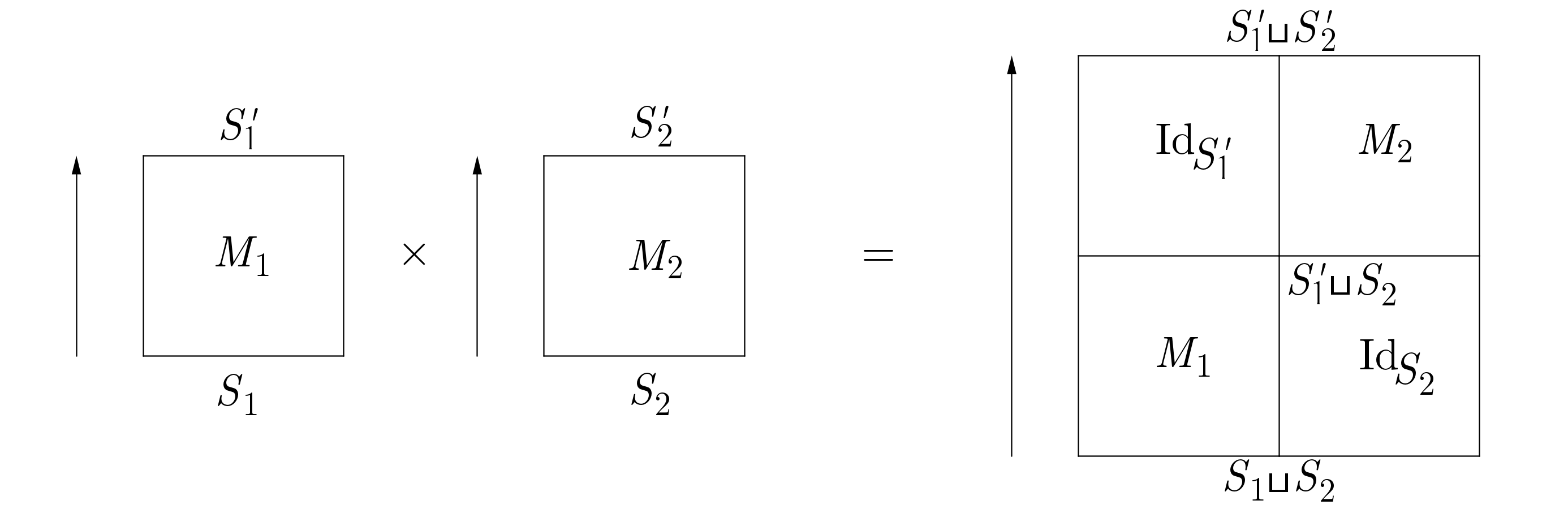}
    \caption{\label{fig:multich}Multiplication à gauche de deux cobordismes avec chronologies, les flèches indique le "sens" de la chronologie.}
\end{figure}

On note alors $ChCob$ la catégorie dont les objets sont des collections de cercles et les flèches des classes d'équivalence de cobordismes avec chronologies. Il n'est pas nécessaire pour la discussion d'entrer plus dans les détails de la construction des cobordismes avec chronologies et donc on utilise seulement la présentation en générateurs et relations de ces objets, calculée aussi par K. Putyra.

\begin{remarque}
La catégorie $ChCob$ n'est pas monoïdale puisqu'on a pas de multiplication au sens usuel. Dans \cite{Putyra}, K. Putyra propose une définition de catégorie chronologiquement monoïdale où la multiplication chronologique est un demi-foncteur et qui correspond au comportement de $ChCob$ pour la multiplication à gauche.
\end{remarque}

\begin{theoreme}\emph{(K. Putyra, \cite[Théorème 4.1]{Putyra})} \label{thm:preschcob}
La catégorie $ChCob$ des cobordismes avec chronologies est engendrée par les compositions et multiplications à gauche et à droite des huit générateurs suivants :
\begin{align*}
\smalldiag{Images_arxiv/Cob_birth.png} && \smalldiag{Images_arxiv/Cob_identity.png} && \smalldiag{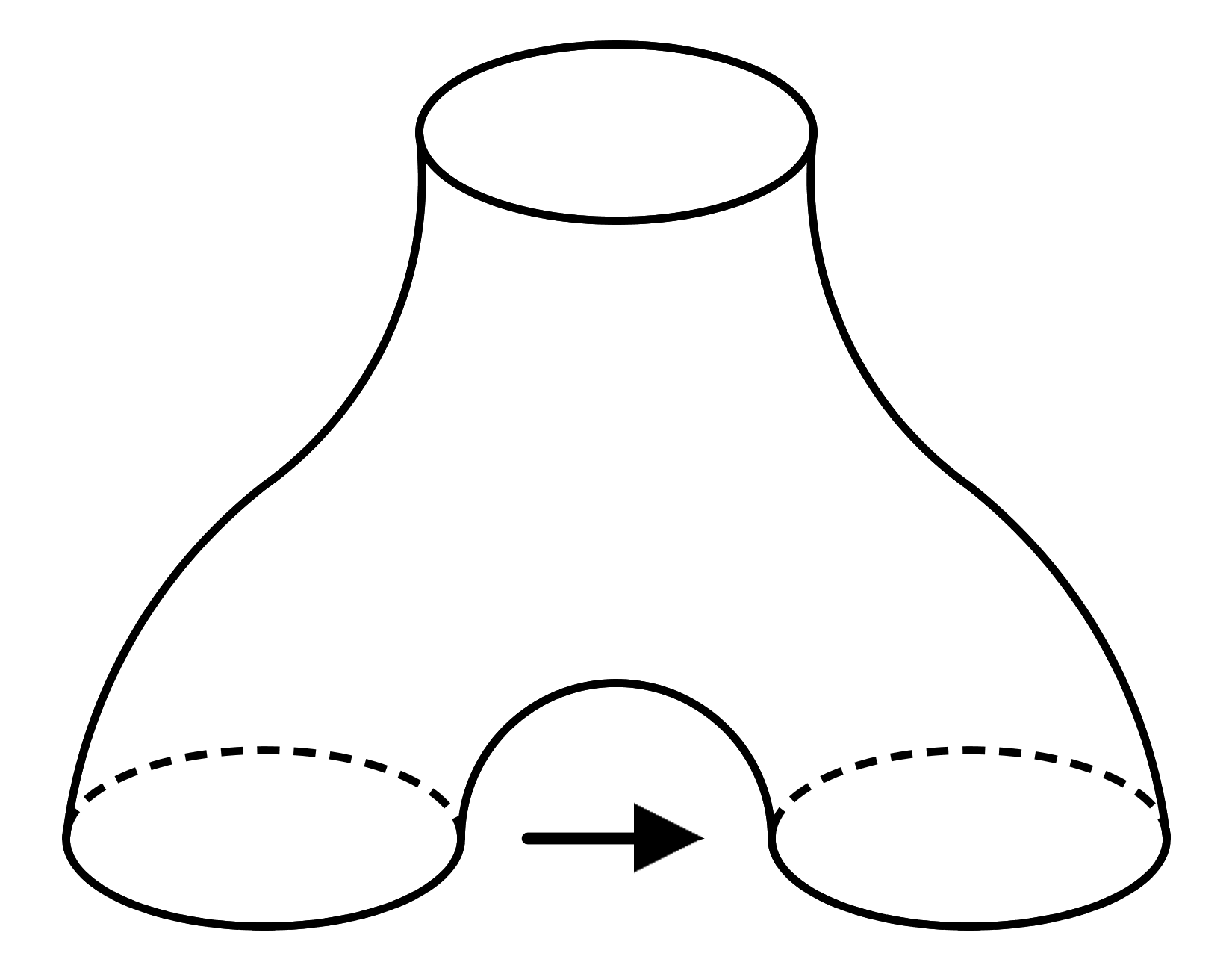} & & \smalldiag{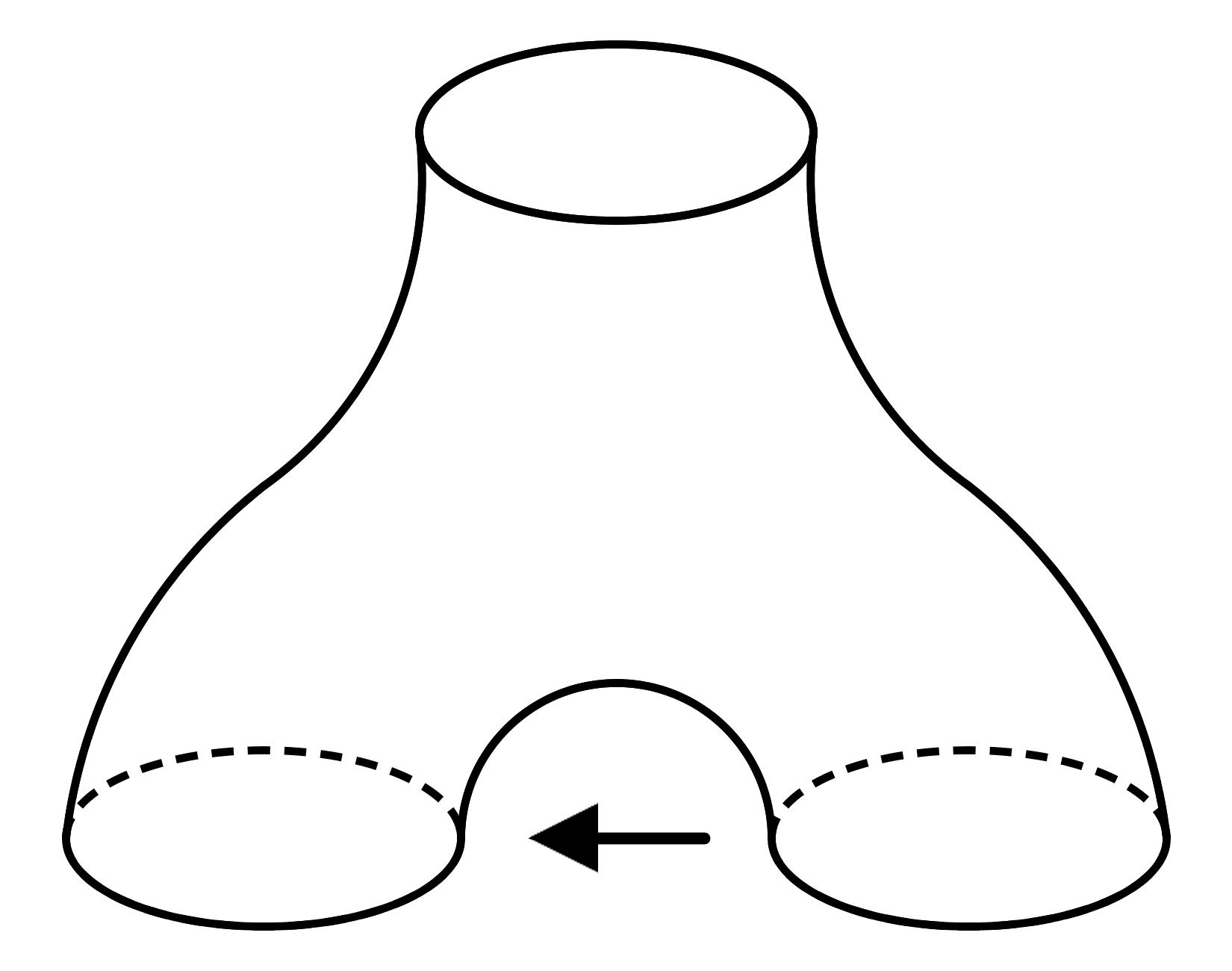}\\
\smalldiag{Images_arxiv/Cob_death.png} && \smalldiag{Images_arxiv/Cob_permutation.png}  &&  \smalldiag{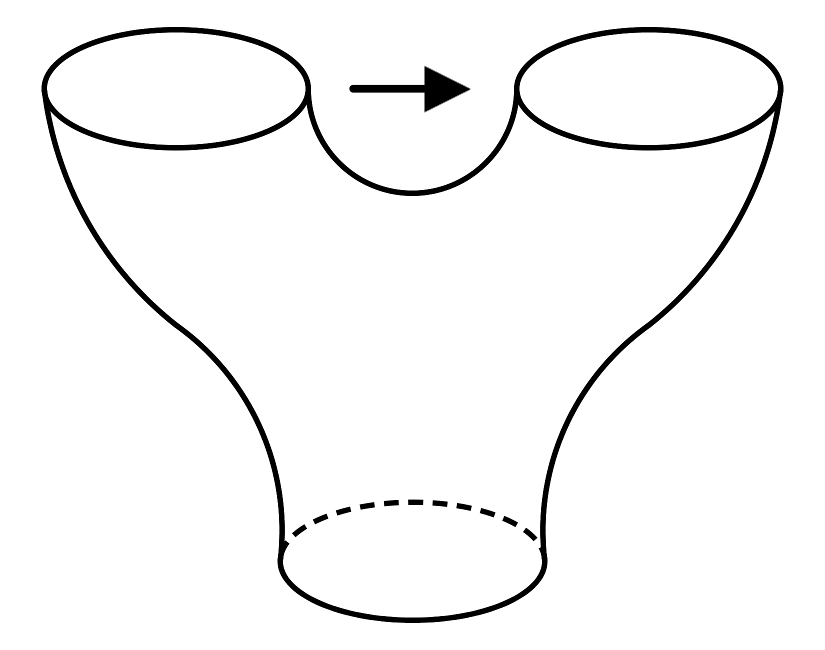}  & & \smalldiag{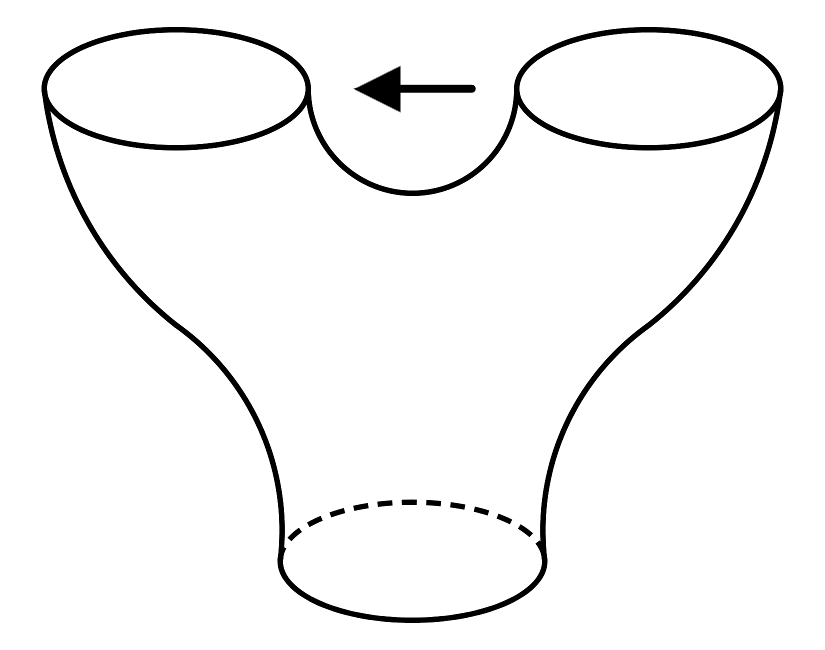} 
\end{align*}
quotientés par les relations suivantes :
\begin{enumerate}
\item Relations de permutations
\begin{align*}
\middiag{Images_arxiv/Cob_doublepermutation.png} &= \smalldiag{Images_arxiv/Cob_identity.png}\smalldiag{Images_arxiv/Cob_identity.png}
&&,& \bigdiag{Images_arxiv/Cob_triplepermutationl.png}  &= \bigdiag{Images_arxiv/Cob_triplepermutationr.png} 
\end{align*}
\item Permutations de l'unité et de la counité
\begin{align*}\middiag{Images_arxiv/Cob_unitypermutation.png} &= \smalldiag{Images_arxiv/Cob_identity.png} \smalldiag{Images_arxiv/Cob_birth.png}
&&,& \smalldiag{Images_arxiv/Cob_death.png} \smalldiag{Images_arxiv/Cob_identity.png} &= \middiag{Images_arxiv/Cob_counitypermutation.png}\end{align*}
\item Permutations de la fusion et de la scission
\begin{align*}
\middiag{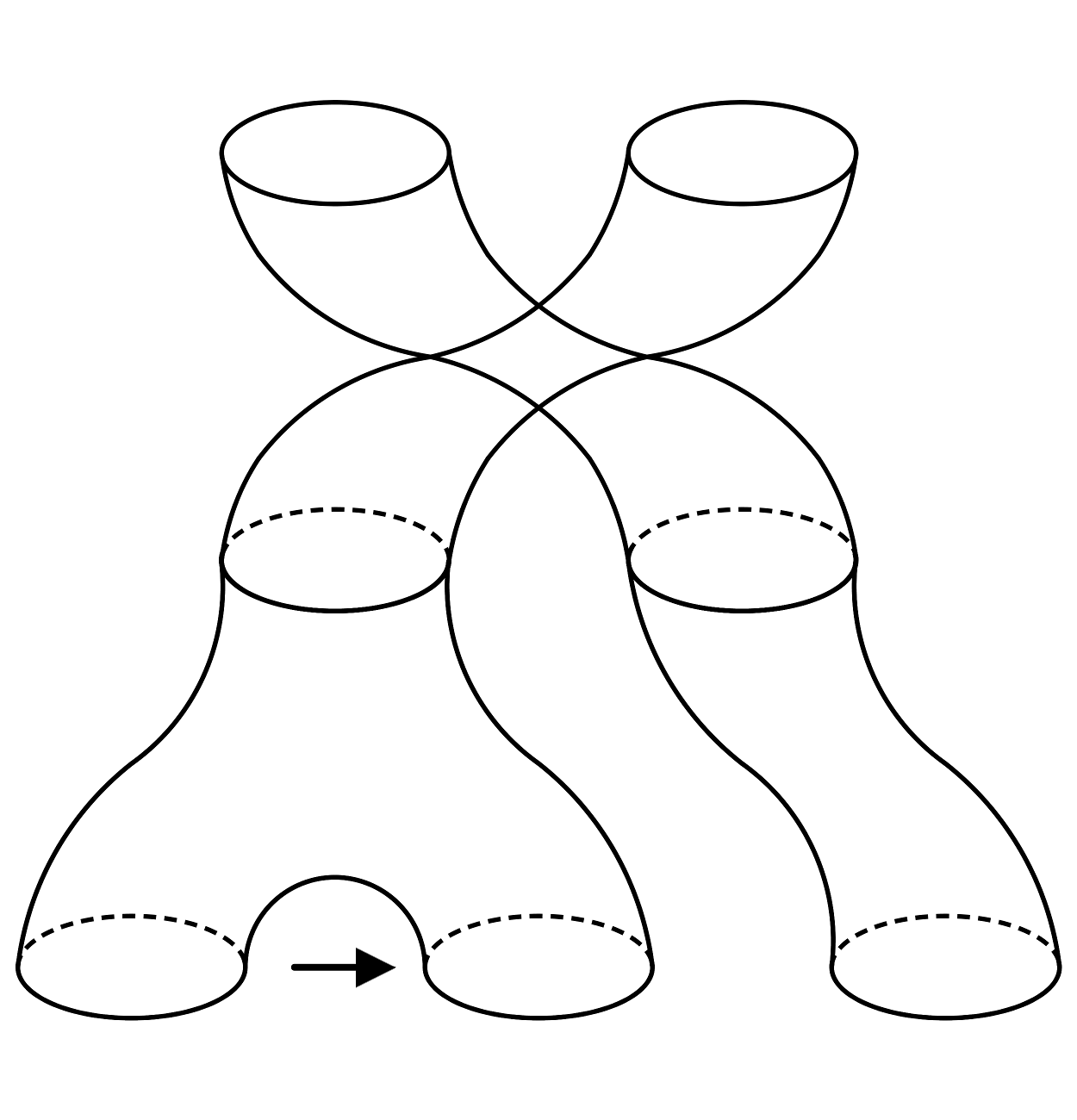} &= \bigdiag{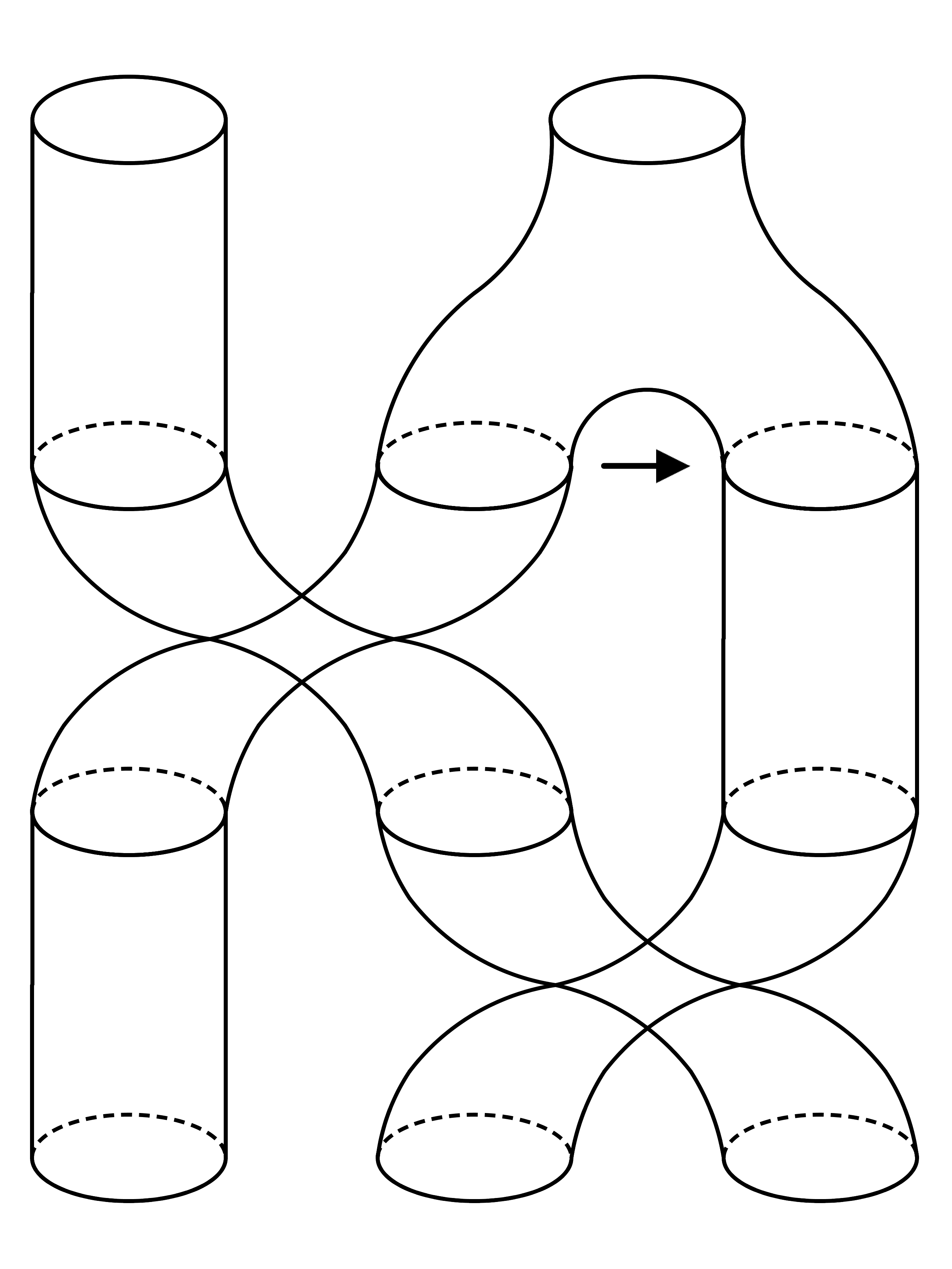}
&&,& \bigdiag{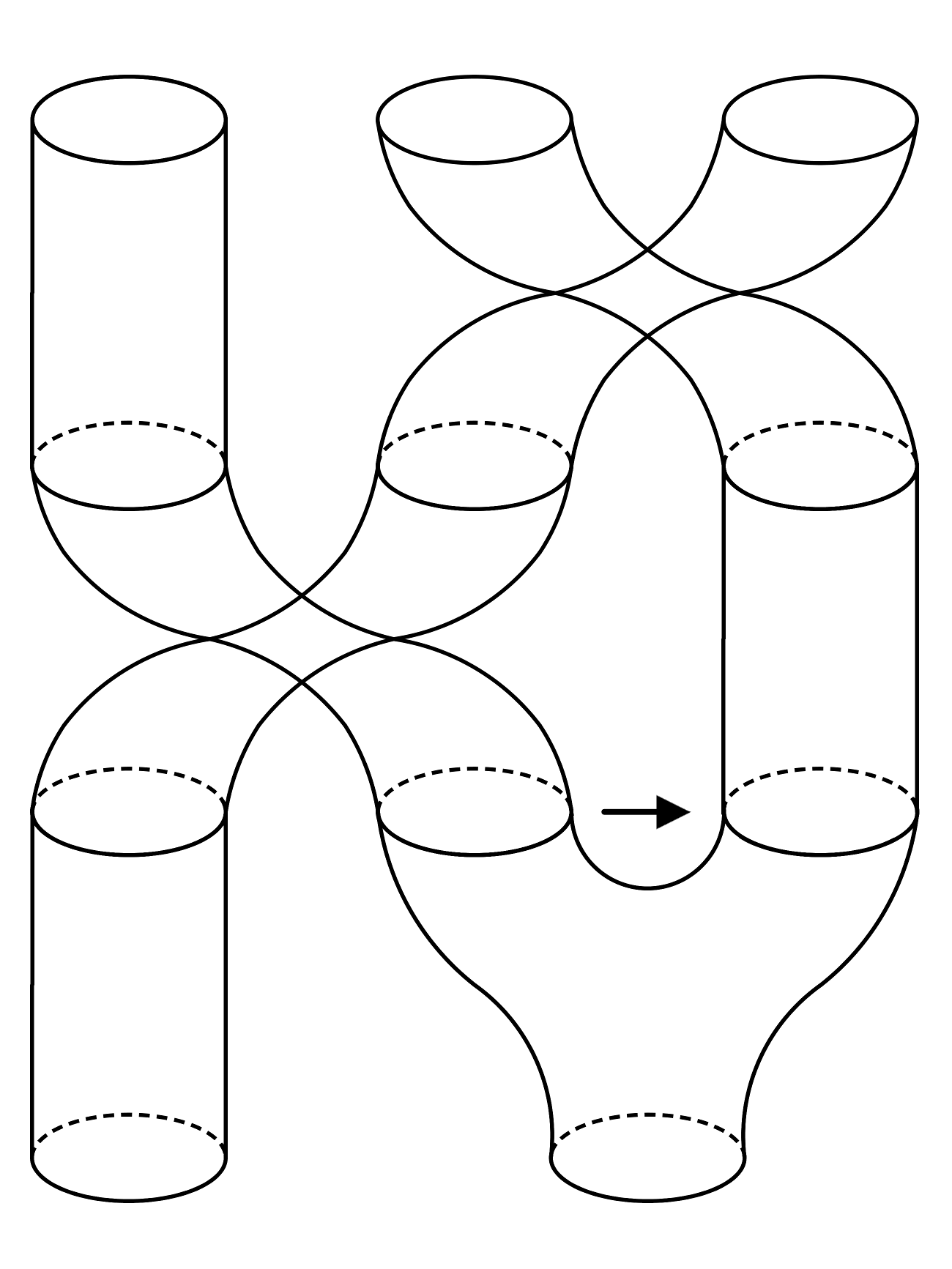}  &= \middiag{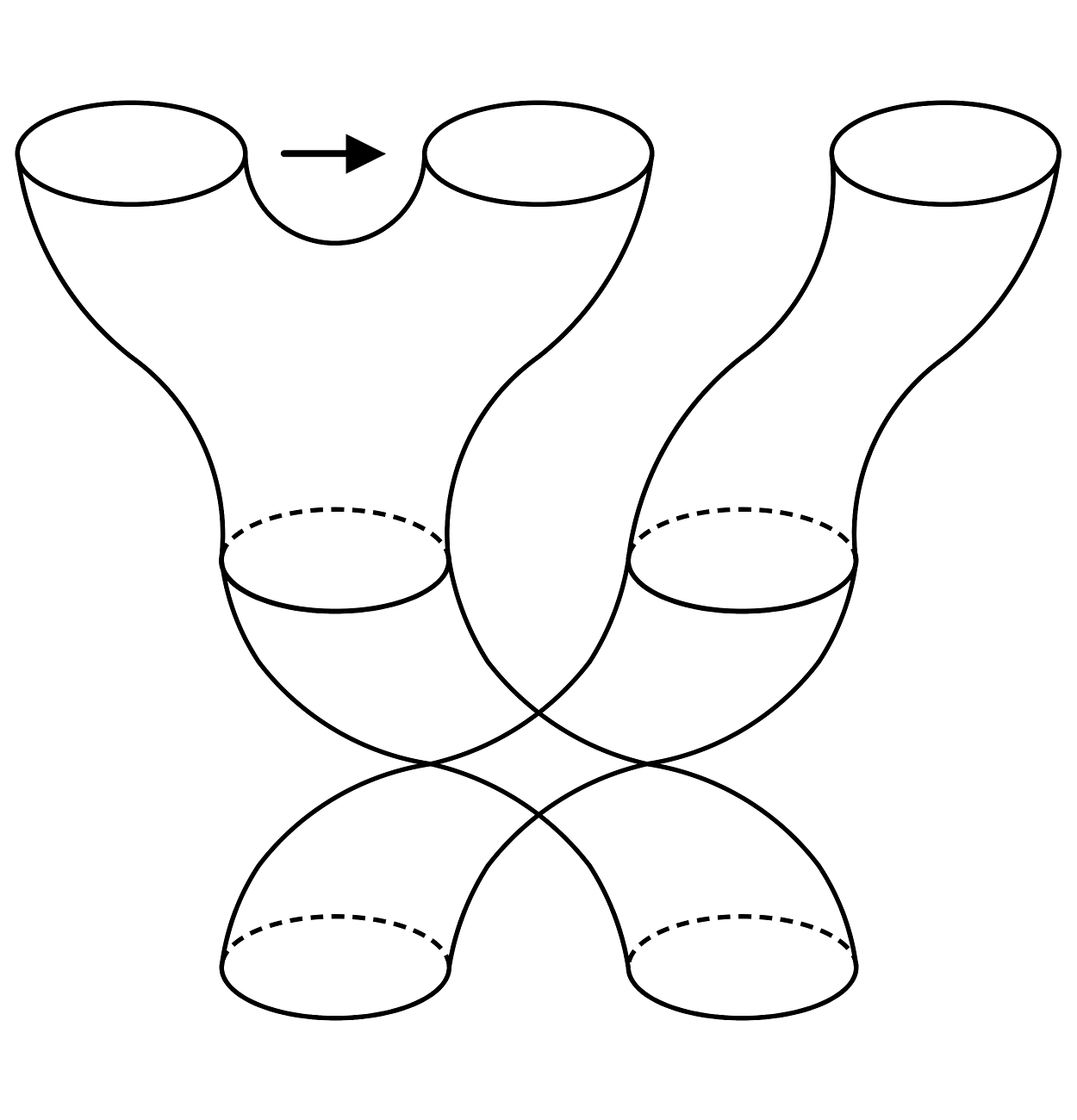} 
\end{align*}
\item Anti-commutativité et anti-co-commutativité
\begin{align*}\middiag{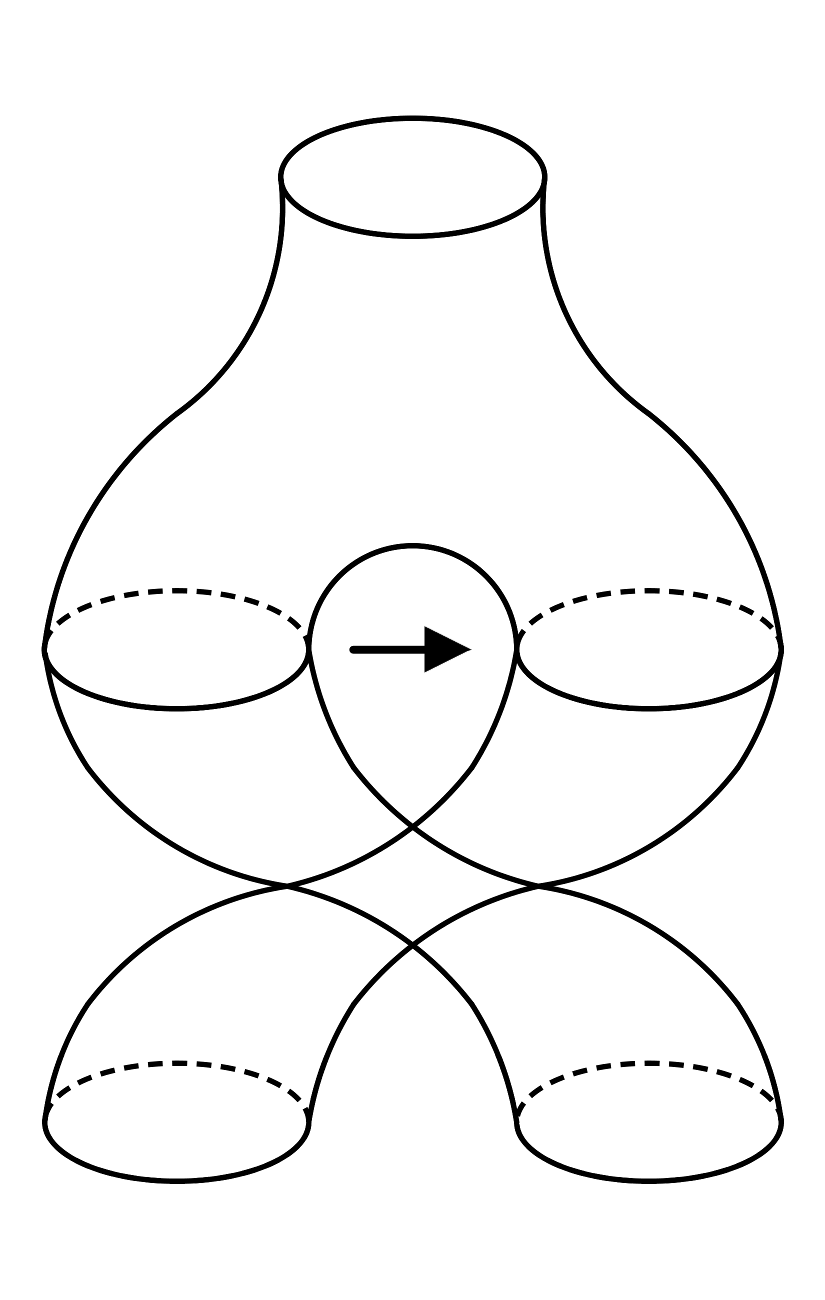} &= \smalldiag{Images_arxiv/Cob_merge_neg.png}
&&,& \smalldiag{Images_arxiv/Cob_split_neg.png} &= \middiag{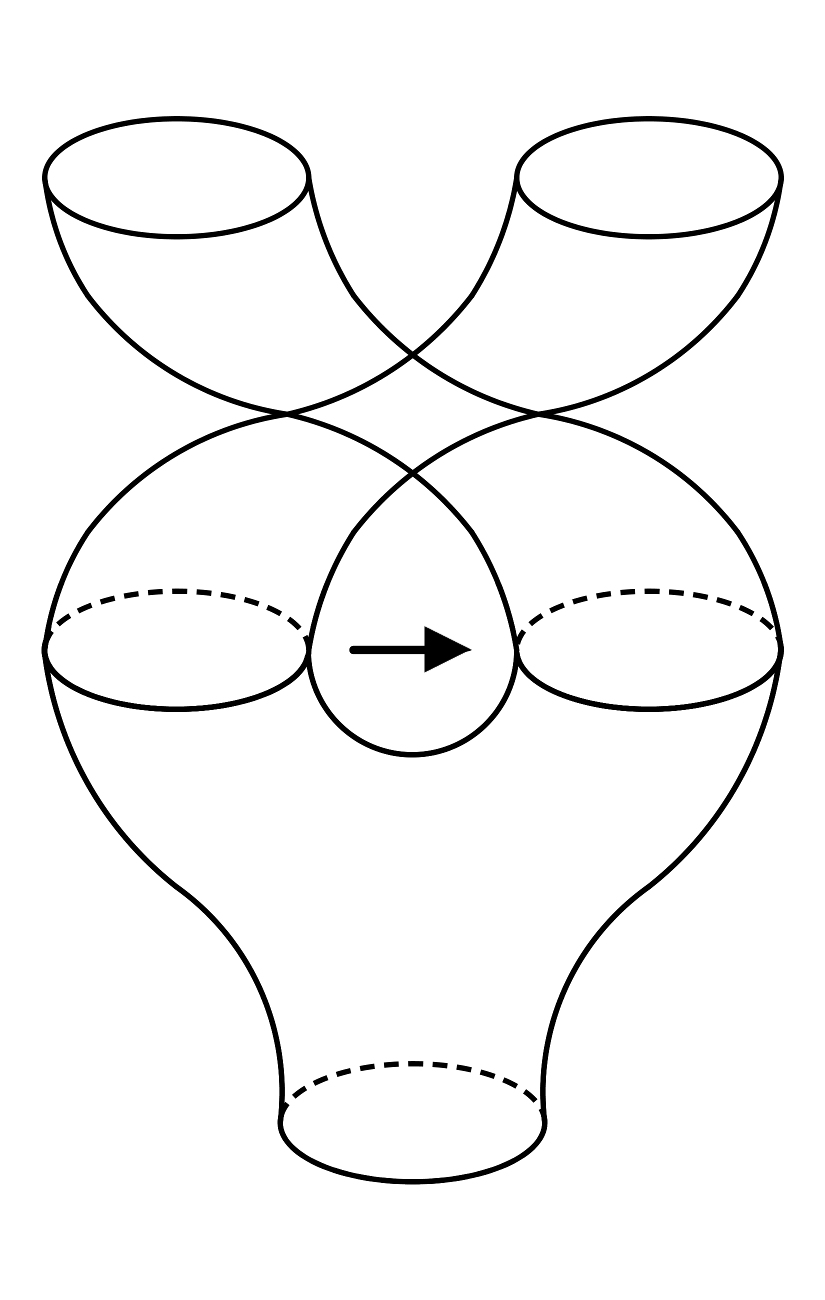}\end{align*}
\end{enumerate}
Les flèches sur les scissions et fusions représentent l'orientation des points critiques d'indice 1 (c'est-à-dire le "sens" de la base choisie pour la sous-matrice hessienne), les points d'indice 0 et 2 ayant une orientation induite par la surface.
\end{theoreme}

\begin{remarque}\label{rem:preschcob}
Par l'anti-commutativité et l'anti-co-commutativité, on peut retirer deux générateurs puisque si on a une fusion avec une orientation on peut obtenir celle avec l'autre et pareillement pour la scission.
\end{remarque}

\section{Le foncteur impair}

Le but de cette section est d'expliquer la construction du foncteur impair défini par P. Ozsvath, J. Rasmussen et Z. Szabo dans \cite{OddKhovanov}, mais dans le cadre des cobordismes chronologiques. On souhaite donc définir un foncteur $OF : ChCob \rightarrow \Z-Mod$ allant de la catégorie des cobordismes avec chronologies vers la catégorie des modules gradués sur $\Z$.\\

Tout d'abord, à un objet $S$ de $ChCob$, c'est-à-dire une collection de cercles, on associe $V(S)$ le groupe abélien libre gradué généré par les composantes de $S$ où on attribue à chaque générateur un degré $2$. On définit ensuite l'image de $S$ par $OF$ comme l'algèbre extérieure de modules (voir la Section \ref{sec:prodext} des Annexes) 
\begin{equation}\label{eq:OFobjects}
OF(S) :=  \Ext^*V(S)\{-|S|\}
\end{equation}
où $|S|$ est le nombre de composantes de cercles de $S$. Par le Théorème \ref{thm:preschcob} et la Remarque  \ref{rem:preschcob}, il suffit de définir le foncteur sur les cobordismes d'identité, de permutation, de naissance et de mort de cercle, ainsi que de fusion et scission avec une orientation donnée. Attention que la catégorie n'est pas monoïdale au sens stricte du terme et donc $OF$ non plus. Dès lors, on doit considérer ces cobordismes avec l'identité partout ailleurs sur les autres composantes de cercles. 

Bien évidemment l'identité est donnée par l'homomorphisme d'identité. Pour une permutation qui permute les deux cercles $a$ et $b$, on permute  les deux éléments dans l'algèbre extérieure,
$$OF\left(\smalldiag{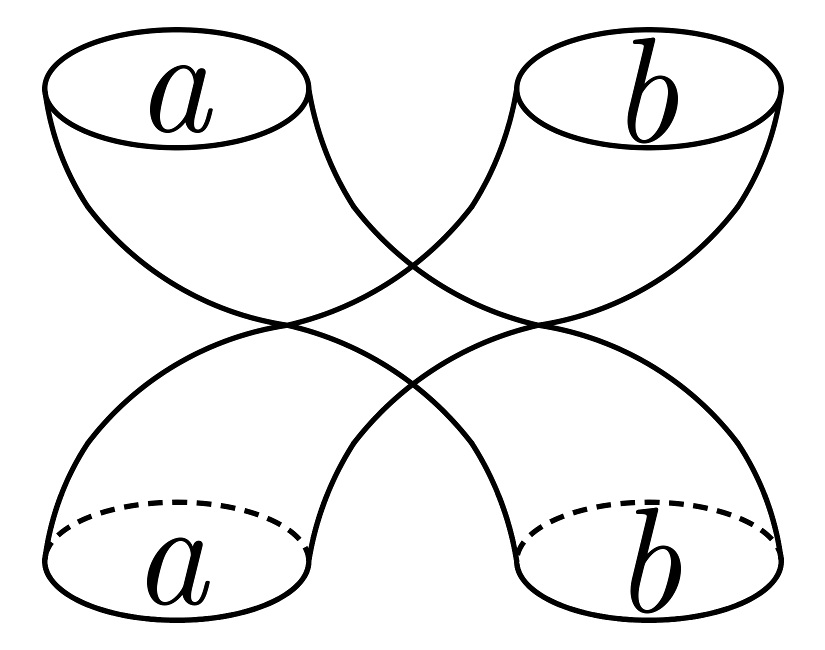}\right) : \begin{cases}
x \wedge a \wedge y &\mapsto x \wedge b \wedge y, \\
x \wedge b \wedge y &\mapsto x \wedge a \wedge y, \\
x \wedge a \wedge b \wedge y &\mapsto x \wedge b \wedge a \wedge y,
\end{cases}$$
avec $x$ et $y$ des produits extérieurs sans facteur $a$ ni $b$. On associe à la naissance de cercle $S \rightarrow S \sqcup \{a\}$ l'application d'inclusion d'algèbres extérieures
$$OF\left(\pitidiag{Images_arxiv/Birth.png}\right) : \Ext^* V(S) \rightarrow \Ext^* V(S \sqcup \{a\}) :  1 \mapsto 1$$
induite par l'inclusion de groupes $V(S) \hookrightarrow V(S) \oplus a\Z = V(S \sqcup \{a\})$. La mort $S \sqcup \{a\} \rightarrow S$ d'un cercle $a$  est donnée par la contraction avec le dual de $a$ (voir Définition \ref{def:contractdual})
$$OF\left(\pitidiag{Images_arxiv/Death.png}\right) : \Ext^* V(S \sqcup \{a\}) \rightarrow \Ext^* V(S) : x \mapsto a^*(x).$$
Pour une fusion $S_1 \rightarrow S_2$ qui joint les cercles $a_1\chemarrow a_2 \in S_1$, la flèche représentant l'orientation de la fusion, en un cercle $b \in S_2$, il y a une identification naturelle 
$$V(S_2) \simeq \frac{V(S_1)}{\{a_1 - a_2\}}$$
et on définit
$$OF\left(\smalldiag{Images_arxiv/Cob_merge_pos.png}\right) : \Ext^*V(S_1) \rightarrow \Ext^*V(S_2)$$
comme étant l'application induite par la projection 
$$\Ext^*V(S_1) \rightarrow \frac{ \Ext^* V(S_1)}{\langle a_1 - a_2 \rangle}.$$

\begin{remarque}\label{rem:assocetnonorient}
On voit aisément que l'image de la fusion ne dépend pas de l'orientation choisie, donnant 
$$OF\left(\smalldiag{Images_arxiv/Cob_merge_neg.png}\right) = OF\left(\smalldiag{Images_arxiv/Cob_merge_pos.png}\right).$$
De même, on peut observer que les fusions donnent les relations d'associativité et d'unité du Théorème \ref{thm:cobelem}. \`A partir de maintenant, on ne note donc plus l'orientation des fusions dans ce travail.
\end{remarque}
Finalement, on considère une scission $S_1 \rightarrow S_2$ qui scinde un cercle $a \in S_1$ en deux cercles $a_1$ et $a_2$ dans $S_2$ avec $a_1 \chemarrow a_2$, la flèche indiquant l'orientation de la scission. On a alors une identification naturelle
$$V(S_1) \simeq \frac{V(S_2)}{\{a_1-a_2\}}$$
qui induit un isomorphisme
$$ \Ext^* V(S_1) \simeq \Ext^*\left(\frac{V(S_2)}{\{a_1-a_2\}}\right) \simeq (a_1-a_2)\wedge \Ext^*V(S_2)$$
puisqu'on a l'égalité des équations suivantes :
\begin{align*}
(a_1 - a_2) \wedge x \wedge a_1 \wedge y &= -a_1 \wedge x \wedge a_2 \wedge y = (-1)^{|x|+1} a_1 \wedge a_2 \wedge x \wedge y\\
(a_1 - a_2) \wedge x \wedge a_2 \wedge y &= a_2 \wedge x \wedge a_1 \wedge y = (-1)^{|x|} a_2 \wedge a_1 \wedge x \wedge y.
\end{align*}
On définit alors
$$OF\left(\smalldiag{Images_arxiv/Cob_split_pos.png}\right) :  \Ext^*V(S_1) \rightarrow \Ext^*V(S_2)$$
 comme la composition
$$\Ext^*V(S_1) \overset{\simeq}{\longrightarrow} \Ext^*\left(\frac{V(S_2)}{(a_1-a_2)}\right) \overset{\simeq}{\longrightarrow}(a_1-a_2)\wedge\Ext^*V(S_2) \overset{\subset}{\longrightarrow} \Ext^*V(S_2).$$ 
On remarque donc que $1$ est envoyé sur $(a_1-a_2)$.

\begin{remarque}
On note que c'est ici que le choix d'orientation a son importance puisque l'isomorphisme
$$ \Ext^* V(S_1) \simeq  (a_1-a_2)\wedge \Ext^*V(S_2)$$
est de signe opposé à celui où on choisit $a_2 \chemarrow a_1$ qui donne
$$ \Ext^* V(S_1) \simeq  (a_2-a_1)\wedge \Ext^*V(S_2).$$
\end{remarque}

\begin{exemple}On donne d'abord deux exemples du foncteur appliqué sur des objets de $ChCob$, c'est-à-dire sur deux collections de cercles :
$$OF\left(\minidiag{Images_arxiv/Cercle.png}\right) = \Ext^*(a\Z)\{-1\} = \langle 1,a\rangle$$
 avec $\deg(1) = 0 - 1 = -1$ et $\deg(a) = 2 - 1 = 1$, ensuite
$$OF\left(\minidiag{Images_arxiv/Cercle.png}\minidiag{Images_arxiv/Cercle.png}\right)=\Ext^*(a\Z + b\Z)\{-2\} = \langle1,a,b,a\wedge b \rangle$$
 avec $\deg(1) = 0 - 2 = -2$, $\deg(a) = \deg(b) = 2-2 = 0$ et $\deg(a\wedge b) = 4-2 = 2$. 
\end{exemple}

 \begin{exemple}
On pose $S_1 = \left\{ b\minidiag{Images_arxiv/Cercle.png}, c\minidiag{Images_arxiv/Cercle.png}\right\}$ et $S_2 =  \left\{a\minidiag{Images_arxiv/Cercle.png}, b\minidiag{Images_arxiv/Cercle.png}, c\minidiag{Images_arxiv/Cercle.png}\right\}$ ainsi que le cobordisme $ M : S_1 \rightarrow S_2$ comme étant la naissance de $a$ et on calcule :
$$OF(M)(c + 3 b \wedge c) = c + 3b \wedge c.$$
\end{exemple}

 \begin{exemple}
On pose $S_1 = \left\{a\minidiag{Images_arxiv/Cercle.png}, b\minidiag{Images_arxiv/Cercle.png}, c\minidiag{Images_arxiv/Cercle.png}\right\}$ et $S_2 = \left\{ b\minidiag{Images_arxiv/Cercle.png}, c\minidiag{Images_arxiv/Cercle.png}\right\}$ ainsi que le cobordisme $ M : S_1 \rightarrow S_2$ comme étant la mort de $a$ et on calcule :
$$OF(M)(b \wedge c + b \wedge 2a \wedge c) = -2 (b \wedge c).$$
\end{exemple}

\begin{exemple}
On pose $S_1 = \left\{a_1\minidiag{Images_arxiv/Cercle.png}, a_2\minidiag{Images_arxiv/Cercle.png}, c\minidiag{Images_arxiv/Cercle.png}\right\}$ et $S_2 =  \left\{b\minidiag{Images_arxiv/Cercle.png}, c\minidiag{Images_arxiv/Cercle.png}\right\}$ ainsi que le cobordisme $M : S_1 \rightarrow S_2$ comme étant la fusion de $a_1$ et $a_2$ en $b$ et on calcule :
$$OF(M)(a_1 \wedge a_2 + a_2 \wedge c) = b \wedge b + b\wedge c = b\wedge c.$$
\end{exemple}

\begin{exemple}
On pose $S_1 = \left\{a\minidiag{Images_arxiv/Cercle.png}, c\minidiag{Images_arxiv/Cercle.png}\right\}$ et $S_2 =  \left\{a_1\minidiag{Images_arxiv/Cercle.png}, a_2\minidiag{Images_arxiv/Cercle.png}, c\minidiag{Images_arxiv/Cercle.png}\right\}$ ainsi que le cobordisme $M : S_1 \rightarrow S_2$ comme étant la scission de $a$ en $a_1 \chemarrow a_2$ et on calcule :
$$OF(M)(a\wedge c + c) = (a_1 - a_2) \wedge(a_1 \wedge c + c) = a_1 \wedge c - a_2 \wedge a_1 \wedge c - a_2 \wedge c.$$
\end{exemple}

Puisque $OF$ n'est pas bien définir sur $Cob$, il n'est pas une TQFT au sens strict du terme et il nous faut vérifier qu'il est bien défini sur $ChCob$.

\begin{proposition}
$OF$ est un foncteur de $ChCob$ vers la catégorie des modules sur $\Z$.
\end{proposition}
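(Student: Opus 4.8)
L'id\'ee directrice est d'exploiter la pr\'esentation de $ChCob$ par g\'en\'erateurs et relations du Th\'eor\`eme \ref{thm:preschcob}. Comme $OF$ est d\'ej\`a d\'efini sur les objets par \eqref{eq:OFobjects} et sur chacun des g\'en\'erateurs --- et que, par les Remarques \ref{rem:preschcob} et \ref{rem:assocetnonorient}, les deux orientations de la fusion fournissent le m\^eme homomorphisme, ce qui ram\`ene \`a six g\'en\'erateurs --- il reste \`a prolonger $OF$ \`a tout morphisme via les compositions et les multiplications \`a gauche et \`a droite, puis \`a v\'erifier que ce prolongement respecte les quatre familles de relations. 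Je montrerais donc que $OF$ envoie les deux membres de chaque relation sur le m\^eme homomorphisme $\Z$-lin\'eaire; la pr\'eservation de l'identit\'e \'etant imm\'ediate et celle de la composition r\'esultant de la d\'efinition du prolongement, la fonctorialit\'e s'ensuivra.

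Pour les trois premi\`eres familles, les v\'erifications sont directes et reposent sur le fait que des op\'erations portant sur des cercles disjoints agissent sur des facteurs distincts de l'alg\`ebre ext\'erieure et commutent donc entre elles, ce qui assure la compatibilit\'e avec les multiplications \`a gauche et \`a droite malgr\'e l'absence de structure mono\"idale stricte. Pour les relations de permutation, j'observerais que $OF$ envoie une permutation sur l'action naturelle du groupe sym\'etrique r\'e\'etiquetant les g\'en\'erateurs, sans signe pour l'\'echange de deux g\'en\'erateurs distincts; les relations $s^2 = \Id$ et la relation de tresse en d\'ecoulent. Pour les permutations de l'unit\'e et de la counit\'e, il suffirait de constater que la naissance (inclusion) et la mort (contraction par le dual) n'affectent pas les cercles permut\'es. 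La troisi\`eme famille se traite de m\^eme, la fusion (projection modulo $a_1 - a_2$) et la scission (produit par $a_1 - a_2$) ne touchant pas les cercles d\'eplac\'es par la permutation.

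Le point crucial, et l'obstacle principal, est la quatri\`eme famille, o\`u les signes apparaissent. Pour l'anti-commutativit\'e, je v\'erifierais que l'image par $OF$ de la fusion pr\'ec\'ed\'ee de la permutation \'echangeant les deux cercles fusionn\'es co\"incide avec l'image par $OF$ de la fusion seule : les deux annulent $a_1 \wedge a_2$ (car $b \wedge b = 0$) et co\"incident sur les autres g\'en\'erateurs une fois pass\'e au quotient; comme $OF$ ne d\'epend pas de l'orientation de la fusion, les deux membres de la relation ont alors la m\^eme image. Pour l'anti-co-commutativit\'e, la relation v\'eritablement responsable du signe, je calculerais que l'image par $OF$ de la scission orient\'ee $a_1 \chemarrow a_2$ suivie de la permutation envoie $1 \mapsto a_1 - a_2 \mapsto a_2 - a_1$, ce qui est pr\'ecis\'ement l'image de $1$ par la scission d'orientation oppos\'ee $a_2 \chemarrow a_1$; c'est ici que la d\'ependance de $OF$ en l'orientation de la scission et le signe issu de l'anti-commutativit\'e du produit ext\'erieur jouent leur r\^ole et garantissent la coh\'erence avec la relation de $ChCob$.

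Enfin, je noterais que chaque g\'en\'erateur est envoy\'e sur un homomorphisme homog\`ene de degr\'e $-\chi$, \`a savoir $-1$ pour la naissance et la mort et $+1$ pour la fusion et la scission, par un argument analogue \`a la Proposition \ref{eq:degchi}, de sorte que $OF$ est bien \`a valeurs dans les modules gradu\'es sur $\Z$. L'essentiel de la preuve se r\'eduit donc \`a un suivi soigneux des signes dans la quatri\`eme famille, le reste se ramenant \`a une s\'erie de v\'erifications directes sur les g\'en\'erateurs de l'alg\`ebre ext\'erieure.
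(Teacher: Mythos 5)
Votre d\'emonstration est correcte et suit essentiellement la m\^eme d\'emarche que celle du texte : on v\'erifie que les homomorphismes associ\'es aux g\'en\'erateurs respectent chacune des relations de la pr\'esentation du Th\'eor\`eme \ref{thm:preschcob}, la composition \'etant respect\'ee par d\'efinition. En particulier, votre calcul pour l'anti-co-commutativit\'e ($1 \mapsto a_1 - a_2 \mapsto a_2 - a_1$, co\"incidant avec la scission d'orientation oppos\'ee) est exactement celui du texte, qui ne d\'etaille que cette relation et la permutation de la counit\'e en d\'eclarant les autres v\'erifications similaires.
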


\emph{Démonstration.} Il suffit de vérifier que les morphismes de modules qu'on a défini respectent chacune des relations du Théorème \ref{thm:preschcob} pour que le foncteur soit bien défini puisqu'il respecte déjà la composition par définition. On donne les détails de deux des relations, toutes les autres étant des calculs similaires.
\begin{enumerate}
\item On considère la permutation de la counité
\begin{align*}
 \smalldiag{Images_arxiv/Cob_death.png} \smalldiag{Images_arxiv/Cob_identity.png} &= \middiag{Images_arxiv/Cob_counitypermutation.png}
\end{align*}
qui donne donc en termes de morphismes de modules
\begin{align*}
OF\left( \smalldiag{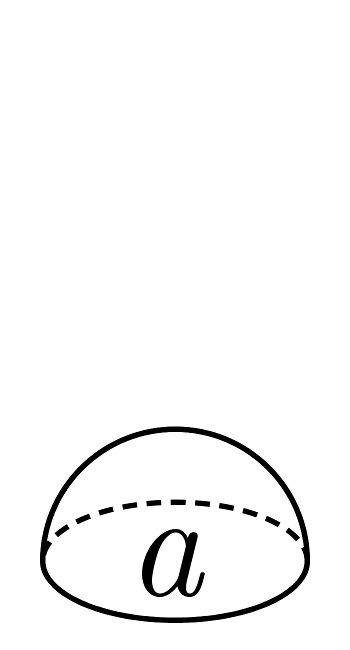} \smalldiag{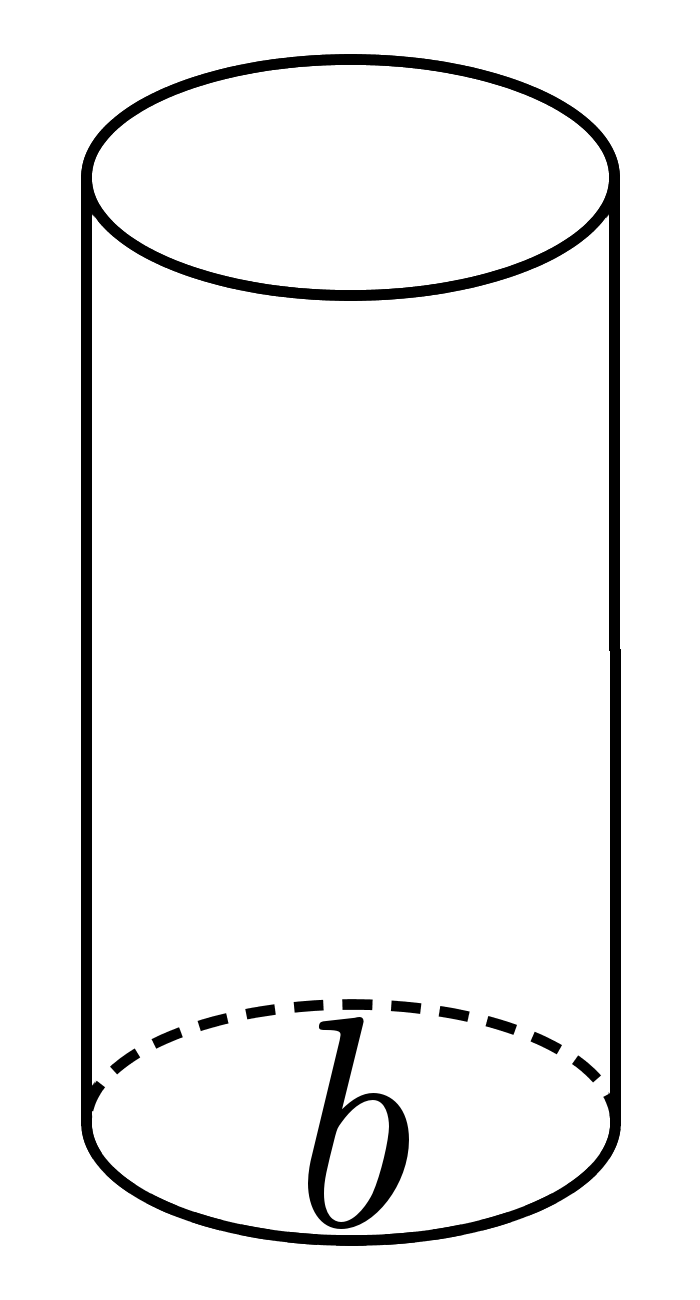} \right)& : \begin{cases}
a &\mapsto a^*(a) = 1, \\
b &\mapsto a^*(b) = 0, \\
a \wedge b &\mapsto a^*(a \wedge b) = b,
\end{cases}\\
OF\left(  \smalldiag{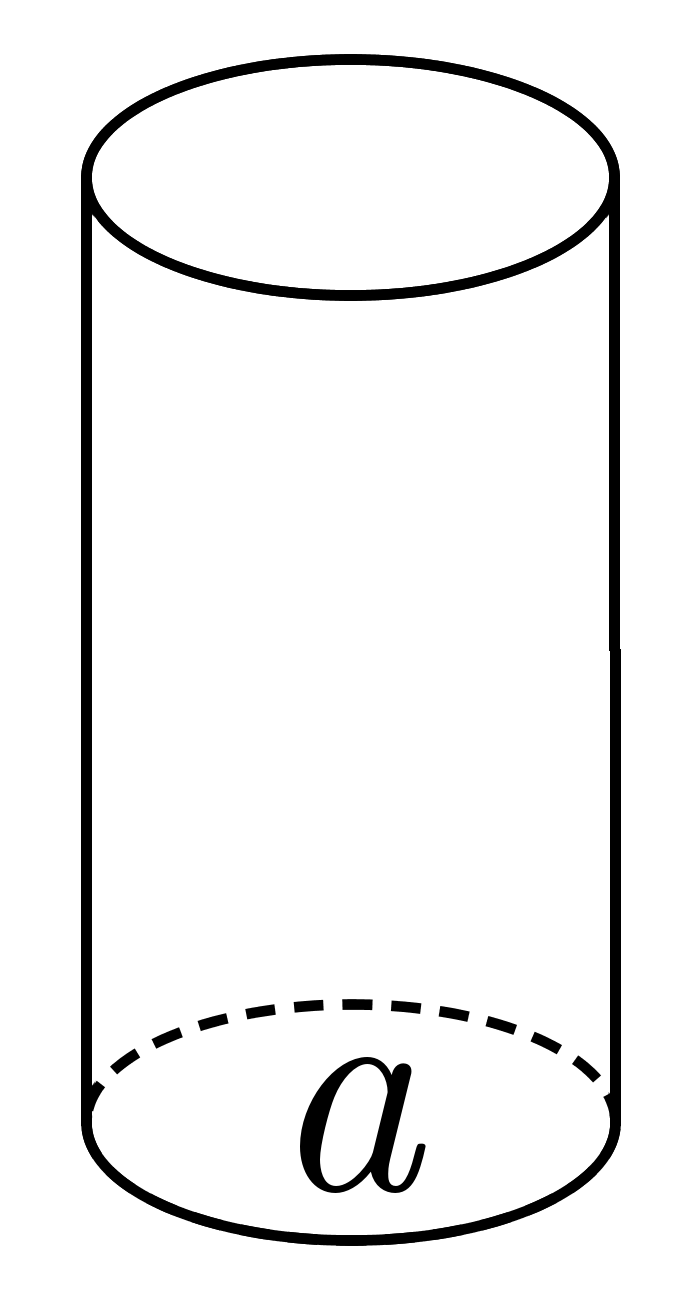} \smalldiag{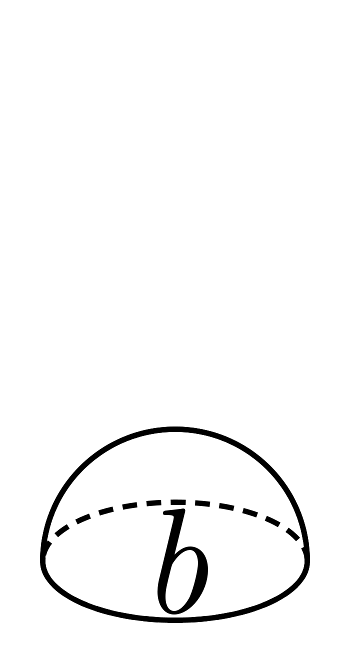} \right)\circ OF\left(\smalldiag{Images_arxiv/Cob_permutation_note.png} \right)& : \begin{cases}
a &\mapsto b \mapsto b^*(b) = 1,\\
b &\mapsto a \mapsto b^*(a) = 0, \\
a \wedge b &\mapsto (b \wedge a) \mapsto b^*(b \wedge a) = a,
\end{cases}
\end{align*}
et qui sont donc égaux car on identifie $a$ et $b$ comme éléments de sortie.

\item On considère l'anti-co-commutativité 
\begin{align*}\smalldiag{Images_arxiv/Cob_split_neg.png} &= \middiag{Images_arxiv/Cob_cocommutativity_ch.png}\end{align*}
qui donne les morphismes
\begin{align*}
OF\left(\smalldiag{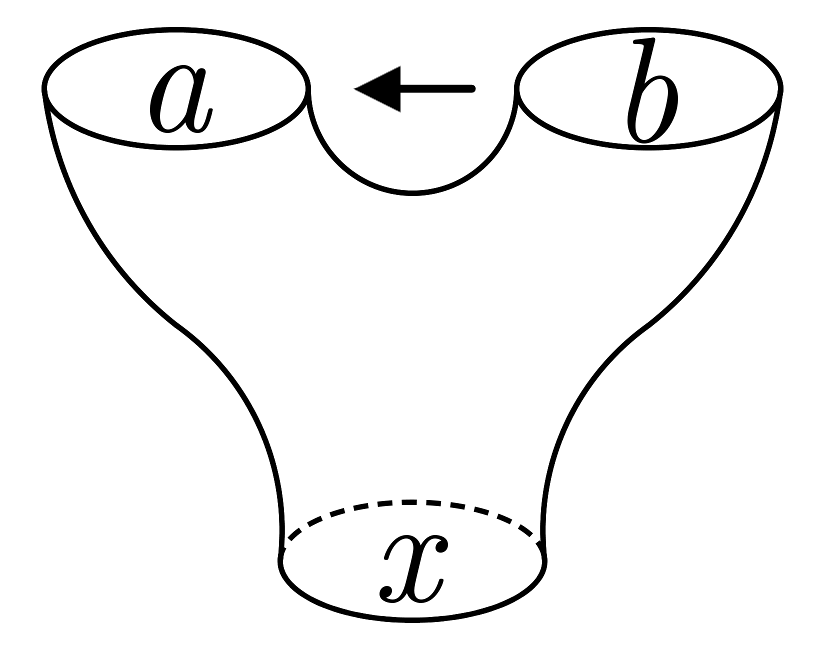}\right)& :  \begin{cases}
1 \mapsto (b - a), \\
x \mapsto (b - a) \wedge a,
\end{cases}\\
OF\left(\smalldiag{Images_arxiv/Cob_permutation_note.png}\right) \circ OF\left(\smalldiag{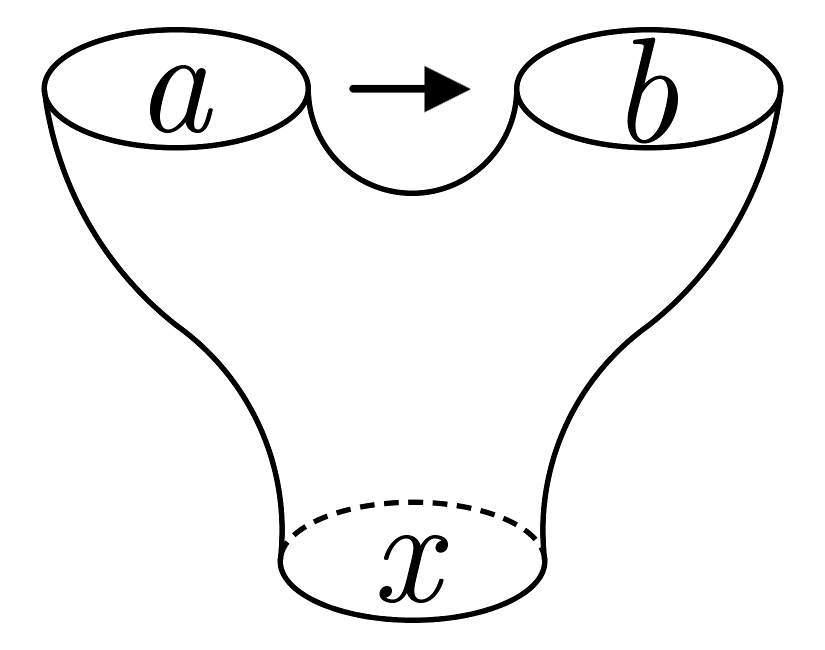}\right)& : 
\begin{cases}
 1  \mapsto (a - b) \mapsto (b - a),\\
 x  \mapsto (a - b)\wedge b \mapsto (b - a) \wedge a,
\end{cases}
\end{align*}
qui sont bien égaux. \qed
%
 \end{enumerate}

Tout comme pour le foncteur $F$, on retrouve la propriété suivante.
\begin{proposition}
Pour tout cobordisme chronologique $C$, on a
$$\deg OF(C) = -\chi(C). $$
\end{proposition}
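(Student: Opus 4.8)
Le plan est de reproduire la preuve de la Proposition~\ref{eq:degchi} relative au foncteur $F$, en s'appuyant cette fois sur la pr\'esentation de $ChCob$ fournie par le Th\'eor\`eme~\ref{thm:preschcob}. Les deux membres de l'\'egalit\'e sont additifs sous composition : d'une part $\deg(f\circ g) = \deg f + \deg g$ pour des applications lin\'eaires gradu\'ees, d'autre part $\chi(C_2\circ C_1) = \chi(C_1)+\chi(C_2)$ puisque le recollement s'effectue le long d'une union de cercles, de caract\'eristique d'Euler nulle. Le m\^eme raisonnement s'applique aux multiplications \`a gauche et \`a droite, qui se d\'ecomposent en une union disjointe avec des cylindres (de degr\'e $0$ et de caract\'eristique d'Euler nulle) suivie d'une composition. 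Il suffit donc de v\'erifier l'\'egalit\'e $\deg OF(C) = -\chi(C)$ sur chacun des huit g\'en\'erateurs.

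Premi\`erement, je calculerais le degr\'e de chaque g\'en\'erateur en tenant compte du d\'ecalage $\{-|S|\}$ de l'\'equation~(\ref{eq:OFobjects}), un \'el\'ement de degr\'e ext\'erieur $k$ dans $OF(S)$ ayant pour degr\'e $2k-|S|$. L'identit\'e et la permutation, issues de cylindres, sont de degr\'e $0$, d'o\`u $-\chi = 0$. Pour la naissance $S \to S\sqcup\{a\}$, l'\'el\'ement $1$ passe du degr\'e $-|S|$ au degr\'e $-(|S|+1)$, donc $\deg OF = -1 = -\chi(\text{disque})$ ; sym\'etriquement, la mort contracte par $a^*$ et abaisse \'egalement le degr\'e d'une unit\'e, donnant encore $\deg OF = -1$. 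Pour la fusion $S_1 \to S_2$ avec $|S_2| = |S_1|-1$, la projection envoie $1$ de degr\'e $-|S_1|$ sur $1$ de degr\'e $-|S_1|+1$, soit $\deg OF = 1 = -\chi(\text{pantalon})$ ; pour la scission $S_1 \to S_2$ avec $|S_2| = |S_1|+1$, l'\'el\'ement $1$ est envoy\'e sur $(a_1-a_2)$, de degr\'e ext\'erieur $1$ et donc de degr\'e $1-|S_1|$, ce qui redonne $\deg OF = 1$.

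La v\'erification est enti\`erement calculatoire et ne pr\'esente pas de v\'eritable obstacle ; le seul point demandant de l'attention est de suivre correctement le d\'ecalage $\{-|S|\}$, qui d\'epend du nombre de composantes et diff\`ere donc entre la source et le but pour la naissance, la mort, la fusion et la scission. On remarquera que les signes intervenant dans les d\'efinitions de la mort et de la scission n'ont aucune incidence, puisqu'ils n'affectent pas le degr\'e. Les degr\'es obtenus ($-1$ pour la naissance et la mort, $+1$ pour la fusion et la scission, $0$ pour l'identit\'e et la permutation) concordent exactement avec ceux du cas pair, et l'on conclut de mani\`ere identique par additivit\'e.
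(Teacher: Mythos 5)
Votre preuve est correcte et suit essentiellement la m\^eme d\'emarche que celle du texte : on v\'erifie l'\'egalit\'e sur les g\'en\'erateurs de $ChCob$ en comparant, pour chacun, le d\'ecalage de degr\'e (en tenant compte du d\'ecalage $\{-|S|\}$) et la caract\'eristique d'Euler, puis on conclut par additivit\'e. Le seul apport de votre r\'edaction est de rendre explicite l'argument d'additivit\'e sous composition et multiplication (recollement le long de cercles de caract\'eristique d'Euler nulle), que le texte laisse implicite.
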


\begin{proof}
La naissance de cercle envoie un produit extérieur vers le même, l'espace d'arrivée ayant juste une composante en plus et donc un degré décalé par $-1$ puisque dans (\ref{eq:OFobjects}) on définit l'image avec un degré décalé par le nombre de composantes.
La fusion ne change pas la longueur du produit extérieur mais diminue le nombre de composantes de $1$, donc on obtient un décalage de $1$. 
La scission augmente la longueur du produit extérieur de $1$, augmentant le degré de $2$, et augmente le nombre de composantes de $1$, donnant au final un décalage du degré par $1$.
Enfin, la mort de cercle diminue la longueur du produit extérieur de $2$ et le nombre de composantes de $1$, donnant un décalage du degré par $-1$.
%
%
%
Par ailleurs, on calcule aisément que
\begin{align*}
 \chi\left(\pitidiag{Images_arxiv/Birth.png}\right) & =  1, &&&
 \chi\left(\smalldiag{Images_arxiv/Cob_merge.png}\right) & =  -1, \\
 \chi\left(\pitidiag{Images_arxiv/Death.png}\right) &=   1, &&&
 \chi\left(\smalldiag{Images_arxiv/Cob_split.png}\right) &=  -1,
\end{align*}
puisqu'on a des surfaces avec $0$ trous et $1$ ou $3$ bord(s). Pour terminer, le résultat est clair pour la permutation et l'identité.
\end{proof}
Par ailleurs, on vérifie bien que si on regarde $OF$ sur les cobordismes sans chronologie, il n'est bien défini qu'à signe près. De plus, soit les deux homomorphismes sont les mêmes, soit ils sont de signes opposés, c'est-à-dire que le signe ne dépend pas des éléments sur lesquels on applique les homomorphismes.
\begin{proposition}\label{prop:isotopesignepres}
Pour toute paire de cobordismes (sans chronologie) $C_1,C_2 : S_1 \rightarrow S_2$ équivalents on a
$$OF(C_1) = \pm OF(C_2).$$
\end{proposition}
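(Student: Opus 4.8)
The plan is to first reduce the claim to a comparison of two chronologies placed on one and the same surface, and then to control how $OF$ changes under an elementary modification of the chronology. Since $C_1$ and $C_2$ are equivalent, a boundary-preserving diffeomorphism $\phi : C_1 \to C_2$ carries any chronology chosen on $C_1$ to a chronology on $C_2$; the two chronological cobordisms thus obtained are equivalent in $ChCob$, so the functor $OF$ assigns them the very same homomorphism. It therefore suffices to show that two chronologies $\chi$ and $\chi'$ on a fixed cobordism $C$ satisfy $OF(C,\chi) = \pm\, OF(C,\chi')$ for a sign that does not depend on the element to which the homomorphisms are applied.

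I would then use that any two chronologies on a fixed surface are joined by a finite sequence of two elementary moves: transposing the order of two consecutive critical points, and reversing the chosen orientation at an index-$1$ critical point (a merge or a split). Because a product of global signs is again a global sign, the statement reduces to checking that each of these two moves multiplies $OF(C)$ by a global scalar $\pm 1$. Equivalently, one may phrase this through the presentations: the relations distinguishing $Cob$ (Th\'eor\`eme \ref{thm:cobelem}) from $ChCob$ (Th\'eor\`eme \ref{thm:preschcob}) are associativity, coassociativity, the Frobenius relations, unit and counit, together with the passage from (co)commutativity to anti-(co)commutativity; it is enough to verify that $OF$, which already respects every relation of $ChCob$ by the preceding proposition, sends each of these extra relations to an identity valid up to a global sign.

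For the orientation reversals the computation is immediate from the definitions on the generators: reversing a merge leaves $OF$ unchanged (Remarque \ref{rem:assocetnonorient}), while reversing a split replaces the assignment $1 \mapsto (a_1 - a_2)$ by $1 \mapsto (a_2 - a_1)$ and hence multiplies the whole homomorphism by $-1$. For the transpositions I would distinguish whether the two critical points act on disjoint circles or share one: in the disjoint case the two orders differ only by commuting two fixed homogeneous operators of $\Ext^*$ past one another, which produces a Koszul sign governed solely by the parities of the exterior-degree shifts they induce, hence a global $\pm 1$; in the interacting case the transposition is an instance of associativity or coassociativity, of a Frobenius relation, of unit/counit, or of one of the permutation relations of Th\'eor\`eme \ref{thm:preschcob}, each of which $OF$ realises up to a global sign (the associativity and unit relations for the merge being covered directly by Remarque \ref{rem:assocetnonorient}).

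The hard part will be the globality of the sign rather than its mere existence: I must guarantee that in every interacting configuration, and in the presence of arbitrarily many ambient circles, the two orderings differ by an overall scalar and not by an element-dependent correction. I would settle this by exploiting that each elementary homomorphism assigned by $OF$ is homogeneous of fixed degree, so that all Koszul signs arising when operators are reordered, and all signs produced by the local relations above, depend only on these fixed degrees and parities and not on the particular exterior monomial; the ambient circles then contribute identical factors on the two sides and cancel, leaving a single global sign $\pm 1$.
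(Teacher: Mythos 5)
Your proposal, in its presentation-based phrasing, is essentially the paper's own proof: the paper likewise reduces the statement to checking that $OF$ respects, up to a sign independent of the argument, each relation of the presentation of $Cob$ (Th\'eor\`eme \ref{thm:cobelem}) that is not already a relation of $ChCob$ (Th\'eor\`eme \ref{thm:preschcob}), allowing all possible orientations of the saddles; it carries out the cocommutativity case (a global sign $-1$) and declares the remaining relations similar, exactly as you outline. You do add two refinements that the paper leaves implicit and that are genuinely worth making explicit. First, the preliminary reduction: pushing a chosen chronology of $C_1$ through the boundary-preserving diffeomorphism yields a chronological cobordism on $C_2$ equivalent to it in $ChCob$, so the whole question becomes a comparison of two chronologies on one surface. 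Second, the treatment of exchanges of critical points lying on disjoint circles: this interchange is not among the listed relations of Th\'eor\`eme \ref{thm:cobelem} (it is hidden in the monoidal structure of $Cob$), yet it does produce signs, and your Koszul-type argument -- merges and births are even operators, while wedging by $(a_1-a_2)$ and contraction by a dual are odd, so reordering them produces signs depending only on these parities and not on the exterior monomial -- is precisely what justifies both this case and the globality of all the signs.

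One claim in your first framing is false as stated: two chronologies on a fixed surface are \emph{not} in general connected by transpositions of consecutive critical points and orientation reversals alone, because two Morse functions on the same cobordism can have different numbers of critical points. On a cylinder, for instance, the projection to $[0,1]$ has no critical point, while another Morse function can have a birth followed by a merge; no sequence of transpositions relates them. One also needs cancellation (birth--death) moves, which in the language of presentations are exactly the unit and counit relations. This does not sink your proof, since your ``equivalent'' phrasing through the presentations does include unit/counit among the relations to be verified; but the two phrasings become equivalent only after these cancellation moves are added to your list of elementary modifications, and filing them under ``transpositions of interacting critical points'' conflates moves that preserve the number of critical points with moves that change it.
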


\begin{proof}
Il suffit d'observer le comportement du foncteur $OF$ sous les relations du Théorème \ref{thm:cobelem} en ajoutant toutes les possibilités d'orientations. On donne comme exemple le cas de la co-commutativité, les autres étant similaires. Si on oublie l'orientation et la chronologie, par le Théorème \ref{thm:cobelem}, on a l'équivalence des cobordismes suivants :
\begin{align*} \smalldiag{Images_arxiv/Cob_split_pos.png} &\sim \middiag{Images_arxiv/Cob_cocommutativity_ch.png}\end{align*}
On suppose qu'à gauche on scinde $a_1 \in S_1$ en $a_2 \chemarrow b_2 \in S_2$ et que à droite on scinde $a_1 \in S_1$ en $a_2\chemarrow b_2 \in S_3$ puis qu'on les permute. On obtient alors respectivement pour le cobordisme de gauche puis celui de droite
\begin{align*}
x &\rightarrow (a_2 - b_2) \wedge \bar x_2, \\
x & \rightarrow (a_2 - b_2) \wedge \bar x_2 \rightarrow (b_2 - a_2) \wedge \bar x_2,
\end{align*}
avec $\bar x_2$ qui est $x$ où on change tous les $a_1$ en $a_2$ et donc cela signifie que les morphismes sont de signes opposés.
\end{proof}
Par contre, on remarque que seules les scissions et morts de cercles peuvent engendrer un changement de signe.
\begin{proposition}\label{prop:isofusions}
Pour toute paire de cobordismes chronologiques  $C_1,C_2 : S_1 \rightarrow S_2$ équivalents et se décomposant en seulement des fusions, permutations et identités on a
$$OF(C_1) = OF(C_2).$$
\end{proposition}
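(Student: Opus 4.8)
The plan is to show that, on cobordisms assembled only from merges, permutations and identities, the functor $OF$ is simply $\Ext^*$ applied to a single $\Z$-linear map between the component groups $V(-)$, and that this linear map is a purely topological invariant of the cobordism. Since equivalent cobordisms then yield the same linear map, they yield the same morphism, with no residual sign. This is cleaner and more self-contained than trying to pin down the ambiguous sign left by Proposition \ref{prop:isotopesignepres}.

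First I would record that each of the three allowed generators $g$ satisfies $OF(g) = \Ext^*(\ell_g)$ for an explicit linear map $\ell_g$ on the generating groups. For a merge of $a_1,a_2$ into $b$ this is the projection $V(S_1) \to V(S_1)/\langle a_1 - a_2\rangle \simeq V(S_2)$ sending $a_1, a_2 \mapsto b$; it is symmetric in $a_1,a_2$, hence free of orientation data, which is exactly Remark \ref{rem:assocetnonorient}. For a permutation it is the transposition swapping the two generators: one checks the defining formula for $OF$ on a permutation agrees with $\Ext^*$ of this swap, the only delicate point being $a\wedge b \mapsto b\wedge a = -a\wedge b$ on both sides. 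For the identity it is the identity. None of these carries any sign or orientation.

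Next, writing $C = g_k \circ \cdots \circ g_1$ and using that $OF$ and $\Ext^*$ are both functorial, I obtain $OF(C) = \Ext^*(\ell_{g_k} \circ \cdots \circ \ell_{g_1})$; the degree shifts $\{-|S_i|\}$ are identical on both sides and play no role in the comparison. Set $f_C := \ell_{g_k}\circ\cdots\circ\ell_{g_1} \colon V(S_1) \to V(S_2)$. The key geometric step is to identify $f_C$ intrinsically: because $C$ has no births, deaths or splits, every connected component of $C$ meets $S_2$ in exactly one circle, so tracking a circle of $S_1$ through the successive merges shows that $f_C$ sends each generator to the unique $S_2$-circle lying in its connected component. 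This description refers only to the partition of the boundary circles into the connected components of $C$, together with the identifications of $S_1$ and $S_2$.

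The conclusion is then immediate: an equivalence $C_1 \sim C_2$ is a boundary-preserving diffeomorphism, which induces a bijection of connected components compatible with the boundary identifications, so $f_{C_1} = f_{C_2}$ and hence $OF(C_1) = \Ext^* f_{C_1} = \Ext^* f_{C_2} = OF(C_2)$. The hard part will be the two verification steps rather than the final deduction: confirming that $OF$ on each generator is genuinely $\Ext^*$ of a linear map (the permutation sign being the subtle case, the orientation-independence of the merge being handed to us by Remark \ref{rem:assocetnonorient}), and justifying carefully that the merge-only hypothesis forces exactly one $S_2$-circle per component, so that $f_C$ is well defined and manifestly independent of the chronology. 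A shorter but less rigorous route would instead invoke Proposition \ref{prop:isotopesignepres} to get $OF(C_1)=\pm OF(C_2)$ and then discard the minus sign by the observation that only splits and deaths can produce one; the argument above has the advantage of proving equality outright.
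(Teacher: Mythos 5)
Votre preuve est correcte, mais elle suit une route r\'eellement diff\'erente de celle du texte. La preuve du texte est une v\'erification par g\'en\'erateurs et relations : elle invoque la Remarque \ref{rem:assocetnonorient} (les fusions ne d\'ependent pas de l'orientation et r\'ealisent associativit\'e et unit\'e) et observe que, parmi les relations du Th\'eor\`eme \ref{thm:cobelem}, seules celles faisant intervenir une scission ou une mort (counit\'e, co-commutativit\'e, Frobenius) peuvent produire un signe, relations qui ne s'appliquent pas ici puisque ni $C_1$ ni $C_2$ ne contient de mort dans sa d\'ecomposition. Votre argument \'evite compl\`etement cette chasse aux relations : vous identifiez $OF$, restreint aux cobordismes form\'es de fusions, permutations et identit\'es, comme $\Ext^*$ appliqu\'e \`a une application lin\'eaire $f_C : V(S_1) \rightarrow V(S_2)$, puis vous montrez que $f_C$ ne d\'epend que de la donn\'ee topologique des composantes connexes de la surface (chaque composante rencontrant $S_2$ en exactement un cercle, gr\^ace \`a l'absence de naissances, morts et scissions) ; l'\'egalit\'e $OF(C_1) = OF(C_2)$ en d\'ecoule alors imm\'ediatement pour tout diff\'eomorphisme pr\'eservant les bords. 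Ce que chaque approche apporte : celle du texte est plus courte et coh\'erente avec la technique d\'ej\`a utilis\'ee pour la Proposition \ref{prop:isotopesignepres}, mais elle repose implicitement sur le fait qu'une cha\^ine de relations reliant $C_1$ \`a $C_2$ peut \^etre choisie sans jamais passer par des d\'ecompositions contenant scissions ou morts (la relation de counit\'e pourrait a priori ins\'erer une paire scission--mort en cours de cha\^ine), point qu'elle ne justifie pas ; votre description intrins\`eque de $OF(C)$ contourne cette difficult\'e, donne une \'egalit\'e directe sans signe r\'esiduel, et fournit en prime une formule ferm\'ee pour le morphisme induit, ne d\'ependant que de la partition des cercles de bord en composantes connexes du cobordisme.
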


\begin{proof}
Il suffit de faire les observations de la Remarque \ref{rem:assocetnonorient} et de voir qu'aucune des relations du Théorème \ref{thm:cobelem} ne peut engendrer de scission sauf celle de la counité mais qu'on peut oublier puisque ni $C_1$ ni $C_2$ ne possède de mort de cercle dans sa décomposition.
\end{proof}

On termine cette section par des exemples de calculs du foncteur appliqué à des cobordismes semblables à ceux des Exemples \ref{ex:cobS2} et \ref{ex:cobT2} ainsi qu'un autre qui montre la non-coassociativité venant d'un changement de chronologie, c'est-à-dire une équivalence de cobordismes qui change l'ordre des points critiques.

\begin{exemple}
On calcule $OF(S^2) : \Z \rightarrow \Z$ avec
 $$S^2 \simeq \smalldiag{Images_arxiv/S2.png}$$
qui est donc une naissance suivi d'une mort. On obtient alors
$$OF(S^2) : \Z \rightarrow \Ext^* \Z\{-1\} \rightarrow \Z : 1 \mapsto 1 \mapsto 0$$
et donc $OF(S^2) = 0 = F(S^2)$.
\end{exemple}

\begin{exemple}
On calcule $OF(T^2) : \Z \rightarrow \Z$ avec
 $$T^2 \simeq \bigdiag{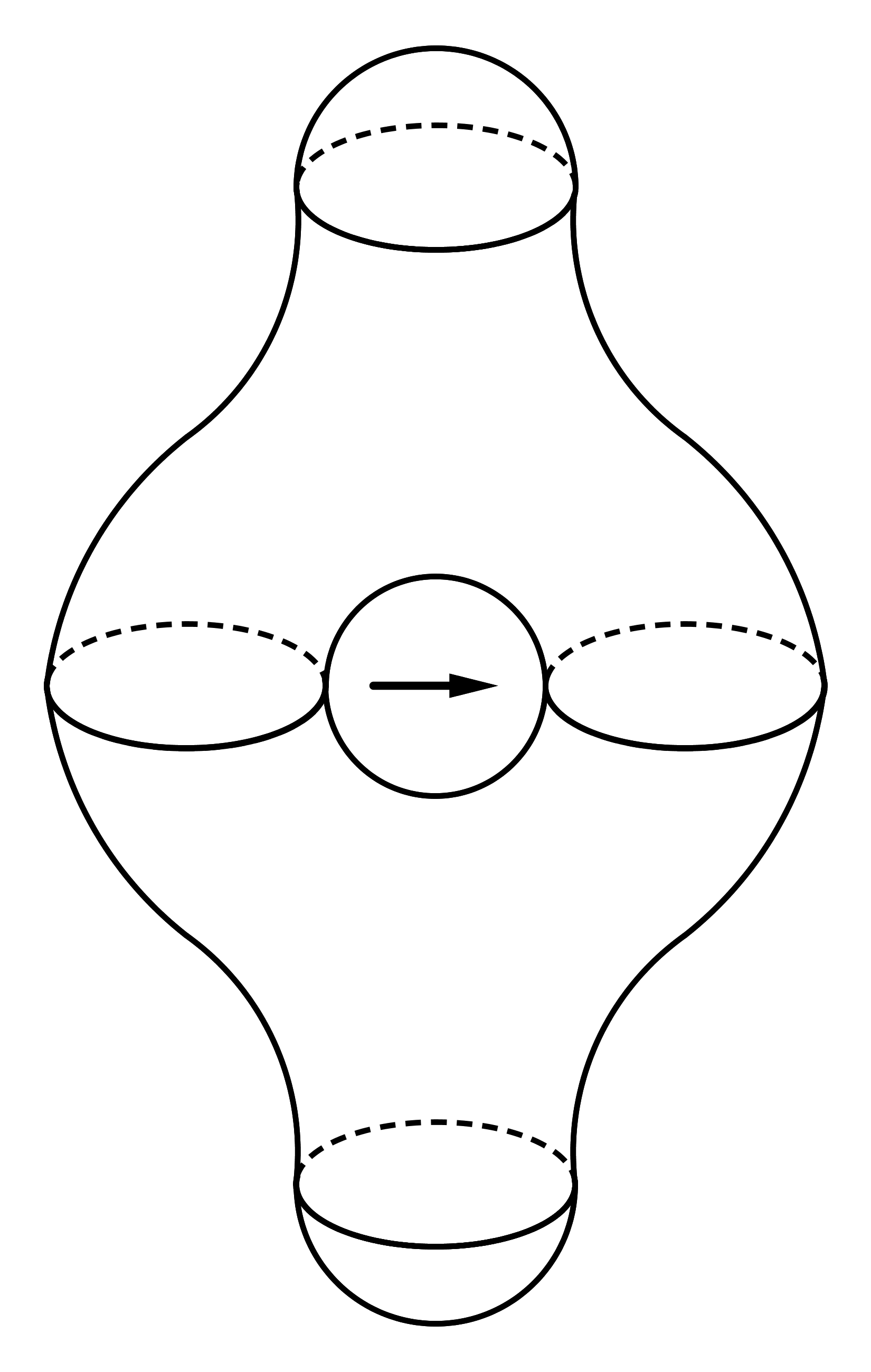}$$
qui est donc une naissance suivi d'une scission avec orientation arbitraire, d'une fusion et enfin d'une mort. On obtient alors

\begin{align*}
OF(T^2) : &\Z \rightarrow \Ext^* \bigl(a\Z\bigr)\{-1\} \rightarrow \Ext^* \bigl(a_1\Z \oplus a_2\Z\bigr) \{-2\} \rightarrow \Ext^* \bigl(b\Z\bigr)\{-1\} \rightarrow \Z :\\
&1 \mapsto 1 \mapsto (a_1 - a_2) \mapsto b - b = 0 \mapsto 0
\end{align*}
et donc $OF(T^2) = 0$, ce qui est différent du cas paire puisqu'on avait une multiplication par $2$. On obtient le même résultat si on choisit une autre orientation.
\end{exemple}

En utilisant la Proposition \ref{prop:isotopesignepres} et le théorème de classification des surfaces, on peut même montrer que tout cobordisme sans bord est envoyé par le foncteur $OF$ sur l'application nulle. 

L'exemple suivant montre la nécessité d'avoir une chronologie sur les points critiques puisque deux cobordismes équivalent dans $Cob$ peuvent donner deux morphismes différents par $OF$.

\begin{exemple}
On considère 3 collections de cercles : $\{a_1\}, \{b_1, b_2\}$ et $\{c_1, c_2, c_3\}$ et deux compositions de scissions : $M_1$ qui scindent $a_1$ en $b_1 \chemarrow b_2$ puis $b_1$ en $c_1 \chemarrow c_2$ et $M_2$ qui scinde $a_1$ en $b_1 \chemarrow b_2$ et $b_2$ en $c_2 \chemarrow c_3$, comme illustré en Figure \ref{fig:nonCommutative}.  Il est clair que ces compositions sont équivalentes et on calcule leurs images par $OF$ :
\begin{align*}
OF(M_1) &: a_1 \mapsto (b_1 - b_2) \wedge b_1 = b_1\wedge b_2 \mapsto (c_1 - c_2)\wedge(c_1 \wedge c_3) = c_1\wedge c_2\wedge c_3, \\
OF(M_2) &: a_1 \mapsto (b_1 - b_2) \wedge b_1 = b_1\wedge b_2 \mapsto (c_2 - c_3)\wedge(c_1 \wedge c_2) = -c_1\wedge c_2\wedge c_3.
\end{align*}

\begin{figure}[h]
    \center
    \includegraphics[width=10cm]{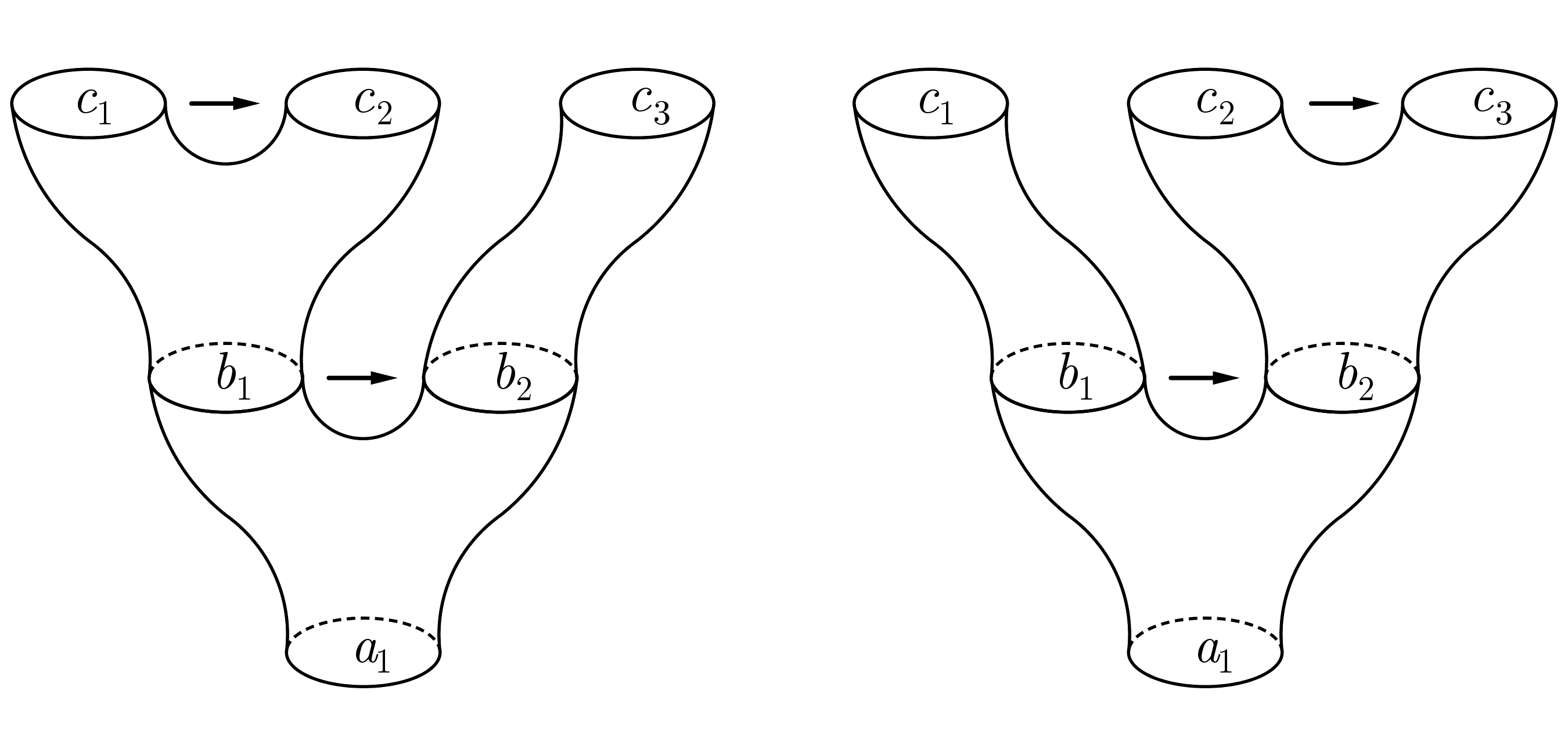}
    \caption{Si on oublie la chronologie, ces deux cobordismes sont équivalents mais n'ont pas la même image par $OF$.}
    \label{fig:nonCommutative}
\end{figure}
\end{exemple}

\section{Construction impaire des anneaux des arcs}

L'objectif de cette section est de définir une famille d'anneaux unitaires par une construction similaire à celle de l'anneau $H^n$ de Khovanov mais en utilisant le foncteur $OF$ de la section précédente. Cela donne alors une construction impaire, ou "oddification" en anglais, de $H^n$.\\

Pour $n\ge 0$, on définit le groupe impair des arcs de Khovanov d'indice $n$ comme le groupe abélien gradué donné par la somme directe
\begin{align}
\Hno &:= \bigoplus_{a,b\in B^n} b(\Hno)a,  & &\text{où}& b(\Hno)a &:= OF\bigl(W(b)a\bigr)\{n\}. \label{eq:degohn}
\end{align}

%

\begin{remarque}\label{rem:hnopos}
Comme dans $H^n$, le fait de définir $b(\Hno)a$ avec un décalage du degré par $n$ rend $\Hno$ positivement gradué. En effet, le degré minimal d'un élément de $OF(W(b)a)$ est obtenu en regardant $1$ dedans qui a un degré de au minimum $-n$ puisqu'on a un maximum de $n$ composantes.
\end{remarque}
%

Pour faire de $\Hno$ un anneau, il faut définir une multiplication dessus. On veut la définir de façon similaire à celle de $H^n$ mais en utilisant des cobordismes avec chronologies. Comme pour $H^n$, on observe que $bW(b)$ donne tous des demi cercles, chacun possédant sa symétrie horizontale en face. On peut donc à nouveau construire des ponts qui s'emboitent, donnant des cobordismes qui envoient à chaque fois une paire de demi cercle vers l'identité, mais cette fois on doit se fixer un ordre dans lequel on construit ces ponts pour avoir une chronologie et on doit orienter les scissions. Il n'y a a priori aucune raison de choisir un ordre plutôt qu'un autre ou une orientation particulière et donc on propose de tous les considérer. 

\begin{definition}
On définit une \emph{règle de multiplication} $C$ pour $OH^n$ comme les données de, pour chaque triplet  $(c,b,a) \in (B^n)^3$ :
\begin{itemize}
\item un ordre $x_1 < \dots < x_{2n}$ sur les points de base $\{1, \dots, 2n\}$, donc sur les extrémités des arcs de $b$, tel que si $x_i$ est relié à $x_j$ dans $b$ et que $x_k \in ]x_i, x_j[$ pour l'ordre usuel alors $x_i < x_k$ ou $x_j < x_k$ dans l'ordre de la règle,
\item une orientation $x_i \chemarrow x_j$ ou $x_j \chemarrow x_i$ pour tout $i,j \in [1, 2n]$ tels que $x_i$ est relié à $x_j$ par un arc de $b$.
\end{itemize}
\end{definition}
Autrement dit, on donne un ordre (qui dépend de $a$ et de $c$) sur les arcs de $b$ ainsi qu'une orientation pour chacun de ceux-ci et on ne permet pas qu'un arc $b_1$ arrive avant un autre $b_2$ dans l'ordre si $b_1$ est imbriqué dans $b_2$, c'est-à-dire si les points d'extrémités de $b_1$ sont entre les points d'extrémités de $b_2$. Cette condition est imposée afin qu'on ne construise pas un pont qui traverserait le restant de la surface si on la plongeait dans $\R^3$. On demande cela puisque M. Khovanov construit ses cobordismes comme des surfaces plongées dans l'espace pour définir $H^n$, même si cela n'a pas d'influence dans le cas pair.

On construit alors une famille de cobordismes chronologiques pour une règle de multiplication $C$
$$M(C) :=\{ C_{cba} : W(c)bW(b)a \rightarrow W(c)a | a,b,c \in B^n\} $$
par la procédure suivante, illustrée en Figure \ref{fig:multiex}, appliquée pour tout $a,b,c \in B^n$ :
\begin{enumerate}
\item On pose $i := 1$ et $D_0 := W(c)bW(b)a$. Quitte à redimensionner, on peut supposer que les points de base de $W(c)b$ sont alignés sur $(1,1), \dots, (2n,1)$ et ceux de $W(b)a$ sur $(1,0), \dots, (2n,0)$. On considère $x_1 < \dots < x_{2n}$ l'ordre sur les points pour $(c,b,a)$ de la règle de multiplication $C$.
\item Si $(x_i,0)$ est relié par un segment de droite $\{x_i\} \times [0,1]$ à $(x_i,1)$ dans $D_{i-1}$, alors on ne fait rien et on pose $D_{i} := D_{i-1}$. Sinon on considère $(x_j,0)$ l'autre extrémité de l'arc de $b$ passant par $(x_i,0)$. On construit $D_{i}$ comme étant $D_{i-1}$ où on supprime l'arc de $b$ passant par $(x_i,1)$ et l'arc de $W(b)$ passant par $(x_i,0)$ et on relie $(x_i,0)$ à $(x_i,1)$ et $(x_j,0)$ à $(x_j,1)$ par des segments de droites verticales $\{x_i\}\times [0,1]$ et $\{x_j\}\times [0,1]$. On construit ensuite un cobordisme de $D_{i-1}$ à $D_{i}$ comme étant l'identité partout sauf  pour un pont envoyant les arcs opposés de $W(b)$ et $b$ passant par le $i$-ème point vers deux segments de droites verticales et l'identité partout ailleurs sur $D_i$. Si ce pont engendre une scission, en notant $X_i$ et $X_j$ les composantes de $D_{i}$ passant respectivement par $(x_i,0)$ et $(x_j,0)$, on l'oriente $X_i \chemarrow X_j$ si $x_i \chemarrow x_j$ et $X_j \chemarrow X_i$ sinon.
\item On pose $i := i+1$. Si $ i > 2n$, on passe à la prochaine étape, sinon on revient à l'étape 2.
\item On a pose $C_{cba} :=D_0 \rightarrow D_1 \rightarrow \dots \rightarrow D_{2n}$ comme étant la composition des cobordismes $D_i$ et puisque $D_{2n} \simeq W(c)a$ cela construit un cobordisme avec chronologie $C_{cba} : W(c)bW(b)a \rightarrow W(c)a$.
\end{enumerate}

\begin{figure}[h]
    \center
    \includegraphics[width=15cm]{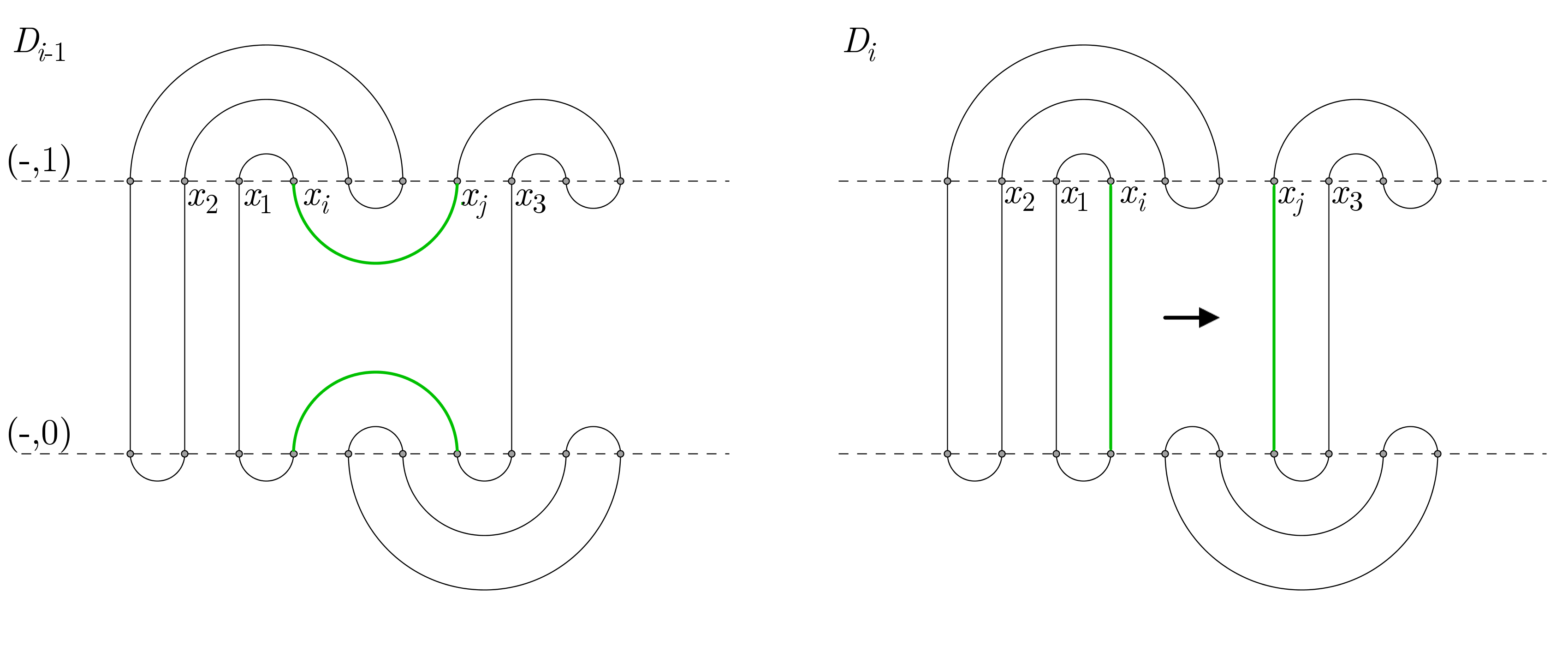}
    \caption{\label{fig:multiex} Exemple d'étape de la procédure de construction de $C_{cba}$ avec $i = 4$ et $x_i \chemarrow x_j$. On ne note que les $x_k$ pour $k \le i$ à l'exception de $x_j$.}
\end{figure}

En d'autres termes, on parcourt les arcs de $b$ dans l'ordre donné par la règle de multiplication en construisant le pont associé et s'il sépare un cercle en deux on lui donne l'orientation donnée par l'orientation de l'arc. De plus, on note que $C_{cba}$ possède $n$ points de selles et on a donc 
$$\chi(C_{cba}) = -n.$$

\begin{remarque}
Puisque la fusion ne dépend pas de l'orientation dans le cadre de $OF$, on ne s'occupe pas de l'orienter. Cependant, on pourrait le faire de sorte à être totalement rigoureux en orientant la composante passant par $(x_i,0)$ vers celle passant par $(x_i,1)$ ou alors tout simplement en prenant un choix arbitraire d'orientations pour les fusions.
\end{remarque}

A partir de cette collection de cobordismes $M(C) = \{C_{cba} : W(c)bW(b)a \rightarrow W(c)a\}$ on définit une multiplication similaire à celle de $H^n$ et on note $OH^n_C$ l'anneau obtenu en munissant $OH^n$ de cette multiplication. Si $b\ne c$, on pose $d(OH^n_C)c \times b(OH^n_C)a = 0$, sinon on remarque que $W(c)b$ et $W(b)a$ sont tous deux des unions disjointes de cercles et donc on a un isomorphisme canonique
$$V\bigl(W(c)bW(b)a\bigr) \simeq V\bigl(W(c)b\bigr) \oplus V\bigl(W(b)a\bigr)$$
avec $V(S)$ le groupe abélien libre engendré par les éléments de l'ensemble $S$. Cela induit des inclusions
\begin{align*}
\Ext^*  V\bigl(W(c)b\bigr) &\subset \Ext^* V\bigl(W(c)bW(b)a\bigr), & \Ext^*  V\bigl(W(b)a\bigr) &\subset \Ext^* V\bigl(W(c)bW(b)a\bigr),
\end{align*}
 et donc en prenant le produit extérieur on obtient un homomorphisme
\begin{equation}
OF\bigl(W(c)b\bigr) \times OF\bigl(W(b)a\bigr) \rightarrow OF\bigl(W(c)b\bigr) \wedge OF\bigl(W(b)a\bigr) \subset OF\bigl(W(c)bW(b)a\bigr). \label{eq:homohn}
\end{equation}
On obtient alors en composant (\ref{eq:homohn}) avec l'image du cobordisme $C_{cba}$ par $OF$ un homomorphisme
$$OF\bigl(W(c)b\bigr) \times OF\bigl(W(b)a\bigr) \xrightarrow{(\ref{eq:homohn})} OF\bigl(W(c)bW(b)a\bigr) \xrightarrow{OF\bigl(C_{cba}\bigr)} OF\bigl(W(c)a\bigr).$$ 
Puisque le cobordisme a une caractéristique d'Euler valant $-n$, le morphisme obtenu a un degré $n$ et donc on obtient un homomorphisme
\begin{equation} OF\bigl(W(c)b\bigr)\{n\} \times OF\bigl(W(b)a\bigr)\{n\} \rightarrow OF\bigl(W(c)a\bigr)\{n\} \label{eq:homohn2}\end{equation}
qui préserve le degré. On définit alors la multiplication en utilisant le diagramme commutatif suivant, où les isomorphismes viennent de (\ref{eq:degohn}) :
$$\xymatrix{
c(OH^n)b \times b(OH^n)a \ar[r] \ar[d]_{\simeq} & c(OH^n)a \\
F\bigl(W(c)b\bigr)\{n\} \times F\bigl(W(b)a\bigr)\{n\} \ar[r]_-{(\ref{eq:homohn2})}  & F\bigl(W(c)a\bigr)\{n\} \ar[u]_{\simeq}.
}$$

On note pour $a\in B^n$, 
$$1_a = 1\{n\} \in \Ext^*V(W(a)a)\{n\}$$
et on vérifie par la proposition suivante que pour $x \in b(\Hno_C)a$ on a $x.1_a = x$ et $1_a.x = 0$ si $a\ne b$ et pour $y \in a(\Hno_C)b$ on a $y.1_a = 0$ et $1_a.y = y$. On définit alors l'unité de $OH^n_C$ comme la somme
$$1 = \sum_{a\in B^n} 1_a.$$

\begin{proposition}\label{prop:aaabfusions}
Les multiplications
\begin{align*}
a(OH^n_C)a \times a(OH^n_C)b &\rightarrow a(OH^n_C)b, \\
a(OH^n_C)b \times b(OH^n_C)b &\rightarrow a(OH^n_C)b,
\end{align*}
sont calculées en utilisant uniquement des fusions de sorte que le produit est obtenu en prenant juste le produit extérieur après avoir renommé les éléments suivant les composantes de $W(a)b$. Autrement dit $C_{aab}$ et $C_{abb}$ sont des cobordismes qui se décomposent en $n$ fusions.
\end{proposition}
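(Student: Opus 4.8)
The plan is to analyze, bridge by bridge, whether each of the $n$ elementary saddles making up $C_{aab}$ and $C_{abb}$ is a fusion or a scission, and to show that the special shape of these two cobordisms forces every saddle to be a fusion. Recall that the source $W(c)bW(b)a$ is the disjoint union of the circles of $W(c)b$ (coming from the first factor) with those of $W(b)a$ (coming from the second factor), and that the $k$-th bridge performs the surgery connecting an arc $\beta_k$ of the middle copy of $b$ to its mirror arc $\bar\beta_k$ in $W(b)$. Such a surgery is a fusion exactly when $\beta_k$ and $\bar\beta_k$ lie on two distinct circles of the current diagram, and a scission exactly when they lie on the same circle. So the whole statement reduces to checking that, at each step and for any admissible order coming from $C$, the two bridged arcs sit on different circles.

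For $C_{abb}$ the bridged arcs are those of the middle diagram $b$, and the lower family of circles is $W(b)b$. The key observation is that $W(b)b$, being a diagram glued to its own vertical mirror, is always a disjoint union of exactly $n$ circles, the $k$-th circle consisting of the single cap $\bar\beta_k$ of $W(b)$ together with the arc of the bottom copy of $b$ joining the same pair of base points. In particular each cap $\bar\beta_k$ is isolated on its own lower circle, which carries no other cap. Consequently the only bridge that ever touches the $k$-th lower circle is the $k$-th bridge itself; before step $k$ this circle has been left untouched, hence it is still distinct from the circle carrying $\beta_k$. Therefore the $k$-th surgery joins two distinct circles and is a fusion, and this holds for every $k$ and independently of the chosen order. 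The case $C_{aab}$ is the mirror-image argument: here it is the upper family $W(a)a$ that splits into $n$ circles, each isolating a single arc $\alpha_k$ of the middle copy of $a$, so again every bridge is a fusion.

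Once all $n$ saddles are known to be fusions, the computation of the product follows formally. By Remark \ref{rem:assocetnonorient} each fusion is the quotient map identifying the two merging generators, so $OF(C_{aab})$ and $OF(C_{abb})$ are compositions of such identifications; composing them simply renames every circle of $W(c)b\sqcup W(b)a$ to the component of $W(a)b$ that it eventually becomes, and by Proposition \ref{prop:isofusions} this map does not depend on the order or orientations recorded in $C$. Since its input is, by (\ref{eq:homohn}), the exterior product of the two factors, the product is indeed obtained by taking that exterior product and relabelling the generators according to the components of $W(a)b$, as claimed.

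The main obstacle I expect is the topological bookkeeping: making precise that $W(b)b$ (resp. $W(a)a$) is a union of $n$ circles each meeting the bridged diagram in a single arc, and checking carefully that an isolated cap (resp. cup) really is touched by a unique bridge, so that no chain of earlier bridges can secretly connect $\beta_k$ to $\bar\beta_k$ before step $k$. Everything else --- the fusion/scission dichotomy and the action of $OF$ on fusions --- is routine given the results already established.
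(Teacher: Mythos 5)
Your proof is correct, but it follows a genuinely different route from the paper's. The paper settles the matter with a single global count: the source $W(a)aW(a)b$ of $C_{aab}$ has $n+|W(a)b|$ circle components (because $W(a)a$ always consists of exactly $n$ circles), the target $W(a)b$ has $|W(a)b|$, and the cobordism consists of exactly $n$ saddles, each of which changes the number of components by $\pm 1$; a net drop of $n$ components in $n$ saddles forces every saddle to be a fusion, whatever the order prescribed by $C$. You argue instead locally, bridge by bridge, using the finer structural observation that each circle of $W(b)b$ (resp.\ $W(a)a$) contains exactly one bridged arc, so the circle carrying $\bar\beta_k$ is untouched before step $k$ and the $k$-th surgery necessarily joins two distinct components. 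Both arguments ultimately rest on the same fact about $W(x)x$, but the paper's counting version is shorter and bypasses entirely the step-by-step bookkeeping that you yourself flag as the delicate point (your resolution of it is nevertheless sound: no earlier bridge can touch the isolated lower circle, so no hidden chain of merges can identify the two sides). In exchange, your argument yields a bit more: it exhibits explicitly which components merge at each step, shows directly that the outcome is independent of the order and orientations in $C$, and thereby justifies the second half of the statement --- that the product is the exterior product followed by relabelling along the components of $W(a)b$, via Remark \ref{rem:assocetnonorient} and Proposition \ref{prop:isofusions} --- which the paper's proof leaves implicit.
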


\begin{proof}
On sait que la multiplication
$$a(OH^n_C)a \times a(OH^n_C)b \rightarrow a(OH^n_C)b$$
est obtenue en prenant un cobordisme de $W(a)aW(a)b$ vers $W(a)b$ composé de $n$ ponts, donc $n$ fusions ou scissions.  Par ailleurs, $W(a)aW(a)b$ est constitué de $n+|W(a)b|$ composantes de cercle et $W(a)b$ en est constitué de $|W(a)b|$, donc on doit fusionner au moins $n$ composantes. On en conclut qu'on a exactement $n$ fusions.
\end{proof}

On propose une règle de multiplication $\widetilde C$ parmi d'autres pour définir une collection de cobordismes qui nous sert à illustrer ce travail par des exemples. Pour tout $a,b,c \in B^n$, on ordonne $\{1,\dots, 2n\}$ selon l'ordre usuel des entiers naturels et on oriente $i \chemarrow j$ si $i < j$ pour l'ordre usuel aussi. Un exemple de construction de cobordisme par cette règle de multiplication est illustré en Figure \ref{fig:ordreponts}. Afin d'alléger la notation, on note $\widetilde {OH}^n := OH^n_{\widetilde C}$.

\begin{figure}[h]
    \center
    \includegraphics[width=15cm]{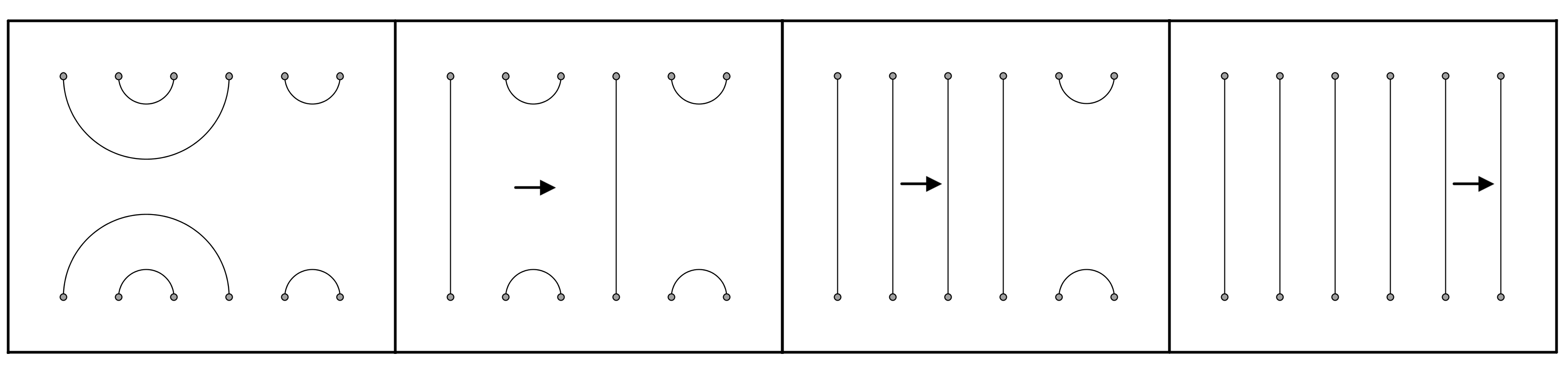}
    \caption{\label{fig:ordreponts} Exemple d'ordre de construction des ponts d'un corbordisme avec les flèches représentant l'orientation d'éventuelles scissions.}
\end{figure}

\begin{exemple}\label{ex:OH1}
On peut montrer que $OH^1_C \simeq \Ext^* \Z$ puisque $B^1$ ne contient qu'un seul élément $a$ donné par un demi-cercle qui relie les deux points. On a alors un isomorphisme de groupes gradués $OH_C^1 \simeq_{ab} OF(W(a)a)\{1\}$ et, $W(a)a$ n'étant composé que d'un seul cercle, on a $OF(W(a)a) =  \Ext^* \Z \{-1\}$. Par ailleurs, la multiplication 
$$a(OH^1_C)a \times a(OH^1_C)a \rightarrow a(OH^1_C)a$$
est composée uniquement d'une fusion et est donc équivalente au produit extérieur, montrant le résultat voulu.
\end{exemple}

\begin{exemple}\label{ex:OH2_1}
On considère $\widetilde {OH}^2$ et on pose
\begin{align*}
a &=\deuxdiag{Images_arxiv/B2_2.png},& 
b &= \deuxdiag{Images_arxiv/B2_1.png}.
\end{align*}
donnant donc
\begin{align*}
W(a)a &= \diagg{Images_arxiv/B2_bb_note.png},&
W(a)b &= \diagg{Images_arxiv/B2_ba_note.png},&
W(b)a &= \diagg{Images_arxiv/B2_ab_note.png}.
\end{align*}
On note $a_1$ et $a_2$ les cercles de $W(a)a$ avec $a_1$ le cercle extérieur et $a_2$ le cercle intérieur. De même on note $b_1$ le cercle de $W(b)a$ et $c_1\in W(a)b$. 

On considère la multiplication dans
$$a\left(\widetilde {OH}^2\right)b \times b\left(\widetilde {OH}^2\right)a \rightarrow a\left(\widetilde {OH}^2\right)a$$
et on calcule par exemple
$${_a1_b}.c_1= -a_1 \wedge a_2$$
où ${_a1_b} \in a\left(\widetilde {OH}^2\right)b$ est l'unité pour le produit extérieur.

 En effet, on a une fusion
$$\bigdiag{Images_arxiv/B2_baab_note.png} 
\xrightarrow{(c_1 \chemarrow b_1) \mapsto x_1} \bigdiag{Images_arxiv/B2_baab_1_note.png} :$$
$${_a1_b} \wedge c_1 = c_1  \longmapsto x_1,$$
suivie d'une scission 
$$\bigdiag{Images_arxiv/B2_baab_1_note.png} \xrightarrow{x_1 \mapsto (a_2 \chemarrow a_1)} \bigdiag{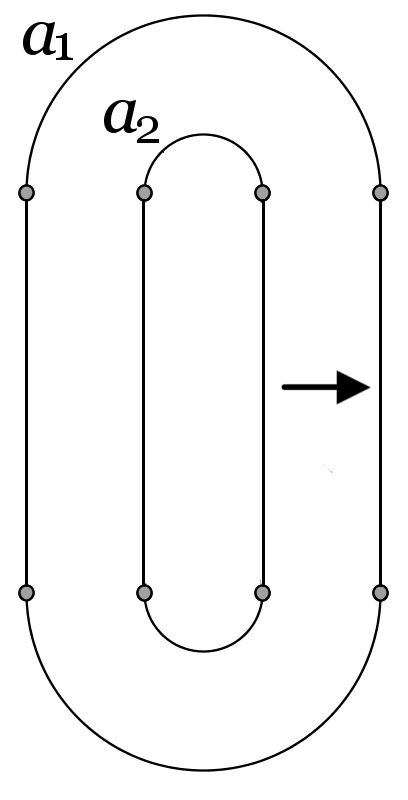} :$$
$$x_1 \longmapsto (a_2 - a_1)\wedge a_1 = -a_1 \wedge a_2,$$
 la flèche sur le diagramme représentant l'orientation de cette scission.
\end{exemple}

\begin{exemple}\label{ex:OH2_2}
On considère $\widetilde {OH}^3$ et on pose
\begin{align*}
a &=\diagg{Images_arxiv/B3_4.png},& 
b &=\diagg{Images_arxiv/B3_2.png},& 
c &=\diagg{Images_arxiv/B3_5.png},
\end{align*}
avec
\begin{align*}
W(c)b &= \middiag{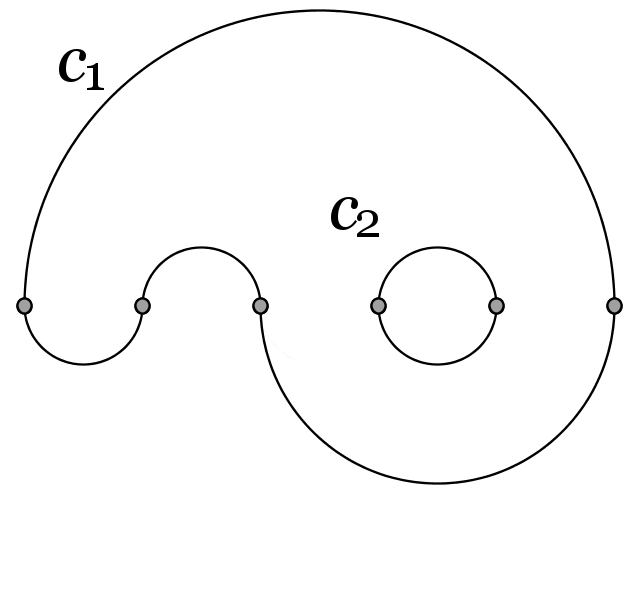},&
W(b)a &= \middiag{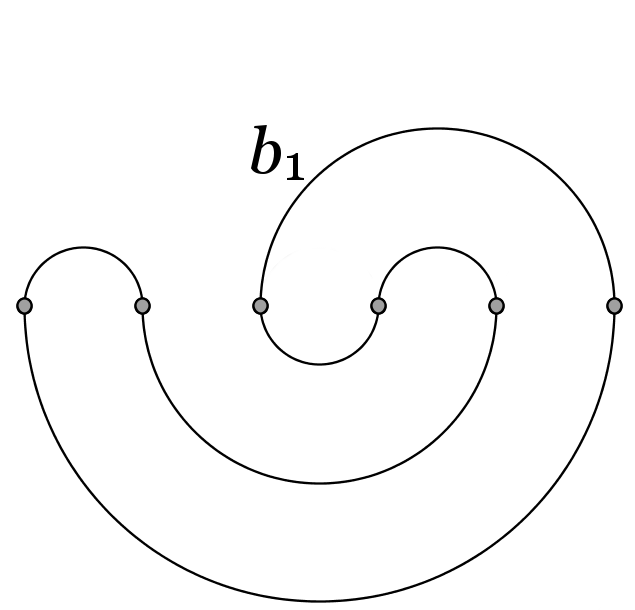},&
W(c)a &= \middiag{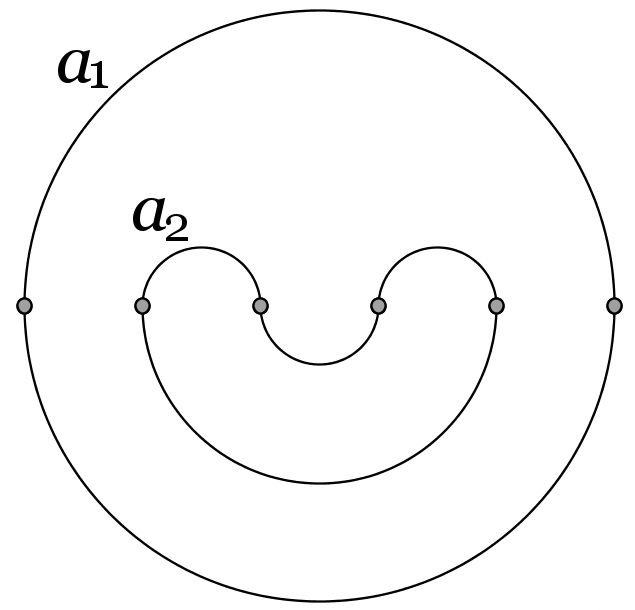}.
\end{align*}
On calcule par exemple
$$c_2.{_b1_a} = -a_1 \wedge a_2$$
dans
$$c\left(\widetilde {OH}^3\right)b \times b\left(\widetilde {OH}^3\right)a \rightarrow c\left(\widetilde {OH}^3\right)a.$$
En effet, on a une fusion
$$\bbigdiag{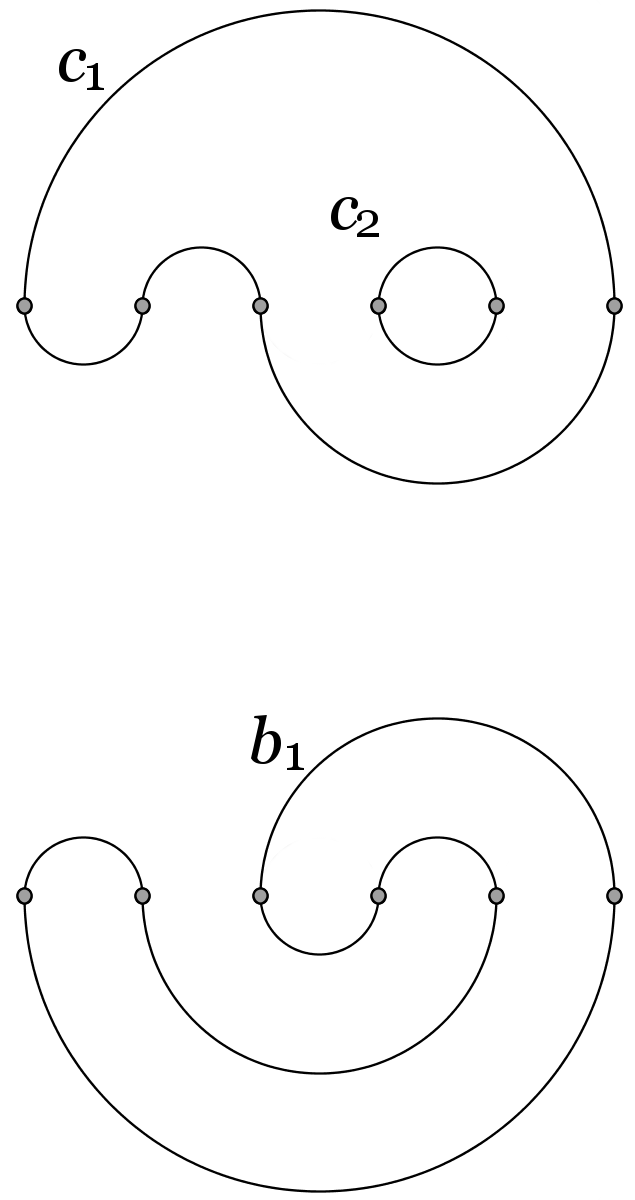}\xrightarrow{(b_1 \chemarrow c_1) \mapsto x_1} \bbigdiag{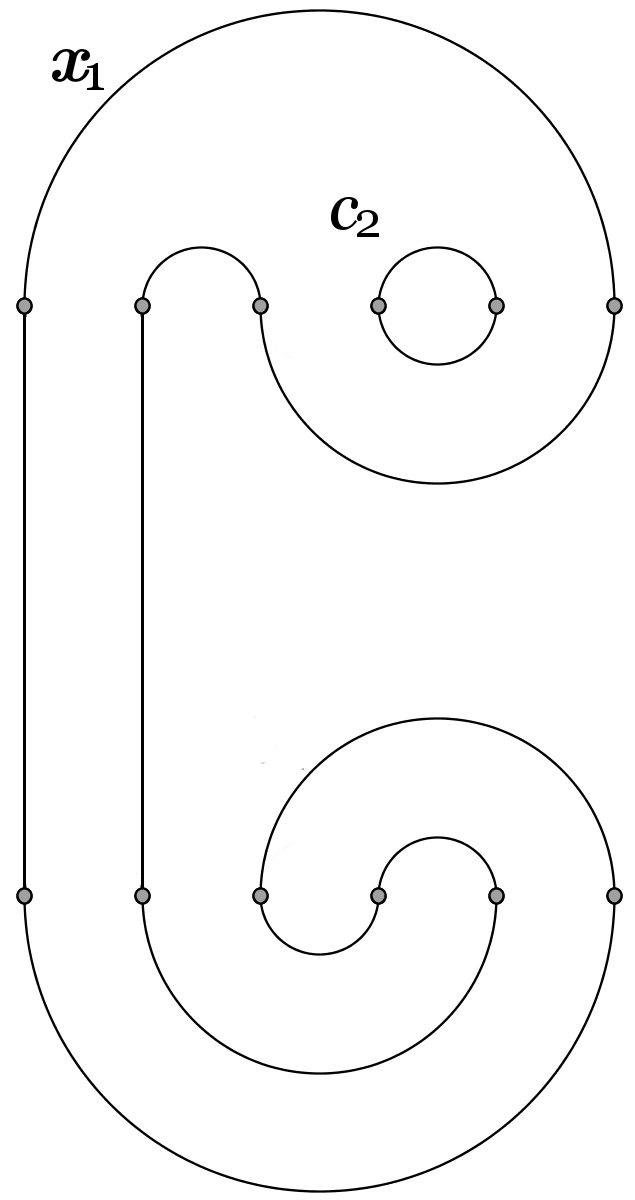} :$$
$$c_2 \wedge {_a1_b} = c_2  \longmapsto c_2,$$
ensuite une scission
$$\bbigdiag{Images_arxiv/B3_cbba_1.png} \xrightarrow{{x_1 \mapsto (y_2\chemarrow y_1)}} \bbigdiag{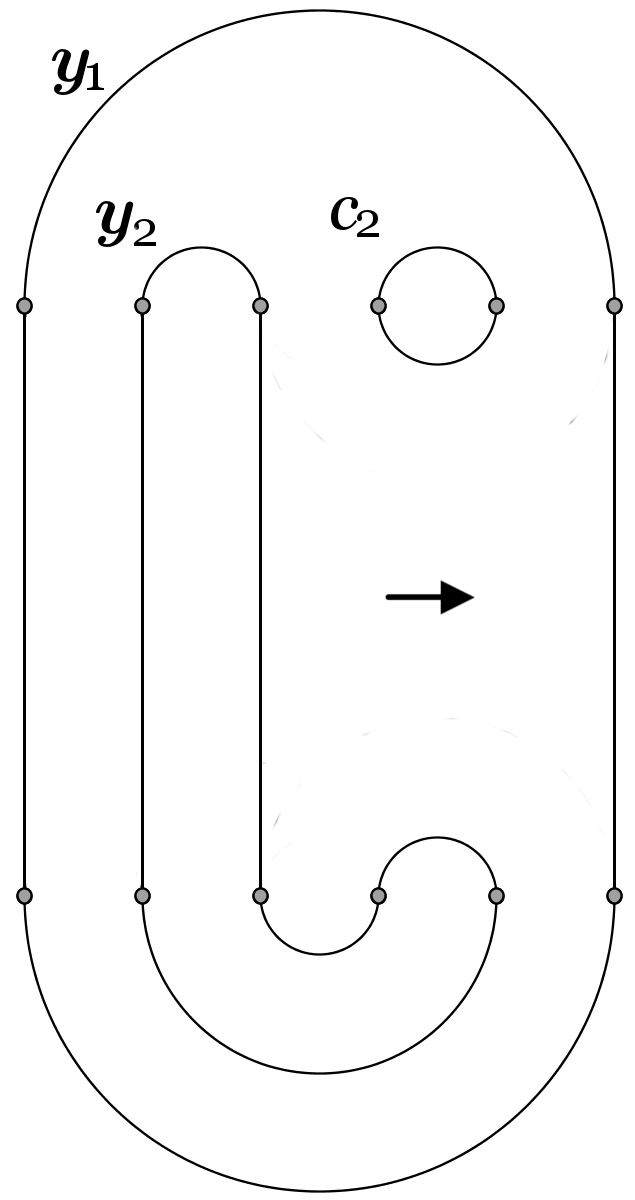}  :$$
$$c_2  \longmapsto (y_2 - y_1) \wedge c_2 = y_2 \wedge c_2 - y_1 \wedge c_2,$$
et enfin une dernière fusion
$$ \bbigdiag{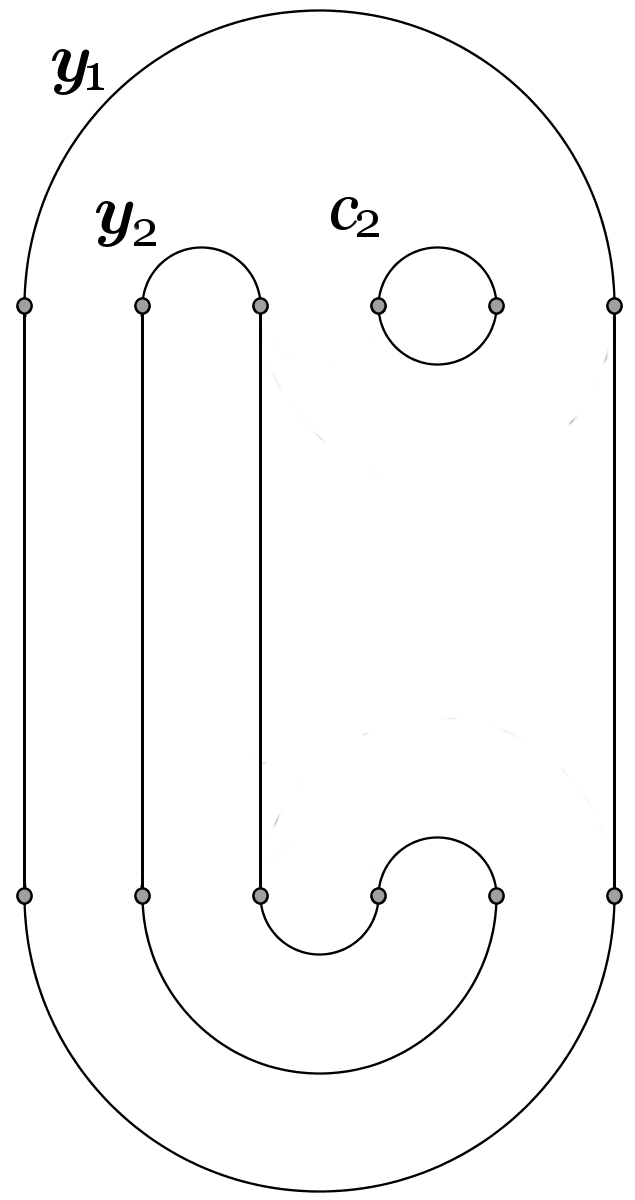} \xrightarrow{{(y_2 \chemarrow c_2) \mapsto a_2}} \bbigdiag{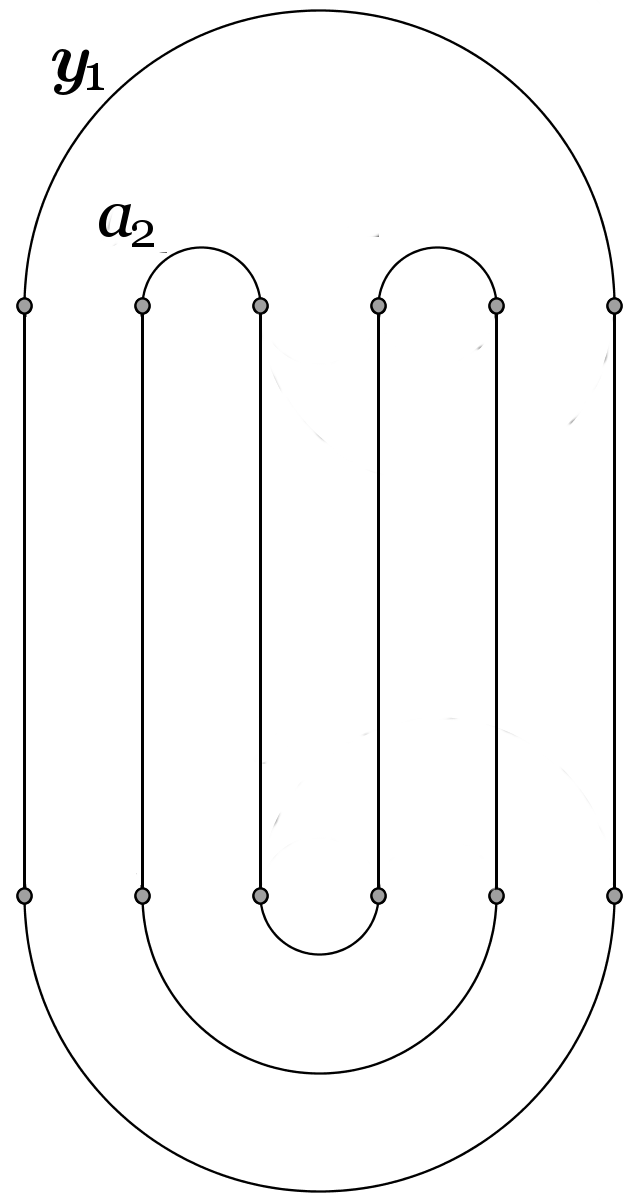} : $$
$$y_2 \wedge c_2 - y_1 \wedge c_2 \longmapsto  -y_1  \wedge a_2.$$
On obtient le résultat puisque $y_1$ correspond avec $a_1$ dans $W(c)a$.


\end{exemple}


\section{Calcul diagrammatique dans $OH^n_C$}\label{sec:calculdiag}

Afin de faciliter le calcul de produits d'éléments dans $OH^n_C$, on propose une technique de calcul diagrammatique utilisant des couleurs.

\begin{definition}
On définit un \emph{$n$-diagramme colorié} comme le plongement dans le plan d'une union disjointe de cercles pouvant être coloriés ou non (avec une seule couleur disponible, on dessine en rouge gras les cercles coloriés et en noir pointillés les autres) tel qu'ils recouvrent les \emph{points de base} $(1,0), \dots, (2n, 0)$ et qui se restreint  à des segments de droite verticales $\{i\} \times [-\epsilon, \epsilon]$ autour de chacun de ceux-ci.
\end{definition}
On munit les points de base d'un ordre induit par l'ordre sur les abscisses (de gauche à droite). Cet ordre induit un ordre sur les composantes du diagramme passant par les points de base en attribuant à chaque composante l'ordre de son point de base minimal. On étend cet ordre à un ordre partiel en disant que toutes les autres composantes, qu'on appelle \emph{libres}, sont plus grandes, comme illustré en Figure \ref{fig:ndiagramme}. On dit que deux $n$-diagrammes sont équivalents s'il existe une isotopie ambiante préservant les points $(1,0), \dots, (2n,0)$ et qui transforme l'un en l'autre en conservant les couleurs. On note $\widehat{D}^n$ l'ensemble des $n-$diagrammes à équivalence près et ${D^n}$ les classes d'équivalences sans composante libre.

\begin{figure}[h]
    \center
    \includegraphics[width=10cm]{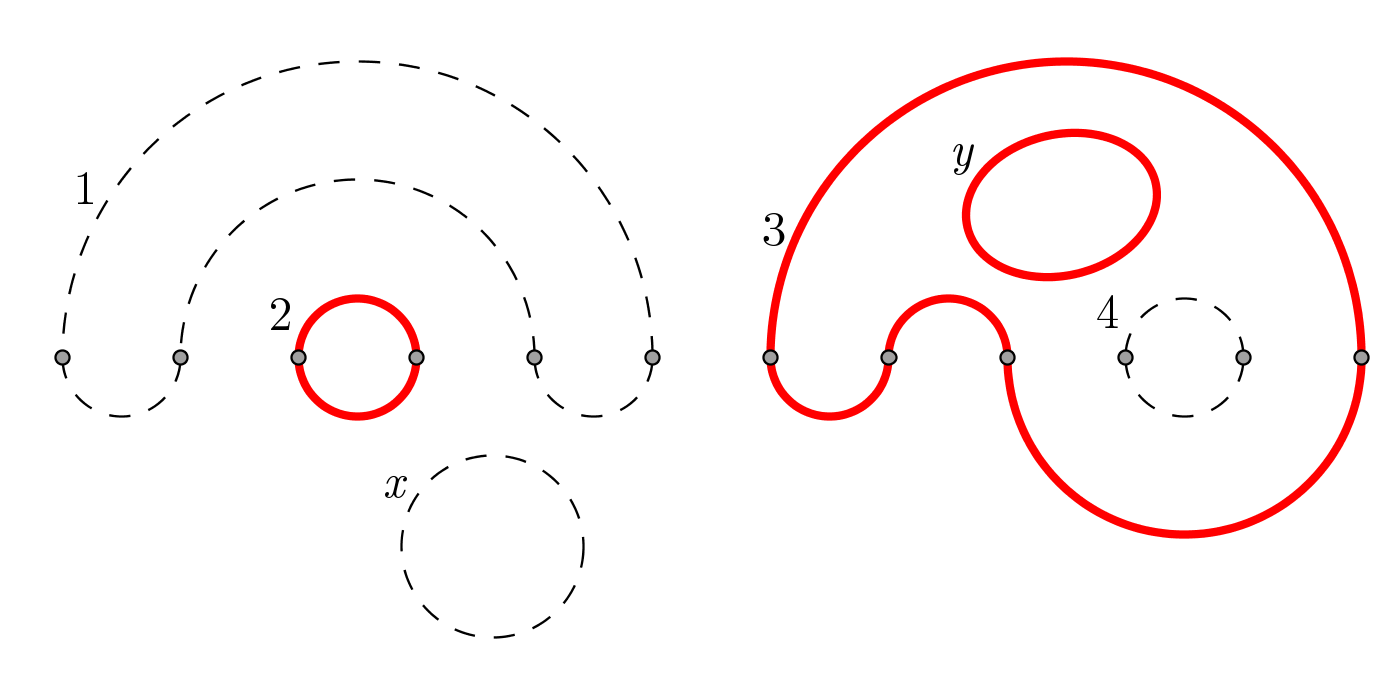}
    \caption{\label{fig:ndiagramme}Exemple de $6$-diagramme colorié avec les points de base en gris, 1,2,3 et 4 les cercles passant par les points de base, $x$ et $y$ les composantes libres et l'ordre donné par $1 < 2 < 3 < 4 < x,y$.}
\end{figure}

De plus, on dit que deux points $(i,0),(j,0) \in \{(1,0), \dots, (2n,0)\}$ sont reliés par un \emph{demi-cercle supérieur} (resp. inférieur) d'un $n$-diagramme s'il existe une composante de cercle qui passe par ces deux points telle qu'elle se restreint à un arc ne passant par aucun autre des points de base $\{(1,0), \dots, (2n,0)\}$ et que pour un voisinage suffisamment proche de $(i,0)$ et $(j,0)$ tous les points de cette restriction ont une ordonnée positive (resp. négative). On dit alors qu'un $n$-diagramme $D_1$ est \emph{compatible au dessus} d'un autre $n$-diagramme $D_2$ si toute paire de points $(i,0),(j,0) \in \{(1,0), \dots, (2n,0)\}$ reliés par un demi-cercle supérieur de $D_2$ est reliée par un demi-cercle inférieur de $D_1$ (et vice-versa) et on demande en plus qu'il n'y ait pas de composante libre entourant des points de base ni dans $D_1$ ni dans $D_2$.

\begin{remarque}
On remarque que, par l'hypothèse de verticalité du diagramme autour des points de base, chacun de ceux-ci a un seul demi-cercle supérieur et un seul inférieur.
\end{remarque}

\begin{proposition}\label{prop:isodiaghn}
Il y a une isomorphisme de groupes abéliens entre le groupe impair $OH^n$ et les combinaisons linéaires sur $\Z$ de classes d'équivalences de $n$-diagrammes coloriés ne possédant pas de composante libre
$$\Z[ {D^n}] \simeq_{ab} OH^n.$$
\end{proposition}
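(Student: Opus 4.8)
Le plan est d'identifier une base naturelle de chaque côté et de montrer qu'elles sont en bijection. D'abord, par (\ref{eq:degohn}) et (\ref{eq:OFobjects}), en oubliant la graduation, on a $OH^n \simeq_{ab} \bigoplus_{a,b \in B^n} \Ext^* V(W(b)a)$. Comme $W(b)a$ est une union disjointe de cercles munie d'un ordre total sur ses composantes (celui induit en Section \ref{sec:TL}), une $\Z$-base de $\Ext^* V(W(b)a)$ est donnée par les produits extérieurs ordonnés $c_{i_1} \wedge \dots \wedge c_{i_k}$, où $c_{i_1} < \dots < c_{i_k}$ parcourt les sous-ensembles des composantes de $W(b)a$. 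Une base de $OH^n$ est donc indexée par les triplets $(a,b,T)$ avec $a,b\in B^n$ et $T$ un sous-ensemble des cercles de $W(b)a$.

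Ensuite, je définirais un morphisme $\phi : \Z[D^n] \to OH^n$ sur les générateurs. À un $n$-diagramme colorié $D$ sans composante libre, on associe les données combinatoires suivantes : puisque chaque composante passe par des points de base et est verticale autour de ceux-ci, les demi-cercles inférieurs de $D$ réalisent un appariement sans croisement des $2n$ points, c'est-à-dire un élément $a \in B^n$, et les demi-cercles supérieurs un élément $b \in B^n$ via l'involution $W$. Ainsi, en oubliant les couleurs, $D$ est isotope à $W(b)a$, et les composantes coloriées de $D$ forment un sous-ensemble $T$ des cercles de $W(b)a$. On pose alors $\phi(D) := \bigwedge_{c \in T} c \in \Ext^* V(W(b)a) \simeq b(OH^n)a$, le produit étant pris selon l'ordre total, puis on étend par $\Z$-linéarité.

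Il reste à vérifier que $\phi$ est un isomorphisme. Pour montrer qu'il est bien défini, on observe qu'une isotopie ambiante préservant les points de base et les couleurs préserve l'appariement inférieur, l'appariement supérieur et le coloriage, donc le triplet $(a,b,T)$, et par conséquent l'image $\bigwedge_{c\in T} c$. La surjectivité est immédiate : tout triplet $(a,b,T)$ provient du $n$-diagramme obtenu en coloriant les cercles de $T$ dans $W(b)a$, qui est bien sans composante libre car les éléments de $B^n$ n'ont pas de composante cercle. Comme $\phi$ envoie les générateurs de $D^n$ sur une base de $OH^n$, il suffit pour conclure de montrer que $\phi$ est injectif sur $D^n$, c'est-à-dire que deux $n$-diagrammes donnant le même triplet $(a,b,T)$ sont équivalents.

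C'est cette dernière injectivité qui constitue l'obstacle principal : il s'agit d'une classification à isotopie près. Elle repose sur le fait, rappelé en Section \ref{sec:TL}, que les appariements sans croisement de $2n$ points par des demi-cercles sont classifiés à isotopie planaire près exactement par $B^n$. On peut donc redresser simultanément les demi-cercles inférieurs et supérieurs d'un diagramme vers ceux de $W(b)a$ par une isotopie ambiante fixant les points de base ; le coloriage étant porté par des composantes entières, il est transporté sans ambiguïté, ce qui fournit l'équivalence voulue. On conclut que $\phi$ réalise une bijection entre les bases, donc un isomorphisme de groupes abéliens $\Z[D^n] \simeq_{ab} OH^n$.
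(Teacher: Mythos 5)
Your proof is correct and takes essentially the same approach as the paper: you construct the inverse of the paper's map (going from $\Z[D^n]$ to $OH^n$ rather than from $OH^n$ to $\Z[D^n]$), and both arguments rest on the same bijection between colored diagrams up to isotopy and triples $(a,b,T)$ with $T$ a subset of the circles of $W(b)a$. The only difference is where the geometric content sits: the paper's surjectivity rank count $\sum_{a,b} 2^{|W(b)a|} = \rank \bigl(\Z[D^n]\bigr)$ encodes exactly the classification-up-to-isotopy fact that you instead make explicit in your injectivity step.
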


\begin{proof}
Soient $a,b \in B^n$ avec $W(b)a$ ayant $m$ composantes $x_1 < x_2 \dots < x_m$, l'ordre étant induit par les points de base. On construit une fonction
$$\phi : b(OH^n)a \simeq \Ext^* V(W(b)a) \rightarrow \Z[{D^n}].$$
Pour tout $\{i_{1} < i_{2} < \dots < i_{r}\} \subset \{1, 2, \dots, m\}$, $\phi$ associe à $x_{i_1} \wedge \dots \wedge x_{i_r} \in b(OH^n)a$ le $n$-diagramme $W(b)a$  (il est évident par définition d'enchevêtrement que cela donne un $n$-diagramme en translatant par $(0,-1/2)$) avec les composantes associées à $x_{i_1}, \dots, x_{i_r}$ qui sont coloriées. On étend ensuite $\phi$ par linéarité pour en faire un morphisme injectif de $OH^n$ vers $\Z[{D^n}]$. Ce morphisme est bien défini car on demande que les composantes du produit extérieur soient mis dans un certain ordre, fixant un signe. L'injectivité est obtenue par le fait que les $n$-diagrammes sont pris à isotopie ambiante fixant les points de base près. On obtient la surjectivité en observant que
$$\rank \left(\bigoplus_{a,b \in B^n} \bigl(b(OH^n)a\bigr)\right) = \sum_{a,b \in B^n} 2^{|W(b)a|} = \rank \bigl( \Z[D^n]\bigr)$$
puisque par la remarque faite au dessus, chaque demi-cercle supérieur (resp. inférieur) relie $2$ points et puisque les demi-cercles ne peuvent pas se croiser et donc les demi-cercles supérieurs (resp. inférieurs) donnent une façon de relier $2n$ points par $n$ demi-cercles vers le haut (ou bas), c'est-à-dire un élément de $B^n$. 
\end{proof}

\begin{exemple}
On sait que $B^2$ est composé des éléments
\begin{align*}
a &=\deuxdiag{Images_arxiv/B2_2.png},& 
b &= \deuxdiag{Images_arxiv/B2_1.png},
\end{align*}
donnant les diagrammes
\begin{align*}
W(a)a &= \diagg{Images_arxiv/B2_bb_note.png},&
W(a)b &= \diagg{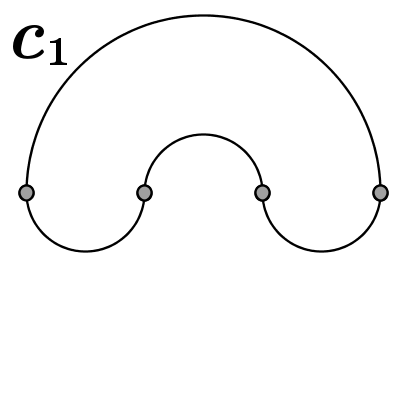},\\
W(b)a &= \diagg{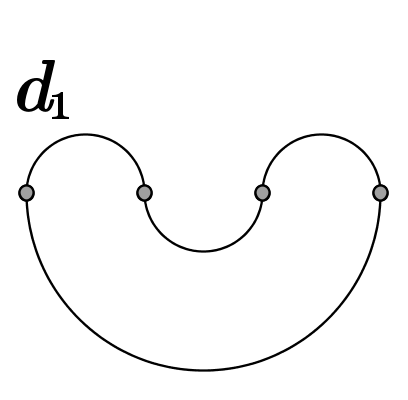},&
 W(b)b &= \diagg{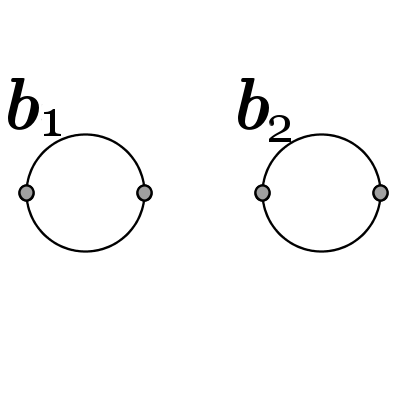}.
\end{align*}
Dès lors, cela signifie que $OH^2$ est engendré par 
$$OH^2 = \langle 1_a, a_1, a_2, a_1 \wedge a_2, 1_b, b_1, b_2, {_a1_b}, c_1, {_b1_a}, d_1 \rangle$$
avec l'isomorphisme de la proposition donnant par exemple
\begin{align*}
\phi(1_a) &= \diagg{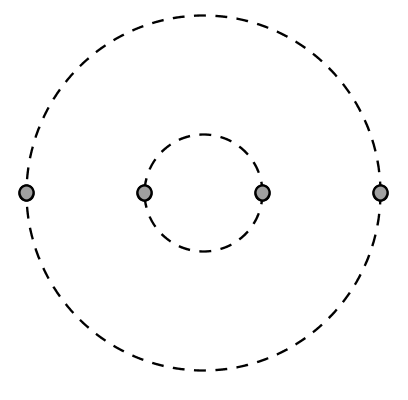},&
\phi(a_2) &= \diagg{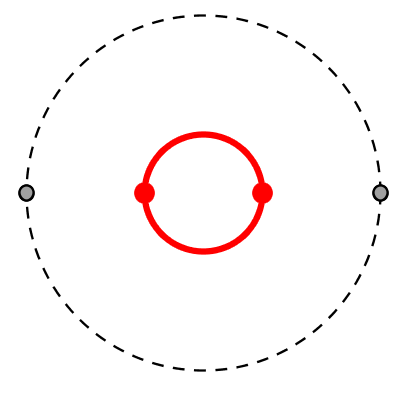},&
\phi(a_2 \wedge a_1) &= -\diagg{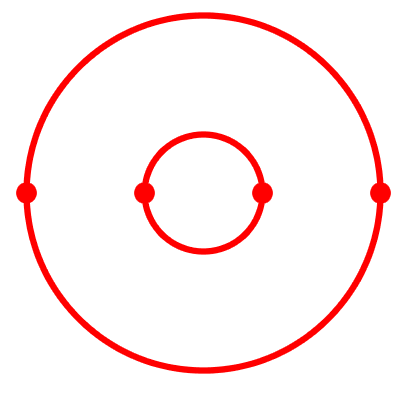},
\end{align*}\begin{align*}
\phi(c_1) &= \diagg{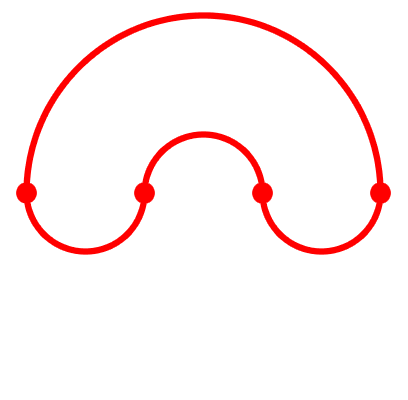},&
\phi(3{_b1_a} - 2b_1) &= 3\diagg{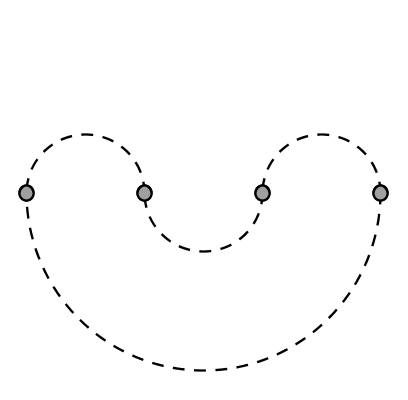} - 2 \diagg{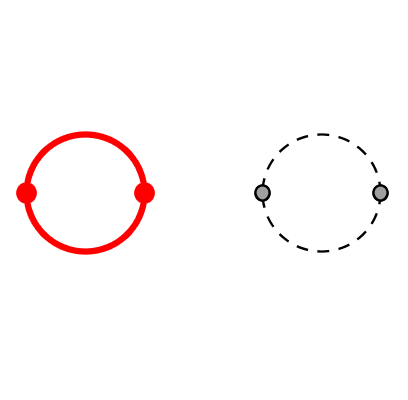}.
\end{align*}

\end{exemple}

Afin d'obtenir un anneau isomorphe à $OH^n_C$, on définit la \emph{résolution de ponts} pour une règle de multiplication $C$ d'un $n$-diagramme $D_t$ compatible au dessus d'un autre $n$-diagramme $D_b$ comme la procédure suivante :
\begin{enumerate}
\item On considère $a,b,c \in B^n$ tels que $W(c)b \simeq D_t$ et $W(b)a \simeq D_b$ par la Proposition \ref{prop:isodiaghn} et on prend $x_1 < \dots < x_{2n}$ l'ordre des points donné par $C$ pour le triplet $(c,b,a)$.
\item On place $D_b$ au dessous de $D_t$ à une certaine distance $d$ de sorte que le résultat soit une union disjointe de cercles dans le plan qu'on note $D_0$ et on pose $i := 1$. 
\item Si $(x_i,-d)$ appartient à la même composante de cercle que $(x_i,0)$ dans $D_{i-1}$, alors on ne fait rien. Sinon, on a un unique demi-cercle supérieur de $D_b$, noté $y$, qui relie $(x_i,-d)$ à un certain $(x_j,-d)$ et un unique demi-cercle inférieur de $D_t$, noté $y$, qui relie $(x_i,0)$ à $(x_j,0)$. On définit $D_{i}$ comme $D_{i-1}$ où on relie $(x_i,-d)$ à $(x_i,0)$ et $(x_j,-d)$ à $(x_j,0)$ puis on supprime $x$ et $y$. On obtient alors une union disjointe de cercles avec une ou deux composante(s) partiellement colorée(s) passant par $(x_i,0)$ et $(x_j,0)$ qu'on recolorie selon la règle suivante :
\begin{itemize}
\item Si $x$ appartient à une composante différente de celle de $y$ dans le diagramme $D_{i-1}$ :
\begin{align*}
\middiag{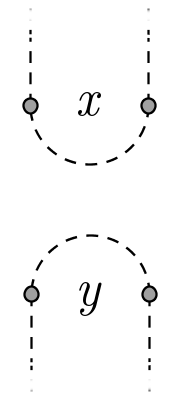} &\rightarrow \middiag{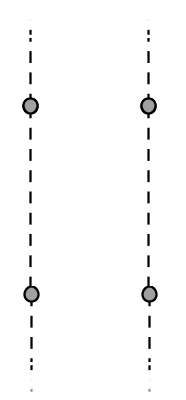}, &
\middiag{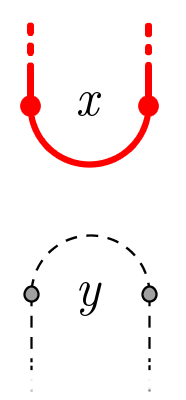} &\rightarrow \beta_x  \middiag{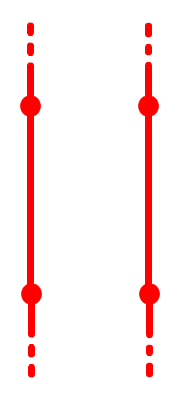}, & \\
\middiag{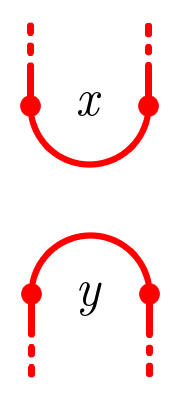} &\rightarrow 0, &
\middiag{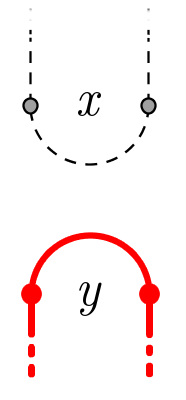} &\rightarrow \beta_y  \middiag{Images_arxiv/ResBridge_2_4.png},
\end{align*}
avec $\beta_x = (-1)^m$ (resp. $\beta_y$) pour $m$ le nombre de composantes de cercles coloriées de $D_{i-1}$ supérieures à celle contenant $y$ (resp. $x$) et inférieures à celle contenant $x$ (resp. y).
\item S'ils appartiennent à la même composante de cercle dans $D_{i-1}$ :
\begin{align*}
\middiag{Images_arxiv/ResBridge_1_1_note.png} &\rightarrow \alpha \left(\beta_i \middiag{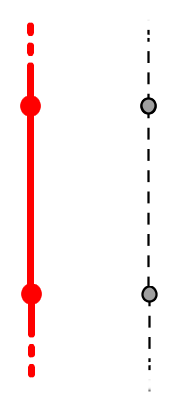} - \beta_j \middiag{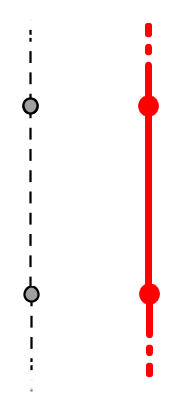}\right), &
\middiag{Images_arxiv/ResBridge_1_4_note.png} &\rightarrow \beta \middiag{Images_arxiv/ResBridge_2_4.png},
\end{align*}
avec $\alpha = +1$ si $x_i \chemarrow x_j$ dans $C$ et $-1$ sinon. On pose $\beta_i = (-1)^{m_i}$ pour $m_i$ le nombre de composantes coloriées dans $D_{i}$ strictement inférieures à celle passant par $(x_i,0)$ et idem pour $\beta_j$ et $(x_j,0)$.
On prend $\beta := \alpha(-1)^{m}$ pour $m$ le nombre de composante coloriées de $D_{i}$  inférieures ou égales à celle passant par $(x_j,0)$ ou strictement inférieures à celle passant par $(x_i,0)$.
\end{itemize}
\item On pose $i := i+1$, si $i > 2n$ on arrête, sinon on revient à l'étape 3 pour chacun des éléments de la combinaison linéaire obtenue à l'étape 3.
\end{enumerate}
On obtient au final une combinaison linéaire de $n$-diagrammes coloriés et on note $D_t \times_C D_b$ la résolution de ponts pour $C$ de $D_t$ au dessus de $D_b$. On pose aussi que $D_1 \times_C D_2 = 0$ si $D_1$ n'est pas compatible au dessus de $D_2$.
On remarque que, dans le cas de $\widetilde C$, cela revient à spécifier $\alpha = 1$ et poser $x_i = i$ et on note  $D_t \times D_b$ la résolution de ponts pour $\widetilde C$.
%

\begin{proposition}
L'isomorphisme de groupes de la Proposition \ref{prop:isodiaghn} respecte la structure multiplicative de $ {OH}_C^n$ en prenant la résolution de ponts pour $C$ qu'on étend par linéarité comme multiplication pour les $n$-diagrammes.
\end{proposition}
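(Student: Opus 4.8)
La strat\'egie est de montrer que $\phi$ devient un isomorphisme d'anneaux lorsqu'on munit $\Z[D^n]$ de la r\'esolution de ponts $\times_C$, c'est-\`a-dire que $\phi(xy) = \phi(x) \times_C \phi(y)$, en comparant les deux op\'erations pont par pont. Rappelons que le produit dans $OH^n_C$ s'obtient en appliquant $OF$ au cobordisme $C_{cba} = D_0 \to D_1 \to \dots \to D_{2n}$, qui est une composition de $2n$ cobordismes \'el\'ementaires, chacun \'etant l'identit\'e, une fusion ou une scission selon l'\'etape. Comme $OF$ est un foncteur, $OF(C_{cba})$ se factorise en la composition des images de ces cobordismes \'el\'ementaires. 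Or la r\'esolution de ponts pour $C$ traite exactement les m\^emes ponts, dans le m\^eme ordre $x_1 < \dots < x_{2n}$ fourni par $C$. L'id\'ee est donc de proc\'eder par r\'ecurrence sur $i$ et d'\'etablir qu'\`a chaque \'etape la combinaison lin\'eaire de $n$-diagrammes produite par la r\'esolution correspond, via $\phi$, \`a l'\'el\'ement interm\'ediaire obtenu en appliquant $OF$ aux $i$ premiers cobordismes \'el\'ementaires.

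Je traiterais d'abord le cas o\`u $D_t$ n'est pas compatible au dessus de $D_b$ : cela se produit exactement lorsque les demi-cercles inf\'erieurs de $D_t$ et les demi-cercles sup\'erieurs de $D_b$ ne relient pas les m\^emes paires de points de base, c'est-\`a-dire lorsque les indices interm\'ediaires des deux facteurs ne co\"incident pas, ce qui est pr\'ecis\'ement la condition annulant le produit dans $OH^n_C$ ; les deux membres sont alors nuls. Dans le cas compatible, le cas de base de la r\'ecurrence identifie la superposition $D_0$ de $D_t$ au dessus de $D_b$ avec l'image du morphisme (\ref{eq:homohn}) : colorier les composantes de $D_t$ et de $D_b$ revient \`a former le produit ext\'erieur des g\'en\'erateurs correspondants, l'ordre total sur les composantes fixant le signe comme dans la preuve de la Proposition \ref{prop:isodiaghn}. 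Les \'etapes o\`u l'on \emph{ne fait rien} correspondent aux cobordismes identit\'es et sont triviales.

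Le c\oe ur de la preuve est la v\'erification de l'\'etape de r\'ecurrence, qui se scinde selon la nature du $i$-\`eme pont. Lorsque $x$ et $y$ appartiennent \`a deux composantes distinctes de $D_{i-1}$, le pont produit une fusion, et il faut constater que les quatre r\`egles de recoloriage reproduisent l'action de $OF$ sur une fusion : $1 \mapsto 1$ si aucune des deux composantes n'est colori\'ee, $a_1 \mapsto b$ et $a_2 \mapsto b$ si une seule l'est, et $a_1 \wedge a_2 \mapsto b \wedge b = 0$ si les deux le sont. Lorsque $x$ et $y$ appartiennent \`a la m\^eme composante, le pont produit une scission, et les deux r\`egles correspondantes doivent reproduire $1 \mapsto (a_1 - a_2)$ ainsi que son analogue sur une composante colori\'ee, conform\'ement \`a la d\'efinition de $OF$ sur les scissions. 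La lin\'earit\'e de l'\'etape 4, o\`u l'on relance la proc\'edure sur chaque terme de la combinaison lin\'eaire, correspond exactement \`a la lin\'earit\'e des morphismes de modules, de sorte que la fonctorialit\'e de $OF$ permet de recoller ces \'etapes \'el\'ementaires en l'\'egalit\'e voulue.

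L'obstacle principal sera le contr\^ole des signes $\alpha, \beta_x, \beta_y, \beta_i, \beta_j$ et $\beta$. En effet, une fusion ou une scission modifie l'ensemble des composantes, donc l'ordre total induit par les points de base ; pour r\'e\'ecrire le produit ext\'erieur dans l'ordre canonique associ\'e au nouveau diagramme, il faut d\'eplacer le ou les g\'en\'erateurs concern\'es au travers des autres g\'en\'erateurs colori\'es, ce qui introduit un facteur $(-1)$ \'elev\'e au nombre de composantes colori\'ees travers\'ees. Je v\'erifierais donc soigneusement que, dans chaque r\`egle, l'exposant d\'efinissant $\beta_x, \beta_y, \beta_i, \beta_j$ ou $\beta$ compte exactement les composantes colori\'ees situ\'ees entre l'ancienne et la nouvelle position du g\'en\'erateur d\'eplac\'e, tandis que le signe $\alpha$ co\"incide avec celui qu'introduit l'orientation $x_i \chemarrow x_j$ de la scission dans la d\'efinition de $OF$. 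Une fois ces identit\'es de signes \'etablies dans les deux sous-cas, la combinaison de la r\'ecurrence et de la fonctorialit\'e de $OF$ donne $\phi(xy) = \phi(x) \times_C \phi(y)$, ce qui ach\`eve la preuve.
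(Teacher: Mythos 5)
Votre proposition est correcte et suit essentiellement la m\^eme d\'emarche que la preuve du texte : on met un ordre sur les composantes de chaque $D_i$ (induit par les points de base) de sorte que $D_0$ corresponde \`a $\phi^{-1}(D_t)\wedge\phi^{-1}(D_b)$, puis on v\'erifie pont par pont que les r\`egles de recoloriage de fusion et de scission, avec leurs signes $\alpha$, $\beta_x$, $\beta_y$, $\beta_i$, $\beta_j$ et $\beta$, reproduisent l'action de $OF$ sur les cobordismes \'el\'ementaires correspondants. La seule diff\'erence est de pr\'esentation : vous explicitez la r\'ecurrence, la fonctorialit\'e de $OF$ et le cas non compatible, tandis que le texte d\'etaille davantage la v\'erification des signes, notamment les deux sous-cas ($x_i < x_j$ ou non) du signe $\beta$ dans la scission, v\'erification que vous annoncez sans la mener \`a terme.
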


\begin{proof}
L'idée de la preuve est de mettre un ordre sur les composantes de $D_i$ à chaque étape de la résolution et de montrer que les signes choisis correspondent aux produits extérieurs de la multiplication dans ${OH}_C^n$ en associant les composantes coloriées aux produits extérieurs donné par l'ordre (comme dans la preuve de la Proposition \ref{prop:isodiaghn}). On définit cet ordre en parcourant les points $(1,0)$ à $(2n,0)$ puis $(1,-d)$ à $(2n,-d)$ de sorte qu'au diagramme $D_0$ soit associé le produit extérieur de celui de $\phi^{-1}(D_t) \wedge \phi^{-1}(D_b)$. On vérifie ensuite qu'à chaque étape de la résolution de ponts soit associé le produit extérieur de la fusion/scission correspondante :
\begin{itemize}
\item Si les deux demi-cercles appartiennent à des composantes différentes, alors les résoudre revient à fusionner ces composantes, expliquant la couleur des diagrammes. Pour le signe $\beta_x$ ou $\beta_y$, il vient du fait qu'en fusionnant la composante coloriée avec l'autre, l'ordre de cet élément dans le produit extérieur associé peut changer puisqu'il prend potentiellement la place de la composante non-coloriée. $\beta_x$ et $\beta_y$ calculent ces décalages.
\item S'ils appartiennent à la même composante, alors la résolution revient à séparer cette composante en deux et donc on a une scission, expliquant les couleurs. Le signe $\alpha$ calcule l'orientation de la scission pour la règle de multiplications et les signes $\beta_i$ et $\beta_j$ calculent le décalage des éléments $a_i$ et $a_j$ du facteur $(a_i - a_j)$ qu'on positionne à la bonne place dans le produit extérieur selon l'ordre défini en début de preuve. Enfin, pour $\beta$ on a deux cas à regarder. Si $x_i < x_j$, alors on a
$$\alpha(a_i - a_j) \wedge x_1 \wedge a_j \wedge x_2 = a_i \wedge x_1 \wedge a_j \wedge x_2$$
et donc $\beta = \alpha (-1)^m$ pour $m$ qui calcule le nombre de composantes inférieures ou égales à $a_j$, contenant alors $a_i$ et $a_j$. Par ailleurs on doit décaler $a_i$ pour le mettre à la place de $a_j$, donc un décalage de $|x_1|$, et on doit décaler $a_j$ pour le mettre à la bonne position dans $x_2$ (puisque $a_j$ est séparé de $a_i$ qui hérite du point de base minimal, on doit calculer le nouveau point de base de $a_j$). Au final, on a un décalage de autant de composantes que celles strictement inférieures à $a_j$ moins celle de $a_i$, mais $\beta$ est calculé en prenant $a_j$ aussi ce qui annule la contribution de $a_i$. Si par contre $a_j < a_i$ alors on a
$$\alpha(a_i - a_j) \wedge x_1 \wedge a_i \wedge x_2 = -a_j \wedge x_1 \wedge a_i \wedge x_2$$
et le calcul du décalage se fait de la même façon en inversant les rôles de $a_i$ et $a_j$ si ce n'est qu'on a pas le $a_i$ pour annuler la contribution de $a_j$, d'où le signe devant l'expression.
\end{itemize}
\end{proof}

\begin{corollaire}
Les combinaisons linéaires de $n$-diagrammes coloriés munies de la résolution de ponts sur $C$ comme multiplication forment un anneau gradué et cet anneau est isomorphe à ${OH}_C^n$.
\end{corollaire}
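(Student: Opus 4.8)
\emph{Plan.} Le plan est de proc\'eder par transport de structure le long de l'isomorphisme de groupes ab\'eliens $\phi : OH^n \to \Z[D^n]$ de la Proposition~\ref{prop:isodiaghn}. La proposition pr\'ec\'edente \'etablit en effet que ce $\phi$ entrelace la multiplication de $OH^n_C$ avec la r\'esolution de ponts, c'est-\`a-dire que $\phi(x\cdot y) = \phi(x)\times_C \phi(y)$ pour tous $x,y\in OH^n_C$, la r\'esolution $\times_C$ \'etant \'etendue par bilin\'earit\'e. Le c\oe ur du travail, \`a savoir le suivi des signes dans la r\'esolution de ponts, ayant d\'ej\`a \'et\'e effectu\'e, il ne reste qu'\`a assembler ces r\'esultats.

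Je commencerais par rendre explicite la graduation sur $\Z[D^n]$ en d\'eclarant $\phi$ de degr\'e nul, c'est-\`a-dire en tirant en arri\`ere celle de $OH^n_C$. Concr\`etement, un $n$-diagramme $D$ poss\'edant $|D|$ composantes de cercles dont $r$ sont colori\'ees re\c{c}oit le degr\'e $\deg(D) = 2r - |D| + n$, ce qui correspond au degr\'e de $x_{i_1}\wedge\dots\wedge x_{i_r}$ dans $b(OH^n)a = OF(W(b)a)\{n\}$ : chaque g\'en\'erateur de $V(W(b)a)$ \'etant de degr\'e $2$, un produit de $r$ facteurs est de degr\'e $2r$, que les d\'ecalages $\{-|W(b)a|\}$ et $\{n\}$ de (\ref{eq:OFobjects}) et (\ref{eq:degohn}) ram\`enent \`a $2r - |W(b)a| + n$. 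Avec cette convention, $\phi$ devient un isomorphisme de groupes ab\'eliens \emph{gradu\'es}.

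Il ne reste qu'\`a conclure par transport de structure. Comme $\phi$ est une bijection \`a la fois additive (isomorphisme de groupes) et multiplicative (proposition pr\'ec\'edente), toute identit\'e valable dans $OH^n_C$ se transporte \`a $(\Z[D^n], \times_C)$. La distributivit\'e est d'ailleurs imm\'ediate, $\times_C$ \'etant d\'efinie par extension bilin\'eaire ; l'image $\phi(1) = \sum_{a\in B^n}\phi(1_a)$, somme des diagrammes $W(a)a$ d\'epourvus de composante colori\'ee, fournit une unit\'e bilat\`ere ; et la graduation est pr\'eserv\'ee, la multiplication de $OH^n_C$ \'etant de degr\'e $0$ (d'apr\`es (\ref{eq:homohn2}) et le d\'ecalage $\{n\}$) et $\phi$ \'etant gradu\'e. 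On obtient ainsi un \emph{anneau gradu\'e} isomorphe \`a $OH^n_C$, le mot anneau \'etant entendu au sens non n\'ecessairement associatif, conform\'ement \`a $OH^n_C$ lui-m\^eme qui n'est pas associatif d\`es que $n\ge 2$. Je ne m'attends \`a aucune difficult\'e v\'eritable \`a ce stade : tout le travail d\'elicat de suivi des signes $\alpha$, $\beta$, $\beta_i$, $\beta_j$, $\beta_x$ et $\beta_y$ a \'et\'e absorb\'e dans la preuve de la proposition pr\'ec\'edente, et le seul point r\'eclamant un minimum de soin est l'explicitation de la formule de degr\'e, n\'ecessaire pour justifier litt\'eralement le qualificatif gradu\'e.
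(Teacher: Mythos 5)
Votre preuve est correcte et suit exactement la d\'emarche implicite du papier : le corollaire y est \'enonc\'e sans d\'emonstration car il d\'ecoule imm\'ediatement, par transport de structure, de la Proposition~\ref{prop:isodiaghn} (isomorphisme de groupes) et de la proposition pr\'ec\'edente (compatibilit\'e avec la multiplication). Votre explicitation de la graduation $\deg(D) = 2r - |D| + n$ et la v\'erification de l'unit\'e et de la distributivit\'e sont exactes et ne font que rendre explicite ce que le papier laisse au lecteur.
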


De par ce corollaire, on confond dans la suite de ce travail les éléments de ${OH}_C^n$ avec des diagrammes colorés.

\begin{remarque}
Le calcul diagrammatique peut être aisément adapté pour $H^n$, où on associe simplement les composantes colorées et les produits tensoriel sans se soucier de l'ordre ainsi qu'en oubliant les signes dans les résolutions de ponts et en prenant un plus dans la scission.
\end{remarque}

\begin{exemple}
On refait l'Exemple \ref{ex:OH2_1} en calculs diagrammatiques :
\begin{align*}
\phi({_a1_b}.c_1) = \smalldiag{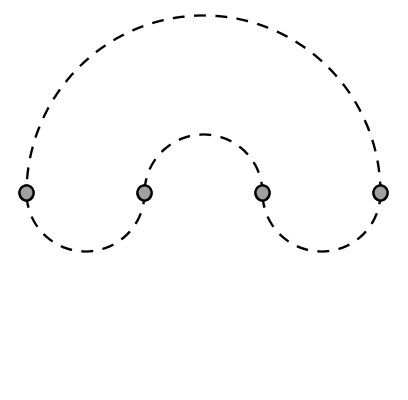} \times \smalldiag{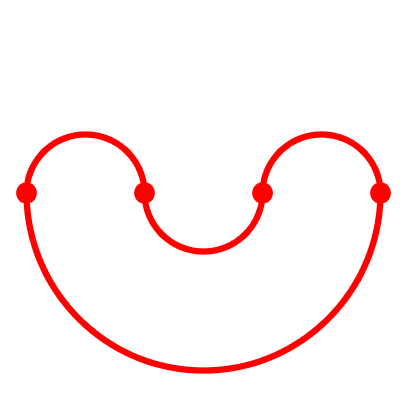} = -\smalldiag{Images_arxiv/B2_bb_11.png} = \phi(-a_1 \wedge a_2)
\end{align*}
puisqu'on a la résolution de ponts
\begin{align*}
\middiag{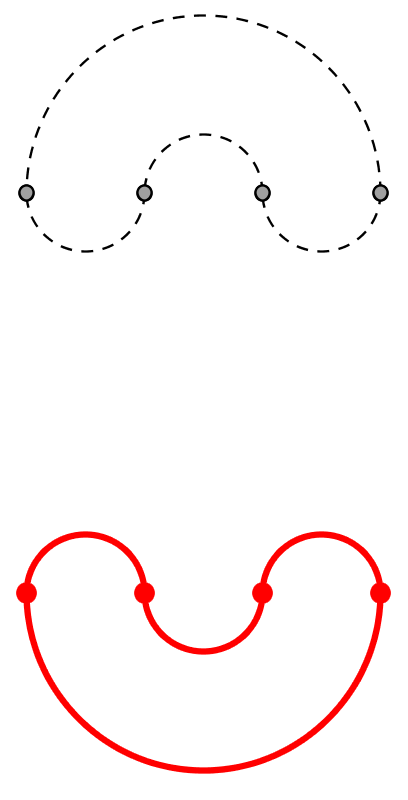} \rightarrow \middiag{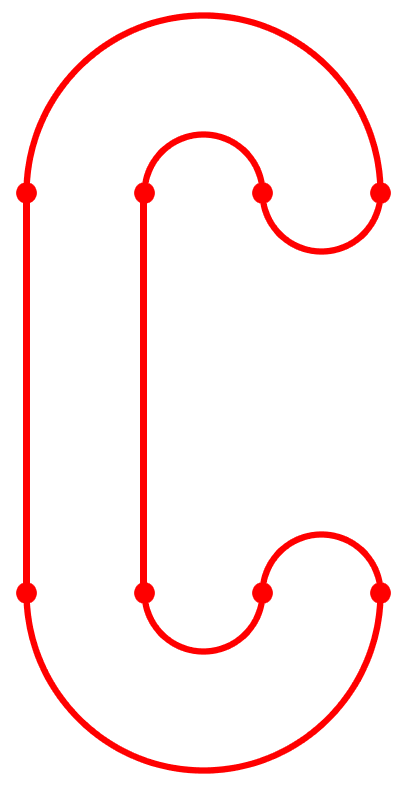} \rightarrow -\middiag{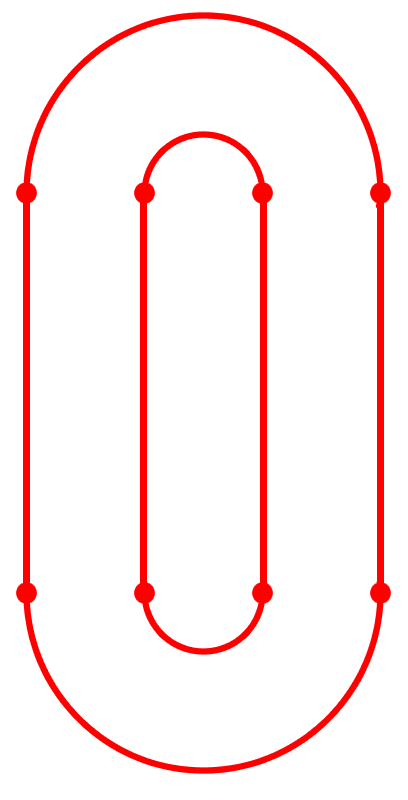}.
\end{align*}
\end{exemple}

\begin{exemple}
On refait l'Exemple  \ref{ex:OH2_2} en calculs diagrammatiques :
\begin{align*}
\phi(c_2.{_b1_a}) = \diagg{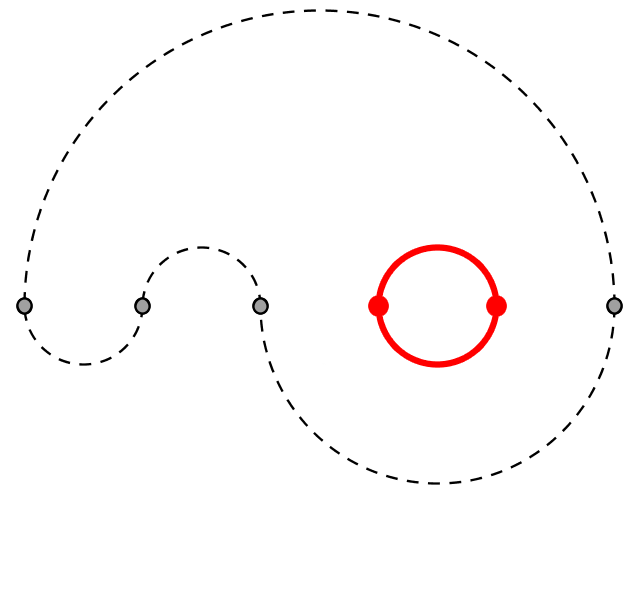} \times \diagg{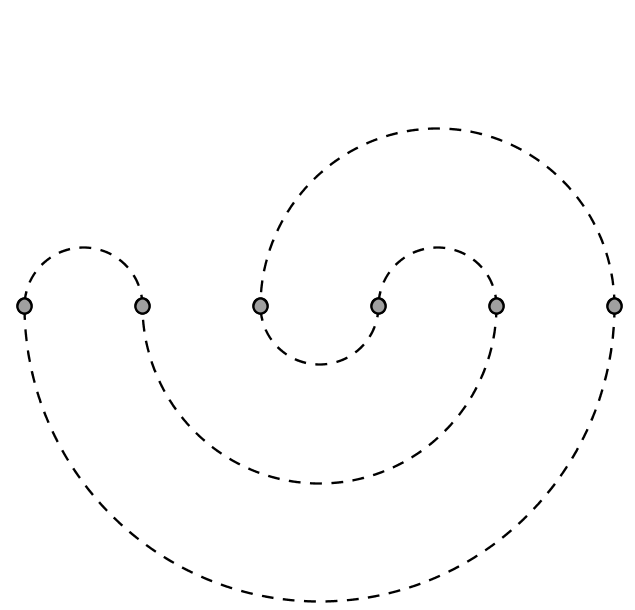} =- \diagg{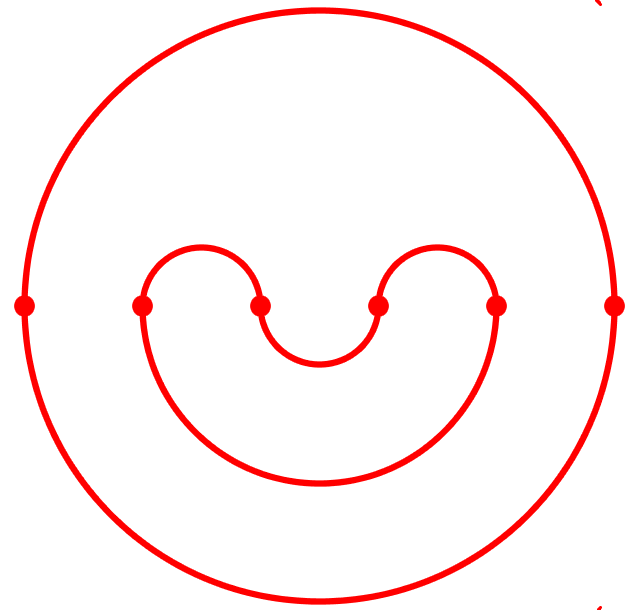} = \phi(-a_1 \wedge a_2)
\end{align*}
puisqu'on a la résolution de ponts
\begin{align*}
\bigdiag{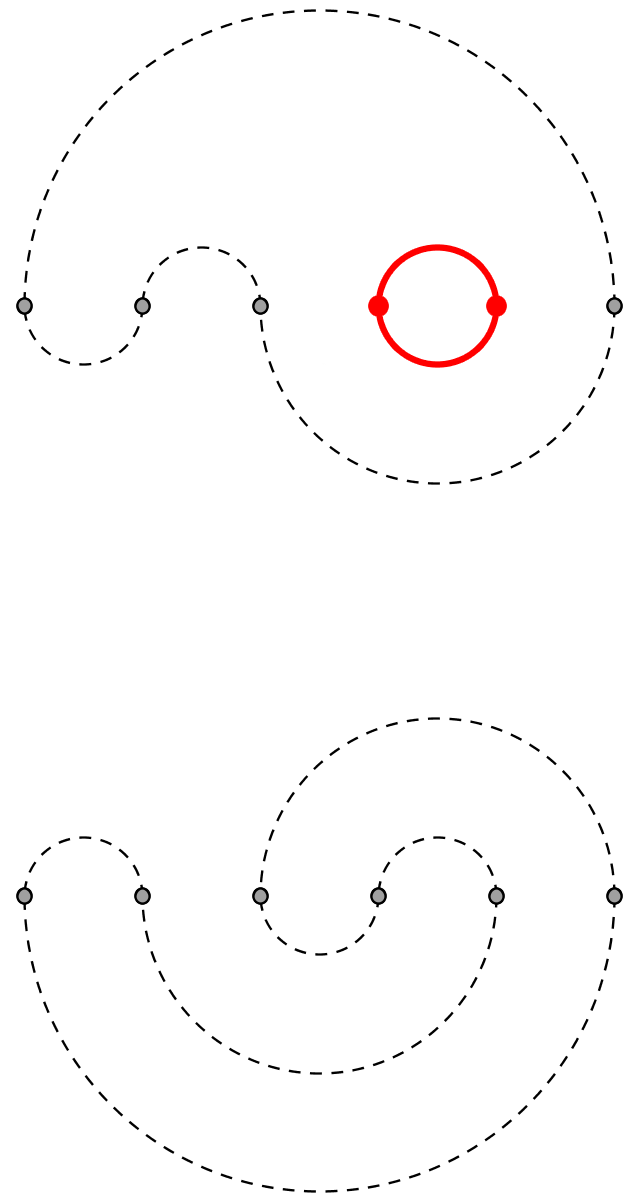} \rightarrow \bigdiag{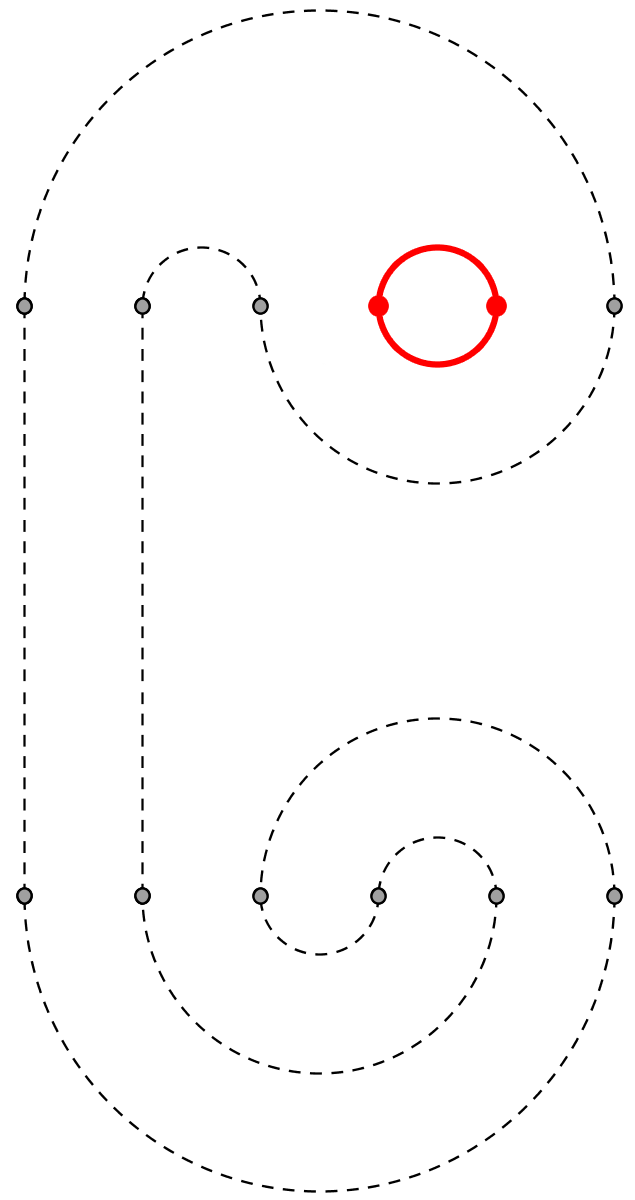} \rightarrow \left( \bigdiag{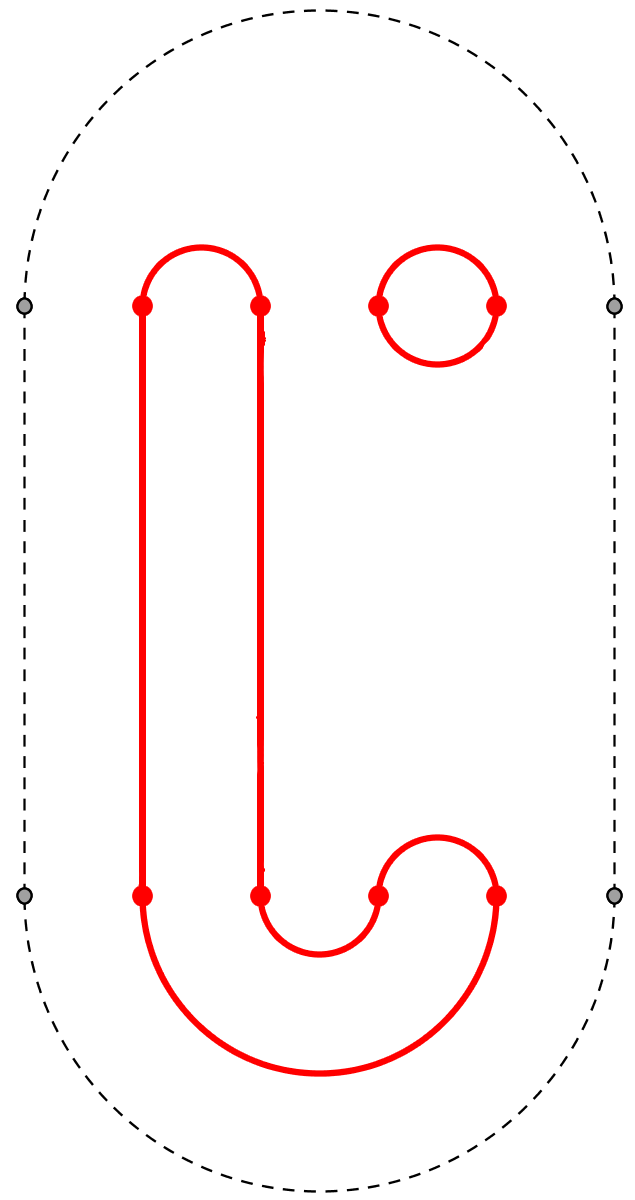} -  \bigdiag{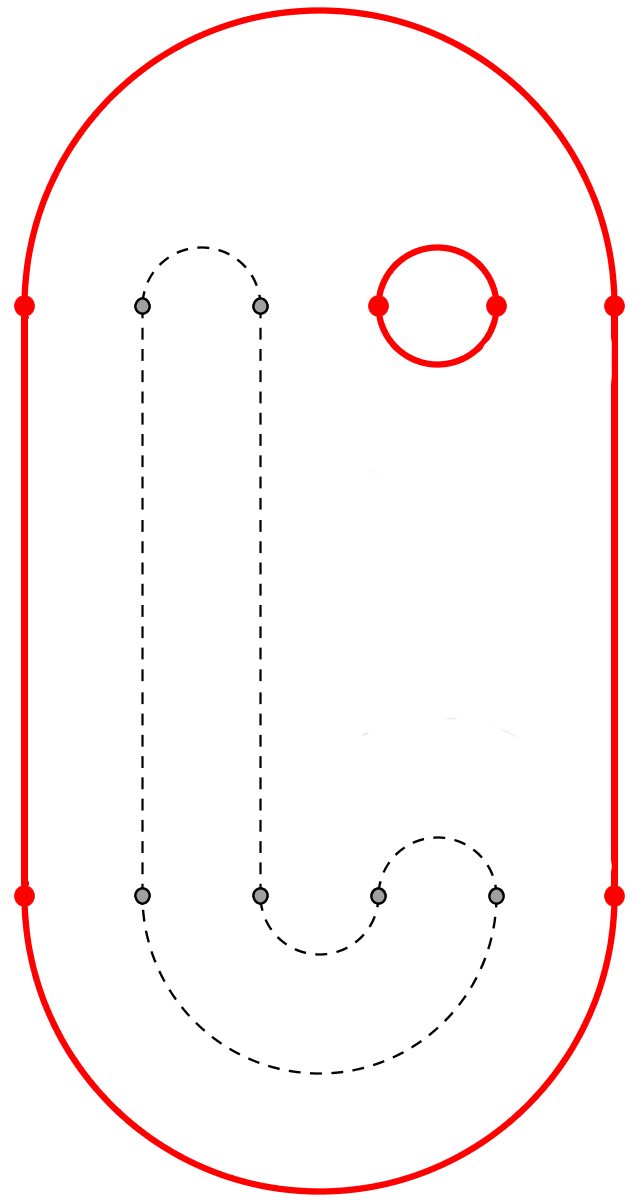} \right) \rightarrow \left( 0-\bigdiag{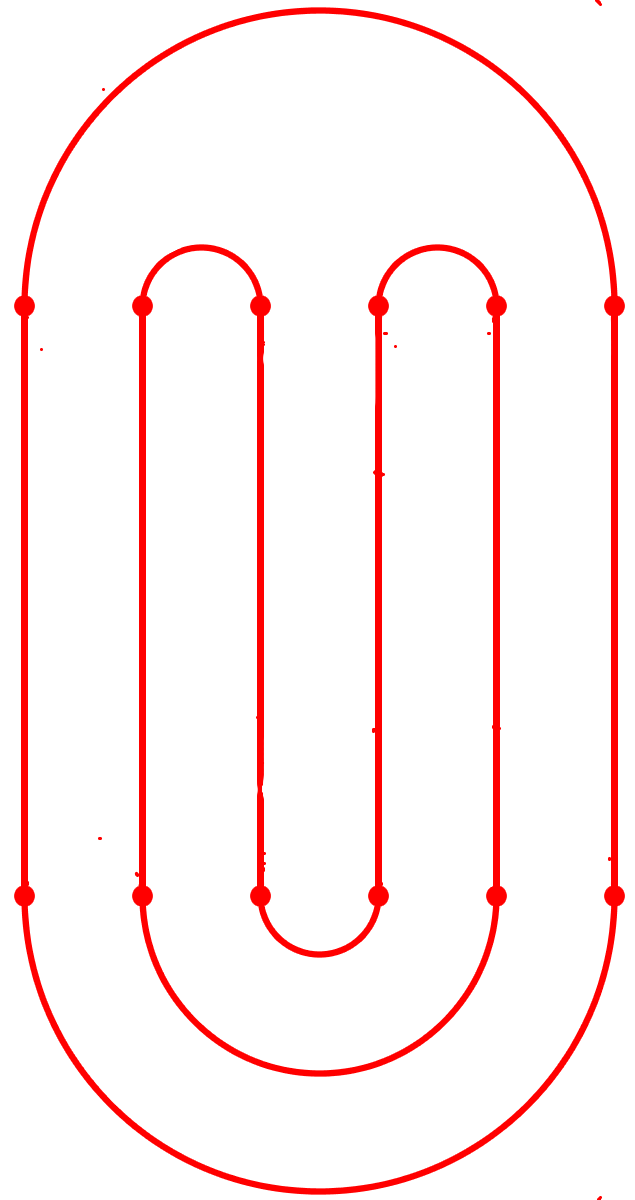} \right).
\end{align*}
\end{exemple}

\begin{exemple}\label{ex:changementordre}
On calcule dans $\widetilde {OH}^3$
\begin{align*}
\phi(x_3.y_2) &= \left(\diagg{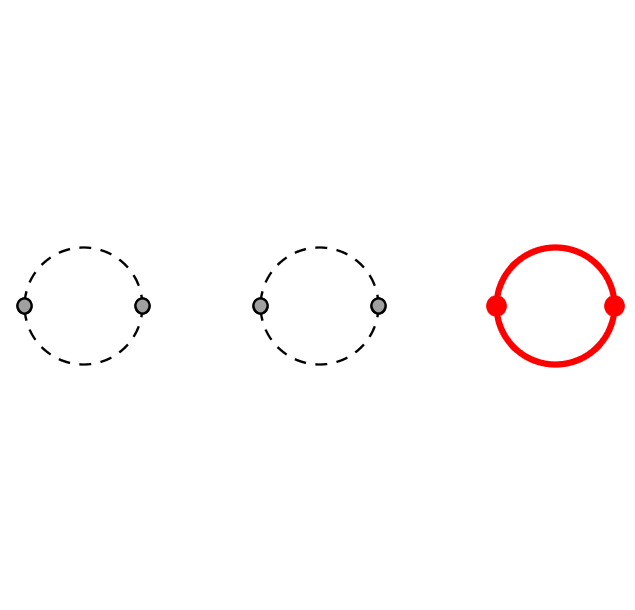} \times \diagg{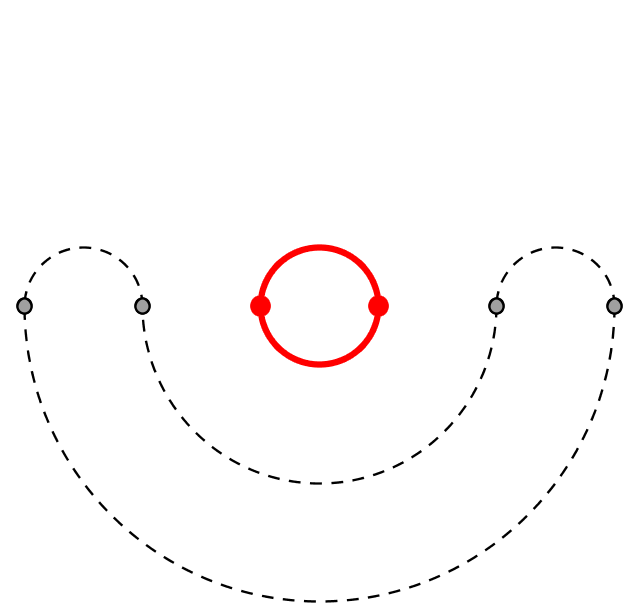}\right) = \left(\diagg{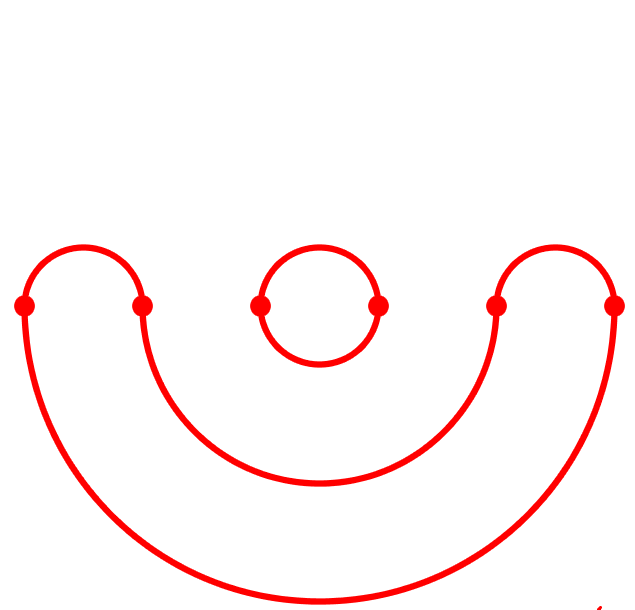}\right) = \phi(y_1 \wedge y_2), \\
\phi(x_2.y_1) &= \left(\diagg{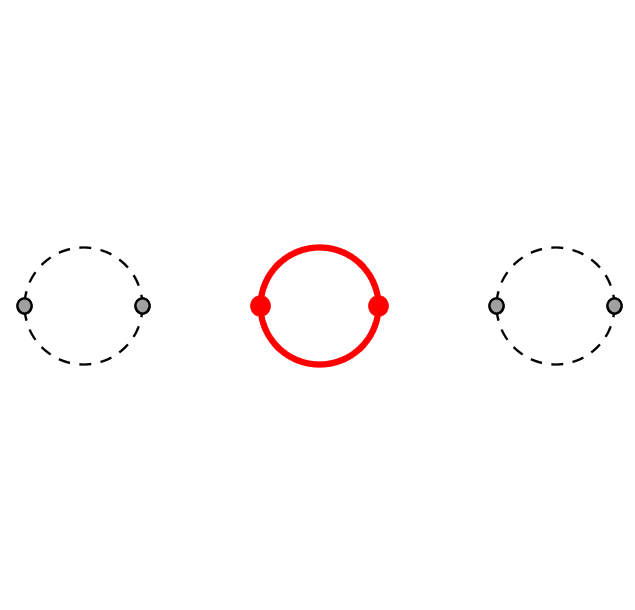} \times \diagg{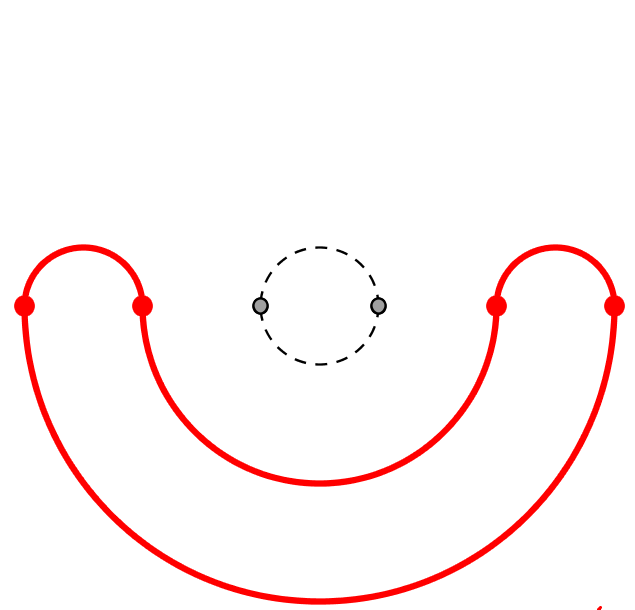}\right) = -\left(\diagg{Images_arxiv/B3_xy_col_2.png}\right) = \phi(-y_1 \wedge y_2).
\end{align*}
\end{exemple}

\section{Propriétés générales de $OH^n_C$}

Dans cette section, on étudie quelques propriétés de $OH^n_C$. On observe notamment que $OH^n_C$ est non-associatif pour tout $n \ge 2$ et on montre que $OH^n_C$ correspond avec $H^n$ quand on les considère modulo $2$.

\begin{proposition}\label{prop:sousOHN}
Pour tout $m \le n$, $OH_C^m$ est un sous-anneau de $OH_C^n$.
\end{proposition}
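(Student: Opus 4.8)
Le plan est de construire un plongement $\iota : B^m \hookrightarrow B^n$ puis de montrer qu'il induit un morphisme d'anneaux injectif, gradu� et multiplicatif. Pour $a \in B^m$, je pose $\iota(a) \in B^n$ l'enchev�trement plat obtenu en conservant les $m$ arcs de $a$ sur les points $\{1, \dots, 2m\}$ et en ajoutant les $n-m$ petits arcs reliant $(2m+2i-1,0)$ � $(2m+2i,0)$ pour $1 \le i \le n-m$. Ces arcs suppl�mentaires �tant disjoints des arcs de $a$ et deux � deux non-imbriqu�s, $\iota(a)$ est bien dans $B^n$ et $\iota$ est injective. On remarque aussi que l'ordre de la r�gle $C$ sur $\{1, \dots, 2n\}$, restreint aux $2m$ premiers points, fournit une r�gle de multiplication sur $B^m$ (la condition de non-imbrication se restreint car $]x_i,x_j[ \subseteq \{1,\dots,2m\}$ d�s que $x_i,x_j \le 2m$), et pour $\widetilde C$ cette r�gle induite est � nouveau $\widetilde C$ ; c'est relativement � cette r�gle induite qu'on interpr�te $OH^m_C$.

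Ensuite je d�cris l'inclusion au niveau des groupes. Comme les arcs suppl�mentaires sont identiques dans $\iota(a)$ et $\iota(b)$, on a $W(\iota(b))\iota(a) = W(b)a \sqcup E$, o� $E$ est une union de $n-m$ cercles "triviaux" provenant de chaque arc suppl�mentaire coll� � son miroir. L'inclusion $V(W(b)a) \hookrightarrow V(W(b)a)\oplus V(E)$ donne alors une inclusion $\Ext^* V(W(b)a) \hookrightarrow \Ext^* V(W(\iota(b))\iota(a))$, $x \mapsto x \wedge 1_E$, o� aucun cercle de $E$ n'est colori�. Un calcul direct des d�calages (via $OF(S) = \Ext^* V(S)\{-|S|\}$ et les d�calages $\{m\}$ et $\{n\}$ de \eqref{eq:degohn}) montre que cette application pr�serve le degr� : la perte de $n-m$ cercles dans le shift $\{-|S|\}$ est exactement compens�e par le passage de $\{m\}$ � $\{n\}$. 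Via la Proposition \ref{prop:isodiaghn}, on obtient ainsi, pour chaque couple $(a,b)$, un isomorphisme gradu� de $b(OH^m_C)a$ sur le facteur $\iota(b)(OH^n_C)\iota(a)$.

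Le c\oe ur de la preuve sera la multiplicativit�. Pour $x \in c(OH^m_C)b$ et $y \in b(OH^m_C)a$, je compare le produit dans $OH^m_C$, calcul� par $C_{cba}$ ($m$ ponts), et celui des images $x \wedge 1_E$, $y \wedge 1_E$ dans $OH^n_C$, calcul� par $C_{\iota(c)\iota(b)\iota(a)}$ ($n$ ponts). Je me place dans le calcul diagrammatique de la Section \ref{sec:calculdiag}. La r�solution de ponts de $C_{\iota(c)\iota(b)\iota(a)}$ se scinde en deux familles agissant sur des composantes disjointes : les ponts portant sur $\{1,\dots,2m\}$, qui reproduisent exactement la r�solution de $C_{cba}$ (par d�finition de la r�gle induite), et les $n-m$ ponts portant sur les arcs suppl�mentaires. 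Chacun de ces derniers fusionne le cercle suppl�mentaire issu de $W(\iota(c))\iota(b)$ avec celui issu de $W(\iota(b))\iota(a)$ : ce sont toujours deux composantes distinctes, donc des fusions et jamais des scissions. Comme ces cercles restent non-colori�s (les images �tant $x \wedge 1_E$ et $y \wedge 1_E$), on est dans le premier cas de la r�solution de ponts, de coefficient $+1$ et de r�sultat non-colori�. Surtout, les signes $\beta$ des ponts de la partie $\{1,\dots,2m\}$ ne comptent que les composantes \emph{colori�es} situ�es dans certains intervalles : les cercles suppl�mentaires �tant non-colori�s, ils sont invisibles pour ces comptages, quelle que soit leur position dans l'ordre. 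Les signes produits sont donc identiques � ceux de $C_{cba}$, et on conclut $(x \wedge 1_E)(y \wedge 1_E) = (xy)\wedge 1_E$.

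Il reste � rassembler : $\iota$ induit un morphisme $OH^m_C \to OH^n_C$ additif, injectif, gradu� et multiplicatif, qui envoie l'unit� $\sum_{a \in B^m} 1_a$ sur l'idempotent $\sum_{a \in B^m} 1_{\iota(a)}$ de $OH^n_C$ ; c'est donc un isomorphisme de $OH^m_C$ sur un sous-anneau de $OH^n_C$ (unitaire pour cet idempotent). Le principal obstacle sera le suivi rigoureux des signes dans la multiplicativit�, en particulier la v�rification que l'entrelacement des ponts suppl�mentaires dans la chronologie n'injecte aucun signe dans la partie $\{1,\dots,2m\}$ du produit ; le point cl� est que les fusions de cercles non-colori�s valent $+1$ (la Remarque \ref{rem:assocetnonorient} et la Proposition \ref{prop:isofusions} assurant leur ind�pendance vis-�-vis de l'orientation et de l'ordre) et que les comptages de signes ne voient que les cercles colori�s.
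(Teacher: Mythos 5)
Votre d\'emonstration est correcte et suit essentiellement la m\^eme d\'emarche que celle du texte : plongement de $B^m$ dans $B^n$ par ajout de $n-m$ arcs, inclusion induite $V(W(b)a) \subset V(W(\iota(b))\iota(a))$ avec compensation exacte des d\'ecalages de degr\'e gr\^ace \`a $|W(\iota(b))\iota(a)| = |W(b)a| + n - m$, et multiplicativit\'e parce que les arcs ajout\'es n'engendrent que des fusions de composantes non colori\'ees, invisibles pour les comptages de signes des r\'esolutions de ponts. La seule diff\'erence est le choix du plongement (petits arcs ajout\'es \`a droite plut\^ot qu'arcs imbriqu\'es autour de l'\'el\'ement d\'ecal\'e), variante que la remarque suivant la Proposition \ref{prop:sousOHN} signale explicitement comme tout aussi valable pour cette preuve ; votre v\'erification que la r\`egle $C$ se restreint bien en une r\`egle de multiplication sur $B^m$ est d'ailleurs un point de rigueur que le texte passe sous silence.
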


\begin{proof}
Tout élément $x$ de $B^m$ peut être étendu à un élément $\bar x$ de $B^n$ en le décalant vers la droite de $n-m$ et en ajoutant des paires $(1, 2n), (2,2n-1), \dots, (n-m,n+m+1)$, comme illustré en Figure \ref{fig:bninclusion}. Dès lors, pour $a,b \in B^m$, on retrouve toutes les composantes de $W(b)a$ dans $W(\bar b)\bar a$ et donc on obtient une inclusion canonique de groupes 
$$V(W(b)a) \subset V(W(\bar b)\bar a).$$
Cela induit une injection de groupes gradués $OH^m_C \hookrightarrow OH^n_C$ par
$$b(OH^m_C)a = \Ext^* V(W(b)a)\{m-|W(b)a|\} \hookrightarrow \Ext^* V(W(\bar b) \bar a)\{n-|W(\bar b)\bar a|\} = \bar b(OH^n_C)\bar a $$
puisque $|W(\bar b)\bar a| = |W(b)a| + n - m$. De plus cette injection respecte la structure de multiplication puisque le arcs qu'on ajoute pour former $\bar x$ n'engendrent que des fusions de composantes non coloriées (les signes des résolutions de ponts ne sont calculées qu'à partir des composantes coloriées).
\end{proof}

\begin{figure}[h]
    \center
    \includegraphics[width=8cm]{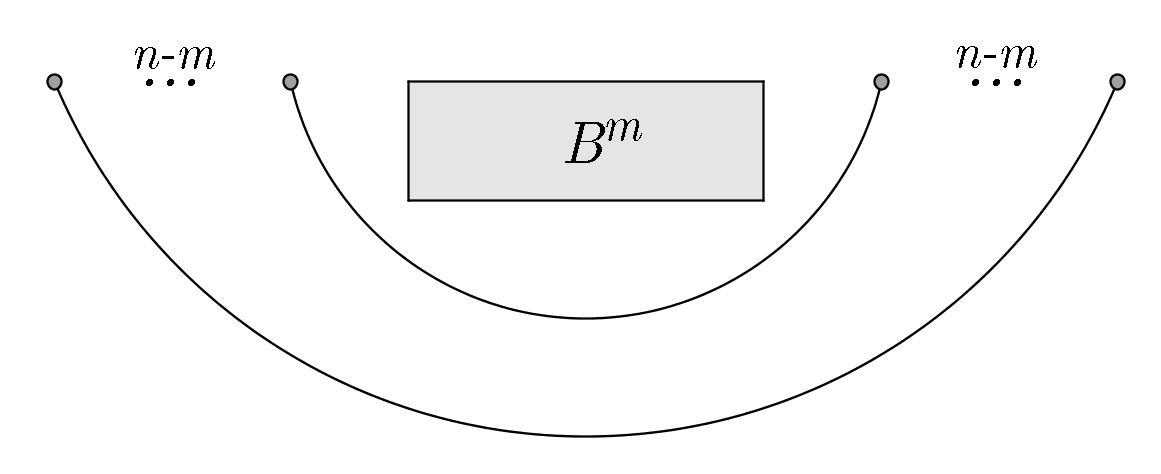}
    \caption{\label{fig:bninclusion} Inclusion de $B^m$ dans $B^n$ en ajoutant $n-m$ arcs.}
\end{figure}

\begin{remarque}
On choisit cette injection de $B^m$ dans $B^n$ car elle vient de l'injection canonique de l'algèbre de Temperley-Lieb $TL_m$ (voir Définition \ref{def:temperleyliebalg}) dans $TL_n$ qui consiste à ajouter des brins verticaux à droite. En effet on peut voir les éléments de $\widetilde B^n$ comme des combinaisons de $U_i$ qu'on courbe, déplaçant les $n$ point en bas pour les mettre à gauche. Pour la preuve de la proposition, on aurait aussi bien pu compléter un élément en ajoutant des paires de points $(2m+1, 2m+1), \dots, (2n-1,2n)$ mais cela donne une inclusion de $OH_C^m$ dans $OH_C^n$ qui n'est pas usuelle.
\end{remarque}

\begin{proposition}
Soient $C$ et $C'$ deux règles de multiplications ainsi que $a,b,c \in B^n$, alors $C_{cba}$ et $C'_{cba}$ sont équivalents vu en tant que cobordismes sans chronologie.
\end{proposition}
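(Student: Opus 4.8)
The plan is to observe that the rules $C$ and $C'$ enter the construction of $C_{cba}$ only through two pieces of data --- the order in which the bridges are built (that is, the chronology) and the orientation assigned to a bridge when it produces a split --- and that both of these are precisely what one forgets when passing from $ChCob$ to $Cob$. Hence it suffices to prove that the \emph{underlying surface} of $C_{cba}$, together with the identification of its boundary with $W(c)bW(b)a$ and $W(c)a$, does not depend on $C$.

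First I would describe the common geometric skeleton. For every rule, $C_{cba}$ is obtained from the cylinder (identity cobordism) on $D_0 = W(c)bW(b)a$ by attaching one bridge for each of the $n$ arcs of $b$: the bridge associated with the arc through the base points $i$ and $j$ is a single $1$-handle glued along a neighbourhood of the arc of $b$ through $(i,1),(j,1)$ together with its mirror arc of $W(b)$ through $(i,0),(j,0)$. This attaching locus is read off directly from $b$ (and from the fixed surrounding pieces $W(c)$ and $a$); it involves neither the order $x_1 < \dots < x_{2n}$ nor any choice of orientation. Consequently the collection of handles, together with their gluing regions, is literally the same for $C$ and for $C'$, and it coincides with the data defining Khovanov's cobordism $\Id_{W(c)}C(b)\Id_a$ used to build $H^n$.

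Next I would use that these $n$ handles are attached along pairwise disjoint regions. Distinct arcs of $b$ meet distinct base points, and since the half-circles of $b$ (and of $W(b)$) are non-crossing, the corresponding gluing neighbourhoods can be taken disjoint. The standard handle-reordering principle then applies: attaching a fixed finite family of $1$-handles along pairwise disjoint attaching regions yields a cobordism whose diffeomorphism type rel boundary is independent of the order of attachment, since all the handles may be attached simultaneously. The resulting boundary-preserving diffeomorphism is exactly an equivalence in $Cob$ in the sense of the definition of equivalent cobordisms, and it identifies the underlying surfaces of $C_{cba}$ and $C'_{cba}$; equivalently, in both cases one recovers the same cobordism $\Id_{W(c)}C(b)\Id_a$, which is manifestly independent of $C$.

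The point that deserves the most care --- and the only genuine obstacle --- is that the \emph{movies} produced by $C$ and $C'$ genuinely differ: whether the bridge through base point $x_i$ is a fusion or a split depends on whether its two opposite arcs already lie on the same component of $D_{i-1}$, and this in turn depends on which earlier bridges have been built. So the sequences of elementary cobordisms are not the same, and one cannot match them term by term. The disjoint-handle viewpoint of the previous step resolves this: the pattern of fusions versus splits is a feature of the chosen Morse function (the height, i.e. the chronology), not of the total surface, and once the chronology is forgotten the two surfaces coincide. If one prefers a purely diagrammatic argument, the same conclusion follows by checking that reordering the disjoint elementary cobordisms and reversing the split orientations is generated by the permutation relations of Theorem \ref{thm:cobelem}, but the handle-attachment argument makes the independence transparent and avoids the case analysis.
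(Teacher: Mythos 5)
Your proof is correct, but it takes a genuinely different route from the paper's. The paper argues algebraically, inside the presentation of $Cob$ by generators and relations: once orientations and chronology are forgotten, $C_{cba}$ and $C'_{cba}$ contain the same number of fusions and scissions involving the same components, and the relations of Theorem \ref{thm:cobelem} (associativity, coassociativity, Frobenius, permutations) then identify them. You instead argue geometrically: each of the $n$ bridges is a $1$-handle whose attaching region is read off from an arc of $b$ and its mirror alone, these regions are pairwise disjoint because the arcs of $b$ are non-crossing, and the standard handle-reordering principle shows the resulting surface, rel boundary, is independent of the order of attachment; since split orientations are simply not data in $Cob$, the equivalence follows. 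Your approach buys transparency: it exhibits directly the boundary-preserving diffeomorphism required by the definition of equivalence, identifies both cobordisms with Khovanov's $\Id_{W(c)}C(b)\Id_a$, and correctly isolates (and disposes of) the one real subtlety, namely that the fusion/scission pattern of individual bridges \emph{does} depend on the order, so a naive term-by-term matching of the two movies fails --- a point the paper's one-line proof glosses over. The paper's approach buys brevity, at the cost of leaving implicit how the listed relations actually rewrite one movie into the other. One caveat on your closing aside: a purely diagrammatic version of your argument would need more than the permutation relations of Theorem \ref{thm:cobelem}; precisely because merge/split roles of individual bridges can swap under reordering (your own observation), the Frobenius and (co)associativity relations are also required, which is exactly the set of relations the paper invokes.
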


\begin{proof}
Si on oublie l'orientation et qu'on accepte l'associativité, la coassociativité et les relations de Frobenius, les deux cobordismes deviennent équivalents puisqu'ils contiennent tous deux le même nombre de scissions et fusions pour les même composantes.
\end{proof}

\begin{corollaire}\label{prop:multisignepres}
Soient $C$ et $C'$ deux règles de multiplication, alors les multiplications
$$\bigl( c(OH^n_C)b \times b(OH^n_C)a \rightarrow c(OH^n_C)a \bigr) = \pm\bigl(
c(OH^n_{C'})b \times b(OH^n_{C'})a \rightarrow c(OH^n_{C'})a\bigr)$$
sont identiques à signe près, signe dépendant uniquement de $a,b$ et $c$.
\end{corollaire}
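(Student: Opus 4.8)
La preuve est une cons\'equence directe de la proposition pr\'ec\'edente et de la Proposition \ref{prop:isotopesignepres}. Le plan est de remarquer que la multiplication sur chaque bloc $c(OH^n_C)b \times b(OH^n_C)a \rightarrow c(OH^n_C)a$ se factorise en deux \'etapes : d'abord l'homomorphisme (\ref{eq:homohn}) donn\'e par le produit ext\'erieur via les inclusions canoniques, puis l'image $OF(C_{cba})$ du cobordisme chronologique associ\'e \`a la r\`egle $C$. L'essentiel est que la premi\`ere \'etape ne d\'epend que de $a,b,c$ (via les cercles de $W(c)b$ et $W(b)a$) et est donc identique pour $C$ et pour $C'$.

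D'abord je fixerais $a,b,c \in B^n$ et j'observerais que, par la proposition pr\'ec\'edente, $C_{cba}$ et $C'_{cba}$ sont \'equivalents en tant que cobordismes sans chronologie. La Proposition \ref{prop:isotopesignepres} fournit alors un signe $\varepsilon_{cba} \in \{+1,-1\}$ tel que $OF(C_{cba}) = \varepsilon_{cba}\, OF(C'_{cba})$. Le point crucial \`a exploiter est que ce signe est global, c'est-\`a-dire qu'il ne d\'epend pas de l'\'el\'ement de $OF\bigl(W(c)bW(b)a\bigr)$ auquel on applique l'homomorphisme, comme le garantit pr\'ecis\'ement l'\'enonc\'e de la Proposition \ref{prop:isotopesignepres}.

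Ensuite je composerais avec les isomorphismes canoniques de (\ref{eq:degohn}) et avec l'homomorphisme (\ref{eq:homohn}), qui sont eux aussi ind\'ependants de la r\`egle de multiplication. En recollant ces observations dans le diagramme commutatif d\'efinissant la multiplication, on obtient que les deux produits co\"incident au signe $\varepsilon_{cba}$ pr\`es, signe ne d\'ependant que de $a,b,c$, ce qui est le r\'esultat voulu.

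La principale subtilit\'e r\'eside dans le contr\^ole du signe : il faut v\'erifier que le facteur $\pm 1$ produit par la Proposition \ref{prop:isotopesignepres} est uniforme sur tout le bloc et non variable selon les g\'en\'erateurs colori\'es, faute de quoi l'application ne serait pas un simple multiple scalaire. Ce point \'etant d\'ej\`a assur\'e par la formulation de la Proposition \ref{prop:isotopesignepres}, le corollaire suit sans calcul suppl\'ementaire.
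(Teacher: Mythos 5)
Your proof is correct and follows essentially the same route as the paper: the previous proposition gives the equivalence of $C_{cba}$ and $C'_{cba}$ as cobordisms without chronology, and Proposition \ref{prop:isotopesignepres} then supplies a global sign $\varepsilon_{cba}$ independent of the element, which is exactly the paper's argument. Your extra remarks — that the exterior-product map (\ref{eq:homohn}) and the isomorphisms of (\ref{eq:degohn}) do not depend on the rule, and that the sign is uniform on the whole block — simply make explicit what the paper leaves implicit.
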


\begin{proof}
On sait que $C_{cba}$ et $C'_{cba}$ sont équivalents par la proposition précédente et donc par la Proposition \ref{prop:isotopesignepres} on obtient la propriété voulue.
\end{proof}

\subsection{Non-associativité de $\Hno_C$}

On voit facilement que $OH_C^1$ est associatif par l'Exemple \ref{ex:OH1}. Par contre pour $n = 2$ on perd l'associativité si on utilise la règle de multiplication $\widetilde C$ comme le montre l'exemple suivant.

\begin{exemple} \label{ex:OH2nonassoc}
On considère les calculs diagrammatiques suivant dans $\widetilde {OH}^2$ :
\begin{align*}
\left(\smalldiag{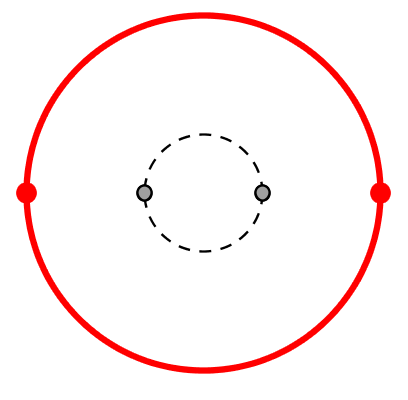} \times   \smalldiag{Images_arxiv/B2_ba_0.png}\right) \times  \smalldiag{Images_arxiv/B2_ab_0.png} &=
\smalldiag{Images_arxiv/B2_ba_1.png} \times \smalldiag{Images_arxiv/B2_ab_0.png} \\
&= -\smalldiag{Images_arxiv/B2_bb_11.png}
\end{align*}
et en réarrangeant les parenthèses on obtient
\begin{align*}
\smalldiag{Images_arxiv/B2_bb_10.png} \times \left( \smalldiag{Images_arxiv/B2_ba_0.png} \times  \smalldiag{Images_arxiv/B2_ab_0.png}\right) &=
\smalldiag{Images_arxiv/B2_bb_10.png} \times \left(\smalldiag{Images_arxiv/B2_bb_01.png} - \smalldiag{Images_arxiv/B2_bb_10.png} \right) \\
&= \smalldiag{Images_arxiv/B2_bb_11.png}.
\end{align*}
\end{exemple}

Cet exemple mène à un autre exemple qui montre que le produit de trois éléments peut parfois être nul et parfois non suivant l'ordre dans lequel on multiplie.
\begin{exemple}
On considère les multiplications suivantes dans $\widetilde {OH}^2$ :
\begin{align*}
\underbrace{\smalldiag{Images_arxiv/B2_bb_10.png}}_{x}  \times\underbrace{ \left(\smalldiag{Images_arxiv/B2_ba_0.png}+\smalldiag{Images_arxiv/B2_bb_00.png} \right)}_{y} \times \underbrace{\left(\smalldiag{Images_arxiv/B2_ab_0.png} + \smalldiag{Images_arxiv/B2_bb_01.png} \right)}_{z} .
\end{align*}
On commence par rassembler les termes à gauche pour avoir $(xy)$ puis on multiplie par le terme de droite pour obtenir $(xy)z$
\begin{align*}
\smalldiag{Images_arxiv/B2_bb_10.png} \times \left(\smalldiag{Images_arxiv/B2_ba_0.png}+\smalldiag{Images_arxiv/B2_bb_00.png} \right) &= 
\smalldiag{Images_arxiv/B2_ba_1.png}  + \smalldiag{Images_arxiv/B2_bb_10.png}, \\
\left(\smalldiag{Images_arxiv/B2_ba_1.png}  + \smalldiag{Images_arxiv/B2_bb_10.png} \right)  \times \left(\smalldiag{Images_arxiv/B2_ab_0.png} + \smalldiag{Images_arxiv/B2_bb_01.png} \right)  &=
-\smalldiag{Images_arxiv/B2_bb_11.png} + \smalldiag{Images_arxiv/B2_bb_11.png} \\
&= 0.
\end{align*}
Ensuite on rassemble les termes de droite pour avoir $(yz)$ puis on multiplie par l'élément de gauche pour obtenir $x(yz)$
\begin{align*}
\left(\smalldiag{Images_arxiv/B2_ba_0.png}+\smalldiag{Images_arxiv/B2_bb_00.png} \right) \times \left(\smalldiag{Images_arxiv/B2_ab_0.png} + \smalldiag{Images_arxiv/B2_bb_01.png} \right) &=
\smalldiag{Images_arxiv/B2_bb_01.png}  - \smalldiag{Images_arxiv/B2_bb_10.png} + \smalldiag{Images_arxiv/B2_bb_01.png}, \\
&= 2\smalldiag{Images_arxiv/B2_bb_01.png}-\smalldiag{Images_arxiv/B2_bb_10.png}\\
\left(2\smalldiag{Images_arxiv/B2_bb_01.png}-\smalldiag{Images_arxiv/B2_bb_10.png}\right) \times \smalldiag{Images_arxiv/B2_bb_10.png} &= -2\smalldiag{Images_arxiv/B2_bb_11.png}.
\end{align*}
On a donc trouvé un exemple de $x,y,z \in \widetilde{ OH}^2$ tel que $(xy)z = 0$ et $x(yz) = 2w$ pour un certain $w \in \widetilde{OH}^2$ non-nul.
\end{exemple}

\begin{proposition}
Pour $n \ge 2$ et $C$ quelconque, la multiplication dans $OH^n_C$ n'est pas associative.
\end{proposition}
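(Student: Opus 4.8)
The plan is to reduce the general statement to the single explicit computation already performed in Example \ref{ex:OH2nonassoc} for $\widetilde{OH}^2 = OH^2_{\widetilde C}$, transporting it first to arbitrary $n$ via the subring structure of Proposition \ref{prop:sousOHN}, and then to an arbitrary multiplication rule $C$ via the sign-comparison Corollary \ref{prop:multisignepres}. Throughout, let $a,b \in B^2$ be the two elements of Example \ref{ex:OH2nonassoc} (with $a$ the nested diagram) and let $x \in a(OH^2)a$, $y \in a(OH^2)b$, $z \in b(OH^2)a$ be the three elements used there, for which one finds $(xy)z = -w$ and $x(yz) = w$ with $w \in a(OH^2)a$ nonzero.

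First I would settle arbitrary $n \ge 2$ while keeping the rule $\widetilde C$. By Proposition \ref{prop:sousOHN} the inclusion $OH^2_{\widetilde C} \hookrightarrow OH^n_{\widetilde C}$ sending $a,b$ to their nested extensions $\bar a, \bar b \in B^n$ is an injective ring homomorphism. Writing $\bar x, \bar y, \bar z, \bar w$ for the images, the homomorphism property gives $(\bar x\bar y)\bar z = -\bar w$ and $\bar x(\bar y\bar z) = \bar w$ in $\widetilde{OH}^n$ with $\bar w \ne 0$, so $\widetilde{OH}^n$ is already non-associative.

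Next I would pass to an arbitrary rule $C$. By Corollary \ref{prop:multisignepres} the multiplication $m^C_{cba}$ equals $\epsilon_{cba}\,m^{\widetilde C}_{cba}$ for a sign $\epsilon_{cba} = \pm 1$ depending only on $(c,b,a)$. The two bracketings of $\bar x, \bar y, \bar z$ involve exactly three triples: $(\bar a,\bar a,\bar b)$ for $\bar x\bar y$, the triple $(\bar a,\bar b,\bar a)$ for both $(\bar x\bar y)\bar z$ and $\bar y\bar z$, and $(\bar a,\bar a,\bar a)$ for $\bar x(\bar y\bar z)$. Pulling the signs out of the bilinear products and using the values computed under $\widetilde C$ yields, in $OH^n_C$,
$$(\bar x\bar y)\bar z = \epsilon_{\bar a\bar a\bar b}\,\epsilon_{\bar a\bar b\bar a}\,(-\bar w), \qquad \bar x(\bar y\bar z) = \epsilon_{\bar a\bar b\bar a}\,\epsilon_{\bar a\bar a\bar a}\,\bar w.$$

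The crux is to pin down the two diagonal signs. The cobordism $C_{\bar a\bar a\bar b}$ is a composite of $n$ fusions by Proposition \ref{prop:aaabfusions}, and $C_{\bar a\bar a\bar a}$ is all fusions as well: since $W(\bar a)\bar a$ always has $n$ circles, its source $W(\bar a)\bar a W(\bar a)\bar a$ has $2n$ circles and the target $n$, forcing all $n$ saddles to merge and leaving no split. As two fusion-only cobordisms with the same endpoints have equal image under $OF$ by Proposition \ref{prop:isofusions}, we get $\epsilon_{\bar a\bar a\bar b} = \epsilon_{\bar a\bar a\bar a} = +1$, whereas $\epsilon_{\bar a\bar b\bar a}$ (the only triple producing a split, hence the only genuinely $C$-dependent sign) occurs with the same power in both expressions and cancels; this leaves $(\bar x\bar y)\bar z = -\epsilon_{\bar a\bar b\bar a}\,\bar w = -\,\bar x(\bar y\bar z)$ with $\bar w \ne 0$, so $OH^n_C$ is non-associative. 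I expect the main obstacle to be precisely this bookkeeping: isolating the triples attached to each bracketing and verifying that the one potentially variable sign $\epsilon_{\bar a\bar b\bar a}$ drops out, so that the rigidity of the two fusion-only cobordisms makes the conclusion uniform in $C$.
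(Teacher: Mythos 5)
Your proof is correct and takes essentially the same route as the paper: it reduces everything to Example \ref{ex:OH2nonassoc}, using Proposition \ref{prop:sousOHN} for the passage to general $n$ and Corollary \ref{prop:multisignepres} together with Propositions \ref{prop:aaabfusions} and \ref{prop:isofusions} for the passage to general $C$. The only difference is the order of the two reductions (you embed into $OH^n$ first and compare multiplication rules afterwards, while the paper compares rules at $n=2$ and then embeds), and your explicit cancellation of $\epsilon_{\bar a\bar b\bar a}$ is precisely the paper's observation that the single split-containing product changes sign coherently in both bracketings while the fusion-only products are independent of $C$.
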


\begin{proof}
On généralise l'Exemple \ref{ex:OH2nonassoc} sur tout $C$ en remarquant que si
$$ \smalldiag{Images_arxiv/B2_ba_0.png} \times_C  \smalldiag{Images_arxiv/B2_ab_0.png} = \smalldiag{Images_arxiv/B2_bb_10.png} - \smalldiag{Images_arxiv/B2_bb_01.png}$$
alors par le Corollaire \ref{prop:multisignepres} le signe de l'homomorphisme obtenu pour un $C$ est fixé et donc
$$\smalldiag{Images_arxiv/B2_ba_1.png} \times_C \smalldiag{Images_arxiv/B2_ab_0.png} = \smalldiag{Images_arxiv/B2_bb_11.png}$$
et idem pour les signes opposés, en remarquant que les autres multiplications n'impliquent que des fusions et ne sont donc pas influencées par le choix de $C$. On généralise ensuite pour tout $n \ge 2$ par la Proposition \ref{prop:sousOHN}.
\end{proof}

Cela ne constitue qu'une des deux sources de non-associativité d'un produit de $x$, $y$ et $z$ dans $OH^n_C$ venant de la permutation du facteur $x$ avec les facteurs $(a_1 - a_2)$ qui apparaissent par les scissions de la multiplication $yz$, comme illustré en Figure \ref{fig:chgtac}. Un second changement de signes peut être du au changement de chronologies dans le cobordisme $W(d)cW(c)bW(b)a \rightarrow W(d)a$. Pour $x \in d(OH^n_C)c$, $y \in c(OH^n_C)b$ et $z \in b(OH^n_C)a$ on a
\begin{align*}
 xy &= S(d,c,b) \wedge  x_{db} \wedge y_{db}, \\
(xy)z &= S(d,b,a) \wedge S(d,c,b) \wedge x_{da} \wedge y_{da} \wedge z_{da},\\
yz &= S(c,b,a) \wedge y_{ca} \wedge z_{ca}, \\
x(yz) &= S(d,c,a) \wedge x_{da} \wedge S(c,b,a) \wedge y_{da} \wedge z_{da},
\end{align*}
où $S(d,c,b)$ est le produit de $(a_1 - a_2)$ ajouté par les scissions de l'image par $OF$ du cobordisme $W(d)cW(c)b \rightarrow W(d)b$ et $x_{db}$ est l'image de $x$ par ce cobordisme et de même pour les autres éléments. On a donc un premier changement de signes pour faire permuter $x_{da}$ avec $S(c,b,a)$ et un second donné par le changement de chronologies
$$ S(d,b,a) \wedge S(d,c,b) = \pm  S(d,c,a) \wedge S(c,b,a).$$
Ce second phénomène est illustré dans le prochain exemple.

\begin{figure}[h]
    \center
    \includegraphics[width=5cm]{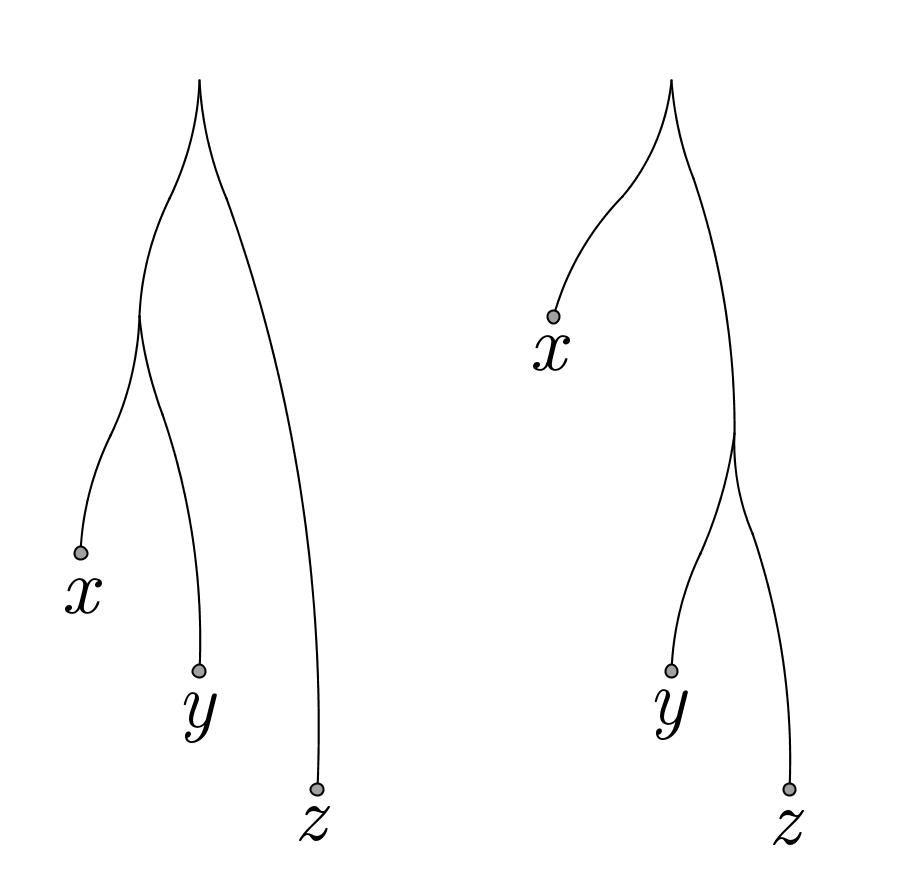}
    \caption{\label{fig:chgtac} A gauche on a $(xy)z$ et à droite $x(yz)$ : $x$ permute avec le cobordisme de la multiplication $yz$ et donc avec les éléments engendrés par ses scissions.}
\end{figure}

\begin{exemple}\label{ex:chgtch}
On considère les multiplications dans $\widetilde{OH}^3$
\begin{align*}
\left(\diagsc{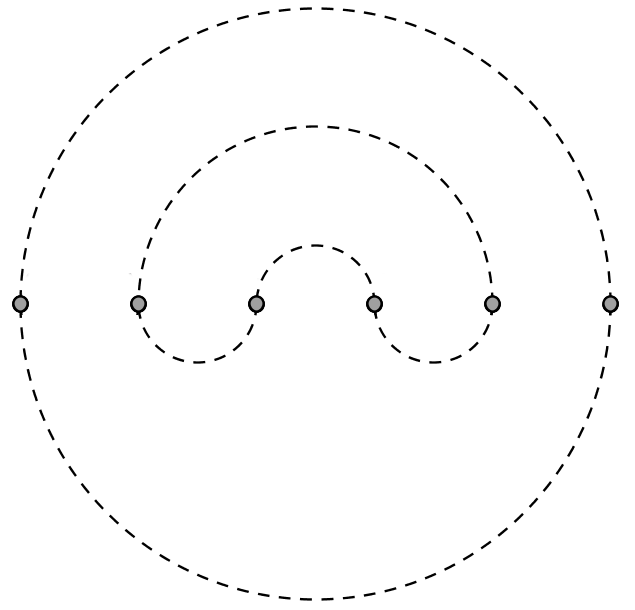} \times   \diagsc{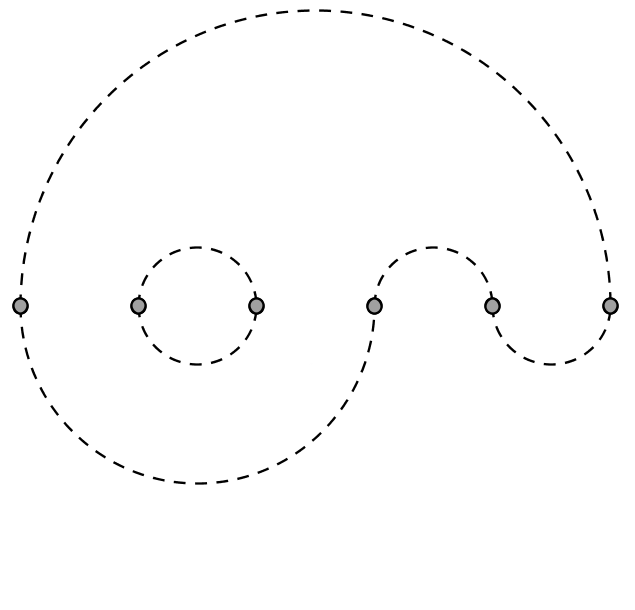}\right) \times  \diagsc{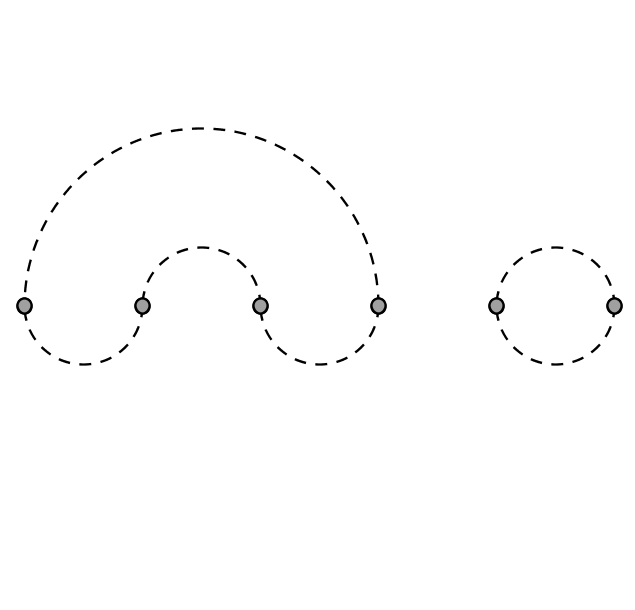}
 &= \diagsc{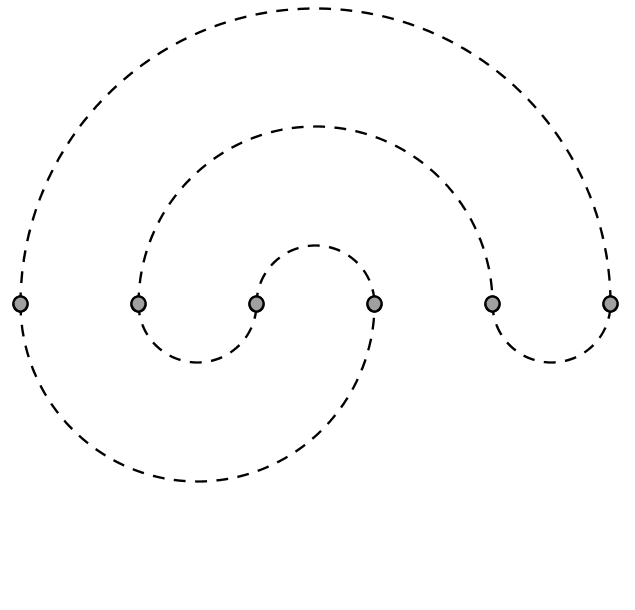} \times \diagsc{Images_arxiv/B3_bx_col_00.png} \\
&= \diagsc{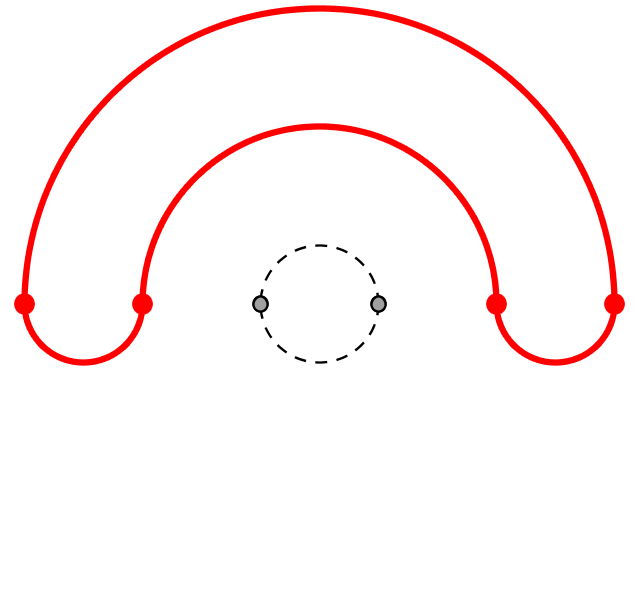} - \diagsc{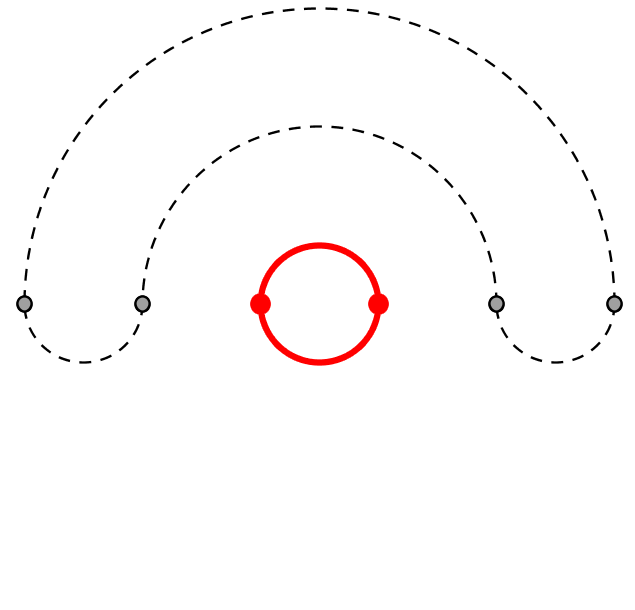}
\end{align*}
et en réarrangeant les parenthèses
\begin{align*}
\diagsc{Images_arxiv/B3_ac_col.png} \times   \left(\diagsc{Images_arxiv/B3_cb_col_rev.png} \times  \diagsc{Images_arxiv/B3_bx_col_00}\right)
 &=\diagsc{Images_arxiv/B3_ac_col.png} \times \diagsc{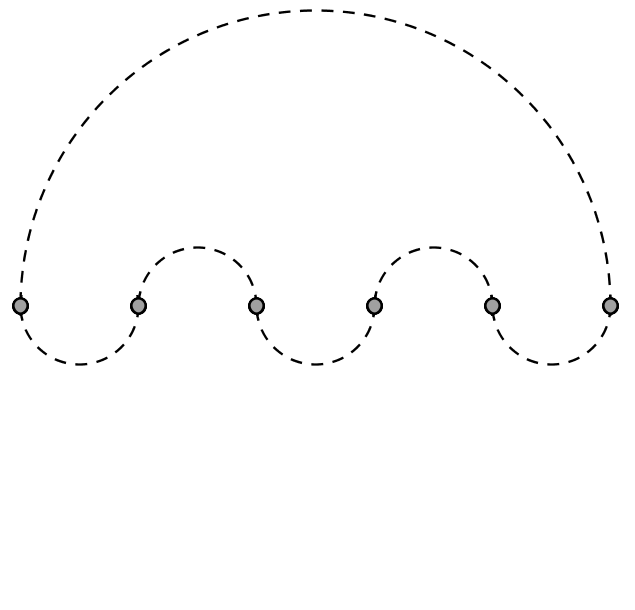}  \\
&=  \diagsc{Images_arxiv/B3_yx_col_01.png} - \diagsc{Images_arxiv/B3_yx_col_10.png}.
\end{align*}
\end{exemple}

On a donc un autre exemple de non-associativité mais qui est du à l'ordre dans lequel on fait les scissions et fusions cette fois. Il est intéressant de noter que, comme le montre l'exemple suivant, en utilisant une autre règle de multiplication on peut ne pas retrouver la non-associativité de ces éléments. En fait, on retrouve le signe du à la permutation du facteur de gauche dans toutes les règles mais le signe du au changement de chronologie dépend du choix d'orientation des cobordismes et de leurs chronologies.

\begin{exemple}\label{ex:chgtch2}
On définit la règle de multiplication $C_{ord}$ comme celle où on donne l'ordre usuel de gauche à droite pour les points de base et on oriente les scissions $x_i \chemarrow x_j$ si dans le diagramme $D_{i}$ de la procédure de construction de $M(C_{ord})$ la composante $X_i$ passant par $x_i$ est plus petite dans l'ordre induit par les points de base que la composante $X_j$ passant par $x_j$ et inversement pour $x_j \chemarrow x_j$. Autrement dit si $X_i$ passe par un point de base plus petit que tous les points de base de $X_j$ alors on prend l'orientation $x_i \chemarrow x_j$. 
%

On considère les multiplications suivante dans $OH_{C_{ord}}^3$ :
\begin{align*}
\left(\diagsc{Images_arxiv/B3_ac_col.png} \times_{C_{ord}}   \diagsc{Images_arxiv/B3_cb_col_rev.png}\right) \times_{C_{ord}}  \diagsc{Images_arxiv/B3_bx_col_00.png}
 &= \diagsc{Images_arxiv/B3_ab_col_rev.png} \times_{C_{ord}} \diagsc{Images_arxiv/B3_bx_col_00.png} \\
&= \diagsc{Images_arxiv/B3_yx_col_10.png} - \diagsc{Images_arxiv/B3_yx_col_01.png}
\end{align*}
et en réarrangeant les parenthèses
\begin{align*}
\diagsc{Images_arxiv/B3_ac_col.png} \times_{C_{ord}}   \left(\diagsc{Images_arxiv/B3_cb_col_rev.png} \times_{C_{ord}}  \diagsc{Images_arxiv/B3_bx_col_00}\right)
 &=\diagsc{Images_arxiv/B3_ac_col.png} \times_{C_{ord}} \diagsc{Images_arxiv/B3_cx_0.png}  \\
&=  \diagsc{Images_arxiv/B3_yx_col_10.png}-\diagsc{Images_arxiv/B3_yx_col_01.png}.
\end{align*}
\end{exemple}

\subsection{Comparaison de $\Hno_C$ avec $H^n$}

Les Exemples \ref{ex:H1} et \ref{ex:OH1} montrent que $OH_C^1 \simeq H^1$ puisque $\Ext^* \Z \simeq A\{1\}$. Par contre, la non-associativité montre que pour $n \ge 2$, $H^n \ne \Hno_C$ étant donné que $H^n$ est associatif. Par ailleurs, il est intéressant de noter que $H^n$ et $\Hno_C$ sont équivalents en modulo $2$, montrant que $OH^n_C$ est associatif modulo $2$.
%

\begin{proposition}\label{prop:OHvsH}
Pour tout $n \in \N$, on a l'équivalence modulo $2$ entre $H^n$ et $OH^n_C$
$$H^n \otimes_\Z \Z/2\Z \simeq \Hno_C \otimes_\Z \Z/2\Z.$$
\end{proposition}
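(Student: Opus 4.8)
The plan is to exhibit a single degree-preserving bijection between natural $\Z$-bases of $H^n$ and of $OH^n_C$ and to check that, after tensoring with $\Z/2\Z$, it intertwines the two multiplications. The point is that $F$ and $OF$ differ only by the signs coming from the orientation of the splits, from the chronology, and from the reordering hidden in the comparison of the tensor product with the exterior product; all of these are signs $\pm 1$, hence become trivial modulo $2$.

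First I would fix the underlying group isomorphism. For a disjoint union $S$ of $m$ circles, $F(S) = A^{\otimes m}$ and $OF(S) = \Ext^* V(S)\{-m\}$ are both free of rank $2^m$, and a basis element of $F(S)$ carrying the label $t$ exactly on the circles $x_{i_1}, \dots, x_{i_k}$ corresponds to the wedge $x_{i_1} \wedge \dots \wedge x_{i_k}$ in $OF(S)$, the circles being taken in the order induced by the base points. Both basis elements have internal degree $-m + 2k$, so the correspondence is degree-preserving. Summing over $a,b \in B^n$ and using the shift $\{n\}$ in both \eqref{eq:defhn} and \eqref{eq:degohn}, it yields a graded isomorphism of abelian groups; concretely it is the composite of the diagrammatic identifications $H^n \simeq_{ab} \Z[D^n] \simeq_{ab} OH^n$ of Proposition \ref{prop:isodiaghn} and the Remark following its multiplicative refinement. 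Tensoring with $\Z/2\Z$ gives a graded iso $\psi \colon H^n \otimes_\Z \Z/2\Z \to OH^n_C \otimes_\Z \Z/2\Z$.

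Next I would prove that $\psi$ is multiplicative. Since both products are obtained by applying the respective functor to one and the same bridge cobordism $W(c)bW(b)a \to W(c)a$ (these have the same underlying surface; only the chronology and the split-orientations distinguish $C_{cba}$ from Khovanov's even bridge), it suffices to compare $F \otimes \Z/2\Z$ and $OF \otimes \Z/2\Z$ on the four elementary cobordisms under the basis correspondence. On births and merges the two functors already agree over $\Z$ ($1 \mapsto 1$; and $1\otimes 1 \mapsto 1$, $t \otimes 1, 1 \otimes t \mapsto t$, $t \otimes t \mapsto 0$ matching $a_1, a_2 \mapsto b$, $a_1 \wedge a_2 \mapsto 0$). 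On a death $OF$ gives the contraction $a^*$, which equals $\epsilon$ up to a reordering sign $(-1)^{|x|}$; on a split $OF$ sends the unlabelled class to $a_1 - a_2$ and the labelled class to $a_1 \wedge a_2$, to be compared with $\Delta(1) = 1\otimes t + t\otimes 1$ and $\Delta(t) = t \otimes t$. Modulo $2$ one has $a_1 - a_2 \equiv a_1 + a_2$, matching $1\otimes t + t\otimes 1$, while $a_1\wedge a_2 \leftrightarrow t\otimes t$; hence all generators agree mod $2$. The same vanishing of reordering signs shows that the comparison map \eqref{eq:homohn}, built from the exterior product $OF(W(c)b)\wedge OF(W(b)a)$, agrees mod $2$ with the canonical tensor identification used for $H^n$. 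Equivalently, one may argue entirely diagrammatically: modulo $2$ the bridge-resolution of Section \ref{sec:calculdiag} loses every sign $\alpha,\beta,\beta_i,\beta_j$ and the minus in the split becomes a plus, which is exactly the signless resolution computing $H^n$ by the Remark after the Corollary.

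The main obstacle, and the only thing that truly needs care, is the sign book-keeping: one must confirm that every discrepancy between the odd and even constructions — the orientation of splits, the dependence on the chronology (the very source of the non-associativity of $OH^n_C$), and the reordering in passing from $\wedge$ to $\otimes$ — is a global factor $\pm 1$, independent of which basis elements one feeds in, so that it dies in $\Z/2\Z$. Once this is granted, $\psi$ is a graded ring isomorphism and the proposition follows; as a consistency check, the non-associativity of $OH^n_C$ indeed evaporates mod $2$, in agreement with the associativity of $H^n$.
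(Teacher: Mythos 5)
Your proof is correct and follows essentially the same route as the paper: the wedge-to-tensor basis correspondence (well defined mod $2$ since antisymmetry dies), bijectivity, and multiplicativity checked on merges and splits, where $(a_1 - a_2) \equiv a_1 + a_2$ matches $\Delta(1) = 1 \otimes t + t \otimes 1$ and orientation and chronology signs vanish. Your extra remarks on deaths and on the signless diagrammatic resolution are consistent refinements of the same argument, not a different method.
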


\begin{proof}
L'idée de la preuve est très simple :  le produit extérieur modulo $2$ correspond avec le produit tensoriels des $A$ modulo $2$ et le foncteur $OF$ est équivalent à $F$ en modulo $2$, donnant directement l'isomorphisme. Plus formellement pour une collection de cercles $\{a_1, \dots, a_m\}$ dans un certain $W(a)b$ on considère l'application
\begin{align*}
a(OH^n)b \otimes_\Z \Z/2\Z &\rightarrow A^{\otimes m}\otimes_\Z \Z/2\Z \simeq a(H^n)b \otimes_\Z \Z/2\Z :\\
a_i \otimes 1 &\mapsto \bar a_i \otimes 1 :=\underbrace{1\otimes \dots \otimes 1}_{i-1} \otimes t_{a_i} \otimes \underbrace{1\otimes \dots \otimes 1}_{m-i} \otimes 1 
\end{align*}
qu'on étend linéairement et en envoyant le produit extérieur vers le produit tensoriel. Cela est bien défini puisque 
$$ x \wedge y \otimes 1 = y  \wedge x \otimes -1 = y \wedge x \otimes 1.$$
C'est clairement bijectif (on a même un inverse explicite qui consiste à envoyer le produit tensoriel vers le produit extérieur) et donc cela forme un isomorphisme de groupes gradués. Il reste à montrer qu'on respecte la multiplication pour avoir un isomorphisme d'anneaux gradués. Pour cela, il suffit de montrer que les fusions et scissions (orientées ou non) sont équivalentes modulo $2$ puisque l'ordre dans le produit extérieur n'a plus d'importance. C'est clair pour les fusions et pour les scissions on observe d'abord que l'orientation n'a plus d'importance en modulo 2 et ensuite que
$$(a_1 - a_2) \wedge x \otimes 1 = a_1 \wedge x \otimes 1 + a_2 \wedge x \otimes 1 \mapsto \bar a_1 \otimes \bar x \otimes 1 + \bar a_2 \otimes \bar x \otimes 1$$
ce qui est équivalent à la scission dans $H^n$.
\end{proof}

\chapter{Centre impair de $\Hno_C$ et cohomologie de la variété de Springer}

Dans \cite{HnCenter}, M. Khovanov montre que le centre de l'anneau $H^n$ est isomorphe à l'anneau de cohomologie de la variété de Springer pour une partition $(n,n)$ (voir Annexes, Section \ref{sec:cohomology} pour une définition générale d'anneau de cohomologie). Dans ce chapitre, on montre que de façon équivalente le centre impair de $OH^n_C$, qui est le centre généralisé pour des éléments anticommutatifs, est isomorphe à la construction impaire, "odd" en anglais, de la cohomologie de la variété de Springer $(n,n)$ introduite par A. Lauda et H. Russell dans \cite{OddSpringer}. Ce chapitre ayant pour objectif de montrer un résultat assez complexe, il est plus technique que les précédents.

\section{Centre de $H^n$ et variété de Springer}

\begin{definition}
Soit $V$ un espace vectoriel complexe de dimension finie. Un \emph{drapeau} est une suite strictement croissante de sous-espaces de $V$ :
$$\{0\} = V_0 \subset V_1 \subset \dots \subset V_k = V.$$
Un drapeau est dit \emph{complet} si pour tout $0\le i \le k$ on a 
$$\dim(V_i) = i.$$
\end{definition}

\begin{definition}
Soient $E_n$ un espace vectoriel complexe de dimension $2n$ et $z_n : E_n \rightarrow E_n$ un endomorphisme linéaire nilpotent composé de deux bloques de Jordan nilpotents de taille $n$. La variété de Springer $(n,n)$ est l'ensemble
$$\mathfrak{B}_{n,n} := \{\text{drapeaux complets dans $E_n$ stabilisés par $z_n$}\},$$
un drapeau $V_0\subset \dots \subset V_{2n}$ étant stabilisé si  $z_n V_i \subset V_i$ pour tout $0 \le i \le 2n$.
\end{definition}

De façon générale, on peut définir une variété de Springer pour toute partition $\lambda_1 + \dots + \lambda_k$ de $n$ en prenant des bloques de taille $\lambda_i$, mais dans ce travail on ne considère que le cas d'une partition de $2n$ en $n + n$ qui est le cas relié à $H^n$. Dans \cite{HnCenter}, M. Khovanov calcule une présentation de l'anneau de cohomologie de cette variété et montre ensuite qu'il est isomorphe au centre de $H^n$. Il donne en plus une construction explicite pour cet isomorphisme.
\begin{theoreme}  \label{thm:hncenter} \emph{ (M. Khovanov \cite[Théorème 1.1 et 1.2]{HnCenter})}
Le centre de $H^n$, $Z(H^n)$, est isomorphe en tant qu'anneau gradué à l'anneau de cohomologie de la variété de Springer $H(\mathfrak B_{n,n})$. De plus, ils sont tous deux isomorphes au quotient de l'anneau polynomial $\Z[X_1, \dots, X_{2n}]$, $\deg(X_i) =2$, par l'idéal engendré par
\begin{align*}
X_i^2, && i\in [1, 2n]; \\
\sum_{|I|=k} X_I, && k\in [1,2n];
\end{align*}
où $X_I = X_{i_1}\dots X_{i_k}$ pour $I = \{i_1, \dots, i_k\}$ et où la somme est prise sur tous les sous-ensembles de cardinalité $k$ de $[1,2n]$.
\end{theoreme}

Explicitement, cet isomorphisme est donné par
$$X_i \mapsto \sum_{a\in B^n} a(X_i)a$$
où  $a(X_i)a = (-1)^i 1^{\otimes (n-1)}\otimes t \in A^{(n-1)}\otimes A$ avec le $A$ séparé étant celui associé à la composante de cercle dans $W(a)a$ qui contient le $i-$ème point de base en partant de la gauche. Il montre de plus la propriété suivante qui sera utile par la suite.

\begin{proposition}\emph{(M. Khovanov \cite[preuve du Théorème 3]{HnCenter})} \label{prop:hncenter}
Un élément $z\in H^n$ est dans le centre si et seulement si
$$z = \sum_{a \in B^n} z_a$$
avec $z_a \in a(H^n)a$ et pour tout $a,b \in B^n$
$$z_a.{_a1_b} = {_a1_b}.z_b.$$
\end{proposition}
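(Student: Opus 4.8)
The plan is to characterize centrality by two successive reductions, the decisive input being that each off-diagonal space $b(H^n)a$ is generated, as a module over the commutative diagonal ring $a(H^n)a$, by its unit ${}_b1_a$.

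First I would show that a central $z$ is necessarily \emph{diagonal}. Writing $z = \sum_{a,b} z_{ba}$ with $z_{ba} \in b(H^n)a$, the multiplication rule (the product of $x \in d(H^n)c$ and $y \in b(H^n)a$ vanishes unless $c=b$) together with the fact that $1_c$ is a right unit on $\,\cdot\,(H^n)c$ and a left unit on $c(H^n)\,\cdot\,$ gives $z\cdot 1_c = \sum_b z_{bc}$ and $1_c \cdot z = \sum_a z_{ca}$. Since $H^n = \bigoplus_{a,b} b(H^n)a$, the equality $z\cdot 1_c = 1_c\cdot z$ must hold summand by summand; comparing components forces $z_{bc}=0$ for $b\neq c$ and $z_{ca}=0$ for $a\neq c$, so $z = \sum_a z_a$ with $z_a := z_{aa} \in a(H^n)a$.

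For a diagonal $z$, the same bookkeeping shows that $z$ is central if and only if $z_b\,w = w\,z_a$ for all $a,b$ and all $w \in b(H^n)a$: by linearity it suffices to test homogeneous $w$, and then $z\cdot w = z_b\,w$ while $w\cdot z = w\,z_a$. Taking $w = {}_b1_a$ gives the easy direction at once, namely $z_b\,{}_b1_a = {}_b1_a\,z_a$, which is the stated condition after exchanging the names of $a$ and $b$. For the converse I would use cyclicity. The key lemma is that $u \mapsto {}_b1_a\cdot u$ maps $a(H^n)a$ onto $b(H^n)a$, so $b(H^n)a = {}_b1_a\cdot a(H^n)a$; this holds because $a(H^n)a \simeq A^{\otimes n}$ indexed by the $n$ circles of $W(a)a$, and because the multiplication cobordism $C(a)$ consists of $n$ merges transporting the circle of $W(a)a$ through a given base point onto the circle of $W(b)a$ through the same point, so that placing $t$ on one source circle over each target circle realizes every $t$-monomial on the circles of $W(b)a$. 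Granting this, write $w = {}_b1_a\cdot u$ and compute, using associativity of $H^n$ (from Chapter 1), the hypothesis $z_b\,{}_b1_a = {}_b1_a\,z_a$, and the commutativity of $a(H^n)a \simeq A^{\otimes n}$:
\[
z_b\, w = (z_b\,{}_b1_a)\,u = ({}_b1_a\,z_a)\,u = {}_b1_a\,(z_a u) = {}_b1_a\,(u z_a) = w\, z_a,
\]
which by the reduction above proves $z$ central.

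I expect the main obstacle to be the cyclicity lemma $b(H^n)a = {}_b1_a\cdot a(H^n)a$: the rest is formal manipulation with the idempotents $1_c$, associativity, and the commutativity of $A^{\otimes n}$, whereas this step requires a careful reading of how $C(a)$ carries the $t$-labels of the circles of $W(a)a$ onto those of $W(b)a$, and in particular that every circle of $W(b)a$ receives at least one circle of $W(a)a$ so that no basis monomial is lost.
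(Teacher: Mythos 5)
Your proof is correct, and it is worth noting that the paper never actually proves Proposition \ref{prop:hncenter} itself: it is quoted from Khovanov's article, and the argument the paper writes out in full is the odd analogue (Propositions \ref{prop:centreaa} and \ref{prop:centercaract1}), which mirrors Khovanov's even-case reasoning; that is the natural point of comparison. Your first two reductions (diagonality of $z$ via the idempotents $1_c$, and the reformulation of centrality as $z_b\,w = w\,z_a$ on each $b(H^n)a$) coincide with it. The genuine difference is in the sufficiency direction. You generate $b(H^n)a$ cyclically, writing $w = {}_b1_a\cdot u$ with $u \in a(H^n)a$, and conclude by associativity and commutativity of $A^{\otimes n}$; this hinges on your lemma $b(H^n)a = {}_b1_a\cdot a(H^n)a$, whose justification (the multiplication cobordism is merge-only, and every circle of $W(b)a$ receives at least one circle of $W(a)a$ through a shared base point) is correct and is the even-case counterpart of Proposition \ref{prop:aaabfusions}. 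The paper's route avoids surjectivity in this direction: since the multiplications $b(H^n)b \times b(H^n)a \to b(H^n)a$ and $b(H^n)a \times a(H^n)a \to b(H^n)a$ consist only of merges, left multiplication by $z_b$ \emph{is} internal multiplication by $z_b\cdot{}_b1_a$ inside the commutative ring $A^{\otimes |W(b)a|}$, and right multiplication by $z_a$ is internal multiplication by ${}_b1_a\cdot z_a$; the hypothesis then gives $z_b\,w = w\,z_a$ for every $w$ at once, the surjectivity fact entering the paper's proof only in the necessity direction of the odd statement (where, incidentally, it is asserted without proof). What each approach buys: yours is formally clean and self-contained once the cyclicity lemma is established, but it leans on the commutativity step $z_a u = u z_a$, which is exactly what breaks in $OH^n_C$ where $a(OH^n_C)a$ is an exterior algebra; the factorization route is the one that survives oddification, and the failure of your commutativity step is precisely why the odd characterization in Proposition \ref{prop:centercaract1} must carry the supplementary condition $z_{ab}\wedge y = y\wedge z_{ab}$.
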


Par ailleurs, on peut montrer que le rang de l'anneau de cohomologie de la variété de Springer, vue comme un groupe abélien, est le même que la dimension de l'espace vectoriel obtenu en prenant le produit tensoriel par $\Z/2\Z$, ce qui revient à regarder les polynômes avec des coefficients modulo 2 dans la présentation. 

\begin{proposition}\label{prop:basemod2}
On a l'égalité
$$\rank H(\mathfrak B_{n,n}) = \dim_{\Z/2\Z} \bigl (H(\mathfrak B_{n,n}) \otimes_\Z \Z/2\Z \bigr).$$
En particulier, une base modulo 2 de $H(\mathfrak B_{n,n})$ est une base de $H(\mathfrak B_{n,n}) \otimes_\Z \Z/2\Z$.
\end{proposition}

\begin{proof}
$H(\mathfrak B_{n,n})$ est un groupe abélien libre (de Concini et Procesi en construisent une base dans \cite{ConciniProcesi}), donc on a une base et donc elle donne une base pour l'espace tensorisé. En effet toute relation dans l'espace tensorisé devient une relation dans le groupe quitte à multiplier par des coefficients $\pm 1$.
\end{proof}

%
%
%

\section{Centre et centre impair de $\Hno_C$}

On s'intéresse maintenant à étudier le centre de $OH^n_C$ pour le comparer à celui de $H^n$. On remarque que le centre n'a pas les propriétés voulues pour continuer l'analogie avec le cas de $H^n$ et on introduit une notion de centre impair qui correspond au centre étendu aux éléments anticommutatifs.

\subsection{Centre de $OH^n_C$}

\begin{proposition} \label{prop:centreaa}
Pour tout $z\in Z(OH^n_C)$ on a 
$$z = \sum_{a\in B^n} z_a$$
avec $z_a \in a(OH^n_C)a$, c'est-à-dire $Z(OH^n_C) \subset \bigoplus_{a\in B^n} a(OH^n_C)a$.
\end{proposition}

\begin{proof}
On considère $z \in OH^n_C$ un élément central. On peut décomposer cet élément $z = \sum_{a,b \in B^n} {_az_b}$ avec $_az_b \in a(OH^n_C)b$. Mais alors on a ${_az_b} = (1_a z) 1_b = (z 1_a)1_b = 0$ si $a \ne b$, ce qui signifie que $z = \sum_a {_az_a}$ et on note $z_a = {_az_a}$. 
\end{proof}

\begin{proposition}\label{prop:centercaract1}
Un élément $z = \sum_a {z_a} \in \bigoplus_{a\in B^n} a(OH^n_C)a$ est dans $Z(OH^n_C)$ si et seulement si 
$$z_{ab} := z_a.{_a1_b} = {_a1_b}.z_b =: z'_{ab}$$
avec $_a1_b = 1 \{n\} \in \Ext^*V(W(a)b)$ et pour tout $a,b \in B^n$ et $y \in a(OH^n_C)b$ on a
$$z_{ab}\wedge y = y \wedge z_{ab}.$$
\end{proposition}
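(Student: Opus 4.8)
Le plan est de ramener la centralit\'e de $z = \sum_a z_a$ \`a une relation entre les produits $z_a \cdot y$ et $y \cdot z_b$ pour $y$ homog\`ene, puis d'exploiter le fait que ces produits ne font intervenir que des fusions. Par bilin\'earit\'e, $z$ est central si et seulement si $z \cdot y = y \cdot z$ pour tout \'el\'ement homog\`ene $y \in a(OH^n_C)b$. Par distributivit\'e et par la r\`egle $d(OH^n_C)c \times b(OH^n_C)a = 0$ lorsque $c \ne b$, seuls les termes d'indices compatibles subsistent, de sorte que $z \cdot y = z_a \cdot y$ et $y \cdot z = y \cdot z_b$. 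Ainsi $z$ est central si et seulement si $z_a \cdot y = y \cdot z_b$ pour tout $a,b \in B^n$ et tout $y \in a(OH^n_C)b$.

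Le c\oe ur de la preuve consiste alors \`a \'etablir les deux \'egalit\'es
$$z_a \cdot y = z_{ab} \wedge y \qquad \text{et} \qquad y \cdot z_b = y \wedge z'_{ab}$$
dans $\Ext^* V(W(a)b)$. Pour cela, j'utiliserais la Proposition \ref{prop:aaabfusions}, qui affirme que $C_{aab}$ et $C_{abb}$ se d\'ecomposent uniquement en fusions. L'image d'une fusion par $OF$ \'etant induite par une application lin\'eaire sur les groupes de composantes, elle s'\'etend en un homomorphisme d'alg\`ebres ext\'erieures, si bien que $OF(C_{aab})(u \wedge v) = OF(C_{aab})(u) \wedge OF(C_{aab})(v)$, sans signe parasite. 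Comme $z_a \cdot y$ s'obtient en plongeant $z_a \otimes y$ sur $z_a \wedge y$ puis en appliquant $OF(C_{aab})$, on a $z_a \cdot y = OF(C_{aab})(z_a) \wedge OF(C_{aab})(y)$, o\`u $OF(C_{aab})(z_a) = z_a \cdot {_a1_b} = z_{ab}$ (car ${_a1_b} = 1$) et $OF(C_{aab})(y) = 1_a \cdot y = y$ par la propri\'et\'e d'unit\'e. La seconde \'egalit\'e s'obtient sym\'etriquement au moyen de $C_{abb}$ et de $z'_{ab} = {_a1_b} \cdot z_b$. C'est cette \'etape que je vois comme l'obstacle principal : il faut contr\^oler pr\'ecis\'ement les plongements et l'identification des images avec $z_{ab}$, $z'_{ab}$ et $y$, et surtout s'assurer que l'absence de scission dans ces cobordismes garantit l'absence de tout signe suppl\'ementaire.

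Il ne resterait plus qu'\`a conclure par substitution. D'apr\`es ce qui pr\'ec\`ede, $z$ est central si et seulement si $z_{ab} \wedge y = y \wedge z'_{ab}$ pour tout $a,b \in B^n$ et tout $y \in a(OH^n_C)b$. En prenant $y = {_a1_b} = 1$, cette relation fournit imm\'ediatement $z_{ab} = z'_{ab}$, et en r\'einjectant cette \'egalit\'e on obtient $z_{ab} \wedge y = y \wedge z_{ab}$ : ce sont exactement les deux conditions de l'\'enonc\'e. R\'eciproquement, si $z_{ab} = z'_{ab}$ et $z_{ab} \wedge y = y \wedge z_{ab}$ pour tout $y$, alors $z_{ab} \wedge y = y \wedge z_{ab} = y \wedge z'_{ab}$, ce qui redonne la centralit\'e. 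L'essentiel du travail se concentre donc dans le lemme de fusion de la deuxi\`eme \'etape, le reste \'etant une simple substitution.
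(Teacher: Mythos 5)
Votre preuve est correcte et suit essentiellement la m\^eme d\'emarche que celle du texte : r\'eduction de la centralit\'e aux \'equations composante par composante, utilisation de la Proposition \ref{prop:aaabfusions} pour \'ecrire $z_a \cdot y = z_{ab} \wedge y$ et $y \cdot z_b = y \wedge z'_{ab}$ (les cobordismes ne comportant que des fusions, $OF$ y agit comme un morphisme d'alg\`ebres ext\'erieures, sans signe parasite), puis conclusion par substitution $y = {_a1_b}$. Vous explicitez simplement davantage la multiplicativit\'e de $OF$ sur les cobordismes de fusions, que le texte original laisse implicite.
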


\begin{proof}
Soit $x = \sum_{a,b \in B^n} {_ax_b} \in OH^n_C$, on a 
\begin{align*}zx &= \sum_{a,b} z_a.{_ax_b}, & &\text{et} &
xz &= \sum_{a,b} {_ax_b}.z_b.\end{align*}
Puisque les morphismes de modules obtenus par $OF$ pour ces deux multiplications sont respectivement les mêmes que ceux pour $z_a {_a1_b}$ et ${_a1_b}z_b$ et qu'ils ne sont composés que de fusions par la Proposition \ref{prop:aaabfusions}, on obtient
\begin{align*}z_a.{_ax_b} &= z_{ab} \wedge {_ax_b} \in a(OH^n_C)b, \\
{_ax_b}.z_b &= {_ax_b}\wedge z'_{ab} \in a(OH^n_C)b\end{align*}
On en conclut que
$$ z_a.{_ax_b} =  {_ax_b}.z_b$$
quand on a les hypothèses de la suffisance en prenant $y = {_ax_b}$.\\

Si $z$ commute avec tout élément, en particulier on doit avoir
$$z_a.{_a1_b} = z1_a.{_a1_b} = z  {_a1_b} =  {_a1_b} z =  {_a1_b}.z_b.$$
Par ailleurs pour tout $y \in a(OH^n_C)b$, on peut trouver des $y_a + y_b \in a(OH^n_C)a \oplus b(OH^n_C)b$ tels que $y_a.{_a1_b} = y = {_a1_b}.y_b$ et l'hypothèse $zy = yz$ donne la condition voulue.
\end{proof}

\begin{corollaire}\label{prop:centercaract2}
Un élément $z = \sum_a z_a$ est dans le centre de $OH^n_C$ si et seulement si pour tout $a,b \in B^n$
$$z_a.{_a1_b} = {_a1_b}.z_b$$
et pour tout $a \in B^n$ et $y \in a(OH^n_C)a$
$$z_a \wedge y = y \wedge z_a.$$
\end{corollaire}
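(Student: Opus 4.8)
The plan is to deduce this corollary directly from Proposition \ref{prop:centercaract1}, by checking that the two lists of conditions describe the same elements $z = \sum_a z_a$. The first condition, $z_a.{}_a1_b = {}_a1_b.z_b$, is common to both statements, so the whole content is to match the second condition of Proposition \ref{prop:centercaract1} (commutation of $z_{ab}$ with every $y \in a(OH^n_C)b$, for all $a,b$) with the apparently weaker diagonal condition stated here (commutation of $z_a$ with every $y \in a(OH^n_C)a$, for each $a$).

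One implication is immediate: specializing the condition of Proposition \ref{prop:centercaract1} to $b = a$ and using ${}_a1_a = 1_a$, one gets $z_{aa} = z_a.1_a = z_a$, so $z_{aa}\wedge y = y \wedge z_{aa}$ becomes exactly $z_a \wedge y = y \wedge z_a$ for $y \in a(OH^n_C)a$. Hence every $z$ satisfying the conditions of Proposition \ref{prop:centercaract1} --- in particular every central $z$ --- satisfies the conditions stated here.

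For the converse I would recover the full off-diagonal commutation from the diagonal one. The key input is Proposition \ref{prop:aaabfusions}: the product $z_{ab} = z_a.{}_a1_b$ is computed by the cobordism $C_{aab}$, which decomposes into $n$ fusions only, and by Remark \ref{rem:assocetnonorient} each fusion realizes the projection (quotient) homomorphism of exterior algebras. Consequently $z_{ab}$ is the image of $z_a$ under a wedge-length preserving ring homomorphism $\phi_{ab} : \Ext^* V(W(a)a) \to \Ext^* V(W(a)b)$. It then remains to show that the image under $\phi_{ab}$ of an element central in $\Ext^* V(W(a)a)$ is central in $\Ext^* V(W(a)b)$; together with the shared first condition this is precisely the second condition of Proposition \ref{prop:centercaract1} and yields $z \in Z(OH^n_C)$.

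I expect the main work to lie in this last transfer of centrality, because the exterior algebras are taken over $\Z$ rather than over a field. Over $\Z$ a homogeneous element $u$ of wedge-length $k$ in $\Ext^* W$ commutes with everything precisely when $k$ is even, or $k$ equals the top length $\dim W$, or $u = 0$; in particular centrality forces every odd-length component of strictly intermediate length to vanish. Applying this to $z_a$ (where $\dim V(W(a)a) = n$), its odd-length components of length $< n$ vanish; since $\phi_{ab}$ preserves wedge-length and $|W(a)b| \le n$, the corresponding components of $z_{ab}$ vanish as well, while the even-length and top-length components are automatically central. Carefully tracking this parity bookkeeping through $\phi_{ab}$, especially the top-length component and the inequality $|W(a)b|\le n$, is the delicate point; alternatively one may argue that $\phi_{ab}$ is surjective on generators, so that the image of the center lands in the center, but this requires checking that every component of $W(a)b$ is hit by some component of $W(a)a$.
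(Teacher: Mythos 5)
Your proposal is correct and follows essentially the same route as the paper: the paper's own (very brief) proof likewise invokes Proposition \ref{prop:aaabfusions} to note that $z_a \mapsto z_a.{_a1_b}$ is computed by fusions only, hence sends a homogeneous element to $0$ or to a homogeneous element of the same exterior length, so commutation in $a(OH^n_C)a$ transfers to commutation in $a(OH^n_C)b$, while the converse is the specialization $b=a$ of Proposition \ref{prop:centercaract1}. Your parity bookkeeping (odd components of intermediate length vanish over $\Z$, even and top-length components are central, and length-$n$ images die when $|W(a)b|<n$) simply makes explicit what the paper leaves implicit.
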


\begin{proof}
Puisque $a(OH^n_C)a\times a(OH^n_C)b$ n'est composé que de fusions par la Proposition \ref{prop:aaabfusions}, un élément homogène est soit envoyé soit sur $0$ soit sur un élément homogène de la même longueur vu en tant que produit extérieur, c'est-à-dire que s'il commute dans $a(OH^n_C)a$ alors il commute dans $a(OH^n_C)b$, ce qui permet de restreindre la seconde condition de la proposition précédente.
\end{proof}

\begin{remarque}\label{rem:commute}
On peut décomposer $z_{a} = \sum_i z_{a}^i$ en une somme d'éléments homogènes de degré $i$. Si $z_{a}^i = x_1 \wedge \dots \wedge x_m$ avec $m < |W(a)a| = n$ alors la seconde condition de la proposition est équivalente à  $ \deg( z_{a}^i) = \deg({_a1_a}) \mod 4$ puisque cela signifie que $m$ est pair. Dans le cas où $m = |W(a)b|$ alors la condition est toujours trivialement obtenue. En particulier on sait que $\deg({_a1_a}) = 0$ et donc un élément du centre est une combinaison linéaire d'éléments de degrés égaux à 0 modulo 4 et de degrés $2n$.
\end{remarque}

\begin{lemme}\label{lem:associacentre}
Pour tout $x_a, x'_a \in a(OH^n_C)a$, $x_b, x'_b \in b(OH^n_C)b$ et $y \in a(OH^n_C)b$ on a
\begin{align*}
(x_ay)x_b &= x_a(yx_b), \\
(x_ax'_a)y &= x_a(x'_ay), \\
(yx_b)x'_b &= y(x_bx'_b).
\end{align*}
\end{lemme}

\begin{proof}
La preuve est directe une fois qu'on a remarqué que toutes les multiplications n'utilisent que des fusions par la Proposition \ref{prop:aaabfusions} et que le produit extérieur est associatif.
\end{proof}

\begin{proposition}
La sous-anneau du centre de $OH^n_C$ est associatif.
\end{proposition}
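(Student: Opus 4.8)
The plan is to reduce every product of central elements to an exterior product inside a single diagonal block $a(OH^n_C)a$, where associativity is automatic. By Proposition \ref{prop:centreaa} any central $z$ has the form $z = \sum_{a\in B^n} z_a$ with $z_a \in a(OH^n_C)a$. First I would observe that for two central elements $z = \sum_a z_a$ and $z' = \sum_a z'_a$ the mixed products $z_a z'_b$ vanish for $a \neq b$ because the inner indices do not match, so that $zz' = \sum_a z_a z'_a$; moreover, by Proposition \ref{prop:aaabfusions} the multiplication on a diagonal block uses only fusions and is therefore just the exterior product, $z_a z'_a = z_a \wedge z'_a$, which again lies in $a(OH^n_C)a$.

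Next I would verify that the center is stable under multiplication, so that it really is a subring. Setting $w_a := z_a \wedge z'_a$, the second condition of Corollary \ref{prop:centercaract2} for $w = \sum_a w_a$ follows at once from the hypotheses $z_a \wedge y = y \wedge z_a$ and $z'_a \wedge y = y \wedge z'_a$ together with associativity of $\wedge$. For the first condition I would chain the three identities of Lemma \ref{lem:associacentre} with the centrality relations $z'_a\cdot{_a1_b} = {_a1_b}\cdot z'_b$ and $z_a\cdot{_a1_b} = {_a1_b}\cdot z_b$, obtaining
$$w_a\cdot{_a1_b} = (z_a z'_a)\cdot{_a1_b} = z_a\bigl({_a1_b}\cdot z'_b\bigr) = \bigl({_a1_b}\cdot z_b\bigr)z'_b = {_a1_b}\cdot(z_b z'_b) = {_a1_b}\cdot w_b,$$
so that $w$ is again central.

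Finally, associativity is immediate once everything lives on the diagonal: for three central elements $z, z', z''$ one has
$$(zz')z'' = \sum_{a\in B^n}(z_a z'_a)z''_a \quad\text{and}\quad z(z'z'') = \sum_{a\in B^n} z_a(z'_a z''_a),$$
and inside each block the product is the associative exterior product, whence $(z_a z'_a)z''_a = z_a\wedge z'_a\wedge z''_a = z_a(z'_a z''_a)$. I expect the only delicate step to be the stability argument of the second paragraph, where the three identities of Lemma \ref{lem:associacentre} must be inserted in exactly the right order; the non-associativity of $OH^n_C$ plays no role here because on the diagonal blocks no splittings occur, so neither the permutation of the left factor nor the change of chronology --- the two sources of sign discrepancy identified earlier --- can arise.
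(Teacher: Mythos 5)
Your proof is correct and follows essentially the same route as the paper's (very terse) proof: Proposition \ref{prop:centreaa} confines central elements to the diagonal blocks, where by Proposition \ref{prop:aaabfusions} and Lemme \ref{lem:associacentre} the multiplication is just the exterior product, hence associative. You additionally verify that the center is closed under multiplication via Corollaire \ref{prop:centercaract2} and the three identities of Lemme \ref{lem:associacentre} — a point the paper leaves implicit, and one genuinely worth checking since in a non-associative ring closure of the center under products is not automatic; your chain of identities establishing $w_a\cdot{_a1_b} = {_a1_b}\cdot w_b$ is carried out in exactly the right order and is valid.
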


\begin{proof}
On applique le Lemme \ref{lem:associacentre} et la Proposition \ref{prop:centreaa}.
\end{proof}

Quelle que soit la règle de multiplication qu'on choisit, le centre de $OH^n_C$ est le même.

\begin{proposition}\label{prop:centreiso}
Pour $C$ et $C'$ des règles de multiplications quelconques, on a l'isomorphisme d'anneaux gradués
$$Z(OH^n_C) \simeq Z(OH^n_{C'}).$$
\end{proposition}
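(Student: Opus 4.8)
The plan is to show that the two centers actually coincide as graded subrings of the common underlying graded abelian group $OH^n$, so that the identity map furnishes the desired isomorphism. The underlying graded abelian group is the same for $C$ and $C'$ — only the product differs — so it suffices to verify that both the defining conditions of the center and the induced multiplication on central elements are insensitive to the choice of multiplication rule.

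First I would isolate exactly which cobordisms intervene. By Corollaire \ref{prop:centercaract2}, membership $z = \sum_a z_a \in Z(OH^n_C)$ is governed entirely by the products $z_a.{_a1_b}$, ${_a1_b}.z_b$ and $z_a \wedge y$ for $y \in a(OH^n_C)a$; moreover, for two central elements the product reduces to $zz' = \sum_a z_a.z'_a$, since factors with distinct diagrams annihilate. These correspond respectively to the cobordisms $C_{aab}$, $C_{abb}$ and $C_{aaa}$. By Proposition \ref{prop:aaabfusions}, $C_{aab}$ and $C_{abb}$ decompose into $n$ fusions, and the same counting argument covers $C_{aaa}$: its source $W(a)aW(a)a$ has $n + |W(a)a| = 2n$ circle components while its target $W(a)a$ has $n$, so the $n$ bridges must all be fusions.

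Next I would invoke that $OF$ of an all-fusion cobordism is independent of the chronology and of the orientations. By Remarque \ref{rem:assocetnonorient}, fusions do not depend on orientation and realize the associativity and unit relations, and by Proposition \ref{prop:isofusions} two equivalent chronological cobordisms decomposing only into fusions, permutations and identities have the same image under $OF$. Consequently $OF(C_{aaa})$, $OF(C_{aab})$ and $OF(C_{abb})$ agree whether computed for $C$ or for $C'$. Hence the conditions of Corollaire \ref{prop:centercaract2} select exactly the same elements for both rules, and the product of two central elements is the same. Therefore $Z(OH^n_C)$ and $Z(OH^n_{C'})$ are equal as graded rings, and the identity map is the required graded ring isomorphism.

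The only point requiring care is the passage from ``$C_{cba}$ and $C'_{cba}$ are equivalent as non-chronological cobordisms'' to ``their images under $OF$ coincide'': this is exactly where the all-fusion hypothesis is essential, since Proposition \ref{prop:isotopesignepres} on its own would yield agreement only up to a sign, and it is precisely splits and deaths — both absent here — that produce those sign discrepancies. Restricting attention to the triples $(a,a,a)$, $(a,a,b)$ and $(a,b,b)$ that govern the center is what makes the signs disappear, and this is the heart of the argument.
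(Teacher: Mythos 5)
Your proposal is correct and follows essentially the same route as the paper: both rest on the observation that the multiplications governing the center (via Corollaire \ref{prop:centercaract2}) are fusion-only cobordisms, so Proposition \ref{prop:isofusions} gives exact (not merely up-to-sign) agreement of their images under $OF$, whence the two centers coincide under the identification of $OH^n_C$ and $OH^n_{C'}$ with the abelian group $OH^n$. You merely spell out details the paper leaves implicit, such as the treatment of $C_{aaa}$ (which is in any case the $b=a$ instance of Proposition \ref{prop:aaabfusions}) and the compatibility of the ring structures on the centers.
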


\begin{proof}
Les cobordismes pour définir les multiplications de la Proposition \ref{prop:centercaract2} ne sont composés que de fusions et donc, puisqu'ils sont équivalents, ils ont la même image par $OF$ par la Proposition \ref{prop:isofusions}. On peut alors prendre l'isomorphisme induit par l'identification de $OH^n_C$ et $OH^n_{C'}$ au groupe abélien $OH^n$.
\end{proof}

\begin{exemple}
On calcule $Z\left( {OH}^2_C\right)$. $B^2$ est composé des éléments
\begin{align*}
a = \deuxdiag{Images_arxiv/B2_2.png}, &&& b = \deuxdiag{Images_arxiv/B2_1.png},
\end{align*}
qui donnent les collections de cercles
\begin{align*}
W(a)a = \diagg{Images_arxiv/B2_bb_note.png}, &&& W(a)b = \diagg{Images_arxiv/B2_ba_note_2.png},\\
W(b)a = \diagg{Images_arxiv/B2_ab_note_2.png}, &&& W(b)b = \diagg{Images_arxiv/B2_aa_note.png},
\end{align*}
et on note $a_1$ le cercle extérieur de $W(a)a$ et $a_2$ l'intérieur.  De même, on note $b_1$ le cercle de gauche de $W(b)b$ et $b_2$ le droit, ainsi que $c_1$ et $d_1$ les cercles de $W(a)b$ et $W(b)a$. 

On considère $z \in Z\left({OH}^2_C\right)$ un élément central qui se décompose en $z = \sum_{a \in B^n} {z_a}$ avec
\begin{align*}
z_a &= x_0.1_a + x_1.a_1 + x_2.a_2 + x_3.a_1 \wedge a_2 \in a\left({OH}^2_C\right)a, \\
z_b &= y_0.1_b + y_1.b_1 + y_2.b_2 + y_3.b_1 \wedge b_2 \in b\left({OH}^2_C\right)b.
\end{align*}
Cela se réécrit en calcul diagrammatique comme
\begin{align*}
z_a &= x_0.\diagg{Images_arxiv/B2_bb_00.png} + x_1.\diagg{Images_arxiv/B2_bb_10.png} + x_2.\diagg{Images_arxiv/B2_bb_01.png} +  x_3. \diagg{Images_arxiv/B2_bb_11.png}  &\in a({OH}_C^2)a, \\
z_b &= y_0.\diagg{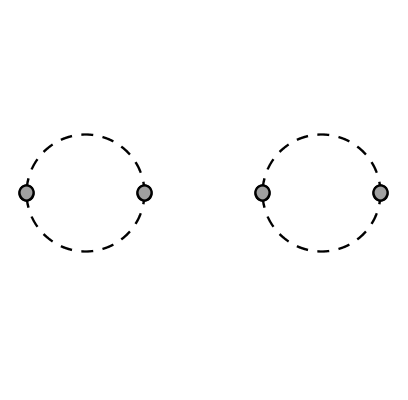} + y_1.\diagg{Images_arxiv/B2_aa_10.png} + y_2.\diagg{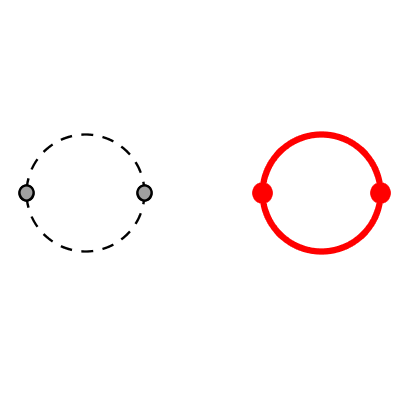} + y_3.\diagg{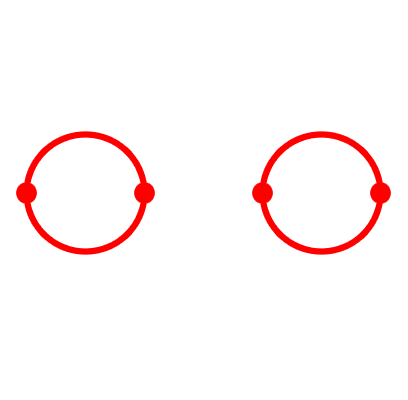}  &\in b({OH}_C^2)b.
\end{align*}
On applique la Proposition \ref{prop:centercaract2} afin d'obtenir des conditions sur les $x_i$ et $y_i$. Puisque l'anneau est gradué, on peut regarder les conditions de la proposition degré par degré. 

\begin{enumerate}
\item Pour $\deg(z) = 0$ : $z_a = x_0.1_a$ et $z_b = y_0.1_b$, on a 
$$z_a.{_a1_b} = x_0.\diagg{Images_arxiv/B2_bb_00.png} \times_C \diagg{Images_arxiv/B2_ba_0.png} = x_0. \diagg{Images_arxiv/B2_ba_0.png}, $$
$${_a1_b}.z_b = \diagg{Images_arxiv/B2_ba_0.png} \times_C y_0.\diagg{Images_arxiv/B2_aa_00.png} = y_0. \diagg{Images_arxiv/B2_ba_0.png},$$
et donc $x_0 = y_0$.

\item Pour $\deg(z) = 2$ : $z_a = x_1.a_1 + x_2.a_2$ et $z_b = y_1.b_1 + y_2.b_2$, on a
\begin{align*}
z_a.a_2 &= \left(x_1.\diagg{Images_arxiv/B2_bb_10.png} + x_2. \diagg{Images_arxiv/B2_bb_01.png}\right) \times_C \diagg{Images_arxiv/B2_bb_01.png} =  -x_1.\diagg{Images_arxiv/B2_bb_11.png},\\
a_2.z_a &=  \diagg{Images_arxiv/B2_bb_01.png} \times_C \left(x_1.\diagg{Images_arxiv/B2_bb_10.png} + x_2. \diagg{Images_arxiv/B2_bb_01.png}\right) =  x_1.\diagg{Images_arxiv/B2_bb_11.png}, \\
\end{align*}
et donc $x_1=0$. Par des calculs similaires, on obtient $x_2 = y_1=y_2=0$.

\item Pour $\deg(z) = 4$ : $z_a =  x_3.a_1 \wedge a_2 $ et $z_b =  y_3.b_1 \wedge b_2$ et la proposition ne livre aucune contrainte puisque
$$z_a.{_a1_b} = x_3.\diagg{Images_arxiv/B2_bb_11.png}  \times_C \diagg{Images_arxiv/B2_ba_0.png}  = 0$$
et de même pour les autres équations.
\end{enumerate}
Dès lors on obtient que
$$Z({OH}^2_C) = \{k.(1_a+1_b) + x.a_1\wedge a_2 + y.b_1 \wedge b_2 | k,x,y \in \Z \}.$$
%
%

\end{exemple}

Le centre de $OH^n_C$ ne remplit pas les critères souhaités pour continuer la discussion en analogie avec le cas de $H^n$. En effet, on remarque que sa dimension graduée est différente de celle du centre de $H^n$ (et de l'anneau de cohomologie de la variété de Springer) :
$$\dim_q\bigl(Z(H^2)\bigr) = 1 + 3q + 2q^2 \ne 1+2q^2 = \dim_q\left(Z\left({OH}_C^2\right)\right).$$
On introduit alors dans la prochaine section le centre impair qui possède des propriétés plus intéressantes et comparables à celles du centre de $H^n$.

\subsection{Centre impair de $\Hno_C$}

Dans le cadre des superalgèbres, c'est-à-dire des algèbres graduées sur $\Z/2\Z$ (voir\cite[Chapitre 3]{superalgebra} pour plus de détails) on peut définir la notion de supercentre en fonction du supercommutateur 
$$[x,y] = xy - (-1)^{\deg(x)\deg(y)}yx$$
pour $x,y$ dans une superalgèbre $\mathcal A$ donnant donc le supercentre
$$SZ(\mathcal A) = \{z \in \mathcal A | [z,x] = 0, \forall x\in \mathcal A\}.$$
Cette définition est aussi parfois utilisée en topologie algébrique, notamment dans \cite[Section 3.2]{Hatcher}, de sorte que l'anneau de cohomologie soit "supercommutatif", dans ce cas on l'appelle parfois tout simplement "commutatif", "skew-commutative", "anticommutative" ou encore "graded commutative".

On propose d'étendre cette définition à $OH^n_C$ et de calculer son "supercentre", qu'on appelle centre impair, cela dans le but d'obtenir les éléments qui commutent "à antisymétrie" près. Pour ce faire, on définit le \emph{degré extérieur} d'un élément homogène $z$ de $a(OH^n_C)b$ comme étant
$$p(z) := \frac{\deg(z)-\deg(_a1_b)}{2} =  \frac{\deg(z)-n + |W(b)a|}{2} $$
et on remarque aisément que $p(z)$ est le nombre de facteurs du produit extérieur qui constitue $z$, c'est-à-dire que 
$$p(a_1 \wedge \dots \wedge a_m) = m.$$ 

\begin{definition}
On définit le \emph{centre impair} de $\Hno_C$ comme
$$OZ(OH^n_C) = \{z \in OH^n_C | zx = (-1)^{p(x)p(z)}xz , \forall x \in OH^n_C\}.$$
\end{definition}

\begin{remarque}
Il faut faire attention que $OH^n_C$ n'est pas une superalgèbre pour le degré extérieur. En effet, on peut avoir
$$p(z_1z_2) \ne p(z_1) + p(z_2) \mod 2.$$
Par exemple, on prend le produit suivant dans $\widetilde{OH}^2$ avec à gauche des degrés extérieurs valant $1$ et $0$ et valant $2$ à droite :
$$\smalldiag{Images_arxiv/B2_ba_1.png} \times \smalldiag{Images_arxiv/B2_ab_0.png} =  -\smalldiag{Images_arxiv/B2_bb_11.png}$$
On ne peut pas non plus prendre simplement le degré divisé par $2$ pour obtenir le "degré impair" puisqu'on obtiendrait des degrés non-entiers pour des éléments dans $W(b)$ possédant un nombre de composantes de parité différente de $n$. Par contre, on peut montrer que $\bigoplus_{a\in B^n} a(OH^n_C)a \subset OH^n_C$ est une superalgèbre pour le degré extérieur modulo 2 car alors on a la relation
$p(x) = \deg(x)/2.$
\end{remarque}

\begin{proposition}\label{prop:inaOHa}
Pour tout $z \in OZ(OH^n_C)$ on a
$$z = \sum_{a\in B^n} z_a$$
avec $z_a \in a(OH^n_C)a$ c'est-à-dire $OZ(OH^n_C) \subset \bigoplus_{a\in B^n} a(OH^n_C)a$.
\end{proposition}

\begin{proof}
La preuve est similaire à celle de la Proposition \ref{prop:centreaa} pour le centre.
\end{proof}

\begin{proposition}
Le centre impair $OZ(OH^n_C)$ est un sous-anneau associatif.
\end{proposition}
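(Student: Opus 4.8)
The plan is to first confine $OZ(OH^n_C)$ to the \emph{diagonal} subspace $D := \bigoplus_{a\in B^n} a(OH^n_C)a$, which is exactly the content of Proposition \ref{prop:inaOHa}, and then to exploit the fact that $D$ is far better behaved than $OH^n_C$ as a whole. A product of two elements of $D$ vanishes off the diagonal, and inside a single block $a(OH^n_C)a$ it is computed by $C_{aaa}$, which decomposes into $n$ fusions by Proposition \ref{prop:aaabfusions} (the case $b=a$); hence it is just the exterior product on $\Ext^* V(W(a)a)$. Thus $D$ is a finite product of exterior algebras, in particular an associative unital subring of $OH^n_C$, and the exterior degree restricts to $p=\deg/2$ on $D$, so that $p$ is additive there.

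With this in hand, associativity of $OZ(OH^n_C)$ is immediate from the inclusion $OZ(OH^n_C)\subset D$ alone: for $x,y,z\in OZ(OH^n_C)\subset D$, both $(xy)z$ and $x(yz)$ are products of elements of $D$ (which is closed under multiplication), hence are computed entirely inside $D$, and $D$ is associative (equivalently, by the second identity of Lemma \ref{lem:associacentre} with $b=a$). Containment of the unit is clear since $p(1)=0$ and $1$ is a two-sided unit, while closure under addition follows from a homogeneity argument: as $OH^n_C$ is graded and $z^{(i)}x$, $xz^{(i)}$ lie in degree $i+\deg x$, the supercommutation relation splits degree by degree, so $OZ(OH^n_C)$ is a graded subgroup.

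The real content is closure under multiplication. First I would reduce to homogeneous $z,w\in OZ(OH^n_C)$ and, by linearity, to a single homogeneous block component $y={}_ay_b$. Using Lemma \ref{lem:associacentre} I can legitimately reassociate every product that meets a diagonal factor (these are precisely its three admissible configurations), namely $(zw)y=z(wy)$, $(wy)z=w(yz)$ and $(yz)w=y(zw)$. Pushing $z$ and then $w$ across $y$ by the defining relations of the odd center, and using that $p$ is additive along the fusion-only products $w\cdot{}_ay_b$ and ${}_ay_b\cdot z_b$ (so $p(wy)=p(w)+p(y)$ and $p(yz)=p(y)+p(z)$), one obtains
$$(zw)y=(-1)^{(p(w)+p(y))p(z)}\,(-1)^{(p(y)+p(z))p(w)}\,y(zw).$$
Modulo $2$ the cross-terms $p(w)p(z)$ occur twice and cancel, leaving exponent $p(y)\bigl(p(z)+p(w)\bigr)=p(y)\,p(zw)$, since $p$ is additive on $D$; hence $zw$ supercommutes with every $y$ and lies in $OZ(OH^n_C)$.

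The main obstacle is exactly this sign bookkeeping: one must verify that each reassociation is one of the three fusion-only situations of Lemma \ref{lem:associacentre}, so that no scission-induced sign intrudes, and that the exterior degree is additive along precisely the multiplications used, even though $p$ is \emph{not} additive on $OH^n_C$ in general. Once these two points are secured, the cancellation of the $p(z)p(w)$ terms is automatic, and, as explained above, associativity is inherited from $D$ with no further work.
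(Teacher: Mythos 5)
Your proof is correct and takes essentially the same route as the paper's: restriction of $OZ(OH^n_C)$ to the diagonal $\bigoplus_{a\in B^n}a(OH^n_C)a$ via Proposition \ref{prop:inaOHa}, associativity via Lemma \ref{lem:associacentre}, mod-$2$ additivity of the exterior degree $p$ on the diagonal blocks, and a chain of reassociations and supercommutations giving closure under multiplication. The only cosmetic difference is that you push $z$ and $w$ across the fusion-only products $wy$ and $yz$ (which requires additivity of $p$ along those products, justified by Proposition \ref{prop:aaabfusions}), whereas the paper pushes $x$ across $z_2$ and then $z_1$ directly and so needs additivity of $p$ only for the final identification $p(z_1z_2)=p(z_1)+p(z_2)$; the resulting sign bookkeeping is equivalent.
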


\begin{proof}
L'associativité est directe par le Lemme \ref{lem:associacentre} et la Proposition \ref{prop:inaOHa}. Par ailleurs, il est clair que le centre impair est stable pour l'addition et donc on ne considère que la multiplication. Pour cela, on remarque d'abord que si $x_1, x_2 \in a(OH^n_C)a$ alors
$$p(x_1x_2) \equiv p(x_1) + p(x_2) \mod 2$$
puisque la multiplication est juste donnée par le produit extérieur. De là on généralise facilement par la Proposition \ref{prop:inaOHa} que pour tout $z_1, z_2 \in OZ(OH^n_C)$ on a de même
$$p(z_1z_2) \equiv p(z_1) + p(z_2) \mod 2.$$
Soient $z_1, z_2 \in OZ(OH^n_C)$, on obtient pour tout $x \in OH^n_C$ 
\begin{align*}
(z_1z_2)x =& z_1(z_2x) \\
&=  (-1)^{p(z_2)p(x)}  z_1(xz_2) \\
&=  (-1)^{p(z_2)p(x)} (z_1x)z_2 \\
&=  (-1)^{p(z_1)p(x)}  (-1)^{p(z_2)p(x)}  (xz_1)z_2 \\
&= (-1)^{(p(z_1)+p(z_2))p(x)} x(z_1z_2)
\end{align*}
et par ce qu'on a montré juste au dessus $p(z_1z_2) = p(z_1)+p(z_2) \mod 2$, donc
$$(-1)^{p(z_1z_2)p(x)} = (-1)^{(p(z_1)+p(z_2))p(x)},$$
ce qui livre la supercommutativité de $z_1z_2$.
\end{proof}

\begin{proposition}\label{prop:caractsupercentre}
Un élément $z = \sum_a z_a$ est dans $OZ(OH^n_C)$ si et seulement si
\begin{align}
z_a{_a1_b} = {_a1_b}z_b \label{eq:supercentre}
\end{align}
 pour tout $a,b \in B^n$.
\end{proposition}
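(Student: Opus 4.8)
Le plan est de proc\'eder par double implication. Gr\^ace \`a la Proposition \ref{prop:inaOHa}, tout \'el\'ement de $OZ(OH^n_C)$ s'\'ecrit $z = \sum_a z_a$ avec $z_a \in a(OH^n_C)a$, et il suffit de traiter le cas o\`u $z$ est homog\`ene, puisque la condition (\ref{eq:supercentre}) comme l'appartenance au centre impair se d\'ecomposent degr\'e par degr\'e. On observe d'abord que $p({_a1_b}) = 0$ par d\'efinition du degr\'e ext\'erieur.

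Pour la n\'ecessit\'e, je prendrais $x = {_a1_b}$ dans la d\'efinition du centre impair. Comme $p({_a1_b}) = 0$, le signe $(-1)^{p(x)p(z)}$ vaut $+1$, d'o\`u $z\,{_a1_b} = {_a1_b}\,z$. En d\'ecomposant $z = \sum_c z_c$ et en utilisant que $z_c\,{_a1_b}$ s'annule sauf si $c = a$ tandis que ${_a1_b}\,z_c$ s'annule sauf si $c = b$, on obtient imm\'ediatement $z_a\,{_a1_b} = {_a1_b}\,z_b$.

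Pour la suffisance, supposons (\ref{eq:supercentre}) et consid\'erons un \'el\'ement homog\`ene $x = {_ax_b} \in a(OH^n_C)b$; par lin\'earit\'e et d\'ecomposition en blocs, il suffit de v\'erifier la relation de supercommutation sur de tels \'el\'ements. La d\'ecomposition en blocs donne $zx = z_a\,{_ax_b}$ et $xz = {_ax_b}\,z_b$. Par la Proposition \ref{prop:aaabfusions}, ces deux produits ne font intervenir que des fusions; comme dans la preuve de la Proposition \ref{prop:centercaract1}, ils se calculent donc en prenant simplement le produit ext\'erieur apr\`es renommage :
$$zx = z_{ab} \wedge {_ax_b}, \qquad xz = {_ax_b} \wedge z'_{ab},$$
o\`u $z_{ab} = z_a\,{_a1_b}$ et $z'_{ab} = {_a1_b}\,z_b$. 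L'hypoth\`ese (\ref{eq:supercentre}) fournit $z_{ab} = z'_{ab}$, et la supercommutativit\'e du produit ext\'erieur donne alors
$$zx = z_{ab} \wedge {_ax_b} = (-1)^{p(z_{ab})\,p({_ax_b})}\,{_ax_b} \wedge z_{ab} = (-1)^{p(z_{ab})\,p(x)}\,xz.$$

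L'obstacle principal sera de v\'erifier que ce signe est bien celui de la d\'efinition, c'est-\`a-dire que $p(z_{ab}) \equiv p(z) \pmod 2$. Pour cela, je noterais qu'une fusion ne modifie pas la longueur du produit ext\'erieur, de sorte que $p(z_{ab}) = p(z_a)$, et que l'homog\'en\'eit\'e de $z$ impose $p(z_a) = \deg(z_a)/2 = \deg(z)/2 = p(z)$ pour tout $a$, en utilisant $\deg({_a1_a}) = 0$. C'est pr\'ecis\'ement ici que le centre impair se comporte mieux que le centre ordinaire : la condition suppl\'ementaire de la Proposition \ref{prop:centercaract1} (\`a savoir $z_{ab} \wedge y = y \wedge z_{ab}$) devient superflue, car l'antisym\'etrie intrins\`eque du produit ext\'erieur fournit automatiquement le signe requis.
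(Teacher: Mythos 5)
Votre preuve est correcte et suit essentiellement la m\^eme d\'emarche que celle du texte : n\'ecessit\'e en testant $x = {_a1_b}$ (de degr\'e ext\'erieur nul), suffisance par d\'ecomposition en blocs o\`u les produits $z_a\,{_ax_b}$ et ${_ax_b}\,z_b$ ne font intervenir que des fusions (Proposition \ref{prop:aaabfusions}), le signe venant de l'antisym\'etrie du produit ext\'erieur et de l'observation $p(z_a\,{_a1_b}) = p(z_a)$. Vous ne faites qu'expliciter les d\'etails que le texte laisse implicites en renvoyant aux Propositions \ref{prop:centercaract1} et \ref{prop:centercaract2}, notamment la r\'eduction aux \'el\'ements homog\`enes et la v\'erification $p(z_{ab}) \equiv p(z) \pmod 2$.
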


\begin{proof}
La preuve est similaire à celle de la Proposition \ref{prop:centercaract1}, si ce n'est qu'on obtient l'anticommutativité par antisymétrie du produit extérieur. Il faut aussi remarquer que
$$p(z_a.{_a1_b}) = p(z_a)$$
pour les mêmes raisons que dans la preuve de la Proposition \ref{prop:centercaract2}, c'est-à-dire puisque toutes les multiplications sont définies par des fusions qui ne changent pas la longueur d'un élément du produit extérieur.
\end{proof}

\begin{remarque}
En particulier, on a $Z(OH^n_C) \subset OZ(OH^n_C)$ et, par la Remarque \ref{rem:commute}, les éléments de degré extérieur pair ou maximal du centre impair constituent le centre.
\end{remarque}

Par analogie avec la Proposition \ref{prop:centreiso}, on obtient la proposition suivante :

\begin{proposition}\label{prop:supercentreiso}
Soit $C$ et $C'$ des règles de multiplication. Il y a un isomorphisme d'anneaux gradués
$$OZ(OH^n_C) \simeq OZ(OH^n_{C'}).$$
\end{proposition}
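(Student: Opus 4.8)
Le plan est de reprendre mot pour mot la strat\'egie utilis\'ee pour le centre (Proposition \ref{prop:centreiso}). Je commencerais par rappeler que $OH^n_C$ et $OH^n_{C'}$ partagent le m\^eme groupe ab\'elien gradu\'e sous-jacent $OH^n$, seules leurs structures multiplicatives diff\'erant ; il existe donc une identification canonique $\iota$ de groupes ab\'eliens gradu\'es de $OH^n_C$ vers $OH^n_{C'}$. Tout le travail consiste \`a v\'erifier que $\iota$ envoie $OZ(OH^n_C)$ sur $OZ(OH^n_{C'})$ et qu'elle y respecte la multiplication.

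Pour le premier point, j'invoquerais la caract\'erisation du centre impair donn\'ee par la Proposition \ref{prop:caractsupercentre} : un \'el\'ement $z = \sum_a z_a$ appartient \`a $OZ(OH^n_C)$ si et seulement si $z_a {_a1_b} = {_a1_b} z_b$ pour tous $a,b \in B^n$. Ces \'egalit\'es ne mettent en jeu que les multiplications $a(OH^n_C)a \times a(OH^n_C)b$ et $a(OH^n_C)b \times b(OH^n_C)b$, donc les cobordismes $C_{aab}$ et $C_{abb}$, qui se d\'ecomposent uniquement en fusions par la Proposition \ref{prop:aaabfusions}. Comme $C_{aab}$ et $C'_{aab}$ (resp. $C_{abb}$ et $C'_{abb}$) sont \'equivalents en tant que cobordismes sans chronologie (la proposition pr\'ec\'edant le Corollaire \ref{prop:multisignepres}) et ne comportent que des fusions, la Proposition \ref{prop:isofusions} garantit qu'ils ont exactement la m\^eme image par $OF$. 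Les conditions d\'efinissant $OZ(OH^n_C)$ et $OZ(OH^n_{C'})$ sont donc identiques sous $\iota$, ce qui fait de la restriction de $\iota$ une bijection entre les deux centres impairs.

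Pour le second point, j'utiliserais la Proposition \ref{prop:inaOHa} : tout \'el\'ement du centre impair vit dans $\bigoplus_{a \in B^n} a(OH^n_C)a$, o\`u la multiplication se r\'eduit au produit ext\'erieur, calcul\'e par les $n$ fusions du cobordisme $C_{aaa}$ (Proposition \ref{prop:aaabfusions} avec $b = a$). \`A nouveau par la Proposition \ref{prop:isofusions}, ce produit ne d\'epend pas du choix de la r\`egle de multiplication, si bien que $\iota$ est multiplicative sur le centre impair ; comme elle pr\'eserve le degr\'e, on obtient l'isomorphisme d'anneaux gradu\'es voulu. L'unique subtilit\'e, et donc l'obstacle principal, est pr\'ecis\'ement de s'assurer que la multiplication interne au centre impair est ind\'ependante de $C$ : ceci tient au fait qu'elle est purement donn\'ee par des fusions, la non-associativit\'e et la d\'ependance en $C$ de $OH^n_C$ ne provenant que des scissions et des changements de chronologie, qui n'interviennent jamais ici.
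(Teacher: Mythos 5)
Votre preuve est correcte et suit essentiellement la m\^eme approche que celle du texte : la caract\'erisation du centre impair par la Proposition \ref{prop:caractsupercentre} ne fait intervenir que des multiplications donn\'ees par des fusions (Proposition \ref{prop:aaabfusions}), donc identiques pour $C$ et $C'$ par la Proposition \ref{prop:isofusions}, et l'isomorphisme est induit par l'identification des groupes ab\'eliens sous-jacents. Votre r\'edaction est m\^eme un peu plus compl\`ete que celle du texte, puisqu'elle v\'erifie explicitement que cette identification est multiplicative sur le centre impair (via le cobordisme $C_{aaa}$, lui aussi compos\'e uniquement de fusions), point que le texte laisse implicite.
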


\begin{proof}
Les cobordismes pour définir les multiplications de la Proposition \ref{prop:caractsupercentre} ne sont composés que de fusions et donc ont la même image par le foncteur $OF$.
\end{proof}

%
%
%

\begin{exemple} \label{ex:centreimpaire}
On calcule le centre impair de $OH^2_C$. On rappelle que $B^2$ est donné par
\begin{align*}
a = \deuxdiag{Images_arxiv/B2_2.png}, &&& b = \deuxdiag{Images_arxiv/B2_1.png},
\end{align*}
qui livrent les diagrammes
\begin{align*}
W(a)a = \diagg{Images_arxiv/B2_bb_note.png}, &&& W(a)b = \diagg{Images_arxiv/B2_ba_note_2.png},\\
W(b)a = \diagg{Images_arxiv/B2_ab_note_2.png}, &&& W(b)b = \diagg{Images_arxiv/B2_aa_note.png}.
\end{align*}
On prend un élément $z = \sum_{a \in B^n} {z_a} \in OZ\left({OH}^2_C\right)$  avec
\begin{align*}
z_a &= x_0.\diagg{Images_arxiv/B2_bb_00.png} + x_1.\diagg{Images_arxiv/B2_bb_10.png} + x_2.\diagg{Images_arxiv/B2_bb_01.png} + x_3. \diagg{Images_arxiv/B2_bb_11.png} &\in a({OH}_C^2)a, \\
z_b &= y_0.\diagg{Images_arxiv/B2_aa_00.png} + y_1.\diagg{Images_arxiv/B2_aa_10.png} + y_2.\diagg{Images_arxiv/B2_aa_01.png} + y_3.\diagg{Images_arxiv/B2_aa_11.png} &\in b({OH}_C^2)b.
\end{align*}
On applique la Proposition \ref{prop:caractsupercentre} pour trouver des contraintes sur les $x_i$ et $y_i$ et, puisque l'anneau est gradué, on peut regarder la supercommutativité degré par degré.
\begin{enumerate}

\item Pour $\deg(z) = 0$ : $z_a = x_0.1_a$ et $z_b = y_0.1_b$, on a 
\begin{align*}
z_a.{_a1_b} &= x_0.\diagg{Images_arxiv/B2_bb_00.png} \times_C \diagg{Images_arxiv/B2_ba_0.png} = x_0. \diagg{Images_arxiv/B2_ba_0.png}, \\
{_a1_b}.z_b &= \diagg{Images_arxiv/B2_ba_0.png} \times_C y_0.\diagg{Images_arxiv/B2_aa_00.png} = y_0. \diagg{Images_arxiv/B2_ba_0.png},
\end{align*}
donc $x_0 = y_0$.

\item Pour $\deg(z)=2$ :  on a
\begin{align*}
\left(x_1.\diagg{Images_arxiv/B2_bb_10.png} + x_2. \diagg{Images_arxiv/B2_bb_01.png}\right) \times_C \diagg{Images_arxiv/B2_ba_0.png} = (x_1+x_2).\diagg{Images_arxiv/B2_ba_1.png}, \\
\diagg{Images_arxiv/B2_ba_0.png} \times_C  \left(y_1.\diagg{Images_arxiv/B2_aa_10.png} + y_2. \diagg{Images_arxiv/B2_aa_01.png}\right) = (y_1+y_2).\diagg{Images_arxiv/B2_ba_1.png} ,
\end{align*}
et
\begin{align*}
  \left(y_1.\diagg{Images_arxiv/B2_aa_10.png} + y_2. \diagg{Images_arxiv/B2_aa_01.png}\right) \times_C \diagg{Images_arxiv/B2_ab_0.png}  = (y_1+y_2).\diagg{Images_arxiv/B2_ab_1.png} , \\
\diagg{Images_arxiv/B2_ab_0.png}  \times_C \left(x_1.\diagg{Images_arxiv/B2_bb_10.png} + x_2. \diagg{Images_arxiv/B2_bb_01.png}\right) = (x_1+x_2).\diagg{Images_arxiv/B2_ab_1.png} ,
\end{align*}
donc $x_1+x_2 = y_1+y_2$ qui se réécrit comme 
$$y_2 = x_1+x_2-y_1.$$

\item Pour $\deg(z) = 4$ : la proposition ne livre aucune contrainte puisque
$$x_3.\diagg{Images_arxiv/B2_bb_11.png}  \times_C \diagg{Images_arxiv/B2_ba_0.png}  = 0 =  \diagg{Images_arxiv/B2_ba_0.png} \times_C y_3.\diagg{Images_arxiv/B2_aa_11.png},$$
$$y_3.\diagg{Images_arxiv/B2_aa_11.png}  \times_C \diagg{Images_arxiv/B2_ab_0.png}  = 0 =  \diagg{Images_arxiv/B2_ab_0.png} \times_C x_3.\diagg{Images_arxiv/B2_bb_11.png}.$$
\end{enumerate}
Dès lors on obtient que
\begin{align*}
OZ(OH^2_C) = \{&k.(1_a+1_b) \\
 &+ x_1(a_1+b_2) + x_2(a_2+b_2) + y_1(b_1-b_2)\\
 &+ x_3.a_1\wedge a_2 + y_3.b_1 \wedge b_2 &| k,x_1,x_2,x_3,y_1,y_3 \in \Z \},
\end{align*}
avec comme prévu $Z({OH}^2_C) \subset OZ({OH}_C^2)$ ainsi que
$$\rank_q\left(OZ\left({OH}_C^2\right)\right) = 1+3q+2q^2 = \rank_q\bigl(Z(H^2)\bigr).$$

\end{exemple}

\section{Construction impaire de la cohomologie de la variété de Springer}\label{sec:oddspringer}

Dans \cite{OddSpringer}, A. Lauda et H. Russell proposent une construction impaire de l'anneau de cohomologie de la variété de Springer $\mathfrak B_{n,n}$ (et même pour toute partition) basée sur les polynômes impairs
$$OPol_{2n} := \frac{\Z\langle x_1, \dots, x_{2n}\rangle}{\langle x_ix_j+x_jx_i = 0 \text{ pour tout $i\ne j$} \rangle},$$
où $\Z\langle x_1, \dots, x_n \rangle$ sont les polynômes ordonnés avec degré $\deg(x_i) = 2$, formant donc un module gradué sur $\Z$.
L'objectif de cette section est de définir et étudier quelques propriétés de cette construction qu'on note simplement "cohomologie impaire de  $\mathfrak B_{n,n}$" pour être concis.\\


\begin{definition}On définit la \emph{cohomologie impaire de la variété de Springer $(n,n)$} comme étant le module gradué sur $OPol_{2n}$
$$OH(\mathfrak B_{n,n}) := OPol_{2n}/OI_n$$
où $OI_n$ est l'idéal à gauche engendré par l'ensemble $OC_n$

\begin{align}
OC_n := \left\{\epsilon^{I}_{r} := \sum_{1 \le {i_1} < \dots < {i_r} \le 2n} x_{i_1}^I \dots x_{i_r}^I \bigg| k \in [1,n], |I| = n+k, r \in [n-k+1, n+k]\right\}
\end{align}
pour tout $I$ sous-ensemble ordonné de $\{1, \dots, 2n\}$ de cardinalité $n+k$ et 
$$x_{i_j}^I := \begin{cases}
0 &\text{ si } i_j \notin I, \\
(-1)^{I(i_j)-1}x_{i_j} &\text{ sinon,}
\end{cases} $$
avec $I(i_j)$ la position de $i_j$ dans $I$. 
\end{definition}

\begin{exemple}
$OH(\mathfrak{B}_{1,1})$ est donné par les polynômes impairs à deux variables $x_1$ et $x_2$ quotientés par les relations
\begin{align*}
x_1 - x_2 &= 0,\\
x_1x_2 &= 0,
\end{align*}
c'est-à-dire les polynômes avec une seule variable $t$ et $t^2 = 0$ donnant donc
$$OH(\mathfrak{B}_{1,1}) \simeq \frac{\Z[t]}{t^2} \simeq OH_C^1 = OZ\left(OH_C^1\right).$$
\end{exemple}

\begin{exemple}
$OH(\mathfrak{B}_{2,2})$ est donné par les polynômes impairs à quatre variables $x_1,x_2,x_3$ et $x_4$ quotientés par les relations
\begin{align*}
x_1 - x_2 + x_3 - x_4 &= 0, \\
-x_ix_j+x_ix_k-x_jx_k &= 0 &\forall i<j<k \in [1,4] , \\
-x_1x_2+x_1x_3-x_1x_4-x_2x_3+x_2x_4-x_3x_4 &= 0, \\
x_ix_jx_k &= 0 &\forall i<j<k \in [1,4] , \\
-x_1x_2x_3+x_1x_2x_4-x_1x_3x_4+x_2x_3x_4 &= 0, \\
x_1x_2x_3x_4 &= 0,
\end{align*}
qui peuvent se simplifier un peu en
\begin{align*}
x_1 - x_2 + x_3 - x_4 &= 0, \\
-x_ix_j+x_ix_k-x_jx_k &= 0 &\forall i<j<k \in [1,4] , \\
-x_1x_2-x_3x_4 &= 0, \\
x_ix_jx_k &= 0 &\forall i<j<k \in [1,4] , \\
x_1x_2x_3x_4 &= 0.
\end{align*}
On peut donc engendrer $OH(\mathfrak{B}_{2,2})$ avec $1, x_1, x_2, x_3,x_1x_2 $ et $x_1x_3$ car
\begin{align*}
x_4 &= x_1 - x_2 + x_3, \\
x_1x_4 &= x_1( x_1 - x_2 + x_3 ) = -x_1x_2+x_1x_3, &
x_2x_3 &= -x_1x_2 + x_1x_3, \\
x_2x_4 &= -x_1x_2 + x_1x_4,&
x_3x_4 &= -x_1x_2,
\end{align*} 
et tous les polynômes de degrés supérieurs sont nuls. De plus ces éléments sont linéairement indépendants, ce qui est clair pour $1,x_1, x_2,x_3$ mais peut-être moins pour $x_1x_2$ et $x_1x_3$. Néanmoins, la Proposition \ref{prop:rankhoh} permet de le justifier. On observe alors qu'en envoyant
\begin{align*}
1 &\mapsto \diagg{Images_arxiv/B2_bb_00.png}  + \diagg{Images_arxiv/B2_aa_00.png},&
x_1 &\mapsto \diagg{Images_arxiv/B2_bb_10.png}  + \diagg{Images_arxiv/B2_aa_10.png},\\
x_2 &\mapsto \diagg{Images_arxiv/B2_bb_01.png}  + \diagg{Images_arxiv/B2_aa_10.png},&
x_3 &\mapsto \diagg{Images_arxiv/B2_bb_01.png}  + \diagg{Images_arxiv/B2_aa_01.png},\\
x_1x_2 &\mapsto  \diagg{Images_arxiv/B2_bb_11.png},&
x_1x_3 &\mapsto  \diagg{Images_arxiv/B2_aa_11.png},
\end{align*}
on obtient un isomorphisme de groupes $OH(\mathfrak{B}_{2,2}) \simeq_{ab} OZ(\widetilde{OH}^2)$ car en utilisant les notations de l'Exemple \ref{ex:centreimpaire} on a
\begin{align*}
a_1 + b_2 &= (a_1+b_1)-(a_2+b_1)+(a_2+b_2), &
b_1 - b_2 &= (a_2+b_1) - (a_2+b_2), \\
a_1 + b_1 &= (a_1+b_2) - (b_1 - b_2), &
a_2 + b_1 &= (a_2 + b_2) + (b_1 - b_2).
\end{align*}
On montre ensuite par de simples calculs dans $\widetilde{OH}^2$ que les relations de $OH(\mathfrak{B}_{2,2})$ sont dans le noyau de l'application, de sorte que l'isomorphisme de groupe soit un isomorphisme d'anneaux gradués bien défini par l'anticommutativité du produit extérieur et des polynômes impairs.
\end{exemple}

\begin{remarque}
On note que tout polynôme de degré strictement plus grand que $n$ dans $OH(\mathfrak B_{n,n})$ est nul puisqu'en prenant $I$ qui contient exactement les variables du polynôme et $r = |I|$ on obtient n'importe quel polynôme de tel degré.
\end{remarque}

En général, la cohomologie impaire de la variété de Springer pour une partition quelconque n'est pas un anneau et est donc juste un module sur $OPol_{2n}$. Mais puisque la hauteur de la partition $(n,n)$ est de deux, on a le lemme suivant qui permet de montrer que pour cette partition particulière la cohomologie impaire donne un anneau.

\begin{lemme}\emph{(\cite[Lemme 3.6]{OddSpringer})}\label{lem:xi2}
Pour tout $i \in [1,2n]$, on a
$$x_i^2 \in OI_n.$$
\end{lemme}

\begin{proposition}
$OH(\mathfrak B_{n,n})$ possède une structure d'anneau induite par la structure d'anneau des polynômes impairs.
\end{proposition}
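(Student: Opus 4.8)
Le plan est de remarquer d'abord que $OI_n$ est seulement d\'efini comme id\'eal \`a \emph{gauche}, de sorte que $OPol_{2n}/OI_n$ n'est a priori qu'un module \`a gauche; pour obtenir la structure d'anneau il suffira de prouver que $OI_n$ est en r\'ealit\'e bilat\`ere, c'est-\`a-dire stable par multiplication \`a droite. Comme tout \'el\'ement de $OPol_{2n}$ est combinaison $\Z$-lin\'eaire de mon\^omes, eux-m\^emes produits de variables, et comme $OI_n$ est d\'ej\`a un id\'eal \`a gauche, je me ram\`enerais \`a montrer que $g\,x_i \in OI_n$ pour chaque g\'en\'erateur $g = \epsilon^I_r \in OC_n$ et chaque variable $x_i$; on conclurait alors par r\'ecurrence sur la longueur du mon\^ome multipli\'e \`a droite, puis par lin\'earit\'e.

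Les deux ingr\'edients cl\'es seront les suivants. D'une part, $x_i^2$ est central dans $OPol_{2n}$, car $x_i^2 x_j = x_j x_i^2$ pour tout $j$ par double anticommutation. D'autre part, le Lemme \ref{lem:xi2} fournit $x_i^2 \in OI_n$. En combinant les deux, tout produit de la forme $p\,x_i^2 = x_i^2\,p$ (avec $p \in OPol_{2n}$) appartiendra \`a $OI_n$, puisque $OI_n$ est un id\'eal \`a gauche contenant l'\'el\'ement central $x_i^2$.

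Pour \'etablir $g\,x_i \in OI_n$, je d\'ecomposerais $g = g' + g''$ o\`u $g''$ regroupe les mon\^omes de $g$ contenant $x_i$ et $g'$ ceux qui ne le contiennent pas; comme chaque terme de $\epsilon^I_r$ est un produit de $r$ variables distinctes, $g'$ et $g''$ restent homog\`enes de degr\'e $r$. Pour $g''$, en faisant glisser \`a gauche le facteur $x_i$ ajout\'e \`a droite jusqu'\`a rencontrer le $x_i$ d\'ej\`a pr\'esent, chaque mon\^ome de $g''x_i$ s'\'ecrit, au signe pr\`es, sous la forme $(\text{mon\^ome sans carr\'e})\,x_i^2$ gr\^ace \`a la centralit\'e; donc $g''x_i \in OI_n$, et le m\^eme calcul donne $x_i g'' \in OI_n$. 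Pour $g'$, l'absence de $x_i$ et l'homog\'en\'eit\'e donnent $g'x_i = (-1)^r x_i g'$; or $x_i g = x_i g' + x_i g'' \in OI_n$ (car $g \in OC_n$ et $OI_n$ est un id\'eal \`a gauche) et $x_i g'' \in OI_n$, d'o\`u $x_i g' \in OI_n$ puis $g'x_i \in OI_n$. En sommant, $g\,x_i = g'x_i + g''x_i \in OI_n$.

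Ceci montrera que $OI_n$ est bilat\`ere, donc que le quotient $OH(\mathfrak B_{n,n}) = OPol_{2n}/OI_n$ h\'erite de la structure d'anneau de $OPol_{2n}$. Le principal obstacle sera le contr\^ole des signes issus de l'anticommutation : c'est l'homog\'en\'eit\'e des g\'en\'erateurs $\epsilon^I_r$ qui rend ce signe uniforme sur $g'$, et c'est la centralit\'e de $x_i^2$ jointe au Lemme \ref{lem:xi2} qui permet d'absorber dans l'id\'eal \`a gauche les termes ``carr\'es'' venant de $g''$.
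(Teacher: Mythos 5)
Your proof is correct and takes essentially the same route as the paper's: split $\epsilon^I_r$ into the monomials containing $x_i$ and those not, absorb the former into $OI_n$ via Lemme \ref{lem:xi2}, and handle the latter through the uniform sign $(-1)^r$ relating left and right multiplication by $x_i$. Your write-up is simply more explicit than the paper's sketch, which leaves the decomposition $g = g' + g''$, the centrality of $x_i^2$, and the induction on monomial length implicit.
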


\begin{proof}
Il suffit de calculer que l'idéal à droite engendré par $OC_n$ est égal à l'idéal à gauche $OI_n$, c'est-à-dire que
$$OC_n.OPol_{2n} = OPol_{2n}.OC_n = OI_n.$$
Soient $i \in [1,2n], k\in [1,n]$ et $r \in [n-k+1, n+k]$. On considère
$$\epsilon^{I}_{r}.x_i \in OC_n.OPol_{2n}.$$
Si $r = 0 \mod 2$, alors $x_i$ commute avec tous les éléments de $\epsilon^{I}_{r}$ qui ne contiennent pas de $x_i$, mais on peut obtenir un signe pour les autres. On élimine ces termes par le Lemme \ref{lem:xi2}. 
Dans le cas impair, on a envie de prendre $-x_i.\epsilon^{I}_{r} \in OPol_{2n}.OC_n $, mais cela ne marche pas en général puisque, à nouveau, les termes de $\epsilon^{I}_{r} $ contenant un $x_i$ posent problème car commutent sans changer de signe. Par le même raisonnement, on corrige cela en utilisant le lemme.
\end{proof}

L'anneau de cohomologie et la cohomologie impaire de la variété de Springer sont semblables en tant que groupes, comme le montre la proposition suivante :

\begin{proposition}\emph{(\cite[Théorème 3.8 et Corollaire 3.9]{OddSpringer})}\label{prop:rankhoh}
$H(\mathfrak B_{n,n})$ et $OH(\mathfrak B_{n,n})$ sont des groupes abéliens libres gradués de même rang et ce rang est donné par le coefficient binomial quantique (voir Section \ref{sec:quantumnumber} des Annexes)
$$\rank_q(H(\mathfrak B_{n,n})) = \rank_q (OH(\mathfrak B_{n,n})) = \begin{bmatrix}2n \\ n\end{bmatrix}_q.$$
\end{proposition}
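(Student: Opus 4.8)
The plan is to establish both freeness statements and the rank equality by reducing the odd case to the even one modulo $2$, after securing the integral freeness of $OH(\mathfrak B_{n,n})$ through an explicit basis. For the even ring I would take the classical description as known: by De Concini--Procesi \cite{ConciniProcesi} the group $H(\mathfrak B_{n,n})$ is free and graded, and its graded rank equals the Poincar\'e series of the $(n,n)$ Springer fibre, namely the quantum (Gaussian) binomial coefficient $\begin{bmatrix}2n\\n\end{bmatrix}_q$. One may also recover this rank directly from the presentation of Theorem \ref{thm:hncenter} by computing the Hilbert series of $\Z[X_1,\dots,X_{2n}]$ modulo the ideal generated by the $X_i^2$ and the symmetric relations $\sum_{|I|=k}X_I$.

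For the odd ring the first step is to produce a spanning set of normal forms. By Lemma \ref{lem:xi2} every generator satisfies $x_i^2=0$ in $OH(\mathfrak B_{n,n})$, so the anticommutativity of $OPol_{2n}$ lets me rewrite any monomial, up to a sign, as an ordered square-free monomial $x_{i_1}\cdots x_{i_r}$ with $i_1<\dots<i_r$. The relations $\epsilon^I_r\in OC_n$ then furnish straightening rules expressing the non-admissible monomials in terms of a distinguished family of admissible ones. I would fix a monomial order and verify that $OC_n$ together with the $x_i^2$ behaves as a Gr\"obner basis whose leading coefficients are units $\pm 1$, so that the admissible monomials are $\Z$-linearly independent and hence form a $\Z$-basis; this is exactly what forces $OH(\mathfrak B_{n,n})$ to be \emph{free} rather than merely finitely generated. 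I expect this to be the main obstacle: the signs $(-1)^{I(i_j)-1}$ in $\epsilon^I_r$ make the straightening and the confluence check genuinely sign-sensitive, and one must rule out any integral torsion in the quotient.

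The second step matches the two ranks through reduction modulo $2$. Tensoring the defining relations with $\Z/2\Z$ kills the signs $(-1)^{I(i_j)-1}$ and turns the anticommutativity $x_ix_j=-x_jx_i$ into commutativity; one then checks that the reduced generators $\epsilon^I_r$ generate the same ideal as the even relations $X_i^2$ and $\sum_{|I|=k}X_I$ of Theorem \ref{thm:hncenter}. This yields an isomorphism of graded rings $OH(\mathfrak B_{n,n})\otimes_\Z\Z/2\Z \simeq H(\mathfrak B_{n,n})\otimes_\Z\Z/2\Z$, in the same spirit as the comparison $OH^n_C\otimes_\Z\Z/2\Z\simeq H^n\otimes_\Z\Z/2\Z$ of Proposition \ref{prop:OHvsH}.

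Finally I would combine the pieces. Since the admissible monomials form a $\Z$-basis, the graded rank of $OH(\mathfrak B_{n,n})$ coincides with the graded $\Z/2\Z$-dimension of $OH(\mathfrak B_{n,n})\otimes_\Z\Z/2\Z$; by the mod $2$ isomorphism this equals the graded dimension of $H(\mathfrak B_{n,n})\otimes_\Z\Z/2\Z$, which by Proposition \ref{prop:basemod2} is precisely $\rank_q H(\mathfrak B_{n,n})$. Together with the even computation this gives $\rank_q OH(\mathfrak B_{n,n})=\rank_q H(\mathfrak B_{n,n})=\begin{bmatrix}2n\\n\end{bmatrix}_q$, which is the assertion.
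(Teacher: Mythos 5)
Un point de contexte d'abord : le m\'emoire ne d\'emontre pas cette proposition. Elle est import\'ee telle quelle de \cite[Th\'eor\`eme 3.8 et Corollaire 3.9]{OddSpringer}, et le texte qui suit ne fait qu'en enregistrer les cons\'equences (l'\'egalit\'e des rangs non gradu\'es, l'existence de la base monomiale $OB$). Votre proposition est donc une tentative de red\'emontrer le r\'esultat cit\'e, et non une variante d'une preuve interne au m\'emoire. Dans cette tentative, la moiti\'e paire (libert\'e et rang gradu\'e de $H(\mathfrak B_{n,n})$ via \cite{ConciniProcesi} ou via la pr\'esentation du Th\'eor\`eme \ref{thm:hncenter}) et la comparaison modulo $2$ (dans l'esprit des Propositions \ref{prop:OHvsH} et \ref{prop:basemod2}) sont correctes mais p\'eriph\'eriques : tout le contenu math\'ematique se trouve dans votre deuxi\`eme \'etape, la libert\'e de $OH(\mathfrak B_{n,n})$ par un argument de redressement/base de Gr\"obner sensible aux signes. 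Or cette \'etape est seulement annonc\'ee (\og je fixerais un ordre monomial et v\'erifierais la confluence \fg) et jamais effectu\'ee ; c'est pr\'ecis\'ement l'\'enonc\'e du Th\'eor\`eme 3.8 de \cite{OddSpringer}, o\`u l'ind\'ependance lin\'eaire enti\`ere des mon\^omes admissibles demande un vrai travail (Lemme \ref{lem:xi2}, r\`egles de redressement, contr\^ole des signes $(-1)^{I(i_j)-1}$).

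Il faut aussi souligner que cette lacune ne peut pas \^etre combl\'ee par le comptage modulo $2$ de votre dernier paragraphe : une famille g\'en\'eratrice dont les images modulo $2$ forment une base n'exclut pas de torsion enti\`ere. Par exemple $\Z/2\Z$, engendr\'e par un seul \'el\'ement, a une r\'eduction modulo $2$ de dimension $1$ dont l'image de ce g\'en\'erateur est une base, sans \^etre libre ; la relation $2v=0$ dispara\^it modulo $2$ et on ne peut pas la \og diviser par $2$ \fg{} sans d\'ej\`a savoir que le groupe est sans torsion. Autrement dit, la dimension modulo $2$ majore le nombre minimal de g\'en\'erateurs mais ne minore pas le rang libre. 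Pour conclure \`a la libert\'e il faut soit mener \`a bien la v\'erification de confluence sur $\Z$ (avec coefficients dominants $\pm 1$), soit produire une minoration ind\'ependante du rang rationnel --- par exemple une surjection de $OH(\mathfrak B_{n,n})$ sur un groupe ab\'elien libre de rang $\begin{pmatrix}2n\\n\end{pmatrix}$, ou une famille d'\'el\'ements lin\'eairement ind\'ependants dans $OH(\mathfrak B_{n,n})\otimes_\Z\Q$. Sans l'une de ces deux pi\`eces, la cha\^ine d'\'egalit\'es de votre conclusion ne d\'emarre pas.
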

En particulier on a donc
$$\rank (H(\mathfrak B_{n,n})) =  \rank (OH(\mathfrak B_{n,n})) = \begin{pmatrix}2n \\ n\end{pmatrix}.$$
On note $OB$ une certaine base monomiale de $OH(\mathfrak B_{n,n})$ et $B$ la base correspondante de $H(\mathfrak B_{n,n})$, la construction de ces bases étant expliquée dans \cite{OddSpringer} et \cite{ConciniProcesi}. Elles se correspondent dans le sens où les monômes qui constituent $OB$ sont les même que ceux de $B$.
De plus, on a un isomorphisme d'espaces vectoriels gradués
$$OH(\mathfrak B_{n,n}) \otimes_\Z \Z/2\Z \simeq H(\mathfrak B_{n,n}) \otimes_\Z \Z/2\Z$$
qui fait se correspondre les projections de $B$ et de $OB$ dans le produit tensoriel, notés $B_{\Z_2}$ et $OB_{\Z_2}$.

\begin{proposition}\label{prop:baseOBmod2}
On a l'égalité
$$\rank OH(\mathfrak B_{n,n}) = \dim_{\Z/2\Z} \bigl (OH(\mathfrak B_{n,n}) \otimes_\Z \Z/2\Z \bigr).$$
En particulier, une base modulo 2 de $OH(\mathfrak B_{n,n})$ est une base de $OH(\mathfrak B_{n,n}) \otimes_\Z \Z/2\Z$.
\end{proposition}

\begin{proof}
On utilise les mêmes arguments que pour la Proposition \ref{prop:basemod2}.
\end{proof}

%
%
%
%

\section{Isomorphisme entre $OZ\left(OH_C^n\right)$ et $OH(\mathfrak B_{n,n})$}

Cette section est dédiée à la preuve du résultat principal du chapitre qui consiste à construire un isomorphisme entre $OZ\left(OH_C^n\right)$ et $OH(\mathfrak B_{n,n})$.

\begin{theoreme}\label{thm:supercentre}
Le centre impair de $OH^n_C$ est isomorphe en tant qu'anneau gradué à la construction impaire de l'anneau de cohomologie de la variété de Springer pour une partition $(n,n)$
$$OZ(OH^n_C) \simeq OH(\mathfrak B_{n,n}).$$
\end{theoreme}

Par analogie à la construction explicite de l'isomorphisme du Théorème \ref{thm:hncenter}, on propose de définir un homomorphisme 
$$s : OH(\mathfrak B_{n,n}) \rightarrow OZ(OH_C^n)$$
similaire et adapté pour nos besoins qui induit le même homomorphisme en modulo 2. On construit dans un premier temps cet homomorphisme en le définissant sur les polynômes impairs et puis en montrant que les éléments de l'idéal par lequel on quotiente pour définir $OH(\mathfrak B_{n,n})$ sont dans le noyau, de sorte à obtenir le morphisme induit $s$. On montre dans un second temps que cet homomorphisme est injectif en montrant que si on a une relation dans l'image de la base de $OH(\mathfrak B_{n,n})$ elle remonte à une relation dans la base. Enfin on montre qu'on a l'égalité des rangs de $OZ(OH^n_C)$ et de $OH(\mathfrak B_{n,n})$ en regardant ces anneaux comme des groupes abéliens libres, donnant la surjectivité et concluant la preuve.

\subsection{Définition de l'homomorphisme $s$}

On définit d'abord le morphisme d'anneaux gradués suivant :
$$s_0 : OPol_{2n} \rightarrow \Hno_C : x_i \mapsto \sum_{a \in B^n} a_i$$
avec $a_i \in a(\Hno_C)a$ la composante de cercle de $W(a)a$ qui touche le $i$-ème point de base à partir de la gauche. On remarque que, par définition du produit dans $OH^n_C$, on a 
$$s_0(x_ix_j) = \left(\sum_{a \in B^n} a_i\right).\left(\sum_{a \in B^n} a_j\right) = \sum_{a \in B^n} a_i \wedge a_j$$
et donc en général
\begin{equation}s_0(x_{i_1}\dots x_{i_r}) = \sum_{a \in B^n} a_{i_1} \wedge \dots \wedge a_{i_r}. \label{eq:poltoprodext}\end{equation}
Ceci montre que $s_0$ est un morphisme d'anneau et qu'il est bien défini puisqu'on retrouve l'antisymétrie des polynômes impairs dans le produit extérieur.

\begin{lemme}
L'image de $s_0$ est dans le centre impair de $OH^n_C$
$$s_0(OPol_{2n}) \subset OZ(OH^n_C).$$
\end{lemme}

\begin{proof}
On utilise la Proposition \ref{prop:caractsupercentre}. Pour tout $a,b \in B^n$ on a l'égalité
$$\left(s_0(x_i)\right){_a1_b} = {_a1_b}\left(s_0(x_i)\right) $$
 car pour effectuer les produits on fusionne ensemble les composantes de $W(a)a$ et $W(b)b$ qui passent par le $i$-ème point de base. L'image de $x_i$ par $s_0$ étant la somme de ces composantes, on obtient l'égalité voulue. On étend ensuite à tout le domaine puisque le centre impair est un sous-anneau de $OH^n_C$ et que $s_0$ est un homomorphisme d'anneaux.
\end{proof}

On a donc un homomorphisme
$$s_0 : OPol_{2n} \rightarrow  OZ(OH^n_C)$$
et il nous reste à montrer que $\epsilon^{I}_{r}$ est dans son noyau pour tout $I$ et pour tout $r$ de sorte à induire un morphisme $s : OH(\mathfrak{B}_{n,n}) \rightarrow OZ(OH^n_C)$.  On remarque que l'image de $\epsilon^{I}_{r} $ est nulle dans $OZ(\Hno_C)$ si et seulement si elle est nulle dans chacun des $a(\Hno_C)a$ puisque son image est dans la somme directe de ceux-ci. Pour $a\in B^n$, on note $s_a : x_i  \mapsto a_i$ la projection de $s$ sur $a(\Hno_C)a$ de sorte que $s= \sum_a s_a$ et on remarque aisément que cela forme un morphisme d'anneaux gradués pour les même raisons que pour $s_0$. Afin de montrer le résultat voulu, on introduit les notations et lemmes suivants.

\begin{lemme}\label{lem:rpgn}
Pour tout $a \in B^n$, $I$ sous-ensemble ordonné de $\{1, \dots, 2n\}$ de cardinalité $n+k$ et $r > n$,  on a
$$s_a(\epsilon^I_{r}) = 0.$$
\end{lemme}

\begin{proof}
Puisqu'il y a $n$ composantes dans $W(a)a$, le produit extérieur est engendré par $n$ éléments et donc tout produit de plus de $n$ éléments est nul. Or $\epsilon^I_{r}$ étant une somme de polynômes à plus de $n$ termes, ils sont envoyés par $s_a$ sur des produits extérieurs de plus de $n$ éléments par (\ref{eq:poltoprodext}).
\end{proof}
Pour la suite du raisonnement, on se fixe un $a \in B^n$ arbitraire afin de ne pas devoir le mettre comme hypothèse dans chacun des lemmes. Dans le but d'alléger l'écriture, on note aussi $E_{2n} := \{1, \dots, 2n\}$ vu comme ensemble ordonné. On peut alors voir un sous-ensemble (ordonné) de $E_{2n}$ comme  un sous-ensemble des points d'extrémités des arcs de $a$.

\begin{definition}
Pour $I \subset E_{2n}$, on appelle \emph{arcs de $I$} les paires de points de $I$ qui sont identifiées aux extrémités d'un même arc de $a$ et on appelle ces points les \emph{points non-libres} (de $I$). On appelle tous les autres points des \emph{points libres} (donc les points de $I$ dont l'autre extrémité de l'arc de $a$ n'est pas dans $I$), comme illustré en  Figure \ref{fig:pointslibres}.
\end{definition}

\begin{figure}[h]
    \center
    \includegraphics[width=8cm]{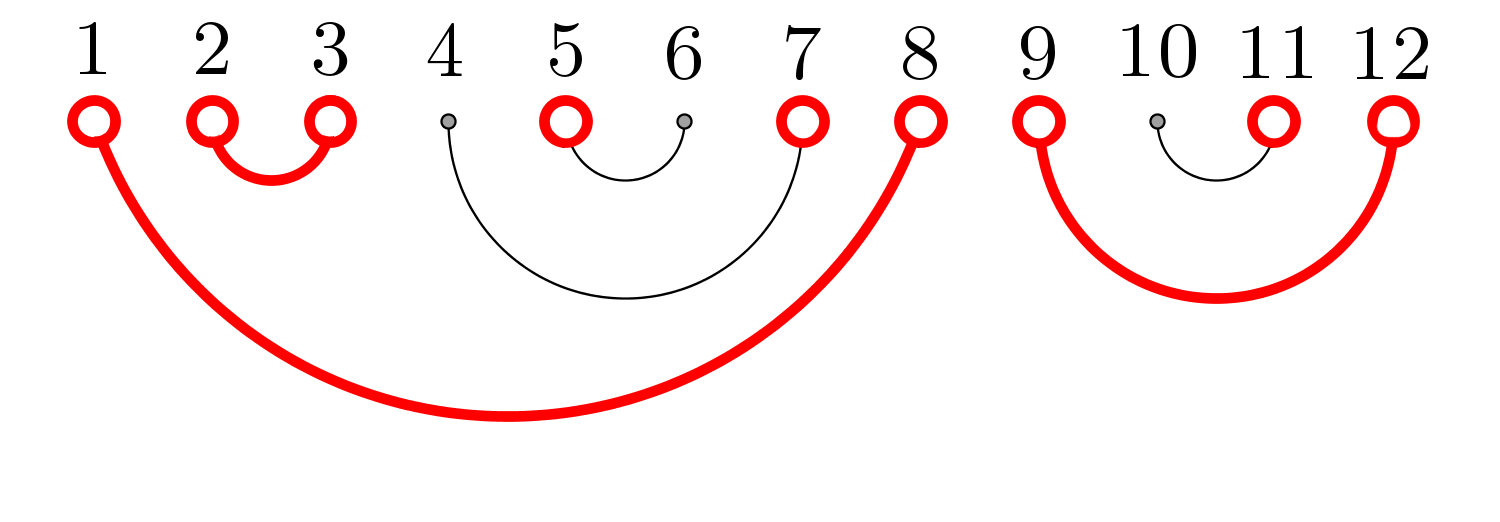}
    \caption{ \label{fig:pointslibres} On prend $n=6, k=3$ et $I = \{1,2,3,5,7,8,9,11,12\}$ (représentés par les cercles rouge) avec $\{5,7, 11\}$ les points libres de $I$ et $\{1,2,3,8,9,12\}$ les points non-libres, les arcs de $I$ étant en rouge.}
\end{figure}

\begin{lemme} \label{lem:comptage1}
Soient $I \subset E_{2n}$. Si $|I| = n+k$, alors $I$ possède au moins $k$ arcs et $2k$ points non-libres ainsi qu'au plus $n-k$ points libres.
\end{lemme}

\begin{proof}
On doit choisir $n+k$ points parmi $2n$ répartis en $n$ paires pour les $n$ arcs, donc on a au moins $k$ collisions (c'est-à-dire paires de points appartenant à un même arc). Chaque arc ayant deux extrémités et puisqu'on a au minimum $k$ arcs, on a au moins $2k$ points non-libres. Le nombre de points libres est obtenu en prenant la différence du nombre total de points de $I$ et du nombre de points non-libres, donc on obtient bien un maximum de $n+k-2k = n-k$ points libres.
\end{proof}

Pour $I \subset E_{2n}$ et $R =  \{i_1, \dots, i_r\} \subset I$ un sous-ensemble ordonné on pose
$$\epsilon^I_R := x_{i_1}^I \dots x_{i_r}^I.$$
Cette notation donne, en prenant la somme sur tous les sous-ensemble ordonnés de $I$,
\begin{equation}\epsilon^I_r = \sum_{R \subset I} \epsilon^I_R.\label{eq:epsilon} \end{equation}

\begin{lemme}\label{lem:doublonnul}
Soient $R \subset I \subset E_{2n}$. Si $R$ contient un arc de $I$, alors
$$s_a(\epsilon^I_R) = 0.$$
\end{lemme}

\begin{proof} On observe que si $i,i' \in E_{2n}$ sont reliés par un arc dans $a$, alors 
\begin{align*} s_a(x_ix_{i'}) = a_i \wedge a_{i'} &= 0 
\end{align*}
puisque $a_i = a_{i'}$. On généralise ensuite et on obtient le résultat par anticommutativité.
\end{proof}

\begin{lemme}\label{lem:pointnonlibreinR}
Pour tout $R \subset I \subset E_{2n}$, avec $|I| = n+k$ et $|R| \ge n-k+1$, il existe un point non-libre de $I$ contenu dans $R$.
\end{lemme}

\begin{proof}C'est un simple argument de comptage : par le Lemme \ref{lem:comptage1} il y au maximum $n-k$ points libres dans $I$ et par hypothèse $R$ contient au minimum $n-k+1$ points.
\end{proof}

\begin{definition}
On suppose $R \subset I \subset E_{2n}$ fixé. Pour un arc $(j,j')$ de $I$ ayant une extrémité $j$ ou $j'$ dans $R$, on considère la somme des nombres de
\begin{itemize}
\item points libres de $I$ situés entre $j$ et $j'$ non-contenus dans $R$,
\item points non-libres de $I$ situés entre $j$ et $j'$ et contenus dans $R$.
\end{itemize}
On appelle \emph{parité de l'arc $(j,j')$} la parité de cette somme.
\end{definition}

On peut calculer la parité d'un arc de $I$ pour $R$ en fonction de ses sous-arcs maximaux ayant une extrémité dans $R$, c'est-à-dire les sous-arcs contenus dans aucun autre sous-arc ayant une extrémité dans $R$.

\begin{lemme}\label{lem:sousarcparite}
Soient $R \subset I \subset E_{2n}$. Si $R$ ne contient aucun arc de $I$, alors la parité d'un arc de $I$ ayant une extrémité dans $R$ est donnée par la somme des parités inverses des sous-arcs maximaux ayant une extrémité dans $R$ et de la parité du nombre de points libres de $I$ non-contenus dans $R$ qui ne sont pas dans ces sous-arcs.
\end{lemme}

\begin{proof}Tous les points contenus dans les sous-arcs sont dans l'arc et, $R$ ne pouvant contenir deux extrémités d'un même arc, chaque sous-arc considéré ajoute un point non-libre contenu dans $R$, inversant la parité.
\end{proof}

\begin{lemme}\label{lem:parite}
Soient $R \subset I \subset E_{2n}$. Soient aussi $(j,j')$ un arc de $I$ avec $j$ (resp. $j'$) dans $R$ et $R'$ obtenu en retirant $j$ (resp. $j'$) de $R$ et ajoutant $j'$ (resp. $j$). Si $(j,j')$ est pair alors on obtient 
\begin{align*}
s_a\left(\epsilon^I_R\right) &= -s_a\left(\epsilon^I_{R'}\right). 
\end{align*}
\end{lemme}

Avant de montrer ce résultat un peu technique, on propose un exemple afin de clarifier le raisonnement.

\begin{exemple}
On prend $a \in B^6$ comme dans la Figure \ref{fig:pointslibres_R} ainsi que $n=6, k=3, r=4$, $I = \{1,2,3,5,7,8,9,11,12\}$ et $R=\{5,8,9,11\} \subset I$. On a donc $\{5,7, 11\}$ les points libres de $I$ et $\{1,2,3,8,9,12\}$ les points non-libres donnant les arcs de $I : (1,8),(2,3)$ et $(9,12)$.
\begin{figure}[h]
    \center
    \includegraphics[width=8cm]{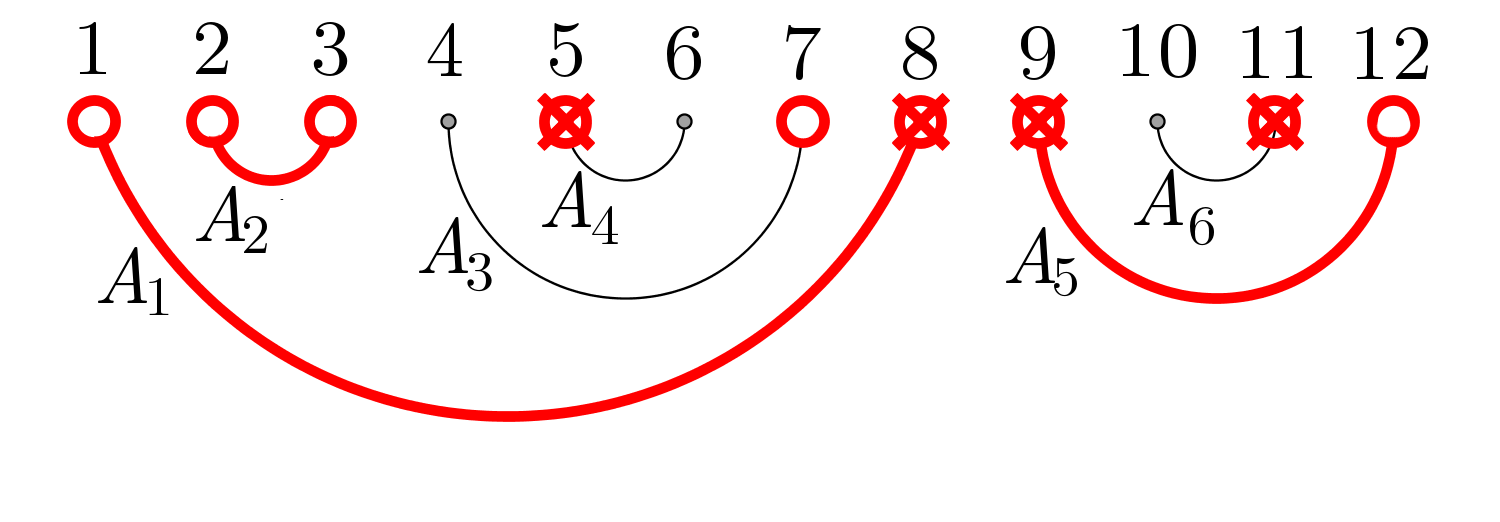}
    \caption{ \label{fig:pointslibres_R} On prend $I$ représenté par les cercles rouge. On prend en plus $R\subset I$ représenté par les croix rouge. Par ailleurs on note $A_i$ l'arc qui appartient à la $i$-ème composante de $W(a)a$ pour l'ordre induit par les points de bases.}
\end{figure}

On obtient le terme $\epsilon^I_R = (-x_5)(-x_8)x_9(-x_{11})$ de $\epsilon^I_r$, les signes étant dus à la position des points dans $I$, qui a comme image dans $a(OH^n_C)a$
 $$s_a(\epsilon^I_R) = s_a\bigl((-x_5)(-x_8)x_9(-x_{11})\bigr) =(-A_4)\wedge(-A_1)\wedge A_5 \wedge(-A_6)$$
où $A_i$ est la $i$-ème composante de cercle de $W(a)a$ pour l'ordre induit par les points de base, comme illustré en Figure \ref{fig:pointslibres_R} (on prend des majuscules dans cet exemple pour ne pas confondre avec les $a_i$ qui définissent $s_0$, dans le cas de la figure on a par exemple $a_4 = a_7 = A_3$). Puisque $R$ contient $8$ et $9$ comme points non-libres, on regarde la parité des arcs :
\begin{itemize}
\item  $(1,8)$ : impair car il contient juste 7 comme point libre de $I$ non-contenu dans $R$ et aucun non-libre contenu dans $R$,
 \item $(9,12)$ : pair car il ne contient aucun des points recherchés.
\end{itemize}
On vérifie qu'en effet l'élément $\epsilon^I_{R'}$ avec $R'$ qui est donné par $R$ où on échange $9$ avec $12$, c'est-à-dire l'élément $(-x_5)(-x_8)(-x_{11})x_{12}$, donne l'image 
$$s_a(\epsilon^I_{R'}) = (-A_4)\wedge(-A_1)\wedge (-A_6) \wedge A_5 = -\bigl((-A_4)\wedge(-A_1)\wedge A_5 \wedge(-A_6)\bigr) = -s_a(\epsilon^I_R).$$
Par contre si on échange $8$ avec $1$ on obtient comme image $A_1\wedge(-A_4)\wedge A_5 \wedge(-A_6)$ qui est la même que celle de $\epsilon^I_R$ mais qui s'annule avec l'élément où on échange $8$ avec $1$ et $9$ avec $12$. Au final, la somme de toutes les possibilités d'échanges des points non-libres de $R$ est donc nulle.
\end{exemple}

\begin{proof}\emph{(du Lemme \ref{lem:parite})}
On montre le résultat par récurrence sur la taille de $I$ et de $R$. En effet, on suppose que $I$ contient un arc $(i,i')$ avec aucun autre point de $I$ compris entre ses extrémités et qui possède donc trivialement un nombre pair de points recherchés. On a alors un changement de signes induit par la position dans $I$ ($i$ et $i'$ sont côte à côte dans $I$, donc $x_i^I$ est de signe opposé à $x_{i'}^I$), tandis que $a_i$ et $a_{i'}$ sont à la même position dans l'expression de l'image, ne changeant pas de signe. Au total, on a donc un changement de signe comme voulu. On a maintenant cinq cas à considérer : 
\begin{enumerate}
\item On ajoute un point libre compris entre $i$ et $i'$ à $I$ et $R$.
\item On ajoute un point libre compris entre $i$ et $i'$ seulement à $I$.
\item On ajoute un arc compris entre $i$ et $i'$ dans $I$ et une de ses deux extrémités dans $R$.
\item On ajoute un arc compris entre $i$ et $i'$ dans $I$ avec aucune de ses extrémités dans $R$.
\item On ajoute un arc compris entre $i$ et $i'$ dans $I$ et dans $R$.
\end{enumerate}
Le premier cas n'induit pas de changement de signe puisque $i'$ est décalé d'une position vers la droite par rapport à $i$, impliquant un premier changement de signe, et que l'élément ajouté étant dans $R$ il apparait aussi dans le produit. Commuter avec cet élément implique donc un second changement de signe donnant une égalité au final. Par contre, le deuxième cas induit un changement de signe puisqu'on conserve le changement par la position dans $I$ mais qu'on ne retrouve plus l'élément avec lequel on doit commuter dans $R$ et donc dans le produit. Le troisième cas induit aussi un changement puisqu'on ajoute $2$ éléments dans $I$, ne modifiant donc pas le signe, mais qu'on ajoute un seul élément dans $R$ impliquant une commutation dans le produit. Le quatrième cas ne change pas non plus le signe par des arguments semblables. Enfin, pour le cinquième cas, alors $R$ contient un arc de $I$ et la solution devient triviale par le Lemme \ref{lem:doublonnul}. Au final, le signe n'est donc influencé que par les deuxième et troisième cas qui sont les points considérés pour définir la parité d'un arc.
\end{proof}

\begin{lemme}\label{lem:existpair}
Soient $R \subset I \subset E_{2n}$  avec $|I|=n+k$ et $n-k+1 \le |R| \le n$. Si $R$ ne contient aucun arc de $I$, alors il existe un arc de $I$ ayant une extrémité dans $R$ et de parité paire.
\end{lemme}

\begin{proof}
Tout d'abord, par le Lemme \ref{lem:pointnonlibreinR} il existe un arc de $I$ ayant une extrémité dans $R$. On suppose par l'absurde qu'il n'existe pas d'arc rencontrant $R$ et ayant une parité pair et donc que tout arc de $I$ ayant une extrémité dans $R$ est impair. Si on pose $l \ge 1$ le nombre de points non-libres de $I$ contenus dans $R$, on observe qu'il y a au moins $n-k-l+1$ points libres dans $R$. Par le Lemme \ref{lem:sousarcparite}, la parité d'un arc ne dépend pas de ses sous-arcs (ils sont tous impairs induisant une parité paire) et il faut donc pour chaque arc au moins un point libre non-contenu qui ne se trouve dans aucun des sous-arcs pour obtenir une parité impaire. De là on déduit qu'il y a au moins autant de points libres non-contenus dans $R$ que d'arcs de $I$ ayant une extrémité dans $R$. On a donc $l$ points libres de $I$ qui ne sont pas contenus dans $R$, laissant un maximum de $n-k-l$ points libres restants par le Lemme \ref{lem:comptage1}. Or $R$ doit en contenir au moins $n-k-l+1$, ce qui est absurde.
\end{proof}

Maintenant qu'on a tous les outils nécessaires, on peut démontrer le résultat voulu donné par la proposition suivante : 

\begin{proposition}\label{prop:nk1}
Pour tout $a \in B^n$, $k \in [1,n]$ et  $r\ge n-k+1$ ainsi que pour tout $I \subset E_{2n}$ de cardinalité $n+k$, on a
$$s_a(\epsilon^I_{r}) = 0.$$
\end{proposition}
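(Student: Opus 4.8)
\emph{D\'emonstration (plan).} Le plan est de montrer que la somme compl\`ete
$s_a(\epsilon^I_r) = \sum_{R \subset I,\, |R| = r} s_a(\epsilon^I_R)$, donn\'ee par (\ref{eq:epsilon}), s'annule. Si $r > n$, le r\'esultat est imm\'ediat par le Lemme \ref{lem:rpgn}, donc on se ram\`enera au cas $n-k+1 \le r \le n$. Dans ce cas, on commencera par scinder la somme selon que $R$ contient ou non un arc de $I$. Les termes pour lesquels $R$ contient un arc de $I$ s'annulent individuellement par le Lemme \ref{lem:doublonnul}, et il ne restera donc que les $R$ ne contenant aucun arc de $I$.

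Pour ces termes restants, l'id\'ee est de construire une involution $\tau$ sans point fixe sur l'ensemble des $R \subset I$ de cardinalit\'e $r$ ne contenant aucun arc, telle que $s_a(\epsilon^I_{\tau(R)}) = -s_a(\epsilon^I_R)$, de sorte que la somme se regroupe en paires qui s'annulent. Pour d\'efinir $\tau$ canoniquement, on fixera un ordre total sur les arcs de $I$ et, pour chaque $R$, on choisira le plus petit arc $(j,j')$ de $I$ qui poss\`ede une extr\'emit\'e dans $R$ et dont la parit\'e est paire ; un tel arc existe par le Lemme \ref{lem:existpair} gr\^ace aux hypoth\`eses $n-k+1 \le r \le n$. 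On posera alors $\tau(R) = R'$, o\`u $R'$ est obtenu en \'echangeant dans $R$ l'unique extr\'emit\'e de $(j,j')$ contenue dans $R$ avec l'autre extr\'emit\'e. Comme $R$ ne contient pas l'arc entier, on a $R' \ne R$ et $R'$ ne contient toujours aucun arc, donc $\tau$ sera une application sans point fixe de l'ensemble dans lui-m\^eme, et le Lemme \ref{lem:parite} donnera directement $s_a(\epsilon^I_{R'}) = -s_a(\epsilon^I_R)$.

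La partie d\'elicate sera de v\'erifier que $\tau$ est bien une involution, c'est-\`a-dire que l'arc choisi pour $R'$ est le m\^eme que celui choisi pour $R$. Pour cela, on utilisera le fait que les arcs de $a$ (et donc de $I$) ne se croisent pas, $a \in B^n$ \'etant un appariement planaire : deux arcs quelconques sont soit disjoints soit embo\^it\'es en tant qu'intervalles sur la droite des points de base. On en d\'eduira que l'\'echange $j \leftrightarrow j'$ ne modifie l'appartenance \`a $R$ d'aucun point strictement compris entre les extr\'emit\'es d'un autre arc $(m,m')$ : si $(j,j')$ est embo\^it\'e dans $(m,m')$, les deux points $j,j'$ sont entre $m$ et $m'$ et l'\'echange pr\'eserve le compte ; sinon aucun des deux ne l'est. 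Par cons\'equent la parit\'e de tout arc de $I$ est inchang\'ee par $\tau$, et comme aucun autre arc n'admet $j$ ou $j'$ pour extr\'emit\'e, l'ensemble des arcs de parit\'e paire ayant une extr\'emit\'e dans $R$ co\"incide avec celui associ\'e \`a $R'$. L'arc minimal choisi sera donc le m\^eme, ce qui garantira $\tau(\tau(R)) = R$.

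En regroupant alors les termes par paires $\{R, \tau(R)\}$, chaque paire contribuera $s_a(\epsilon^I_R) + s_a(\epsilon^I_{\tau(R)}) = 0$, d'o\`u $s_a(\epsilon^I_r) = 0$ et la proposition sera d\'emontr\'ee. L'obstacle principal se situe enti\`erement dans la bonne d\'efinition de l'involution $\tau$ (pr\'eservation de la parit\'e et du choix de l'arc minimal), le reste de l'argument d\'ecoulant m\'ecaniquement des lemmes ant\'erieurs.
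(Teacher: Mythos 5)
Your proof follows essentially the same route as the paper's: reduce to $r \le n$ by Lemma \ref{lem:rpgn}, kill the terms where $R$ contains an arc of $I$ by Lemma \ref{lem:doublonnul}, and cancel the remaining terms in pairs via Lemmas \ref{lem:existpair} and \ref{lem:parite}. Your explicit involution (choice of the minimal even arc for a fixed total order, with preservation of all parities deduced from the fact that the arcs of $a$ do not cross) is correct, and it in fact makes rigorous the global well-definedness of the pairing $R \leftrightarrow R'$, a point the paper's own proof treats only informally.
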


\begin{proof}
Tout d'abord, par le Lemme \ref{lem:rpgn}, on peut supposer que $r \le n$. L'idée de la preuve est de montrer que tous les termes $s_0(\epsilon^I_R)$ qui composent l'image $s_0(\epsilon^I_r)$ par (\ref{eq:epsilon}) et qui ne sont pas annulés par le Lemme \ref{lem:doublonnul} s'annulent deux à deux en utilisant le Lemme \ref{lem:parite}. On peut donc supposer que $R$ ne contient pas d'arc de $I$. Par le Lemme \ref{lem:existpair}, il existe un arc de $I$ pair pour $R$ et donc on peut appliquer le Lemme \ref{lem:parite} pour l'annuler avec $R'$. Cela permet d'annuler tous les autre termes puisque, étant donné un $R$ avec un arc pair $(j,j')$, pour tout autre $R_2$ obtenu à partir de $R$ en échangeant un point non-libre (différent de $j$ et $j'$) avec l'autre extrémité de l'arc, on peut appliquer le Lemme \ref{lem:parite} dessus aussi pour l'annuler avec $R_2'$ obtenu en échangeant $j$ et $j'$ et que tout ces ensembles donnent des $\epsilon^I_R$ différents.
\end{proof}

\begin{corollaire}\label{cor:supercentremorphisme}
Il existe un morphisme d'anneaux gradués induit par $s_0$
$$s : OH(\mathfrak B_{n,n}) \rightarrow OZ(OH^n_C) : x_i \mapsto \sum_{a \in B^n} a_i. $$
\end{corollaire}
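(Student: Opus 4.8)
The plan is to obtain $s$ as the factorization of the graded ring homomorphism $s_0$ through the quotient defining $OH(\mathfrak{B}_{n,n})$. The key observation is that, although $OH^n_C$ itself is non-associative, the image of $s_0$ lies in $OZ(OH^n_C)$, which is an associative subring; restricting the codomain of $s_0$ to $OZ(OH^n_C)$ turns it into a genuine homomorphism of associative rings, and it is precisely this restriction that allows us to invoke the universal property of the quotient ring.

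First I would record that $s_0 : OPol_{2n} \rightarrow OZ(OH^n_C)$ is a graded ring homomorphism: this follows from equation (\ref{eq:poltoprodext}), from the lemma placing the image of $s_0$ inside $OZ(OH^n_C)$, and from the fact that $OZ(OH^n_C)$ is associative. Next, I would show that the generating set $OC_n$ lies in $\ker s_0$. Since $s_0 = \sum_{a \in B^n} s_a$ and its image sits in the direct sum $\bigoplus_{a \in B^n} a(OH^n_C)a$, an element $\epsilon^I_r$ is killed by $s_0$ exactly when $s_a(\epsilon^I_r) = 0$ for every $a \in B^n$. But this is exactly the content of Proposition \ref{prop:nk1} (whose treatment of the range $r > n$ rests on Lemma \ref{lem:rpgn}); hence $s_0(\epsilon^I_r) = 0$ for all admissible $I$ and $r$, that is, $OC_n \subseteq \ker s_0$.

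The remaining step is to upgrade the inclusion $OC_n \subseteq \ker s_0$ to $OI_n \subseteq \ker s_0$. Recall that $OI_n$ is the left ideal generated by $OC_n$, so a typical element has the form $\sum_i p_i \epsilon_i$ with $p_i \in OPol_{2n}$ and $\epsilon_i \in OC_n$. Using that $s_0$ is a homomorphism into the associative ring $OZ(OH^n_C)$, I compute $s_0\left(\sum_i p_i \epsilon_i\right) = \sum_i s_0(p_i)\, s_0(\epsilon_i) = 0$, so that $s_0(OI_n) = 0$. By the universal property of the quotient ring $OPol_{2n}/OI_n = OH(\mathfrak{B}_{n,n})$, the map $s_0$ descends to a well-defined ring homomorphism $s : OH(\mathfrak{B}_{n,n}) \rightarrow OZ(OH^n_C)$ with $s(x_i) = \sum_{a \in B^n} a_i$; since $s_0$ and the projection are both graded, so is $s$.

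The bulk of the difficulty has already been discharged in Proposition \ref{prop:nk1}, which is the genuinely combinatorial heart of the argument. The only point requiring care here is the non-associativity of $OH^n_C$: one must be sure to work throughout inside $OZ(OH^n_C)$ so that the identity $s_0(p\epsilon) = s_0(p)\,s_0(\epsilon)$ is legitimate, and one must recall, from the earlier identity $OC_n \cdot OPol_{2n} = OPol_{2n} \cdot OC_n = OI_n$, that $OI_n$ is in fact two-sided, which is what makes the quotient a ring and the factorization a morphism of rings rather than merely a map of left modules.
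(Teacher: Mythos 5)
Your proposal is correct and follows essentially the same route as the paper: the paper also obtains $s$ by restricting the codomain of $s_0$ to the associative subring $OZ(OH^n_C)$, reducing membership of $\epsilon^I_r$ in $\ker s_0$ to the componentwise statement $s_a(\epsilon^I_r)=0$ (Proposition \ref{prop:nk1}), and then descending to the quotient. The paper leaves the final factorization step implicit, whereas you spell out the passage from $OC_n\subseteq\ker s_0$ to $OI_n\subseteq\ker s_0$ and the role of the two-sidedness of $OI_n$; these are exactly the right details and introduce no gap.
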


\subsection{Injectivité de l'homomorphisme $s$}

\begin{lemme}
Le morphisme d'anneaux
$$\bar s : OH(\mathfrak B_{n,n}) \otimes_\Z \Z/2\Z \rightarrow OZ(OH^n_C)  \otimes_\Z \Z/2\Z$$
induit par $s$ sur le produit tensoriel avec $\Z/2\Z$ est un isomorphisme.
\end{lemme}

\begin{proof}
On a un isomorphisme d'anneaux
$$H^n \otimes_\Z \Z/2\Z \simeq OH^n_C \otimes_\Z \Z/2\Z$$
en envoyant le produit tensoriel d'éléments de $A$ vers le produit extérieur, comme dans la preuve de la Proposition \ref{prop:OHvsH}.
Le centre de $H^n$ et le centre impair de $OH^n_C$ étant caractérisés par la même relation ($\ref{eq:supercentre}$) une fois prise en modulo $2$, on obtient un isomorphismes d'anneaux gradués
$$t : Z(H^n) \otimes_\Z \Z/2\Z \rightarrow OZ(OH^n_C) \otimes_\Z \Z/2\Z.$$ 
On considère ensuite le diagramme commutatif de morphismes d'anneaux suivant
$$\xymatrix{OH(\mathfrak B_{n,n}) \otimes_\Z \Z/2\Z  \ar[r]^\simeq \ar[d]_{\bar s} & H(\mathfrak{B}_{n,n})  \otimes_\Z \Z/2\Z  \ar[d]^{h} \\
OZ(OH^n_C) \otimes_\Z \Z/2\Z  & \ar[l]^t Z(H^n) \otimes_\Z \Z/2\Z  }  $$
avec l'isomorphisme de droite venant du Théorème \ref{thm:hncenter} et celui du haut de la Proposition \ref{prop:rankhoh}. Le diagramme commute car $s$ est équivalent modulo 2 à l'isomorphisme de droite. On en conclut que $\bar s$ est un isomorphisme.
\end{proof}

\begin{proposition}\label{prop:supercentreinjectif}
Le morphisme $s$ du Corollaire \ref{cor:supercentremorphisme} est injectif.
\end{proposition}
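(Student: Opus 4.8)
The plan is to deduce injectivity of $s$ from injectivity of its mod-$2$ reduction $\bar s$, which the preceding lemma has just shown to be an isomorphism. The structural facts I would invoke are that $OH(\mathfrak B_{n,n})$ is a free abelian group (Proposition \ref{prop:rankhoh}) admitting a basis $OB$ whose mod-$2$ reduction $OB_{\Z_2}$ is a basis of $OH(\mathfrak B_{n,n}) \otimes_\Z \Z/2\Z$ (Proposition \ref{prop:baseOBmod2}), and that the target $OZ(OH^n_C)$ is torsion-free, being a subgroup of the free abelian group $OH^n_C$. The whole proof is then the standard device for promoting a mod-$2$ injectivity to an integral one between free abelian groups.

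First I would take $x \in OH(\mathfrak B_{n,n})$ with $s(x) = 0$ and argue by contradiction, assuming $x \neq 0$. Expanding $x = \sum_i c_i b_i$ in the basis $OB$, I let $2^k$ be the largest power of $2$ dividing all the coefficients $c_i$ and factor $x = 2^k y$ with $y = \sum_i (c_i/2^k) b_i$, so that by construction at least one coefficient of $y$ is odd. Because $OB_{\Z_2}$ is a basis of the mod-$2$ reduction, the class $\bar y$ is then nonzero in $OH(\mathfrak B_{n,n}) \otimes_\Z \Z/2\Z$.

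Next I would use torsion-freeness of the target: from $0 = s(x) = 2^k\, s(y)$ and the absence of torsion in $OZ(OH^n_C)$, the nonzero integer $2^k$ can be cancelled, giving $s(y) = 0$. Reducing modulo $2$ yields $\bar s(\bar y) = \overline{s(y)} = 0$ while $\bar y \neq 0$, which contradicts the injectivity of $\bar s$ furnished by the preceding lemma. Hence $x = 0$, and $s$ is injective.

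I do not expect a genuine obstacle here, since the substantive content is already packaged in the mod-$2$ isomorphism $\bar s$ together with Propositions \ref{prop:rankhoh} and \ref{prop:baseOBmod2}. The only points requiring care are the routine verifications underlying the lifting step: that $OZ(OH^n_C)$ is indeed torsion-free, so that the factor $2^k$ may be removed, and that a mod-$2$-nonzero combination of basis elements genuinely comes from an odd coefficient. These are precisely where the freeness and the compatibility of the bases are used, and nothing beyond them is needed.
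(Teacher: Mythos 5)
Your proof is correct and follows the same overall strategy as the paper: reduce modulo $2$, use the isomorphism $\bar s$ from the preceding lemma together with the compatibility of the basis $OB$ with its mod-$2$ reduction $OB_{\Z_2}$, and derive a contradiction. The difference is that your version is actually more careful than the paper's. The paper takes a nonzero combination $\sum_{x\in OB} x$ (coefficients suppressed) in $\ker s$, reduces it mod $2$, and declares the resulting relation in $OB_{\Z_2}$ absurd; but if all integer coefficients happen to be even, the mod-$2$ relation is the trivial one $0=0$ and no contradiction arises. Your normalization step --- factoring out the largest power $2^k$ dividing all coefficients, then cancelling it using the fact that $OZ(OH^n_C)$ is torsion-free (as a subgroup of the free abelian group $OH^n_C$, a direct sum of exterior algebras of free $\Z$-modules) --- is precisely what is needed to guarantee that the element you reduce mod $2$ has an odd coefficient, hence a nonzero image in $OH(\mathfrak B_{n,n})\otimes_\Z \Z/2\Z$. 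So your argument fills a genuine (if routine) gap in the paper's own redaction, at the cost of invoking one additional structural fact (torsion-freeness of the target) that the paper never states explicitly.
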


\begin{proof}
On note $OB$ la base de $OH(\mathfrak B_{n,n})$ de la Proposition \ref{prop:rankhoh}. On suppose par l'absurde que $s$ ne soit pas injectif, c'est-à-dire qu'il existe une combinaison linéaire non-nulle d'éléments de $OB$, $\sum_{x \in OB} x$, envoyée sur $0$ par $s$
$$\sum_{x\in OB} s(x) = 0\in OZ(OH^n_C).$$
L'idée de la preuve est d'utiliser l'isomorphisme du lemme précédent et la conservation de la base $OB$ sous le produit produit tensoriel avec $\Z/2\Z$ pour obtenir une relation dans les éléments d'une base de $OH(\mathfrak B_{n,n}) \otimes \Z/2\Z$, c'est-à-dire une combinaisons linéaire non-nulle d'éléments de cette base donnant $0$, ce qui est absurde par définition. On obtient cela en passant par le produit tensoriel avec $\Z/2\Z$, comme illustré par les diagrammes suivants 
$$
\xymatrix{
OH(\mathfrak B_{n,n}) \ar[d]_{s} \ar[r]^-{p_{\otimes \Z/2\Z}} & OH(\mathfrak B_{n,n}) \otimes \Z/2\Z \ar[d]^{\bar s} &
 \sum_{x\in OB} x \ar[d] &   \sum_{x\in OB} x \otimes 1 = 0 \\
OZ(OH^n_C) \ar[r]_-{p_{\otimes \Z/2\Z}} & OZ(OH^n_C) \otimes \Z/2\Z &  \sum_{x\in OB} s(x) = 0 \ar[r] &  \sum_{x \in OB} \bar s(x \otimes 1) = 0 \ar[u]
}
$$
où le diagramme de gauche commute et le diagramme de droite représente le chemin de raisonnement parcouru.

On considère l'image de la combinaison linéaire en modulo 2, c'est-à-dire sa projection dans le produit tensoriel avec $\Z/2\Z$, donnant
$$\sum_{x\in OB} \bar s(x \otimes 1) =  0 \in OZ(OH^n_C) \otimes \Z/2\Z$$
puisque $0$ est projeté sur $0$ et que le diagramme de gauche commute. Par le lemme précédent $\bar s$ est un isomorphisme, impliquant que
$$\sum_{x\in OB} x \otimes 1 =  0 \in OH(\mathfrak B_{n,n}) \otimes_\Z \Z/2\Z.$$
On obtient donc une relation dans $OB_{\Z_2}$ qui est une base par la Proposition \ref{prop:baseOBmod2}.
\end{proof}

\subsection{Égalité des rangs de $OZ\left(OH^n_C\right)$ et $OH(\mathfrak B_{n,n})$}

Dans la preuve du Théorème \ref{thm:hncenter}, M. Khovanov construit une variété $\widetilde S$ à partir de sphères. Il montre que son anneau de cohomologie est isomorphe au centre de $H^n$ et qu'il possède une présentation équivalente à celle de l'anneau de cohomologie de la variété de Springer. On propose de construire une variété similaire, notée $\widetilde T$, à partir de cercles (donnant donc des algèbres extérieures comme anneau de cohomologie) et de montrer qu'il existe un épimorphisme d'anneaux non-gradués de l'anneau de cohomologie de cette variété vers le centre impair de $OH^n_C$. Ensuite on calcule le rang de cet anneau de cohomologie et on remarque qu'il est égal au rang de l'anneau de cohomologie impaire de $\mathfrak B_{n,n}$, donnant l'égalité avec le rang du centre impaire et concluant la preuve.

\begin{definition}
Pour tout $a\in B^n$, on définit $T_a \subset T^{2n} := \underset{2n}{\underbrace{S^1 \times \dots \times S^1}}$ tel que
$(x_1, \dots, x_{2n}) \in T_a$
si et seulement si tout $(i,j)$ reliés par un arc dans $a$ implique $x_i = x_j$. \\
On définit alors
$$\widetilde T := \bigcup_{a \in B^n} T_a.$$
\end{definition}

On remarque que $T_a \simeq T^n$ puisque $a$ possède $n$ arcs et donc on égalise $n$ paires de $x_i$ disjointes sur $2n$ disponibles. De même, $T_a \cap T_b$ est l'ensemble des $(x_1, \dots, x_{2n})$ tels que si $(i,j)$ est relié dans $a$ ou dans $b$ alors $x_i = x_j$ de sorte que si deux points $k,l$ appartiennent à la même composante de $W(a)b$ on a $x_k = x_l$. Géométriquement, $T_a \cap T_b$ est donc un hypertore $T^{|W(a)b|}$, avec $|W(a)b|$ le nombre de composantes de cercles, de sorte que $\widetilde T$ est une collection d'hypertores $T^n$ identifié ensemble deux par deux sur des sous-hypertores de dimensions inférieures donnés par les diagonales.

\begin{exemple}
Dans $B^2$ on considère
\begin{align*}
a &=\deuxdiag{Images_arxiv/B2_1.png} ,& 
b &=\deuxdiag{Images_arxiv/B2_2.png},&
 W(a)b = \diagg{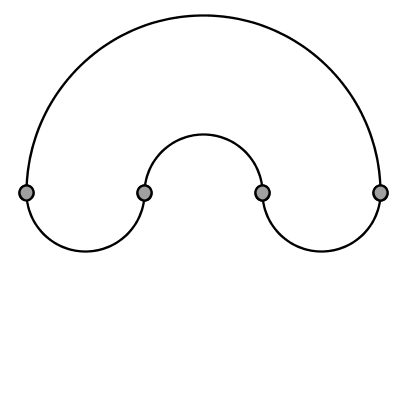}.
\end{align*}
On obtient $T_a = \{(x,x,y,y) | x,y \in S^1\} \simeq T^2$, noté par abus de notation $T_a = (x,x,y,y)$. On obtient aussi $T_b = (x,y,y,x) \simeq T^2$ et par conséquent $T_a \cap T_b = (x,x,x,x) \simeq T^1$. Visuellement, si on représente $T_a$ et $T_b$ comme des tores sous forme de rectangles où on identifie les côtés opposés, on obtient la surface représentée sur la Figure \ref{fig:toresasb}, c'est-à-dire que $T_a \cup T_b$ est l'union de deux tores dont on identifie ensemble les diagonales.
\begin{figure}[h]
    \center
    \includegraphics[width=6cm]{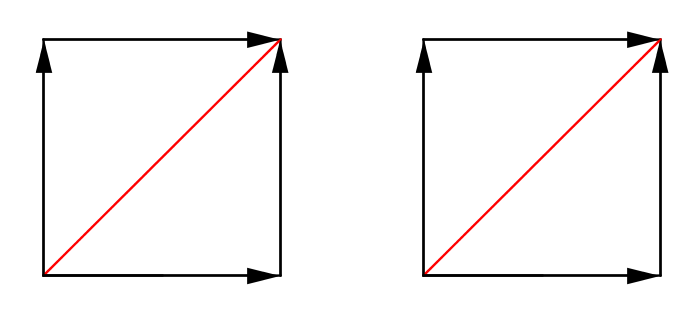}
    \caption{ \label{fig:toresasb} $T_a$ et $T_b$ sont donnés par des tores de dimension $2$ et $T_a \cap T_b$ est un sous-hypertore (en rouge) de dimension $1$, donc un cercle, défini en prenant la diagonale le long des composantes (les flèches) de $T_a$ (et $T_b$) qu'on égalise.}
\end{figure}
\end{exemple}

\begin{exemple}
Dans $B^3$ on considère les éléments
\begin{align*}
a &=\diagg{Images_arxiv/B3_4.png},& 
b &=\diagg{Images_arxiv/B3_2.png},& 
c &=\diagg{Images_arxiv/B3_5.png}.
\end{align*}
On obtient alors par exemple les diagrammes 
\begin{align*}
W(b)a &= \middiag{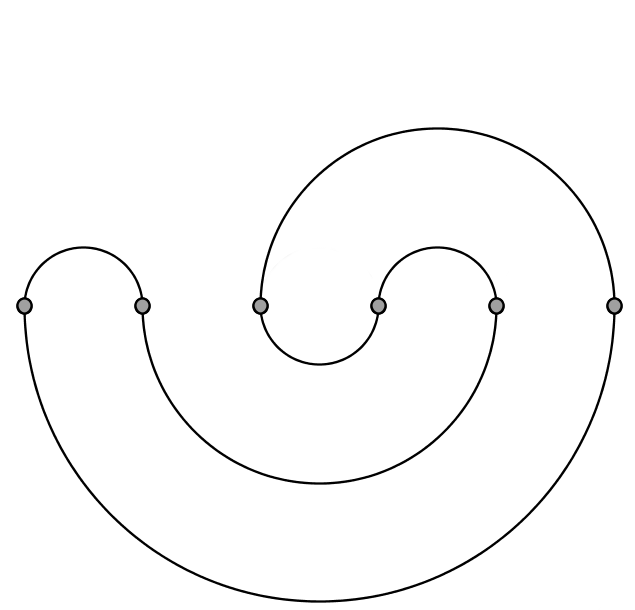},&
W(c)a &= \middiag{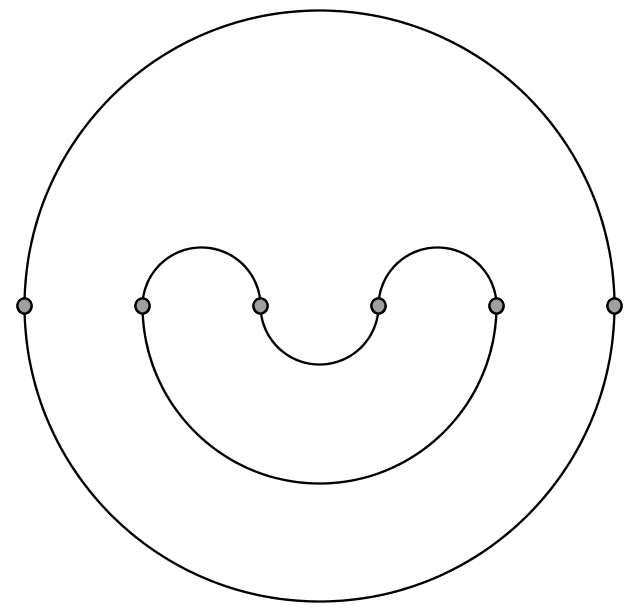}.
\end{align*}
On calcule que $T_a = (x,y,z,z,y,x) \simeq T^3$ et $T_b = (x,x,y,z,z,y) \simeq T^3$ impliquant que $T_a \cap T_b = (x,x,x,x,x,x) \simeq S^1$. Visuellement, on a pour $T_a$ représenté par la boite où on identifie deux à deux les cotés opposés la Figure \ref{fig:hypertore1} avec l'intersection $T_a \cap T_b$ en rouge.

\begin{figure}[H]
    \center
    \includegraphics[width=4.66cm]{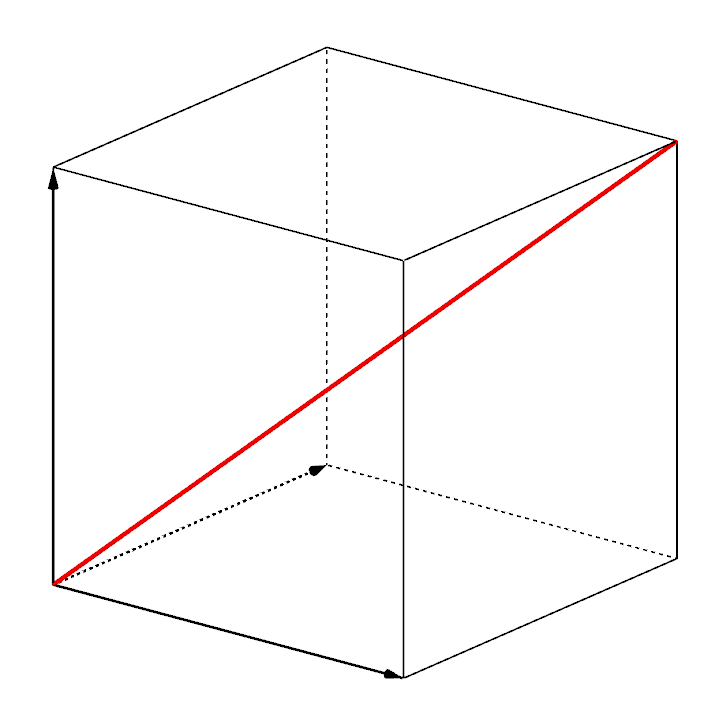}
    \caption{ \label{fig:hypertore1} $T_a$ est donné par un hypertore (en noir) de dimension $n=3$ et $T_a \cap T_b$ est un sous-hypertore (en rouge) de dimension $|W(a)b| = 1$ défini en prenant les diagonales le long des composantes (les flèches) de $T_a$ qu'on égalise.}
   
\end{figure}

De même, on calcule $T_c = (x,y,y,z,z,x)$ et donc $T_a \cap T_c = (x,y,y,y,y,x)$. On obtient alors l'hypertore représenté en Figure \ref{fig:hypertore2} avec l'intersection $T_a\cap T_c$ en rouge.

\begin{figure}[H]
    \center
    \includegraphics[width=4.66cm]{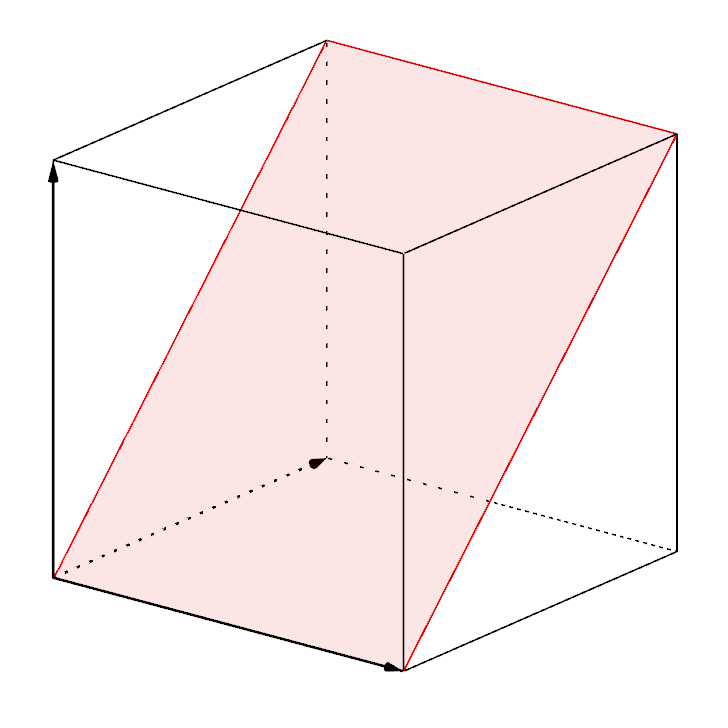}
    \caption{\label{fig:hypertore2}$T_a$ est donné par un hypertore  (en noir) de dimension $n=3$ et $T_a \cap T_c$ est un sous-hypertore (en rouge) de dimension $|W(a)c| = 2$ défini en prenant les diagonales le long des composantes (la flèche en pointillée et la flèche verticale) de $T_a$ qu'on égalise.}
    
\end{figure}

En faisant pareil pour $T_b \cap T_c = (x,x,x,y,y,x)$, on calcule finalement que $T_a \cup T_b \cup T_c$ est donné par la Figure \ref{fig:hypertore3} où les éléments colorés sont identifiés ensemble deux à deux.

\begin{figure}[H]
    \center
    \includegraphics[width=14cm]{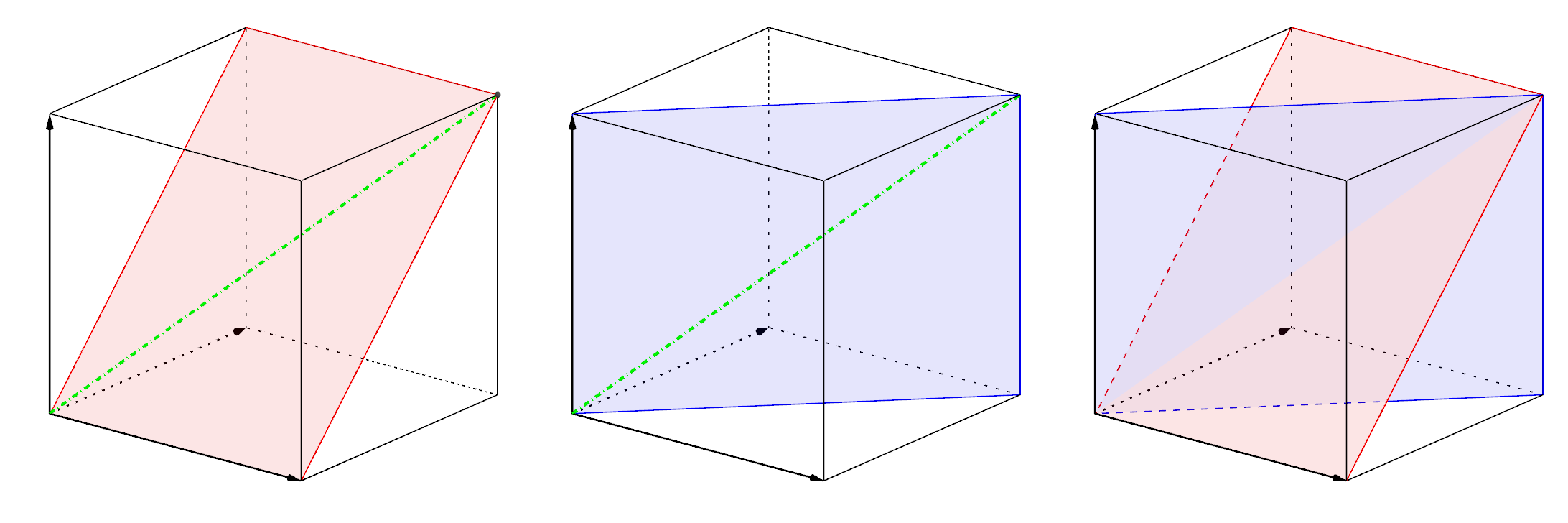}
    \caption{\`A gauche $T_a$, au milieu $T_b$ et à droite $T_c$ avec les éléments en couleurs identifiés deux à deux afin d'obtenir $T_a \cup T_b \cup T_c$.}
    \label{fig:hypertore3}
\end{figure}

\end{exemple}

\begin{lemme}
On a un isomorphisme d'anneaux (non-gradués)
$$a(OH^n_C)a \simeq H(T_a)$$
et un isomorphisme de groupes (non-gradués)
$$a(OH^n_C)b \simeq_{ab} H(T_a \cap T_b).$$
De plus, en munissant $a(OH^n_C)b$ d'une structure d'anneau induite par l'isomorphisme de groupe et en définissant
\begin{align*}
\psi_{a;a,b} &: H(T_a) \rightarrow H(T_a \cap T_b), \\
 \psi_{b;a,b} &: H(T_b) \rightarrow H(T_a \cap T_b), \\
\gamma_{a;a,b} &: a(OH^n_C)a \rightarrow a(OH^n_C)b : x \mapsto x{_a1_b}, \\
\gamma_{b;a,b} &: b(OH^n_C)b \rightarrow a(OH^n_C)b : x \mapsto {_a1_b}x,
\end{align*}
où les $\psi$ sont induit par inclusion de $T_a \cap T_b$ dans $T_a$ et $T_b$, on obtient le diagramme commutatif de morphismes d'anneaux suivant :
$$
\xymatrix{
H(T_a) \ar[r]^-{\psi_{a;a,b}} \ar[d]^{\simeq} & H(T_a \cap T_b) \ar[d]^{\simeq} & H(T_b) \ar[l]_-{\psi_{b;a,b}} \ar[d]^{\simeq} \\
a(OH^n_C)a \ar[r]_{\gamma_{a;a,b}} & a(OH^n_C)b & b(OH^n_C)b \ar[l]^{\gamma_{b;a,b}}.
}
$$
\end{lemme}

\begin{proof}
On sait par l'Exemple \ref{ex:torecohomology} des Annexes que
$$H(T^k) \simeq \Ext^* \Z^k$$
avec chaque composante $S^1$ de $T^ {k} = S^1 \times \dots \times S^1$ générant un élément de base du produit extérieur par la formule de Kunneth (voir \cite[Théorème 3.15]{Hatcher}) et donc on a
$$H(T_a) \simeq H(T^n) \simeq \Ext^* \Z^n \simeq a(OH^n_C)a.$$
De même on a des isomorphismes de groupes 
$$H(T_a \cap T_b) \simeq  \Ext^* \Z^{|W(b)a|} \simeq_{ab} a(OH^n_C)b.$$
On calcule que $\psi_{a;a,b}$ envoie un produit extérieur vers un autre en renommant les éléments suivant les composantes de $T_a$ égalisées dans $T_a \cap T_b$ puisque c'est le morphisme induit par l'inclusion
$$T^{|W(b)a|}  \hookrightarrow T^{n}$$
où on envoie chaque coordonnée de $T_a \cap T_b$ vers les coordonnées de $T_a$ égalisées pour former $T_a \cap T_b$. En effet, chaque composante $S^1$ de $T^n$ engendrant un élément de base de $H(T_a)$, deux éléments de base sont égalisés si les composantes associées sont égalisées. Par ailleurs, on sait que $\gamma_{a;a,b}$ agit exactement de la même façon puisqu'on égalise les composantes de cercles fusionnées.
\end{proof}

\begin{definition}
Soient des ensembles finis $I$ et $J$ ainsi que des anneaux $A_i$ pour tout $i \in I$ et $B_j$ pour tout $j \in J$ et des morphismes $\beta_{i,j} : A_i \rightarrow B_j$ pour certaines paires $(i,j)$. On note
\begin{align*}
\beta &:= \sum \beta_{i,j}, & \beta&: \prod_{i \in I} A_i \rightarrow \prod_{j \in J} B_j.
\end{align*}
On définit l'égalisateur $\Eq(\beta)$ de $\beta$ comme le sous-anneau de $\prod_{i\in I} A_i$ qui consiste en les $(a_i)_{i \in I}$ tels que
$$\beta_{i,j}(a_i) = \beta_{k,j}(a_k)$$
dès que $\beta_{i,j}$ et $\beta_{k,j}$ sont définis.
\end{definition}

Par la Proposition \ref{prop:inaOHa}, on remarque que, si on pose  $\gamma := \sum_{a \ne b \in B^n} \gamma_{a;a,b} + \gamma_{b;a,b}$, avec $I$ donné par l'ensemble des $a \in B^n$ et $J$ par l'ensemble des paires $(a,b), a \ne b \in B^n$ tels que $\gamma_{c, (a,b)}$ est défini si $c = a$ ou $c = b$, on obtient l'égalité $OZ(OH^n_C) = \Eq(\gamma)$. Dès lors, en posant de la même manière $\psi := \sum_{a \ne b \in B^n} \psi_{a;a,b} + \psi_{b;a,b}$, on obtient le diagramme commutatif de morphismes d'anneaux suivant :
$$\xymatrix{
\Eq(\psi) \ar[r]  \ar[d]^{\simeq} & \bigoplus\limits_{a \in B^n} H(T_a) \ar[r]^-{\psi} \ar[d]^{\simeq} & \bigoplus\limits_{a \ne b \in B^n} H(T_a \cap T_b) \ar[d]^{\simeq} \\
\Eq(\gamma) \ar[r] &   \bigoplus\limits_{a \in B^n} a(OH^n_C)a \ar[r]_-{\gamma} &  \bigoplus\limits_{a \ne b \in B^n} a(OH^n_C)b 
}$$

%

De plus, on a un morphisme $H(\widetilde T) \rightarrow \bigoplus_{a\in B^n} H(T_a)$ induit par l'inclusion $T_a \hookrightarrow \widetilde T$ pour tout $a$. On considère alors $\tau$ la composition de ce morphisme avec la projection de $ \bigoplus_{a \in B^n} H(T_a) $ sur le sous-anneau $\Eq(\gamma)$ et on obtient le diagramme commutatif suivant :
\begin{align}
\xymatrix{
H(\widetilde T) \ar[r]^{\tau} & \Eq(\psi) \ar[r]  \ar[d]^{\simeq} & \bigoplus\limits_{a \in B^n} H(T_a) \ar[r]^-{\psi} \ar[d]^-{\simeq} & \bigoplus\limits_{a \ne b \in B^n} H(T_a \cap T_b) \ar[d]^{\simeq} \\
OZ(OH^n_C) \ar[r]_{\simeq} & \Eq(\gamma) \ar[r] &   \bigoplus\limits_{a \in B^n} a(OH^n_C)a \ar[r]_-{\gamma} &  \bigoplus\limits_{a \ne b \in B^n} a(OH^n_C)b. 
}
\label{eq:tau}
\end{align}

Jusque là, la preuve est équivalente à celle de M. Khovanov en remplaçant les sphères par des cercles, sauf que maintenant on veut montrer que $\tau$ est, non pas un isomorphisme, mais un épimorphisme.\\

\begin{definition} \label{def:arrowBn}
On dit qu'il y a une flèche $a\rightarrow b$ pour $a,b \in B^n$ s'il existe un quadruplet $1 \le i<j<k<l \le 2n$ tels que $i$ est relié à $j$ et $k$ à $l$ dans $a$ et $i$ est relié à $l$ et $j$ à $k$ dans $b$, ainsi que si tous les autres arcs sont les mêmes dans $a$ et $b$. Visuellement, on a
$$\deuxdiag{Images_arxiv/B2_1.png} \longrightarrow \deuxdiag{Images_arxiv/B2_2.png}.$$
\end{definition}

\begin{exemple}
Pour $n = 3$, on obtient toutes les flèches suivantes :
$$
\xymatrix@R=.5pc{
& \smalldiag{Images_arxiv/B3_3.png} \ar[dr] & & \\
\smalldiag{Images_arxiv/B3_1.png} \ar[ur] \ar[dr] \ar@/_8pc/[rrr] & & \smalldiag{Images_arxiv/B3_5.png} \ar[r] & \smalldiag{Images_arxiv/B3_4.png}\\
& \smalldiag{Images_arxiv/B3_2.png}  \ar[ur] & & \\
\\
\\
}
$$
\end{exemple}

\begin{definition}
On définit un ordre partiel sur $B^n$ en disant que $a \prec b$ s'il existe une suite $a = a_0 \rightarrow a_1 \rightarrow \dots \rightarrow a_m = b$. On se fixe un ordre total arbitraire $<$ à partir de cet ordre partiel.\\
On définit aussi une notion de distance donnée par la distance sur le graphe non orienté induit par les flèches, c'est-à-dire que $d(a,b) = m$ pour $m$ minimal tel qu'on a une suite $a = a_0, a_1, \dots, a_m = b$ avec soit $a_i \rightarrow a_{i+1}$ soit $a_{i+1} \rightarrow a_i$ pour tout $0 \le i < m$.
\end{definition}

\begin{proposition}
Le diagramme $W(b)a$ contient exactement $n-d(a,b)$ composantes de cercles
$$|W(b)a| = n - d(a,b).$$
\end{proposition}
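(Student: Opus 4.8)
Le plan consiste d'abord \`a reformuler la quantit\'e $|W(b)a|$ de mani\`ere purement combinatoire. Comme $W(b)a$ s'obtient en pla\c{c}ant $W(b)$ au-dessus de $a$ le long des $2n$ points de base, chaque point porte exactement un arc de $a$ et un arc de $W(b)$~; le nombre de cercles $|W(b)a|$ est donc le nombre de cycles du multigraphe $2$-r\'egulier $G_{a,b}$ de sommets $\{1,\dots,2n\}$ dont les ar\^etes sont les arcs de $a$ et ceux de $b$. Un cycle de longueur $2$ (un bigone) appara\^it exactement lorsque $a$ et $b$ partagent un arc, si bien que tous les cycles sont des bigones si et seulement si $a=b$~; on a ainsi $|W(a)a|=n$ et, plus g\'en\'eralement, $|W(b)a|=n$ si et seulement si $a=b$. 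La formule \`a d\'emontrer \'equivaut alors \`a l'\'egalit\'e $d(a,b)=n-|W(b)a|$, que j'\'etablirais par double in\'egalit\'e.

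Je prouverais d'abord l'in\'egalit\'e facile $|W(b)a|\ge n-d(a,b)$. L'observation clef est qu'une fl\`eche (D\'efinition~\ref{def:arrowBn}) entre deux \'el\'ements de $B^n$ revient \`a remplacer deux arcs de l'un par deux autres arcs sur les m\^emes quatre points~; dans le graphe $G_{\cdot,b}$ (pour $b$ fix\'e) c'est un \'echange de deux ar\^etes disjointes, et un tel \'echange modifie le nombre de cycles d'un graphe $2$-r\'egulier d'au plus une unit\'e (il vaut $-1$, $0$ ou $+1$ suivant que les deux ar\^etes sont dans un m\^eme cycle ou non et suivant la reconnexion). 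En parcourant un chemin g\'eod\'esique $a=a_0,a_1,\dots,a_m=b$ avec $m=d(a,b)$, chaque \'etape fait varier $|W(b)a_i|$ d'au plus $1$~; partant de $|W(b)a_0|=|W(b)a|$ et aboutissant \`a $|W(b)a_m|=|W(b)b|=n$, on obtient $n\le |W(b)a|+m$, d'o\`u l'in\'egalit\'e annonc\'ee.

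Pour l'in\'egalit\'e inverse $|W(b)a|\le n-d(a,b)$ je proc\'ederais par r\'ecurrence sur l'entier $n-|W(b)a|$, le cas nul correspondant \`a $a=b$ et $d=0$ par la reformulation ci-dessus. Le c\oe ur de la preuve est le lemme suivant~: si $a\ne b$, il existe une fl\`eche reliant $a$ \`a un certain $a'$ telle que $|W(b)a'|=|W(b)a|+1$. Admettant ce lemme, la r\'ecurrence donne $d(a,b)\le 1+d(a',b)\le 1+\bigl(n-|W(b)a'|\bigr)=n-|W(b)a|$, ce qui, combin\'e \`a l'\'etape pr\'ec\'edente, conclut.

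La principale difficult\'e est donc la justification de ce lemme d'existence d'un mouvement \emph{s\'eparant} un cercle. Je le traiterais en partant de ce que, puisque $a\ne b$, le graphe $G_{a,b}$ poss\`ede un cycle $C$ passant par au moins quatre points~; en suivant $C$ on rep\`ere deux arcs de $a$ joints par un unique arc de $b$, et en reconnectant ces deux arcs de $a$ de mani\`ere \`a faire co\"incider l'un des nouveaux arcs avec cet arc de $b$ on d\'etache un bigone, augmentant le nombre de cycles exactement de $1$. Un point agr\'eable est que la planarit\'e est en grande partie automatique~: si l'on \'echange une paire d'arcs de $a$ plac\'es \guillemotleft{} c\^ote \`a c\^ote \guillemotright{} contre la paire \guillemotleft{} embo\^it\'ee \guillemotright{} sur les m\^emes quatre points, le crois\'ement impos\'e par le fait que $a$ est sans croisement force les points intercal\'es \`a \^etre appari\'es entre eux, de sorte que $a'$ reste sans croisement et que le mouvement est bien une fl\`eche de la D\'efinition~\ref{def:arrowBn}. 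L'obstacle restant, qui est le vrai point technique, est de montrer qu'on peut toujours choisir la position sur $C$ pour que la reconnexion d\'etachant le bigone soit justement de ce type non crois\'e (et non du type crois\'e, qui ne donnerait pas un \'el\'ement de $B^n$)~; je le garantirais en prenant un arc \emph{le plus int\'erieur} le long d'un cercle non trivial, l'analyse locale y devenant for\c{c}ee.
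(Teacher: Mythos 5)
Your plan has the same skeleton as the paper's proof: the inequality $|W(b)a|\ge n-d(a,b)$ is obtained by walking along a geodesic and observing that each arrow changes the number of circles by at most one, and the reverse inequality by producing arrow moves that each raise the circle count by one (the paper packages this as a direct construction, $k-1$ moves per circle component, while you use induction on $n-|W(b)a|$ with a single-move lemma; these are equivalent). The genuine gap is in the proof of that key lemma, exactly at the point you flag, and both of the claims you make there fail. First, planarity is \emph{not} "largely automatic". Take $a=\{(1,2),(5,6),(3,8),(4,7)\}\in B^4$ and $b=\{(2,5),(3,4),(6,7),(1,8)\}\in B^4$: the arcs $(1,2)$ and $(5,6)$ of $a$ are consecutive along a non-trivial circle, joined by the $b$-arc $(2,5)$, but replacing them by $(2,5),(1,6)$ creates crossings with $(3,8)$ and $(4,7)$; the points $3,4$ sitting under the $b$-arc are not matched together by $a$, and non-crossingness of $a$ alone does not force them to be. Second, picking an innermost arc \emph{along a non-trivial circle} is not enough either. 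Take $a=\{(1,2),(3,10),(4,9),(5,6),(7,8),(11,12)\}$ and $b=\{(1,12),(2,7),(3,6),(4,5),(8,11),(9,10)\}$ in $B^6$; then $W(b)a$ has exactly two circles, $C$ through $\{1,2,7,8,11,12\}$ and $C'$ through $\{3,4,5,6,9,10\}$. On the non-trivial circle $C$ the innermost $b$-arc is $(8,11)$, yet the $a$-arc $(4,9)$ — which lies on the \emph{other} circle $C'$ — crosses it, so the corresponding reconnection leaves $B^6$; in fact on $C$ the moves at $(2,7)$ and $(8,11)$ are both illegal and only the move at the \emph{outermost} arc $(1,12)$ is legal, so no rule of the form "innermost arc of a chosen circle" can work.

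The lemma is nevertheless true, and the fix is to take the minimality globally rather than relative to a circle: among all arcs of $b$ that are not arcs of $a$ (a nonempty set since $a\ne b$), choose $(y,z)$, $y<z$, of minimal length $z-y$; such an arc automatically lies on a non-trivial circle. Any point $p\in(y,z)$ is matched by $b$ to a point $p'\in(y,z)$, and the $b$-arc $(p,p')$ is strictly shorter than $(y,z)$, hence by minimality it is also an arc of $a$; thus all points under $(y,z)$ are matched by $a$ among themselves, so no arc of $a$ crosses $(y,z)$, and the $a$-partners $x$ of $y$ and $w$ of $z$ lie outside $[y,z]$. Since $a$ is non-crossing, the only possible configurations are $x<y<z<w$, $w<x<y<z$ and $y<z<w<x$, and in each of them one checks directly that replacing $(x,y),(z,w)$ by $(y,z),(x,w)$ crosses no remaining arc of $a$, is an arrow move of Definition \ref{def:arrowBn} in one direction or the other, and detaches the bigon $(y,z)$ from its circle, so that $|W(b)a'|=|W(b)a|+1$. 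With this corrected lemma your induction closes and the proposition follows; as a bonus, the same argument makes rigorous the step that the paper itself leaves implicit in its "$k-1$ moves per component" construction.
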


\begin{proof}
Tout d'abord on note que si $a \rightarrow b$ ou $b \rightarrow a$ alors il est clair que $W(b)a$ contient $n-1$ composantes puisque les 4 arcs de la définition de flèche entre $a$ et $b$ appartiennent à la même composante de $W(b)a$.

On doit avoir 
$$|W(b)a| \ge n-d(a,b)$$
car pour toute suite $a = a_0, \dots, a_{m'} = b$ avec $a_i \rightarrow a_{i+1}$ ou $a_{i+1} \rightarrow a_i$ on a le cobordisme
$$W(a_0)a_1W(a_1)a_2 \dots W(a_{m'-1})a_{m'} \rightarrow W(a_0)a_{m'}$$
et à chaque étape
$$W(a_0)a_iW(a_i)a_{i+1} \rightarrow W(a_0)a_{i+1}$$
on change le nombre de composantes de $\pm 1$ donc dans le pire des cas on fusionne $m'$ composantes.

On considère les $k$ arcs de $a$ qui appartiennent à une même composante de $W(b)a$ et on construit une suite de $a$ vers $a'$ avec tous ces arcs fusionnés, ce qui prend $k-1$ étapes. On peut faire ainsi pour chaque composante et on obtient alors une suite de $a$ à $b$ constitué de $n-|W(b)a|$ étapes et donc $d(a,b) \le n-|W(b)a|$.
\end{proof}

On pose $T_{<a} := \bigcup_{b<a} T_b$ et $T_{\le a} := \bigcup_{b \le a} T_b$. On remarque que si $c$ est le prochain élément après $a$ dans l'ordre total sur $B^n$ alors $T_{<c} = T_{\le a}$. Cela est utile par la suite pour faire des preuves par induction.

M. Khovanov utilise les trois lemmes suivants (\cite[Lemmes 3.2-3.4]{HnCenter}) pour sa preuve mais qu'il laisse non démontrés. La preuve du premier étant fort technique et trouvable dans les annexes de \cite[preuve de la Proposition 3.15]{preuveLemme}, on n'en donne que l'idée principale tandis qu'on démontre les deux autres pour le cas utilisant des tores.

\begin{lemme} \label{lem:dab=daccb}
Pour tout $a,b \in B^n$ il existe $c \in B^n$ tel que $d(a,b) = d(a,c) + d(c,b)$ et $a \succ c \prec b$.
\end{lemme}

\begin{proof}\emph{(idée)}
Tout d'abord, il faut observer que si on n'a pas de séquence minimale $a = a_0, a_1, \dots, a_m = b$ avec $a_1 \rightarrow a$, alors on a une séquence $a=a_0 \rightarrow a_1 \rightarrow \dots \rightarrow a_m = b$. Toute la technicité de la preuve est contenue dans ce résultat qu'on accepte.

La preuve se fait par récurrence sur $d(a,b)$. Si $d(a,b) = 1$, alors soit $a \rightarrow b$, soit $b \rightarrow a$ et on prend $c = a$ ou $c = b$. 
On suppose maintenant que le lemme est vrai pour des distances inférieures. Si on un chemin minimal $a = a_0, a_1, \dots, a_m = b$ tel que $a_1 \rightarrow a$ alors on prend le $c$ donné par le lemme appliqué à $a_1$ et $b$ et on remarque qu'il est correspond au $c$ recherché pour $a$ et $b$. Sinon, par le résultat technique au dessus, on peut prendre $c = a_0$.
\end{proof}

\begin{lemme} \label{lem:sac=sabc}
Pour tout $a,b,c \in B^n$ tels que $d(a,c) = d(a,b) + d(b,c)$ on a 
$$T_a \cap T_c = T_a\cap T_b \cap T_c.$$
\end{lemme}

\begin{proof}
On remarque que le corbordisme 
$$W(a)bW(b)c \rightarrow W(a)c$$
envoie $n-d(a,b) + n-d(b,c)$ composantes de cercle vers $n-d(a,c)$ composantes. Or, puisque $d(a,b)+d(b,c) = d(a,c)$, on doit éliminer et donc fusionner au moins $n$ composantes. Puisqu'on a $n$ ponts, on a exactement $n$ fusions et donc deux points reliés par un arc de $b$ sont dans la même composante de $W(a)c$ ce qui est équivalent à l'assertion du lemme.
\end{proof}

\begin{lemme} \label{lem:sasbsa}
Pour tout $a \in B^n$, on a
$$T_{<a}\cap T_a = \bigcup_{b \rightarrow a}(T_b \cap T_a).$$
\end{lemme}

\begin{proof}
Tout d'abord on observe que si $c \prec a$ alors il existe $b\rightarrow a$  tel que $c \prec b$ et par le Lemme \ref{lem:sac=sabc} on obtient 
$$T_a \cap T_c = T_a \cap T_b \cap T_c \subset T_a \cap T_b.$$
Ensuite, on prend $c < a$ et par le Lemme \ref{lem:dab=daccb} il existe $d$ tel que $d \prec a$ et $d \prec c$ ainsi que $d(a,c) = d(a,d) + d(d,c)$. Par le Lemme \ref{lem:sac=sabc} on obtient
$$T_a \cap T_c = T_a \cap T_d \cap T_c \subset T_a \cap T_d  \subset T_a \cap T_b$$
pour un certain $b \rightarrow a$.
\end{proof}

Le lemme suivant est adapté de \cite[Lemme 3.5]{HnCenter} avec comme différence que la décomposition utilise des cellules de toutes dimensions au lieu de cellules de dimensions paires. On renvoie à la Section \ref{def:decomp} des Annexes pour une définition d'une décomposition cellulaire.

\begin{lemme}\label{lem:decompcellulaire}
Il existe une décomposition cellulaire de $T_a$ qui se restreint en une décomposition cellulaire de $T_{<a}\cap T_a$.
\end{lemme}

\begin{proof}
On note $I$ l'ensemble des arcs de $a$ de sorte qu'on ait un homéomorphisme $T_a \simeq T^{|I|}$. On définit $\Gamma$ le graphe non-orienté obtenu en prenant comme sommets l'ensemble $I$ et en posant que deux arcs $y$ et $z$ sont reliés s'il existe $b \rightarrow a$ tel que $b$ est obtenu à partir de $a$ en effaçant $y$ et $z$ et en reconnectant les 4 points extrémités de ces arcs d'une façon différente. Une façon similaire de voir que deux arcs sont reliés est si l'un est contenu dans l'autre et qu'il n'y a pas d'arc intermédiaire. $\Gamma$ est alors une union disjointe d'arbres.  Un exemple de graphe $\Gamma$ pour un élément de $B^6$ est donné en Figure \ref{fig:gamma}.

\begin{figure}[H]
    \center
    \includegraphics[width=14cm]{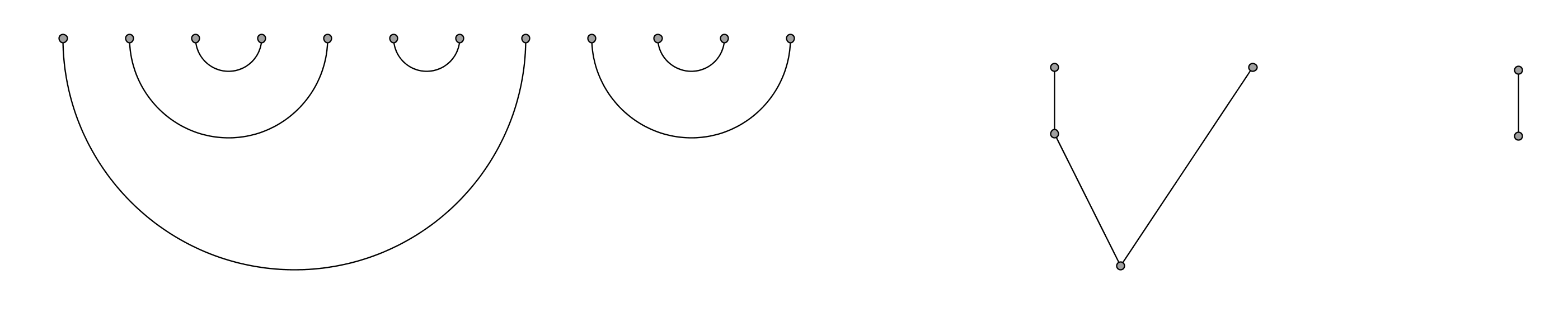}
    \caption{\label{fig:gamma} Un élément de $B^6$ et son graphe $\Gamma$.}
\end{figure}

On pose $E$ l'ensemble des arêtes de $\Gamma$. Dans chaque arbre de $\Gamma$, on choisit un sommet qu'on marque et on note $M$ l'ensemble des sommets marqués (qui contient donc autant d'éléments que d'arbres dans $\Gamma$). 
On remarque alors que $|E| + |M| = n$ puisqu'on a autant d'arêtes que le nombre d'arcs moins le nombre d'arbres (chaque arbre contenant autant d'arêtes que d'arcs moins un). On se fixe un point $p \in S^1$ et pour tout $J \subset (E \sqcup M)$ on pose $c(J)$ le sous-ensemble de $T^{|I|}$ constitué des points $(x_i)_{i\in I}$ tels que
\begin{align*}
x_i &= x_j & \text{ si } &(i,j) \in J,\\
x_i &\ne x_j & \text{ si } &(i,j) \notin J,\\
x_i &= p & \text{ si } &i\in M\cap J, \\ 
x_i &\ne p & \text{ si } &i\in M, i \notin J,
\end{align*}
avec $(i,j)$ qui est donc l'arête qui relie l'arc $i$ à l'arc $j$.
Il est évident qu'on a $T^{|I|} = \sqcup_J c(J)$, l'union étant disjointe puisqu'on a des égalités ou inégalités suivant si l'élément est présent ou non dans $J$. De là on a $T_a \simeq \sqcup_J c(J)$ où l'homéomorphisme est donné par : la composante d'arc marqué est envoyé sur la composante de cet arc et on parcourt le graphe en partant de ces points marqué, envoyant les composantes de chaque arête vers la composante de l'arc extrémité auquel on a pas encore attribué de composante, de sorte que si $(i,j) \in J$ alors les composantes engendrées par les extrémités de l'arc sont égalisées dans l'image de $c(J)$. De plus $c(J)$ est homéomorphe à $\R^{n-|J|}$ puisque chaque élément dans $J$ diminue le nombre de degrés de libertés par un, un élément de $E$ égalisant deux composantes et un élément de $M$ fixant une composante sur $p$.

Cela donne donc une décomposition cellulaire de $T_a$ dont les cellules sont des hypertores de dimension $n-|J|$ et le bord de $c(J)$ est donné par $\bigsqcup_{J' \supsetneq J} c(J')$. Cette décomposition se restreint à une décomposition cellulaire de $T_{<a} \cap T_a$. En effet, on observe que pour $b \rightarrow a$, on a
$$T_b \cap T_a = \bigsqcup_{J'\supset \{(y, z)\} } c(J')$$
avec $J'$ qui contient au moins l'arête $(y,z)$ donnée par le changement d'arcs de $b$ vers $a$. On a cela car $T_a \cap T_b$ égalise les composantes de $T_a$ données par les arcs $y$ et $z$ et est donc donné par un sous-hypertore de codimension $1$ dans $T_a$ pris sur la diagonale qui égalise les deux même composantes. Or toute cellule $c(J')$ pour $J'$ contenant $(y,z)$ a ses composantes $x_y$ et $x_z$ égalisées.
\end{proof}

\begin{exemple}\label{ex:partition}
On considère la décomposition de $T_b$ pour 
\begin{align*}
a &=\deuxdiag{Images_arxiv/B2_1.png} ,& 
b &=\deuxdiag{Images_arxiv/B2_2.png},
\end{align*}
en notant $b_1$ le grand arc de $b$ et $b_2$ l'autre. On obtient alors un graphe $\Gamma$ avec $b_1$ et $b_2$ comme sommets reliés ensemble puisque $a \rightarrow b$. On choisit $b_1$ comme sommet marqué et donc on a $E \sqcup M = \{(b_1, b_2), b_1\}$, on obtient alors le tore représentés en Figure \ref{fig:toredecomp} où la flèche horizontale est la composante associée à $b_1$ et la verticale à $(b_1, b_2)$ de sorte que l'homéomorphisme $\sqcup_J c(J) \simeq T_b$ envoie la composante associée à $b_1$ vers celle associée à $b_1$ dans $T_b$ et celle de $(b_1,b_2)$ vers celle de $b_2$ dans $T_b$. On obtient alors une décomposition comme dans la Figure \ref{fig:toredecomp} où le point bleu est donné par $c(\{(b_1, b_2), b_1\}) = \{(p,p)\}$, le cercle vert troué d'un point par $c(\{b_1\}) = (p,-) \setminus \{(p,p)\}$, le cercle rouge troué d'un point par $c(\{(b_1, b_2)\}) = (x,x) \setminus \{(p,p)\}$ et enfin le tore auquel on retire deux cercles donné par $c(\emptyset)$. On remarque de plus qu'on a $T_a \cap T_b = (x,x,x,x)$ et donc $T_a \cap T_b$ est donné dans $\sqcup_J c(J)$ par la diagonale qui est donc bien un sous-complexe.
\begin{figure}[H]
    \center
    \includegraphics[width=3cm]{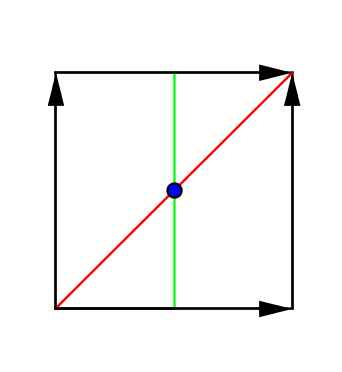}
    \caption{\label{fig:toredecomp} On décompose $T_b \simeq T^2$ en cellules : un point central en bleu $(p,p)$, un segment de droite $(p,-) \setminus \{(p,p)\}$, un segment de droite $(x,x) \setminus \{(p,p)\}$ et le restant de la surface.}
\end{figure}
Par contre, $T_a$ se décompose comme représenté en Figure \ref{fig:toredecomp2} et on remarque que $T_a \cap T_b$ n'est pas un sous-complexe puisque c'est aussi la diagonale.
\begin{figure}[H]
    \center
    \includegraphics[width=3cm]{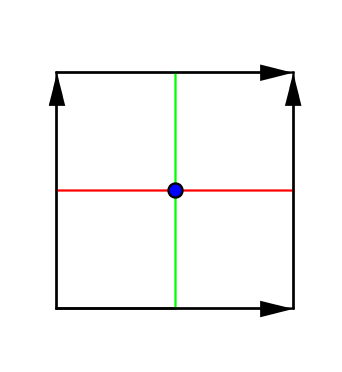}
    \caption{\label{fig:toredecomp2} On décompose $T_a \simeq T^2$ en cellules : un point central en bleu $(p,p)$, un segment de droite $(p,-) \setminus \{(p,p)\}$, un segment de droite $(-,p) \setminus \{(p,p)\}$ et le restant de la surface.}
\end{figure}

\end{exemple}

La première différence avec le travail de M. Khovanov est que dans sa décomposition cellulaire, puisqu'il utilise des sphères à la place de cercles, il obtient des cellules de dimensions paires. Dès lors, il obtient trivialement que l'anneau de cohomologie de $S_{\le a} \cap S_a$ est composé d'éléments de degrés pairs uniquement qui lui permettent de montrer facilement que le morphisme induit par l'inclusion $S_{<a} \cap S_a \hookrightarrow S_a$ est surjectif (les opérateurs de bords étant trivialement nuls) et par la suite que $\tau$ est injectif. On montre maintenant le lemme suivant adapté de \cite[Lemme 3.6]{HnCenter}.

\begin{lemme}\label{lem:SbSainS<aSainj}
Le morphisme
$$H(T_{<a} \cap T_a) \rightarrow \bigoplus_{b<a} H(T_b \cap T_a)$$
induit par l'inclusion $(T_b \cap T_a) \subset (T_{<a} \cap T_a)$ est injectif.
\end{lemme}

\begin{proof}
Il suffit de vérifier que 
$$H(T_{<a} \cap T_a) \rightarrow \bigoplus_{b \rightarrow a} H(T_b \cap T_a)$$
est injectif puisque c'est une composition par une projection. Par la preuve du Lemme \ref{lem:decompcellulaire}, on sait que la décomposition cellulaire de $T_{<a} \cap T_b$ se restreint à une décomposition cellulaire de $T_b  \cap T_a$ pour tout $b \rightarrow a$. Mais par le Lemme \ref{lem:sasbsa} on a $T_{<a} \cap T_a = \bigcup_{b\rightarrow a} (T_b \cap T_a)$ et donc c'est injectif. 

En effet, en général pour des CW-complexes $Y \subset X$, on a que l'application induite par l'injection est la projection
$$H^n(X) = \frac{\ker d_{n}}{\img d_{n+1}} \rightarrow H^n(Y) = \frac{\ker d'_{n}}{\img d'_{n+1}} \simeq  \frac{\ker d_{n}}{\img d_{n+1}}\Bigg /( X^n\setminus Y^n)$$
avec $X^n\setminus Y^n$ qui sont les cellules de $X^n$ non-présentes dans $Y^n$.
Mais donc ici, si l'application n'était pas injective, ça signifierait qu'on pourrait trouver au moins une cellule de $T_{<a} \cap T_a$ présente dans aucun des $T_b \cap T_a$ ce qui est absurde.
\end{proof}

On remarque que $T_{\le a} = T_{<a} \cup T_a$ et donc, par la proposition suivante, on a une suite de Mayer-Vietoris :
\begin{equation}
\xymatrix{
& \dots \ar[r] &  H^{m-1}(T_{a} \cap T_{<a}) \ar[dll]_{\delta}   \\
H^m(T_{a} \cup T_{<a})\ar[r]  &  H^m(T_{a}) \oplus H^m(T_{<a})\ar[r]  &  H^m(T_{a} \cap T_{<a})  \ar[dll]_\delta \\
  \ar[r] H^{m+1}(T_{a} \cup T_{<a})  & \dots
}\label{eq:mv}\end{equation}
En effet, par la Proposition \ref{prop:retracte} des Annexes, les groupes de cohomologie d'un espace et d'une déformation rétracte de celui-ci sont isomorphes et donc la suite de Mayer-Vietoris pour les voisinages de la proposition suivante donne la suite voulue.

\begin{proposition}\label{prop:voisretract}
Pour tout $a \in B^n$ il existe un voisinage ouvert $T^\epsilon_{a}$ de $T_{a}$ dans $T_{\le a}$ tel que $T_{a}$ est une déformation rétracte de ce voisinage, on l'appelle alors voisinage rétracte de $T_{a}$. De même on a des voisinages rétractes $T_{<a}^\epsilon$ et $T_{<a,a}^\epsilon$ pour respectivement $T_{<a}$ et $T_{<a} \cap T_a$ tels que
$$T_{a}^\epsilon \cap T_{<a}^\epsilon = T_{a,<a}^\epsilon.$$
\end{proposition}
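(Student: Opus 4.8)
The plan is to reduce the statement to piecewise-linear (PL) topology: all of the spaces involved are finite unions of \emph{linear} subtori of $T^{2n}$, hence compact polyhedra, and for such spaces a closed subpolyhedron always admits a neighborhood that deformation retracts onto it, these neighborhoods being moreover choosable compatibly with intersections. First I would observe that each $T_b$ is cut out inside $T^{2n}$ by the linear conditions $x_i = x_j$ attached to the arcs $(i,j)$ of $b$, so it is the intersection of the subtori $H_{ij} := \{x \in T^{2n} : x_i = x_j\}$. The finite family $\{H_{ij}\}_{1 \le i < j \le 2n}$ is a toric arrangement whose closed strata endow $T_{\le a}$ (and $T^{2n}$) with a polyhedral cell structure in which each $T_b$ is a union of closed cells. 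Consequently $T_a$, $T_{<a} = \bigcup_{b<a} T_b$, and, via Lemme \ref{lem:sasbsa}, $T_{<a} \cap T_a = \bigcup_{b \to a}(T_b \cap T_a)$ are all subpolyhedra of $T_{\le a}$. Invoking the simultaneous triangulation theorem for a finite family of subpolyhedra, I would fix a triangulation $K$ of $T_{\le a}$ in which $T_a$, $T_{<a}$ and $T_{<a}\cap T_a$ are simultaneously subcomplexes.

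Next I would pass to the barycentric subdivision $K'$, which makes every subcomplex \emph{full}, i.e. the face spanned by the vertices of a simplex lying in the subcomplex again belongs to it. I then define the three neighborhoods as the open stars
$$T_a^\epsilon := \mathring{\mathrm{st}}_{K'}(T_a), \qquad T_{<a}^\epsilon := \mathring{\mathrm{st}}_{K'}(T_{<a}), \qquad T_{a,<a}^\epsilon := \mathring{\mathrm{st}}_{K'}(T_{<a}\cap T_a),$$
each being the union of the open simplices of $K'$ possessing at least one vertex in the corresponding subcomplex. These are open in $T_{\le a}$ and contain their subcomplexes, and since every vertex of $K'$ lies in $T_a$ or in $T_{<a}$ (as $T_{\le a}=T_a\cup T_{<a}$), the first two form an open cover, which is what the Mayer--Vietoris sequence (\ref{eq:mv}) requires. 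Fullness furnishes the deformation retraction of $\mathring{\mathrm{st}}_{K'}(L)$ onto $L$: rescaling the barycentric coordinates of a point so as to annihilate those supported on vertices outside $L$ produces a point of $L$ (fullness is exactly what guarantees that the resulting face lies in $L$), and the straight-line homotopy toward that point stays inside the open star. This shows each of the three is a retract neighborhood.

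Finally, the compatibility $T_a^\epsilon \cap T_{<a}^\epsilon = T_{a,<a}^\epsilon$ falls out of the combinatorics of $K'$. A simplex of $K'$ is a chain $\tau_0 \subsetneq \dots \subsetneq \tau_p$ of simplices of $K$; because a subcomplex $L$ is closed under faces and $\tau_0$ is a face of every $\tau_i$, such a chain has a vertex in the subdivision of $L$ if and only if its minimal term $\tau_0$ belongs to $L$. Hence a point whose carrier chain has minimum $\tau_0$ lies in $T_a^\epsilon \cap T_{<a}^\epsilon$ exactly when $\tau_0 \in T_a$ and $\tau_0 \in T_{<a}$, that is when $\tau_0 \in T_{<a}\cap T_a$, which is precisely membership in $T_{a,<a}^\epsilon$. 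Thus the three open stars satisfy the required identity.

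The main obstacle is the very first step: assembling a single triangulation adapted to all three subspaces at once. This is where I must use Lemme \ref{lem:sasbsa} to see that $T_{<a}\cap T_a$ really is polyhedral (a finite union of subtori, not an a priori wild intersection), and then appeal to the standard but genuinely nontrivial simultaneous triangulation theorem. I would also note in passing why the naive Riemannian alternative fails: orthogonal projection onto $T_a$ in the flat metric does not send $T_b$ into $T_a\cap T_b$ in general (averaging a $b$-constant function over the arcs of $a$ need not be constant along the joined blocks of $W(b)a$), so the geodesic retraction would leave $T_{\le a}$; the PL construction avoids this entirely. Once a common full triangulation is in hand, both the retraction and the intersection identity are formal.
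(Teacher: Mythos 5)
Your proof is correct, but it follows a genuinely different route from the paper's. The paper exploits the explicit flat geometry of the tori: for each $b<a$ it thickens the diagonal sub-hypertorus $T_a\cap T_b$ inside the two glued hypertori $T_a\cup T_b$ by an open collar of width $2\epsilon$, obtaining $T_{a,b}^{\epsilon}$; it then sets $T_{<a,a}^{\epsilon}:=\bigcup_{b<a}T_{a,b}^{\epsilon}$, $T_{a}^{\epsilon}:=T_{a}\cup T_{<a,a}^{\epsilon}$ and $T_{<a}^{\epsilon}:=T_{<a}\cup T_{<a,a}^{\epsilon}$, the retractions collapsing the collar coordinates onto the diagonal and the identity $T_{a}^{\epsilon}\cap T_{<a}^{\epsilon}=T_{a,<a}^{\epsilon}$ holding essentially by construction. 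You instead invoke general PL topology: since each $T_b$ is a rational subtorus, all the spaces in sight are compact subpolyhedra of $T^{2n}$, so one can triangulate $T_{\le a}$ with $T_a$, $T_{<a}$ and $T_a\cap T_{<a}$ as subcomplexes, pass to the barycentric subdivision to make them full, and take open stars; your combinatorial observation (a simplex of the subdivision has a vertex in a subcomplex if and only if the minimal term of its defining chain lies in that subcomplex) correctly yields the intersection identity and, as a bonus, the fact that the two stars cover $T_{\le a}$, which the Mayer--Vietoris argument needs. The trade-off is clear: the paper's construction is elementary, explicit and self-contained, but it glosses over why $\bigcup_{b<a}T_{a,b}^{\epsilon}$ is open in all of $T_{\le a}$ (not merely in each $T_a\cup T_b$) and why the piecewise retractions agree on overlaps so as to assemble into a single deformation retraction --- two points your approach handles automatically; in exchange, you lean on standard but nontrivial PL theorems (simultaneous triangulation of a finite family of subpolyhedra, deformation retraction of the open star of a full subcomplex), so your proof is rigorous modulo citation rather than fully explicit.
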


\begin{proof}
Pour tout $b \in B^n$, on sait que $T_a \cup T_b$ est l'union de deux hypertores de dimension $n$ identifiés sur un sous-hypertore donné par un hyperplan diagonal de dimension $|W(a)b|$. On construit alors $T_ {a,b}^\epsilon$ comme un épaississement ouvert de cet hyperplan de largeur $2\epsilon$ dans $T_a$ et $T_b$, comme illustré en Figure \ref{fig:toreepaissi}, donnant
$$T_{a,b}^\epsilon \simeq \frac{\left( T^{|W(a)b|} \times ]-\epsilon, \epsilon[^{(n-|W(a)b|)} \right) \sqcup \left(T^{|W(a)b|} \times ]-\epsilon, \epsilon[ ^{(n-|W(a)b|)}\right)}{T^{|W(a)b|}}.$$
Il est clair que cela donne un ouvert dans $T_a \cup T_b$ et que $T_a \cap T_b$ est une déformation rétracte de $T_{a,b}^\epsilon$ en envoyant les $ ]-\epsilon, \epsilon[$ sur $\{0\}$. On construit alors 
$$T_{<a,a}^\epsilon := \bigcup_{b < a} T_{a,b}^\epsilon$$
qui est un ouvert de $T_{<a} \cup T_{a}$ et un voisinage rétracte de $T_{<a} \cap T_{a}$ pour la même raison. On définit
\begin{align*}
T_{a}^\epsilon &:= T_{a} \cup T_{<a,a}^\epsilon, & T_{<a}^\epsilon &:= T_{<a} \cup T_{<a,a}^\epsilon,
\end{align*}
et on observe que ce sont des voisinages rétractes de $T_a$ et $T_{<a}$ ayant la propriété voulue par définition.
\end{proof}

\begin{figure}[h]
    \center
    \includegraphics[width=6cm]{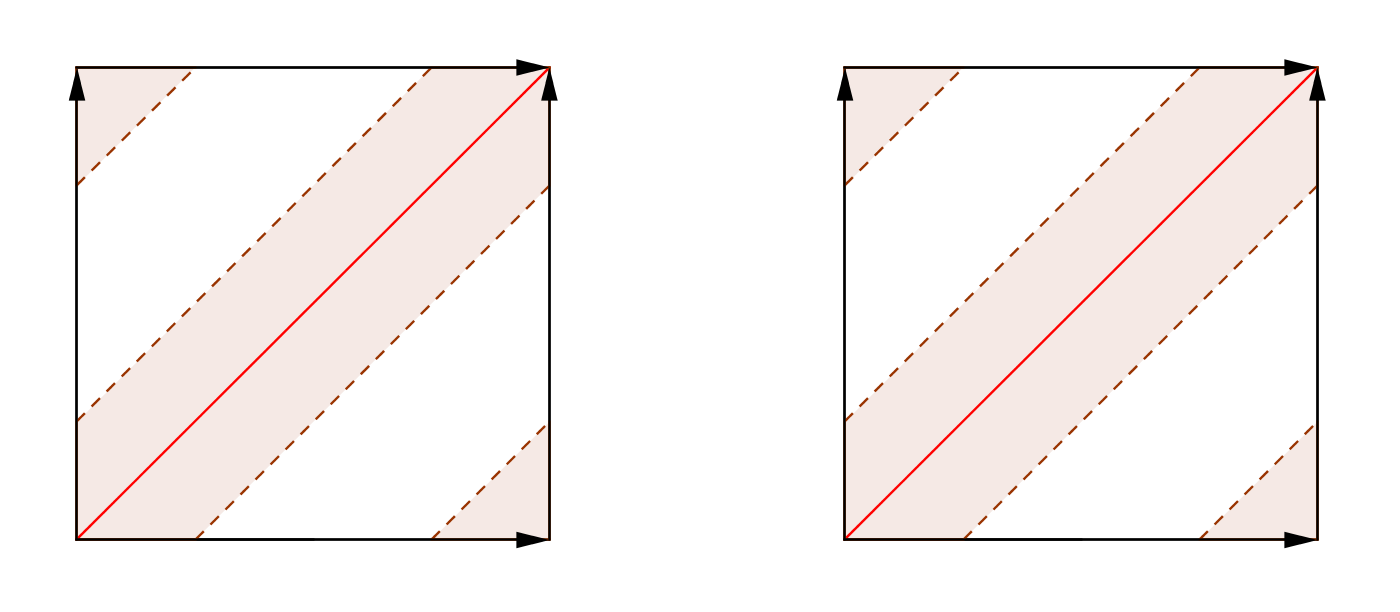}
    \caption{\label{fig:toreepaissi} $T_a \sqcup T_b$ identifiés ensemble sur la diagonale $T_a \cap T_b$ en rouge et $T_{a,b}^\epsilon$, l'intersection épaissie, en brun. }
\end{figure}

M. Khovanov obtient par surjectivité de $H(S_a) \rightarrow H(S_{<a} \cap S_a)$ une version un peu plus forte de la proposition suivante dans \cite[Proposition 3.8]{HnCenter} où il a en plus un "$0 \rightarrow$" à gauche de la suite exacte.

\begin{proposition}\label{prop:suiteexacteHtilde}
La suite suivante est exacte
$$\xymatrix{
H(T_{\le a}) \ar[r]^-{\phi}  & \bigoplus_{b \le a} H(T_b) \ar[r]^-{\psi^-} & \bigoplus_{b < c \le a} H(T_b \cap T_c)
}$$ 
où $\phi$ est le morphisme induit par les inclusions $T_b \hookrightarrow T_{\le a}$ et on définit
$$\psi^- = \sum_{b < c \le a} (\psi_{b,c} - \psi_{c,b})$$
avec
$$\psi_{b,c} = \psi_{b;b,c} : H(T_b) \rightarrow H(T_b \cap T_c)$$
induit par l'inclusion $(T_b \cap T_c) \hookrightarrow T_b$.
\end{proposition}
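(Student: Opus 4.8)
Le plan est de raisonner par récurrence sur l'ordre total $<$ fixé sur $B^n$, en exploitant à chaque étape la suite de Mayer-Vietoris (\ref{eq:mv}) associée à la décomposition $T_{\le a} = T_a \cup T_{<a}$. Pour le cas de base, on prendrait $a$ minimal : alors $T_{\le a} = T_a$, il n'y a aucune paire $b<c\le a$, et la suite se réduit à $H(T_a) \xrightarrow{\Id} H(T_a) \rightarrow 0$, trivialement exacte. Pour l'étape de récurrence, on utiliserait le fait que si $a'$ est le prédécesseur de $a$ pour $<$ alors $T_{<a} = T_{\le a'}$, de sorte que l'hypothèse de récurrence fournirait l'exactitude de la suite analogue pour $T_{<a}$, dont le terme central est $\bigoplus_{b<a} H(T_b)$ et le terme de droite $\bigoplus_{b<c<a} H(T_b\cap T_c)$.

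On vérifierait d'abord l'inclusion facile $\img \phi \subset \ker \psi^-$, c'est-à-dire $\psi^- \circ \phi = 0$, par simple fonctorialité : pour toute paire $b<c\le a$, les deux restrictions de $H(T_{\le a})$ vers $H(T_b \cap T_c)$ passant respectivement par $T_b$ et par $T_c$ coïncident avec la restriction directe le long de $T_b \cap T_c \hookrightarrow T_{\le a}$.

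Le cœur de la preuve serait l'inclusion réciproque $\ker \psi^- \subset \img \phi$, qu'on traiterait par chasse au diagramme. Étant donné $(u_b)_{b\le a} \in \ker\psi^-$, on poserait $u := u_a \in H(T_a)$ et $u' := (u_b)_{b<a}$. La condition $\psi^-((u_b)) = 0$ se scinde en deux familles : les relations pour $b<c<a$ disent que $u'$ est dans le noyau du morphisme $\bigoplus_{b<a} H(T_b) \rightarrow \bigoplus_{b<c<a} H(T_b\cap T_c)$, donc par hypothèse de récurrence $u'$ se relèverait en un $w' \in H(T_{<a})$ de restriction $u_b$ sur chaque $T_b$ ; les relations pour les paires $(b,a)$ donneraient $\psi_{a,b}(u) = \psi_{b,a}(u_b)$ dans $H(T_b\cap T_a)$ pour tout $b<a$. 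On relèverait ensuite $(u,w') \in H(T_a)\oplus H(T_{<a})$ à $H(T_{\le a})$ grâce à la suite (\ref{eq:mv}), ce qui revient à montrer que $u$ et $w'$ ont même image dans $H(T_{<a}\cap T_a)$. Pour cela on invoquerait le Lemme \ref{lem:SbSainS<aSainj} : le morphisme $H(T_{<a}\cap T_a) \rightarrow \bigoplus_{b<a} H(T_b\cap T_a)$ étant injectif, il suffit de comparer les images dans chaque $H(T_b\cap T_a)$, où elles valent respectivement $\psi_{a,b}(u)$ et $\psi_{b,a}(u_b)$, égales par la seconde famille de relations. Le relèvement $x\in H(T_{\le a})$ vérifierait alors $\phi(x) = (u_b)_{b\le a}$ par un contrôle direct des composantes.

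La principale difficulté conceptuelle — et la différence avec la preuve de M. Khovanov — est qu'on n'obtient ici que l'exactitude au terme central, et non l'injectivité de $\phi$. Dans le cas des sphères, les cellules de la décomposition sont toutes de dimension paire, ce qui rend $H(S_a) \rightarrow H(S_{<a}\cap S_a)$ surjectif et annule le morphisme de liaison $\delta$ de la suite de Mayer-Vietoris, forçant le « $0\rightarrow$ » à gauche. Par le Lemme \ref{lem:decompcellulaire}, la décomposition de $T_a$ comporte au contraire des cellules de toutes les dimensions, si bien que cette surjectivité peut échouer et que $\delta$ peut être non nul ; il faudra donc veiller à ce que le relèvement ci-dessus n'utilise que l'exactitude de (\ref{eq:mv}) au terme $H(T_a)\oplus H(T_{<a})$, qui elle reste toujours valable.
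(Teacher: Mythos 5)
Votre démonstration est correcte et suit essentiellement la même voie que celle du texte : récurrence sur l'ordre total, suite de Mayer-Vietoris pour $T_{\le a} = T_a \cup T_{<a}$, remplacement de $H(T_{<a}\cap T_a)$ par $\bigoplus_{b<a} H(T_b\cap T_a)$ grâce à l'injectivité du Lemme \ref{lem:SbSainS<aSainj}, puis combinaison avec l'hypothèse de récurrence appliquée à $T_{<a} = T_{\le e}$. Votre chasse au diagramme rend simplement explicite le relèvement $\ker\psi^- \subset \img\phi$ que le texte laisse implicite dans la combinaison des suites (\ref{eq:exact1}) et (\ref{eq:exact2}), et votre remarque finale sur l'absence du « $0\rightarrow$ » à gauche correspond exactement à la discussion du texte sur la différence avec le cas des sphères.
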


\begin{proof}
La preuve se fait par induction sur $a$ par rapport à l'ordre total sur $B^n$. Pour $a_0 \in B^n$ minimal, on obtient la suite
$$\xymatrix{
H(T_{a_0}) \ar[r]^-{\simeq} &  H(T_{a_0}) \ar[r]^-{\psi^-} & 0
}$$ 
qui est bien évidemment exacte. On suppose maintenant que $a$ est le prochain élément après $e$ pour l'ordre total et que la proposition soit vrai pour $e$. Par le Lemme \ref{lem:SbSainS<aSainj}, puisque le morphisme est injectif et donc que la composition préserve le noyau, on peut remplacer $H^m(T_{<a} \cap T_a)$ par $\bigoplus_{a<b} H^m(T_b \cap T_a)$ dans la suite exacte de Mayer-Vietoris  (\ref{eq:mv}) en conservant l'exactitude partout sauf pour les flèches $H^m(T_{<a} \cap T_a) \rightarrow H^{m+1}(T_{\le a})$, donnant une suite exacte
\begin{align}H(T_{\le a}) \rightarrow H(T_{< a}) \oplus H(T_a) \rightarrow \bigoplus_{b<a} H(T_b \cap T_a).  \label{eq:exact1}\end{align}
De plus, par hypothèse d'induction, la suite 
\begin{align}H(T_{\le e}) \rightarrow \bigoplus_{f \le e} H(T_f) \rightarrow \bigoplus_{f<g\le e} H(T_f \cap T_e) \label{eq:exact2}\end{align}
est exacte. Puisque $T_{<a} = T_ {\le e}$, on obtient par (\ref{eq:exact2}) et (\ref{eq:exact1}) le diagramme
$$\xymatrix{
H(T_{\le a}) \ar[r] & H(T_{\le e}) \oplus H(T_a) \ar[r] \ar[d] & \bigoplus_{b<a} H(T_b \cap T_a) \\
&\bigoplus_{f \le e} H(T_f) \oplus H(T_a) \ar[r] & \bigoplus_{f < g \le e} H(T_f \cap T_e) \oplus \bigoplus_{b < a} H(T_b \cap T_a)
}$$
qui donne donc la suite exacte voulue
$$\xymatrix{
H(T_{\le a}) \ar[r]^-{\phi}  & \bigoplus_{b \le a} H(T_b) \ar[r] & \bigoplus_{b < c \le a} H(T_b \cap T_c).
}$$ 
Plus précisément, on suppose que $x = \sum_{b \le a} x_b = \left(\sum_{f\le e} x_f\right) + x_a$ soit dans l'image de $\phi$ et on a
$$\psi^{-}(x) = \sum_{f<g\le e} (\psi_{f,g}(x) - \psi_{g,f}(x)) + \sum_{b<a}  (\psi_{b,a}(x) - \psi_{a,b}(x)) $$
mais $\psi_{f,g}(x_a) = \psi_{g,f}(x_a) = 0$ et donc par exactitude de (\ref{eq:exact2})
$$\sum_{f<g\le e} (\psi_{f,g}(x) - \psi_{g,f}(x)) = 0.$$
De plus $\psi_{b,a}(x_c) = 0$ si $c \ne b$ et $ \psi_{a,b}(x_c) = 0$ si $c \ne a$, donc on a
$$ \sum_{b<a}  (\psi_{b,a}(x) - \psi_{a,b}(x)) = \sum_{b<a}  (\psi_{b,a}(x_b) - \psi_{a,b}(x_a)) = 0$$
par le fait que la suite de Mayer-Vietoris fait apparaitre un signe similaire à $\psi^-$ et par l'exactitude de (\ref{eq:exact1}).
\end{proof}

\begin{proposition}\label{prop:supercentresurjectif}
Il existe un épimorphisme d'anneaux (non-gradués) 
$$k : H(\widetilde T) \rightarrow OZ(OH^n_C).$$
\end{proposition}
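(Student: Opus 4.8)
Le plan est de déduire cette proposition, presque immédiatement, de la Proposition \ref{prop:suiteexacteHtilde} appliquée à l'élément maximal de $B^n$. On poserait donc $a_{\max}$ le plus grand élément de $B^n$ pour l'ordre total $<$, de sorte que $T_{\le a_{\max}} = \widetilde T$. La Proposition \ref{prop:suiteexacteHtilde} livre alors la suite exacte
$$\xymatrix{
H(\widetilde T) \ar[r]^-{\phi}  & \bigoplus_{b \in B^n} H(T_b) \ar[r]^-{\psi^-} & \bigoplus_{b < c} H(T_b \cap T_c),
}$$
où $\phi$ est le morphisme induit par les inclusions $T_b \hookrightarrow \widetilde T$, c'est-à-dire précisément le morphisme servant à définir $\tau$ dans le diagramme (\ref{eq:tau}). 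L'exactitude donne $\img \phi = \ker \psi^-$.

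L'étape clé consiste à reconnaître que $\ker \psi^-$ n'est autre que l'égalisateur $\Eq(\psi)$. En effet, puisque $\psi^- = \sum_{b < c}(\psi_{b,c} - \psi_{c,b})$ avec $\psi_{b,c} = \psi_{b;b,c}$, un élément $(x_b)_b$ appartient à $\ker \psi^-$ si et seulement si $\psi_{b;b,c}(x_b) = \psi_{c;b,c}(x_c)$ pour toute paire $b < c$, ce qui est exactement la condition définissant $\Eq(\psi)$. On a donc $\img \phi = \Eq(\psi)$. Comme $\tau$ est la corestriction de $\phi$ au sous-anneau $\Eq(\psi)$ et que l'inclusion $\Eq(\psi) \hookrightarrow \bigoplus_b H(T_b)$ est injective, on conclut que $\tau$ est surjectif. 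Il suffit alors de composer $\tau$ avec l'isomorphisme d'anneaux $\Eq(\psi) \simeq \Eq(\gamma) \simeq OZ(OH^n_C)$ du diagramme (\ref{eq:tau}) pour obtenir l'épimorphisme $k$ cherché ; le fait que $k$ soit un morphisme d'anneaux vient de ce que $\phi$ est induit par des inclusions et que toutes les identifications intervenant préservent la multiplication.

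La difficulté de cette proposition n'est en fait pas ici : elle a été entièrement absorbée par la Proposition \ref{prop:suiteexacteHtilde} et les lemmes qui la préparent (décomposition cellulaire, Mayer--Vietoris, injectivité du Lemme \ref{lem:SbSainS<aSainj}). Le seul point à souligner est que, contrairement au cas des sphères de M. Khovanov, on ne dispose pas du terme initial ``$0 \rightarrow$'' dans la suite exacte, ce qui interdit de conclure à l'injectivité de $\tau$ : on n'obtient donc qu'un épimorphisme. C'est néanmoins suffisant pour la suite, puisque, combiné à l'injectivité déjà établie de $s$ et au calcul du rang de $H(\widetilde T)$, il permettra d'établir l'égalité des rangs de $OZ(OH^n_C)$ et $OH(\mathfrak B_{n,n})$.
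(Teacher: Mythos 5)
Votre preuve est correcte et suit essentiellement la même démarche que celle du texte : on applique la Proposition \ref{prop:suiteexacteHtilde} à l'élément maximal de $B^n$, on identifie $\ker(\psi^-) = \Eq(\psi) = \img(\phi)$, puis on compose $\tau$ avec les isomorphismes du diagramme (\ref{eq:tau}). Vous explicitez en plus, à juste titre, pourquoi $\ker(\psi^-)$ coïncide avec l'égalisateur et pourquoi l'absence du terme initial « $0 \rightarrow$ » empêche de conclure à l'injectivité, ce qui est fidèle à l'esprit du texte.
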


\begin{proof}
On prend $a$ maximal dans la Proposition \ref{prop:suiteexacteHtilde} donnant la suite exacte
$$\xymatrix{
H(\widetilde T) \ar[r]^-{\phi}  & \bigoplus_{b} H(T_b) \ar[r]^-{\psi^-} & \bigoplus_{b < c} H(T_b \cap T_c)
}$$ 
et on observe que
$$\Eq(\psi) = \ker(\psi^-) = \img(\phi)$$
montrant que le morphisme d'anneaux (non-gradué) $\tau : \widetilde T \rightarrow \Eq(\psi)$ défini dans le Diagramme (\ref{eq:tau}) est surjectif puisqu'il correspond avec $\phi$ et donc en composant avec les isomorphismes du même diagramme on obtient l'épimorphisme voulu.
\end{proof}

Dans le but de calculer le rang de $H(\widetilde T)$ on montre le lemme suivant, cité par M. Khovanov dans sa preuve de \cite[Lemme 4.1]{HnCenter} mais laissé non-démontré.

\begin{lemme}\label{lem:ta2n}
Pour tout $a \in B^n$, on note $t(a)$ le nombre d'arcs inférieurs de $a$, c'est-à-dire le nombre d'arcs qui ne sont pas contenus dans un autre arc. On obtient alors
$$\sum_{a \in B^n} 2^{t(a)} = \begin{pmatrix}2n \\ n \end{pmatrix}.$$
\end{lemme}

\begin{proof}
%
%

On montre l'identité équivalente
$$K_n := \frac{1}{n+1}\sum_{a \in B^n} 2^{t(a)} = \frac{1}{n+1}\begin{pmatrix}2n \\ n \end{pmatrix} = C_n$$
avec $C_n$ le $n-$ème nombre de Catalan. On a à gauche pour $n=1$
$$K_1 = \frac{1}{2} 2^1 = 1 = C_1.$$
On affirme que l'élément de gauche respecte la relation de récurrence de Segner 
$$K_{n+1} = \sum_{i=0}^n K_iK_{n-i} = \sum_{i=1}^{n+1} K_{i-1}K_{n+1-i}$$
qui est une des façon de définir les nombres de Catalan, voir \cite[Chapitre 5]{catalan}. Pour prouver cela, on suppose par induction qu'elle la respecte pour tout $k \le n$ et donc que $K_k = C_k$ et $\sum_{a \in B^k} 2^{t(a)}= \begin{pmatrix} 2k \\ k \end{pmatrix}$.

Tout d'abord on observe qu'on peut construire $B^{n+1}$ à partir des $B^i$ pour $i \le n$ par une construction comme en Figure \ref{fig:reccurrenceBn} :  
on peut écrire $B^{n+1}$ comme l'union  $\bigcup_{1 \le k \le n+1}(B^{n+1})_k$ où les éléments de $(B^{n+1})_k$ sont donnés par un arc $(1,2k)$ avec dans cet arc un élément de $B^{k-1}$ et à droite un élément de $B^{n+1-k}$.

\begin{figure}[h]
    \center
    \includegraphics[width=10cm]{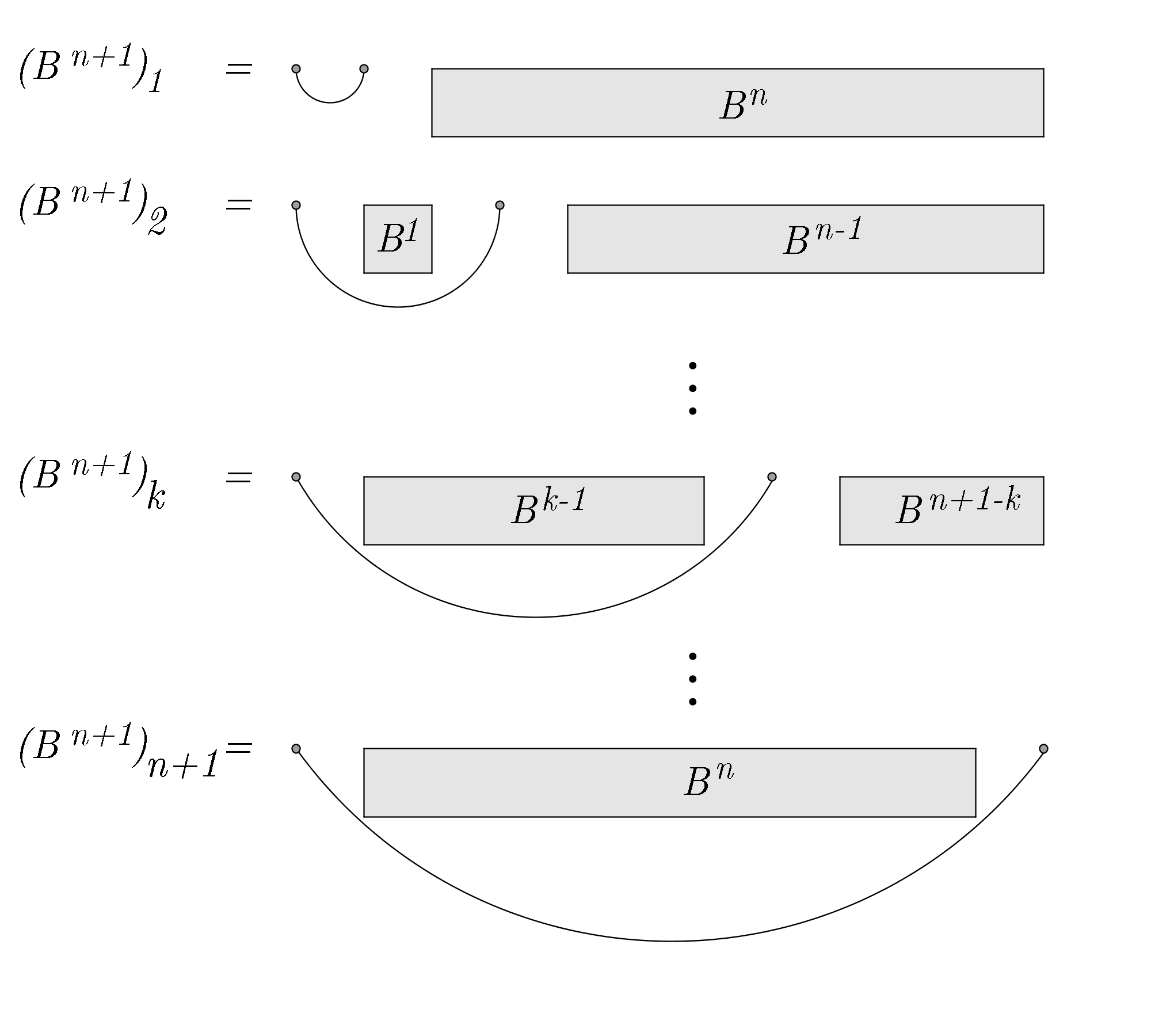}
    \caption{\label{fig:reccurrenceBn} Construction par récurrence de $B^{n+1} = \bigcup_{1 \le k \le n+1} (B^{n+1})_k$ à partir de $B^i$ pour tout $i \le n$.}
\end{figure}

 On obtient alors qu'un élément de $(B^{n+1})_k$ possède autant d'arcs inférieurs que l'élément de droite de $B^{n+1-k}$ plus un. Dès lors, si on somme sur toutes les possibilités d'éléments à droite, on obtient par hypothèse d'induction $2\begin{pmatrix} 2(n+1-k) \\ n+1-k \end{pmatrix}$ et on multiplie par $\left|B^{k-1}\right| = C_{k-1}$ pour le nombre de possibilités d'éléments de $B^{k-1}$ dans l'arc $(1,2k)$, livrant l'équation
$$\sum_{a \in B^{n+1}} 2^{t(a)} = \sum_{k=1}^{n+1} \left( \left|B^{k-1}\right| 2\sum_{b \in B^{n+1-k}} 2^{t(b)}\right)  = \sum_{k=1}^{n+1} C_{k-1} 2\begin{pmatrix} 2(n+1-k) \\ n+1-k \end{pmatrix}.$$
Cela se traduit en termes de $K_{n}$, en utilisant l'hypothèse d'induction que $K_{k-1} = C_{k-1}$, par
\begin{align*}
K_{n+1} &=  \frac{1}{n+2}\sum_{k = 1}^{n+1} K_{k-1}.2(n-k+2).K_{n-k+1}.
\end{align*}
On a donc
$$K_{n+1} = 2 \sum_{k = 1}^{n+1} K_{k-1}.K_{n-k+1} - \sum_{k = 1}^{n+1}\frac{2k}{n+2} K_{k-1}K_{n-k+1}.$$
On montre que pour tout $k \le \lfloor (n+1)/2 \rfloor$ on a en sommant le $k$-ème terme de la somme et le $(n+2-k)$-ème
$$\frac{2k}{n+2} K_{k-1}.K_{n-k+1} + \frac{2(n+2-k)}{n+2} K_{n+1-k}.K_{k-1} = K_{k-1}.K_{n-k+1} + K_{n+1-k}.K_{k-1} $$
et si $n+1$ est impair alors pour $k = \lceil (n+1)/2 \rceil$, on a $2k = n+2$ et donc
$$ \frac{2k}{n+2} K_{k-1}.K_{n-k+1} = K_{k-1}.K_{n-k+1}$$
ce qui nous donne
$$\sum_{k = 1}^{n+1}\frac{2k}{n+2} K_{k-1}.K_{n-k+1} = \sum_{k = 1}^{n+1} K_{k-1}.K_{n-k+1}$$
et cela conclut la preuve.
\end{proof}

\begin{proposition}\label{prop:rangHS}
Vu en tant que groupe abélien, on a
$$\rank\bigl(H(\widetilde T)\bigr) =  \begin{pmatrix}2n \\ n\end{pmatrix}.$$
De plus, $H(\widetilde T)$ est un groupe abélien libre.
\end{proposition}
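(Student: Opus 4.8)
The plan is to compute $\rank H(\widetilde T)$ by induction along the total order $<$ on $B^n$, proving the stronger statement that for every $a\in B^n$ the group $H(T_{\le a})$ is free abelian of rank $\sum_{b\le a}2^{t(b)}$. Taking $a$ maximal then gives $T_{\le a}=\widetilde T$, and by Lemma \ref{lem:ta2n} this yields $\rank H(\widetilde T)=\sum_{a\in B^n}2^{t(a)}=\begin{pmatrix}2n\\n\end{pmatrix}$ together with freeness. For the base case, the minimal element $a_0$ is the totally unnested diagram (all arcs side by side, since arrows only increase nesting), for which $t(a_0)=n$ and $T_{\le a_0}=T_{a_0}\simeq T^n$; its cohomology $\Ext^*\Z^n$ is free of rank $2^n=2^{t(a_0)}$.

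For the inductive step I would write $T_{\le a}=T_{<a}\cup T_a$ and feed the retract neighbourhoods of Proposition \ref{prop:voisretract} into the Mayer--Vietoris sequence (\ref{eq:mv}). The whole argument rests on one geometric input: the cellular decomposition of $T_a$ from Lemma \ref{lem:decompcellulaire} is the standard product decomposition of $T^n\simeq(S^1)^{|I|}$, so \emph{all} of its cellular boundary maps vanish (its Betti numbers already account for every cell). Since $T_{<a}\cap T_a$ is a subcomplex, its induced differentials are restrictions of zero maps, hence also zero, so $H(T_{<a}\cap T_a)$ is free of rank equal to its number of cells. By Lemma \ref{lem:sasbsa} and the description in Lemma \ref{lem:decompcellulaire}, the cells of $T_a$ lying inside $T_{<a}\cap T_a$ are exactly the $c(J)$ with $J$ meeting the edge set $E$ of $\Gamma$, so the complementary cells are the $2^{|M|}$ subsets $J\subseteq M$; as $|M|$ equals the number of trees of $\Gamma$, i.e. the number of outermost (lower) arcs, this is $2^{t(a)}$, giving $\rank H(T_{<a}\cap T_a)=2^n-2^{t(a)}$.

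The same vanishing makes the restriction $H^*(T_a)\to H^*(T_{<a}\cap T_a)$ the surjective cochain-level projection onto the subcomplex, so the Mayer--Vietoris map $\rho\colon H^m(T_{<a})\oplus H^m(T_a)\to H^m(T_{<a}\cap T_a)$ is surjective in every degree and the connecting homomorphism $\delta$ is zero. The long exact sequence thus collapses into short exact sequences $0\to H^m(T_{\le a})\to H^m(T_{<a})\oplus H^m(T_a)\to H^m(T_{<a}\cap T_a)\to 0$. Because the right-hand term is free, each sequence splits over $\Z$, so $H^m(T_{\le a})$ is a direct summand of a free group, hence free, and $\rank H(T_{\le a})=\rank H(T_{<a})+\rank H(T_a)-\rank H(T_{<a}\cap T_a)=\rank H(T_{<a})+2^n-(2^n-2^{t(a)})=\rank H(T_{<a})+2^{t(a)}$. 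Since $T_{<a}=T_{\le e}$ for the predecessor $e$ of $a$, the induction hypothesis closes the step.

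The main obstacle is precisely the point where the torus argument departs from Khovanov's sphere argument: for spheres all cells are even-dimensional, so $\delta=0$ is automatic, whereas here $T_a$ carries odd-dimensional cells and one must genuinely invoke the product structure of the torus to see that every cellular boundary map vanishes. Once that is established, surjectivity of $\rho$, freeness, and the rank count all follow formally; the bookkeeping identifying $|M|$ with $t(a)$ and confirming that $T_{<a}\cap T_a$ is a subcomplex is routine given Lemmas \ref{lem:decompcellulaire} and \ref{lem:sasbsa}.
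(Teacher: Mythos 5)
Your proposal is correct and follows essentially the same route as the paper: the cellular decomposition of Lemma \ref{lem:decompcellulaire}, the observation that the cell count of $T_a$ matches the Betti numbers of $T^n$ (forcing all cellular differentials to vanish), the resulting degeneration of the Mayer--Vietoris sequence furnished by Proposition \ref{prop:voisretract}, induction along the total order with $2^{t(a)}$ new cells added at each step, and Lemma \ref{lem:ta2n} to conclude. The only cosmetic difference is that you run Mayer--Vietoris in cohomology, deducing $\delta=0$ from surjectivity of the restriction onto the subcomplex, whereas the paper works in homology and gets $\partial=0$ from injectivity of the inclusion; your explicit splitting of the short exact sequences also makes the freeness assertion slightly more transparent than in the paper's formulation.
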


\begin{proof}
Par le Lemme \ref{lem:decompcellulaire}, on a une décomposition cellulaire de $S_a$ qui se restreint à une décomposition de $T_a \cap T_{<a}$. Dès lors, on peut obtenir une partition cellulaire de $\widetilde T$ à partir de cette décomposition, en commençant par prendre la décomposition de $T_{a_0}$ pour $a_0$ l'élément minimal de $B^n$ puis en ajoutant les cellules de $T_{a_m} \setminus T_{<a_m}$ pour tout $a_m$ en suivant l'ordre total défini sur $B^n$. Il est important de noter que cela forme une partition cellulaire et non pas une décomposition puisque la fermeture d'une cellule n'est pas en général une union de cellules, comme le montre l'Exemple \ref{ex:partition}. Mais cela suffit pour la preuve car on affirme que le rang de $H(\widetilde T)$ est donné par le nombre de cellules de la partition. Puisque le rang des groupes d'homologie et de cohomologie sont égaux, on montre que la somme des rangs des groupes d'homologie est donnée par le nombre de cellules. En effet, on a la suite de Mayer-Vietoris, donnée par la Proposition \ref{prop:voisretract},
$$\xymatrix{
& \dots & \ar[l] H_{k-1}(T_{a_m} \cap T_{<a_m})   \\
\ar[urr]^{\partial} H_k(T_{a_m} \cup T_{<a_m})  & \ar[l] H_k(T_{a_m}) \oplus H_k(T_{<a_m})  & \ar[l] H_k(T_{a_m} \cap T_{<a_m}) \\
  \ar[urr]^\partial H_{k+1}(T_{a_m} \cup T_{<a_m})  & \ar[l] \dots
}$$
et les $\partial$ sont nuls car l'application $H_k(T_{a_m} \cap T_{<a_m}) \rightarrow H_k(T_{a_m}) $ est injective. Intuitivement, cela vient du fait que l'intersection est une union d'hypertores de codimension un donné par des diagonales dont les intersections sont des hypertores de dimension inférieure et donc un cycle non nul dans un de ces hypertores reste non nul dans l'hypertore de dimension $n$.
Plus formellement, on sait par l'Exemple \ref{ex:torehomology} des Annexes que $H_k(T_{a_m})$ est donné par le groupe abélien libre à $\begin{pmatrix}n \\ k \end{pmatrix}$ éléments et on a une décomposition cellulaire de $T_{a_m}$ en le même nombre de cellules de dimension $k$ (les cellules de dimension $k$ étant celles avec $|J| = n-k$ et donc on doit piocher $n-k$ éléments dans $n$ disponibles, ce qui égal à en piocher $k$ dans $n$). Cela signifie que chacune des cellules de la décomposition engendre un unique générateur du groupe d'homologie de $T_{a_m}$. Puisque $T_{a_m} \cap T_{<a_m}$ est un sous-complexe, son homologie est engendrée par ses cellules. Dès lors, on ne peut pas engendrer de nouvelle relation par l'inclusion puisqu'on sait qu'elles sont linéairement indépendantes dans $T_{a_m}$. Ceci donne l'information supplémentaire que $H_k(T_{a_m} \cap T_{<a_m})$ est un groupe abélien libre de rang égal au nombre de cellules du sous-complexe. De là on obtient 
$$H_k(T_{a_m}  \cup T_{<a_m}) \simeq \frac{H_k(T_{a_m}) \oplus H_k(T_{<a_m}) }{H_k(T_{a_m} \cap T_{<a_m})}.$$
qui se traduit en termes de rang par
$$\rank(H_k(T_{a_m} \cup T_{<a_m}) = \rank(H_k(T_{a_m})) + \rank(H_k(T_{<a_m})) - \rank(H_k(T_{a_m} \cap T_{<a_m}))$$
et donc on obtient par récurrence que le rang de $H_k(\widetilde T)$ est égal au nombre de cellules de dimension $k$ puisque $ \rank(H_k(T_{a_m})) - \rank(H_k(T_{a_m} \cap T_{<a_m}))$ est le nombre de cellules qu'on ajoute.

Pour $a \in B^n$, on pose $t(a)$ le nombre d'arcs inférieurs de $a$, c'est-à-dire le nombre d'arcs qui ne sont pas contenu dans un autre arc. On remarque que $t(a)$ est aussi le nombre de composantes connexes du graphe $\Gamma$ de la preuve du Lemme \ref{lem:decompcellulaire}. En effet, chaque composante possède au moins un arc inférieur sinon le graphe serait infini et on ne peut pas avoir plusieurs arcs inférieurs dans une même composante par le fait que deux arcs sont reliés si l'un contient l'autre sans arc intermédiaire.
La décomposition de $T_a \setminus T_{<a}$ possède $2^{t(a)}$ cellules puisque les cellules qu'il reste à attacher sont celles qui ne possèdent aucune des arêtes données par les flèches $b \rightarrow a$, donc il ne reste que les cellules définies par les $J$ ne possédant que des arcs marqués, donc  une par composante connexe du graphe. Dès lors, on a $2^{t(a)}$ possibilités suivant si on prend où non l'arc marqué dans $J$. De là on obtient que la partition cellulaire de $\widetilde T$ possède $\sum_a 2^{t(a)}$ cellules puisque l'élément minimal est composé de $n$ arcs inférieurs et que la décomposition de $T_{a_0}$ possède $2^n$ cellules. Par le Lemme \ref{lem:ta2n} cette somme est égale à $\begin{pmatrix}2n \\ n\end{pmatrix}$.
\end{proof}
%

\begin{proof}\emph{du Théorème \ref{thm:supercentre}. }
Par la Proposition \ref{prop:supercentreinjectif} on obtient que 
$$\rank(OZ(OH^n_C)) \ge \rank (OH(\mathfrak B_{n,n})).$$
Par les Propositions \ref{prop:supercentresurjectif}, \ref{prop:rangHS} et \ref{prop:rankhoh} on obtient que
$$\rank(OZ(OH^n_C)) \le \rank(\HSt) = \begin{pmatrix}2n \\ n\end{pmatrix} = \rank (OH(\mathfrak B_{n,n}))$$
et donc on a
$$\rank(OZ(OH^n_C)) = \rank (OH(\mathfrak B_{n,n}))$$
qui permet de conclure que le monomorphisme $j$ de la Proposition \ref{prop:supercentreinjectif} est un isomorphisme.
\end{proof}

\begin{remarque}
On peut remarquer qu'on a en plus montré que le rang de $OZ(OH^n_C)$ est le même que $H(\widetilde T)$ et donc $\tau$ est en fait un isomorphisme (non-gradué). En composant avec l'isomorphisme vers la présentation de la cohomologie impaire de $\mathfrak{B}_{n,n}$, on obtient une présentation de $H(\widetilde T)$ en changeant le degré des générateurs $x_1, \dots, x_{2n}$ des polynôme impairs en $1$. On pouvait aussi l'observer directement en voyant que les opérateurs de bord de la suite de Mayer-Vietoris (\ref{eq:mv}) sont nuls, ajoutant un $0 \rightarrow$ à gauche dans la suite exacte de la Proposition \ref{prop:suiteexacteHtilde} et donnant l'injectivité de $\tau$. Sans doute est-il possible de montrer directement que $H(\widetilde T)$ admet une présentation semblable à l'anneau de cohomologie impaire de $\mathfrak{B}_{n,n}$ en adaptant la preuve de la Proposition \ref{prop:nk1} et/ou en utilisant un raisonnement similaire à M. Khovanov dans la preuve de \cite[Théorème 1.4]{HnCenter}, donnant un isomorphisme non-gradué entre les deux, et il suffirait alors de montrer que la composition de $\tau$ avec cet isomorphisme donne un morphisme gradué, livrant une preuve alternative. 

Par ailleurs, une preuve similaire serait certainement possible en construisant $\widetilde T$ à partir d'hypersphère $S^3$ (ou toute autre dimension impaire $m$ plus grande que $1$), permettant d'utiliser des arguments similaires à ceux de M. Khovanov pour montrer que les opérateurs de bords sont nuls (puisqu'on a des groupe de cohomologie triviaux pour les entiers qui ne sont pas multiple de cette dimension). De plus, cela donne une présentation de ces anneaux de cohomologie toujours en modifiant le degré des générateurs $x_1, \dots, x_{2n}$ des polynôme impairs en fonction de la dimension choisie pour $S^m$.
\end{remarque}


\chapter{Pour aller plus loin}\label{chap:allerplusloin}

Ce chapitre propose une série d'idées pour aller plus loin dans l'étude et la compréhension de l'anneau impair des arcs des Khovanov, notamment en proposant un substitut de module qui pourrait permettre d'imiter une bonne partie des résultats connus sur $H^n$ et pourrait mener à un nouvel invariant d'enchevêtrements. On montre aussi qu'il existe une façon de tourner $OH^n_C$ en un anneau associatif et on étudie les règles de multiplications qui donnent des anneaux isomorphes.


\section{Rendre $OH^n_C$ associatif}

On peut se demander s'il existe des coefficients $\lambda(x,y) \in \{-1,1\}$ tels que la multiplication
$$(x,y) \rightarrow \lambda(x,y)xy$$
soit associative pour tout $x,y \in OH^n_C$. La réponse est affirmative mais demande des résultats qui dépassent le cadre de ce travail. On en propose quand-même l'idée en renvoyant le lecteur vers des références pour les démonstrations et détails des résultats utilisés.

\begin{figure}[h]
    \center
    \includegraphics[width=5cm]{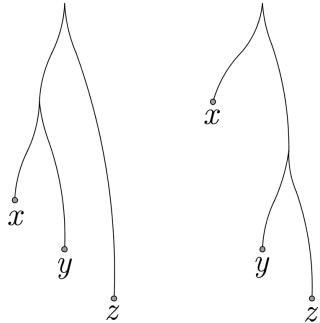}
    \caption{\label{fig:chgtac2} \`A gauche on a $(xy)z$ et à droite $x(yz)$ : $x$ commute avec les scissions du cobordisme de la multiplication $yz$.}
\end{figure}

Tout d'abord il faut observer plus attentivement d'où vient la non-associativité. En effet, elle est due à deux phénomènes différents : d'une part les changements de chronologie dans les cobordismes, dépendant juste des diagrammes, comme dans l'Exemple \ref{ex:chgtch}, et d'autre part l'anticommutativité avec l'élément de gauche du double produit et des scissions de la multiplication de droite, dépendant du degré et des diagrammes, représenté en Figure \ref{fig:chgtac2} et illustré dans l'Exemple \ref{ex:OH2nonassoc}. \'Etant donné la nature du second, on observe le résultat suivant qui permet de calculer le nombre de scissions d'un cobordisme $W(c)bW(b)a \rightarrow W(c)b$ et donc le nombre de scissions avec lesquelles l'élément de gauche doit permuter.


\begin{proposition}\label{prop:nbrscissions}
La multiplication 
$$c(OH^n_C)b \times b(OH^n_C)a \rightarrow c(OH^n_C)a$$
est composée de $S(c,b,a) := \frac{d(c,b)+d(b,a) - d(c,a)}{2}$ scissions.
\end{proposition}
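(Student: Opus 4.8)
The plan is to count the saddle points of the cobordism $C_{cba}$ directly, separating fusions from scissions, and then to relate this count to the number of circle components through the formula $|W(b)a| = n - d(a,b)$ established just above (together with the symmetry $d(x,y) = d(y,x)$ of the graph distance).

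First I would recall that, by the explicit construction of $C_{cba} : W(c)bW(b)a \rightarrow W(c)a$, this cobordism decomposes into exactly $n$ bridges, one for each arc of $b$, and that every such bridge is an elementary cobordism which is either a fusion or a scission. There are no births or deaths involved, since both $W(c)bW(b)a$ and $W(c)a$ are unions of circles and each bridge merely joins or separates existing components. Writing $p$ for the number of fusions and $q$ for the number of scissions in this decomposition, we then have $p + q = n$.

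Next I would track the change in the number of connected components. The source $W(c)bW(b)a$ is the disjoint union of $W(c)b$ and $W(b)a$, hence has $|W(c)b| + |W(b)a|$ components, whereas the target $W(c)a$ has $|W(c)a|$ components. Since a fusion decreases the component count by one and a scission increases it by one, we obtain
$$|W(c)a| = |W(c)b| + |W(b)a| - p + q.$$
Substituting $|W(x)y| = n - d(x,y)$ gives
$$n - d(c,a) = (n - d(c,b)) + (n - d(b,a)) - p + q,$$
so that $q - p = d(c,b) + d(b,a) - d(c,a) - n$. Combining this with $p + q = n$ yields $2q = d(c,b) + d(b,a) - d(c,a)$, that is, $q = S(c,b,a)$, as claimed.

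The only delicate points are the bookkeeping assertions of the first step: that each of the $n$ bridges is genuinely a fusion or a scission, and that there are precisely $n$ of them. Both are immediate consequences of the construction of $C_{cba}$ recalled earlier, so I do not expect any real obstacle here; the argument is in essence a single linear count. It is worth noting as a corollary that $S(c,b,a)$ depends only on $a,b,c$ and not on the multiplication rule $C$, which is consistent with the fact that the underlying unoriented cobordisms for different rules are equivalent.
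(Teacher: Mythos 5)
Your proof is correct and follows essentially the same route as the paper's: both arguments count the $n$ bridges of $C_{cba}$, use the fact that each bridge is a fusion or a scission together with the balance of circle components between $W(c)bW(b)a$ and $W(c)a$, and conclude via the identity $|W(x)y| = n - d(x,y)$. Your formulation as two linear equations in the numbers of fusions and scissions is a slightly cleaner packaging of the paper's "minimal fusions plus compensating fusion--scission pairs" bookkeeping, but the underlying computation is identical.
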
 

\begin{proof}
Il suffit d'observer qu'on a autant de scissions que $n$ moins le nombre de fusions. Le cobordisme
$$W(c)bW(b)a \rightarrow W(c)a$$
doit envoyer $|W(c)b| + |W(b)a|$ composantes sur $|W(c)a|$ c'est-à-dire qu'il faut en fusionner au moins $|W(c)b| + |W(b)a| - |W(c)a|$. Pour toute fusion supplémentaire, on doit avoir une scission correspondante afin d'équilibrer le nombre de composantes, ce qui donne
$$|W(c)b| + |W(b)a| - |W(c)a| + F_{supp} + S_{supp} = n$$
avec $F_{supp} = S_{supp}$, pour $F_{supp}$ et $S_{supp}$ le nombre de fusions et scissions supplémentaires, la somme devant faire $n$ car on a $n$ ponts. Ceci donne
\begin{align*}
 2S_{supp} &= n - |W(c)b| - |W(b)a| + |W(c)a| \\
&=  n - |W(c)b| + n - |W(b)a| - n + |W(c)a| \\
&=  d(c,b)+d(b,a) - d(c,a),
\end{align*}
d'où on obtient la formule de la proposition.
\end{proof}

Cela montre bien que peut importe le choix de $C$, on a toujours autant de scissions dans le produit. On considère maintenant le groupoïde $\ddot B^n$ (voir Section \ref{sec:groupoide} des Annexes pour une définition de groupoïde) dont les objets sont les éléments de $B^n$ et les flèches les diagrammes $W(b)a : a \rightarrow b$ qu'on peut composer $W(c)b \circ W(b)a := W(c)a : a \rightarrow c$. Chaque flèche possède un inverse puisque $W(a)b \circ W(b)a = W(a)a = \Id_a$. Par ailleurs, on peut définir une graduation sur les éléments de $OH^n$ (vu en tant que groupe abélien gradué) donnée par les flèches de $\ddot B^n$ et la décomposition en somme directe 
$$OH^n := \bigoplus_{a,b \in B^n} b(OH^n)a =  \bigoplus_{W(b)a \in \Hom\left( \ddot B^n\right)} b(OH^n)a .$$
Cela forme une graduation pour $OH^n_C$ car la multiplication de deux éléments de $OH^n_C$ a comme degré la composition des degrés. On note le degré pour cette graduation $|-|_2$ et on l'appelle \emph{degré diagrammatique}. On note celui de la graduation usuelle par $|-|_1$ et le double degré dans le groupoïde $\Z \times \ddot B^n$ est noté $|-|$. Pour $|x|_2 = W(c)b$ et $|y|_2 = W(b)a$, on note $S(|x|_2, |y|_2) = S(c,b,a)$ le nombre de scissions donné par la Proposition \ref{prop:nbrscissions} et on note $d(|x|_2) = d(c,b)$.
On considère l'associateur
$$\alpha : (OH^n_C \otimes OH^n_C) \otimes OH^n_C \rightarrow OH^n_C \otimes (OH^n_C \otimes OH^n_C),$$
$$(x \otimes y) \otimes z \mapsto \phi(|x|,|y|,|z|)(x \otimes (y \otimes z))$$
avec $\phi(|x|,|y|,|z|) \in \{-1, 1\}$ tel que
$$(xy)z = \phi(|x|,|y|,|z|) x(yz).$$
Puisque la non-associativité est due à deux phénomènes on peut séparer cet associateur en deux éléments
\begin{align*}
\phi(|x|,|y|,|z|) &=(-1)^{p(x).S(|y|_2, |z|_2)}.(-1)^ {\phi_0(|x|_2, |y|_2, |z|_2)}\\
&= (-1)^{\frac{(|x|_1-d(|x|_2))}{2}.S(|y|_2, |z|_2)}.(-1)^{\phi_0(|x|_2, |y|_2, |z|_2)}\\
&= (-1)^{\phi_1(|x|,|y|,|z|)}.(-1)^{\phi_0(|x|_2, |y|_2, |z|_2)}
\end{align*}
où $\phi_0 : \Hom(\ddot B^n)^3 \rightarrow \Z/2\Z$  calcule le signe du au changement de chronologie en fonction des diagrammes et $\phi_1 : \Hom(\Z \times \ddot B^n)^3 \rightarrow \Z/2\Z$ calcule le signe de la permutation de $x$ avec la multiplication $yz$ en fonction des diagrammes et du degré de $x$.

L'idée de la preuve est d'observer que le groupoïde $\ddot B^n$ a une forme de $m$-simplexe puisque pour toute paire d'objets on a une unique flèche de l'un vers l'autre et une unique flèche dans l'autre sens, avec $m = 1, 4$ ou plus (puisque $|B^2| = 2$ et $|B^3| = 5$). Dès lors, par l'Exemple \ref{ex:groupoidsimplexe} des Annexe, sa cohomologie de degré $3$ dans $\Z/2\Z$ est triviale (on renvoie vers la Section \ref{sec:groupoide} pour une définition de cohomologie de groupoïdes). Par ailleurs, puisque $\phi$ est un associateur, le diagramme suivant, où $O := OH^n_C$, commute :
\begin{align} \label{diag:assoc}
\xymatrix{
((O \otimes O ) \otimes O ) \otimes O  \ar[rr]^{\alpha \otimes \Id} \ar[d]_{\alpha} && (O\otimes (O \otimes O))\otimes O \ar[d]^{\alpha} \\
(O\otimes O) \otimes (O\otimes O) \ar[dr]_{\alpha} && O\otimes((O\otimes O)\otimes O) \ar[dl]^{\Id\otimes\alpha} \\
&O\otimes(O\otimes(O\otimes O)) &
}
\end{align}
Cela se traduit en termes de $\phi_0$ et en prenant les éléments ${_e1_d},{_d1_c},{_c1_b}$ et ${_b1_a}$ par
$$\phi_0(h,k,l)+\phi_0(g,hk,l)+\phi_0(g,h,k)-\phi_0(gh,k,l)-\phi_0(g,h,kl) = 0 = d^3 \phi_0(g,h,k,l)$$
pour toute suite
$$\xymatrix{e & \ar[l]_{g} d & \ar[l]_{h} c & \ar[l]_{k} b & \ar[l]_{l} a}$$
dans $\ddot B^n$, ce qui signifie que $\phi_0$ est un cocycle. Puisque la cohomologie de degré $3$ de $\ddot B$ est triviale, $\phi_0$ est un cobord et donc il existe $\lambda_0 : \Hom(\ddot B^n)^2 \rightarrow \Z/2\Z$ tel que $d^2 \lambda_0 = \phi_0$, c'est-à-dire
$$(d^2 \lambda_0)(|x|_2, |y|_2, |z|_2) = \lambda_0(y,z)-\lambda_0(xy,z)+\lambda_0(x,yz)-\lambda_0(x,y) = \phi_0(|x|_2, |y|_2, |z|_2).$$
De même, on obtient que $\phi_1$ est un cocycle dans le groupoïde $\Z \times \ddot B^n$. On calcule que la réalisation géométrique de $\Z \times \ddot B^n$ est
$$|\Z \times \ddot B^n| \simeq S^1 \times \Delta^m$$
et donc on peut les décomposer en cellules. Par la formule de Künneth (\cite[Théorème 3.15]{Hatcher}) on obtient alors
$$H^3(\Z \times \ddot B^n, \Z/2\Z) \simeq H^3(S^1 \times \Delta^m, \Z/2\Z) \simeq H^3(S^1, \Z/2\Z) \otimes_{\Z/2\Z} H^3(\Delta^m,  \Z/2\Z) \simeq \{0\}.$$
Dès lors $\phi_1$ est un cobord et on a un $\lambda_1 : \Hom(\Z \times \ddot B^n)^2 \rightarrow \Z/2\Z$ tel que $d^2 \lambda_1 = \phi_1$. On pose
$$\lambda(|x|,|y|) = (-1)^{\lambda_1(|x|,|y|)}.(-1)^{\lambda_0(|x|_2, |y|_2)}$$
et on obtient 
\begin{align*} 
(x,\lambda(|y|,|z|) yz) &\mapsto \lambda(|x|,|yz|).\lambda(|y|,|z|).x(yz), \\
(\lambda(|x|,|y|)xy, z) &\mapsto \lambda(|x|,|y|).\lambda(|xy|,|z|).(xy)z,
\end{align*}
avec
\begin{align*}
\lambda(|x|,|yz|).\lambda(|y|,|z|)
&= (-1)^{\lambda_1(|x|,|yz|)+\lambda_1(|y|,|z|)}.(-1)^{\lambda_0(|x|_2,|yz|_2)+\lambda_0(|y|_2,|z|_2)},\\
\lambda(|x|,|y|).\lambda(|xy|,|z|)
 &=(-1)^{\lambda_1(|x|,|y|) + \lambda_1(|xy|, |z|)}.(-1)^{\lambda_0(|x|_2,|y|_2) + \lambda_0(|xy|_2, |z|_2)}.
\end{align*}
On en conclut que
\begin{align*}
\frac{ \lambda(|x|,|yz|).\lambda(|y|,|z|)}{ \lambda(|x|,|y|).\lambda(|xy|,|z|)} &=  (-1)^{\phi_1(|x|, |y|, |z|)}.(-1)^{\phi_0(|x|_2, |y|_2, |z|_2)} \\
&= \phi(|x|,|y|,|z|).
\end{align*}
et cela donne l'associativité pour cette nouvelle multiplication. Hélas $\lambda(|x|,|y|)$ est très difficilement calculable et donc la version associative de $OH^n_C$ n'est pas intéressante pour ce qu'on en fait. De plus, il faudrait encore prouver que l'anneau qu'on obtient n'est pas isomorphe à $H^n$.

\section{Classes d'isomorphismes de $OH^n_C$}

On peut étudier les collections de règles de multiplications qui donnent des $OH^n_C$ isomorphes. En fait, on peut montrer que tous les $OH^n_C$ qui ont des règles de multiplications avec les même associateurs sont isomorphes. Cela se montre par des arguments similaires à ce qu'on a fait pour trouver une multiplication associative. \\

On se fixe $C$ et $C'$ deux règles de multiplications et afin d'éviter toute confusion on note $*_C$ la multiplication dans $OH^n_C$ et $*_{C'}$ dans $OH^n_{C'}$. Puisque les cobordismes sont équivalents si on les regarde sans chronologie ni orientation, on sait qu'on a
$$OF(C_{cba}) = \gamma(c,b,a) OF(C'_{cba})$$
avec $\gamma(c,b,a) \in \{-1, 1\}$ pour tout $c,b,a \in B^n$. On peut alors définir l'application
$$\eta : \Hom\left(\ddot B^n\right)^2\rightarrow \Z/2\Z $$
définie par $(-1)^{\eta(|x|_2, |y|_2)} := \gamma(c,b,a)$ si $|x|_2 = W(c)b$ et $|y|_2 = W(b)a$ et $0$ sinon. Cela signifie que si $x *_C y = w \in OH^n_C$ alors $x *_{C'} y =  (-1)^{\eta(|x|_2,|y|_2)} w \in OH^n_{C'}$. On souhaite avoir
$$\eta(|x|_2,|y|_2) = \epsilon(|x|_2)+\epsilon(|y|_2) - \epsilon(|xy|_2)$$
pour un certain $\epsilon(|x|_2) : \Hom\left(\ddot B^n\right) \rightarrow \Z/2\Z$ de sorte que
$$\theta : OH^n_C \rightarrow OH^n_{C'} : x \mapsto (-1)^{\epsilon(|x|_2)}x$$
soit un isomorphisme d'anneaux gradués. Puisque $\ddot B$ a une cohomologie de degré $2$ triviale, tout cocycle est un cobord et donc on voudrait avoir que $\eta$ soit un cocycle. On observe alors que
\begin{align*}
(\delta \eta)(|x|_2,|y|_2,|z|_2) &= \eta(|y|_2,|z|_2) - \eta(|xy|_2,|z|_2) + \eta(|x|_2,|yz|_2) - \eta(|x|_2,|y|_2)
\end{align*}
et on peut prendre $x = {_d1_c}, y = {_c1_b}$ et $z = {_b1_a}$  pour avoir
\begin{align*}
(x *_C y) *_C z &= (-1)^{\eta(|x|_2,|y|_2)+\eta(|xy|_2,|z|_2)} (x *_{C'} y ) *_{C'} z, \\
x *_C (y *_C z) &= (-1)^{\eta(|x|_2, |yz|_2)+ \eta(|y|_2, |z|_2)} x *_{C'} (y *_{C'} z), 
\end{align*}
avec le produit, s'il n'est pas spécifié, dans $OH^n_C$. Cela signifie que si
\begin{align*}
\frac{x *_C (y *_C z)}{(x *_C y) *_C z} &= \frac{x *_{C'} (y *_{C'} z)}{(x *_{C'} y) *_{C'} z}
\end{align*}
 la fraction étant un abus de notation pour le signe induit par la non-associativité, donc si $OH^n_C$ et $OH^n_{C'}$ ont le même associateur, alors $\eta$ est un cocycle. On note donc 
$$\phi_0^C : \Hom\left(\ddot B^n\right)^3 \rightarrow \Z/2\Z$$
l'associateur tel que pour  $x = {_d1_c}, y = {_c1_b}, z = {_b1_a}$ on a
$$(x *_C y) *_C z = (-1)^{\phi_0^C(|x_2, |y|_2, |z|_2)} (x *_C (y *_C z)),$$
c'est-à-dire que c'est la fonction qui calcule le signe du au changement de chronologies, comme dans la section précédente. On ajoute l'hypothèse que $\phi_0^C = \phi_0^{C'}$  et 
on en conclut que $\eta$ est un cobord. Cela signifie qu'il existe $\epsilon : \Hom\left(\ddot B^n\right) \rightarrow \Z/2\Z$ tel que $\delta \epsilon = \eta$. On a
$$(\delta \epsilon)(|x|_2,|y|_2) = \epsilon(|x|_2)-\epsilon(|xy|_2)+\epsilon(|y|_2) = \eta(|x|_2, |y|_2)$$
et donc
\begin{align*}
\theta(x) *_{C'} \theta(y) &= (-1)^{\epsilon(|x|_2)+\epsilon(|y|_2)}(x *_{C'}y), \\
\theta(x *_C y) &= (-1)^{\epsilon(|xy|_2)}(x *_{C}y)
\end{align*}
avec comme prévu
$$\frac{(-1)^{\epsilon(|x|_2)+\epsilon(|y|_2)}}{ (-1)^{\epsilon(|xy|_2)}} = (-1)^{\eta(|x|_2,|y|_2)}.$$
donnant l'égalité $\theta(x) *_{C'} \theta(y) = \theta(x *_C y) $. On en conclut le théorème suivant :

\begin{theoreme}
Pour toutes règles de multiplication $C$ et $C'$ telles que $OH^n_C$ et $OH^n_{C'}$ ont le même associateur, donc si $\phi^C = \phi^{C'}$, on a un isomorphisme d'anneaux gradués
$$OH^n_C \simeq OH^n_{C'}.$$
\end{theoreme}

\begin{proof}
On utilise le fait que l'associateur $\phi^C$ est décomposé en deux éléments : un dépendant du degré du facteur de gauche et du nombre de scissions de la multiplication de droite, donc indépendant de $C$, et l'autre dépendant du changement de chronologie et noté $\phi_0$. Cela signifie que $\phi^C = \phi^{C'}$ si et seulement si $\phi_0^{C} = \phi_0^{C'}$. Il suffit ensuite d'appliquer le raisonnement fait au-dessus pour conclure.
\end{proof}

On peut alors noter $OH^n_\phi$ la classe d'équivalence de ces anneaux, avec $\phi$ l'associateur. Comme le montrent les Exemples \ref{ex:chgtch} et  \ref{ex:chgtch2} on n'a pas toujours l'hypothèse d'égalité des associateurs. La question qu'on se pose maintenant est de savoir si cette condition est suffisante ou s'il existe d'autres isomorphismes pour entre ces anneaux ? En tout cas, cela donne une borne supérieure sur le nombre d'anneaux $OH^n_\phi$ pour chaque $n$ puisque l'associateur est définir par un choix d'autant de $1$ ou $-1$ que de triplets de diagrammes $W(d)c$, $W(c)b$ et $W(b)a$, c'est-à-dire quadruplets d'éléments de $B^n$. On a donc au maximum 
$$2^{|B^n|^4} = 2^{C_n^4}$$
différents $OH^n_\phi$, où $C_n$ est le $n$-ème nombre de Catalan.

%
%

\section{Modules sur $OH^n_C$}
Puisque $OH^n_C$ n'est pas associatif, il ne peut pas former un module sur lui-même et donc il n'y a pas vraiment d'intérêt de parler de modules sur $OH^n_C$ au sens usuel du terme. On se demande alors s'il existe un moyen de définir un substitut de module. Une réponse possible est de voir $OH^n_C$ comme un quasi-anneau et d'étudier ses quasi-modules et quasi-bimodules, un quasi-anneau étant un anneau gradué non-associatif mais avec un associateur (devant donc faire commuter le diagramme (\ref{diag:assoc})) prenant comme arguments les degrés des éléments. Dès lors, un quasi-module $M$ est un groupe abélien gradué sur le même ensemble que le quasi-anneau et avec une action devant respecter l'associateur aussi, c'est-à-dire que pour tout $m \in M$ et $x,y \in OH^n_C$ on a
$$x \bullet (y \bullet m )) = \phi(|x|,|y|,|m|) (xy)\bullet m.$$
On peut alors définir le produit tensoriel de deux bimodules $M$ et $N$ comme
$$M \otimes_{OH^n_C} N := \frac{M \otimes_\Z N}{\langle mh \otimes n - \phi(|m|,|h|,|n|) m \otimes hn \rangle}.$$
Comme expliqué dans \cite{Khovanov}, les bimodules sur $H^n$ forment une catégorification de l'algèbre de Templerley-Lieb qui est définie comme suit :
\begin{definition}\label{def:temperleyliebalg}
L'\emph{algèbre de Temperley-Lieb}, notée $TL^n$, est l'algèbre sur $\Z[q,q^{-1}]$  avec générateurs $U_1, \dots, U_{n-1}$ et les relations
\begin{align*}
U_i^2 &= (q+q^{-1})U_i, \\
U_iU_{i\pm1}U_i &= U_i, \\
U_iU_j &= U_jU_i,& |i-j| > 1. 
\end{align*}
\end{definition}
On peut montrer que cette algèbre est isomorphe à l'algèbre diagrammatique sur $\Z[q,q^{-1}]$ engendrée par
$$\middiag{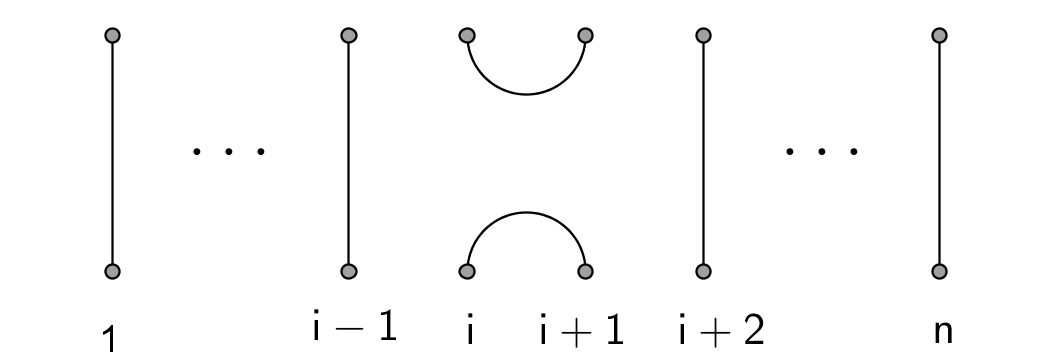}$$
où on regarde donc les éléments à isotopie près et un cercle disjoint revient à multiplier par $(q+q^{-1})$. Plus précisément, cette catégorification consiste à associer à un diagramme $U \in TL^n$ le bimodule sur $H^n$ donné par 
\begin{align*}
C(T) &:= \bigoplus_{a,b \in B^n} b(U)a, & &\text{où}& b(U)a &:= F(W(b)Ua)\{n\}
\end{align*}
avec l'action à gauche 
$$c(H^n)b \times b(U)a \rightarrow c(U)a$$
donnée par le cobordisme $W(c)bW(b)Ua \rightarrow W(c)Ua$ où on construit les même ponts que pour la multiplication dans $H^n$ et idem pour l'action à droite. De plus, la catégorification du coefficient $q+q^{-1}$ est donnée par le groupe abélien $A$ (le rang gradué de $A$ est $q+q^{-1}$). On veut alors définir de la même façon des quasi-bimodules sur $OH^n_C$, en utilisant la règle de multiplication $C$ pour choisir l'ordre de construction des ponts et leurs orientations donnés par $(c,b,a)$ (on pourrait aussi étendre la règle à un choix dépendant de l'élément $U$). Pour que ceci forme une catégorification, il faut que la composition de deux bimodules donne le bimodule donné par la concaténation des diagrammes associés, c'est-à-dire que pour tout $T,T' \in TL^n$ on ait
$$C(TT') = C(T) \otimes_{OH^n_C} C(T').$$
Si on montre cela, on peut alors définir des complexes de quasi-bimodules et espérer obtenir un invariant d'enchevêtrement, comme M. Khovanov le fait avec les bimodules sur $H^n$ (voir \cite{Khovanov} pour plus de détails). La question suivante serait alors de savoir si $H^n$ et $OH^n_C$ livrent des invariants différents (ce qui est fort probablement le cas puisque l'homologie de Khovanov et sa version impaire sont différents) et même de savoir si pour des règles de multiplications différentes on peut obtenir des invariants différents.

\section{Généralisation de $H^n$ et $OH^n_C$}

K. Putyra a défini dans \cite{Covering} une famille de foncteurs avec des paramètres tels que, une fois spécialisés, on obtient $F$ de $H^n$ ou $OF$ de $OH^n$. Pour cela on pose 
\begin{align*}
R &:= \frac{ \Z[X,Y,Z^{\pm 1}]}{X^2=Y^2=1},&& \text{et}&
A' &:= R\langle v_+, v_- \rangle,
\end{align*}
avec $\deg(v_+) = (1,0)$ et $\deg(v_-) = (0,-1)$ et les applications qui servent de permutation, fusion, scission, naissance de cercle et mort de cercle :
\begin{align*}
\tau : A' \otimes A'  \rightarrow A' \otimes A' : &\begin{cases}
(v_+, v_+) &\mapsto X(v_+, v_+), \\
(v_+, v_-) &\mapsto Z^{-1}(v_-, v_+), \\
(v_-, v_+) &\mapsto Z(v_+, v_-), \\
(v_-, v_-) &\mapsto Y(v_-, v_-), 
\end{cases} \\
\mu : A' \otimes A'  \rightarrow A' : &\begin{cases}
(v_+, v_+) &\mapsto  v_+, \\
(v_+, v_-) &\mapsto v_-, \\
(v_-, v_+) &\mapsto XZv_-, \\
(v_-, v_-) &\mapsto 0, 
\end{cases} \\
\Delta : A' \rightarrow A' \otimes A' : &\begin{cases}
v_+ \mapsto (v_-,v_+) + YZ(v_+, v_-), \\
v_- \mapsto (v_-, v_-),
\end{cases} \\
\eta : R \rightarrow A' : &1 \mapsto v_+, \\
\epsilon : A' \rightarrow R : &\begin{cases}
v_+ \mapsto 0, \\
v_- \mapsto 1.
\end{cases}
\end{align*}
On note $RH^n_C$ l'anneau obtenu par une construction similaire à $OH^n_C$ où on remplace les produit extérieurs par des produits tensoriels de $A'$ et on associe aux cobordismes élémentaires les applications associées. On remarque qu'en spécialisant les paramètres $(X,Y,Z) = (1,1,1)$ on obtient $H^n$ et en les spécialisant à $(1,-1,1)$ on obtient ${OH}^n_C$. On pourrait alors définir un centre pour $RH^n_C$ donné par
$$Z(RH^n_C) := \{z \in RH^n_C | zx = X^{|z|_1|x|_1}Y^{|z|_2|x|_2}Z^{|z|_1|x|_2-|z|_2|x|_1}xz, \forall x \in RH^n_C\}$$
où $|v_+|_1 = 1, |v_+|_2 = 0$ et $|v_-|_1 = 0, |v_-|_2 = -1$. Il serait intéressant de vérifier si, en spécialisant les paramètres, on obtient que ce centre donne $OZ(OH^n_C)$ et $Z(H^n)$. Par ailleurs, cela mène à la question de savoir s'il existe une généralisation de l'anneau de cohomologie de la variété de Springer et qui serait isomorphe au centre de $RH^n_C$ pour une partition $(n,n)$.

\section{Action de $\mathcal{H}_{-1}$ sur le centre impair}

On dit qu'un groupe $G$ agit faiblement sur une catégorie $C$ s'il existe des foncteurs $F_g$ pour tout $g \in G$ tels que $F_1 \simeq \Id$ et $F_gF_h \simeq F_{gh}$. Comme expliqué dans \cite[Section 6.5]{Khovanov} et dans \cite[Section 5]{HnCenter}, le groupe des tresses à $2n$ brins agit faiblement sur la catégorie triangulée $\mathcal{K}$ des complexes bornés de bimodules sur $H^n$ en tensorisant par le complexe de bimodule obtenu par résolutions de la tresse vue comme enchevêtrement. Cette action faible descend à une action du groupe symétrique sur le centre de $H^n$ par permutation des $X_i$ dans la présentation du Théorème \ref{thm:hncenter}. 

Par ailleurs, dans \cite[Corollaire 3.4]{OddSpringer}, A. Lauda et H. Russell montrent que l'algèbre de Hecke $\mathcal{H}_{-1}$ agit à gauche sur la construction impaire de la cohomologie de la variété de Springer et donc sur le centre impair de $OH^n_C$. On se demande alors si l'action du groupe des tresses sur la catégorie des quasi-bimodules sur $OH^n_C$ descend à l'action de $\mathcal{H}_{-1}$ sur le centre impair.

\section{Algèbres de Stroppel-Ehrig}

C. Stroppel et M. Ehrig ont développé une famille d'algèbres diagrammatiques de type D dans \cite{Stroppel} qui généralisent les anneaux des arcs de Khovanov $H^n$ qui sont de type A (puisque reliés à $\mathfrak{sl}(2)$). On peut alors se demander quels liens pourraient exister entre la version impaire des anneaux des arcs et les algèbres de Stroppel-Ehrig ? 

\appendix

\renewcommand\chaptername{Annexes}
\chapter*{Annexes}
\markboth{\textbf{Annexes}}{}
\addcontentsline{toc}{chapter}{Annexes} 
\renewcommand\thesection{A.\arabic{section}}
\counterwithin{theoreme}{section}
\counterwithin{figure}{section}

\section{Notations}

Afin d'éviter toute confusion possible, on donne une liste des notations utilisées dans ce travail, sauf celles qui sont explicitement définies dedans.

\paragraph{Ensembles}
On note les ensembles des naturels $\N$, des entiers $\Z$, des réels $\R$ et des complexes $\C$. Par ailleurs, pour $S$ un ensemble, on note $\Z[S]$ le groupe abélien libre engendré par les éléments de $S$.

\paragraph{Rangs de groupes}
Pour un groupe $G$ on note son rang $\rank(G)$ (c'est-à-dire le nombre minimal d'éléments nécessaires pour générer le groupe) et pour un groupe abélien $G'$ on note son rang $\rank(G')$ (c'est-à-dire la taille du plus grand sous-groupe abélien libre).

\paragraph{Coefficients binomiaux}
On note pour deux naturels $n$ et $k$ le coefficient binomial
$$\begin{pmatrix} n \\ k \end{pmatrix} := \frac{n!}{k!(n-k)!}.$$

\paragraph{Caractéristique d'Euler}
On note $\chi(X)$ pour un espace topologique $X$ sa caractéristique d'Euler-Poincaré.

\paragraph{Espaces vectoriels}
Soit $E$ un espace vectoriel sur un corps $k$. On note $\dim(E)$ ou $\dim_k(E)$ sa dimension.

\paragraph{Fonctions} On note $A \hookrightarrow B$ l'application induite par l'inclusion de l'ensemble $A$ dans l'ensemble $B$. On note $A \simeq B$ s'il existe un isomorphisme respectant la structure algébrique de $A$ et de $B$ entre les deux. On note $A \simeq_{ab} B$ si $A$ et $B$ possèdent une structure de groupe abélien et qu'ils sont isomorphes en tant que groupes abéliens.

\paragraph{Produit tensoriel} On note $M \otimes_A N$ le produit tensoriel sur $A$ de deux modules $N$ et $M$ sur un anneau $A$. On note $M \otimes N := M \otimes_A N$ si le choix de $A$ est clair du contexte. On note aussi $M^{\otimes n} := \underset{n}{\underbrace{M \otimes \dots \otimes M}}$.

\section{Nombres quantiques et groupe quantique $U_q(\mathfrak{sl}_2)$}\label{sec:quantumnumber}

Tout ce qui est présenté dans cette section se trouve en détails dans \cite{quantumj} à quelques changements de notation près (on prend par exemple un $q$ tel qu'il soit la racine de $q$ utilisé dans la référence).\\

 L'algèbre de Lie $\mathfrak{sl}_2$ admet une enveloppe $U(\mathfrak{sl}_2)$ avec générateurs $E,F,H$ tels que $EF-FE=H$, $HE-EH=2E$ et $HF-FH=-2F$ (le $H$ ici est $2H$ dans la référence).

 L'enveloppe $U(\mathfrak{sl}_2)$  admet une unique représentation irréductible de dimension $2n$, à isomorphisme près, qu'on note $V^n$. Pour la dimension $2$, on note simplement $V := V^1$. Cette représentation est donnée par l'espace vectoriel des polynômes homogènes de degré $2n-1$ en les variables $x,y$ avec $U(\mathfrak{sl}_2)$ agissant sur le vecteur $e_{n,m} := x^{2n-1-m}y^{m}$ de poids $2n-1-2m$, par
\begin{align*}
Ee_{n,m} &= (m+1)e_{n,m+1}, \\
Fe_{n,m} &= (2n-m)e_{n,m-1}, \\
He_{n,m} &= (2n-1-2m)e_{n,m},
\end{align*}
avec la convention $e_{n,-1} = e_{n,2n} = 0$.

Par ailleurs, on peut définir une version quantique des entiers, c'est-à-dire qu'on déforme un nombre par un paramètre complexe $q$. Plus formellement, on a la définition suivante :
\begin{definition}
Pour $n \ge 1$, on définit le $n$-ème nombre quantique (pour $q \in \C$) comme
$$[n]_q := \frac{q^{n}-q^{-n}}{q-q^{-1}} \in \Z[q,q^{-1}]$$
et on pose $[0]_q = 0$ ainsi que $[-n]_q = -[n]_q$.
\end{definition}
De là on peut définir la factorielle quantique et par extension le coefficient binomial quantique
\begin{align*}
[n]_q! = \prod_{i=1}^n [i]_q,&&& \begin{bmatrix}n \\ k\end{bmatrix}_q = \frac{[n]_q!}{[a]_q![n-a]_q!}.
\end{align*}
On observe que
$$[n]_q = q^{n-1} + q^{n-3} + \dots + q^{-n+3} + q^{-n+1}$$
et donc la somme des coefficients de $[n]_q$ est $n$. De là on obtient que la somme des coefficients de $[n]_q.[m]_q$ est $nm$ et on peut montrer que la somme des coefficients du coefficient binomial quantique donne le coefficient binomial usuel.

Tout comme on a une version quantique des entiers, on peut définir une version quantique de $U(\mathfrak{sl}_2)$ comme une déformation de cette algèbre par un paramètre complexe $q \in \C$. Intuitivement, cela signifie qu'au lieu de prendre $H$, on prend la déformation quantique de $H$ par un paramètre $q$, c'est-à-dire $\frac{q^H - q^{-H}}{q - q^{-1}}$, et on pose $K ``=" q^{H}$ (cela peut se formaliser en termes de séries de puissances en prenant $q = e^h$). On obtient alors la définition suivante :
\begin{definition}
Le groupe quantique $U_q(\mathfrak{sl}_2)$ est l'algèbre sur $\C[q]$ engendrée par les éléments $E,F,K$ et $K^{-1}$ tels que 
\begin{align*}
EF-FE &= \frac{K-K^{-1}}{q-q^{-1}}, \\
KE &= q^2EK, \\
KF &= q^{-2}FK, \\
KK^{-1} &= 1 = K^{-1}K.
\end{align*}
\end{definition}

On peut alors définir une version quantique de $V^n$, qu'on note aussi $V^n$, qui est donnée par l'espace vectoriel de dimension $2n$ ayant comme base 
$$\{e_{n,m} | m \in \{0,1 , \dots, 2n-1\},$$
et avec l'action définie par les mêmes relations que celles de $U(\mathfrak{sl}_2)$ en remplaçant les coefficients entiers par leurs analogues quantiques. On obtient donc 
\begin{align*}
Ee_{n,m} &= [m+1]_q.e_{n,m+1}, \\
Fe_{n,m} &= [2n-m]_q.e_{n,m-1}, \\
Ke_{n,m} &= q^{2n-2m-1}.e_{n,m}.
\end{align*}

%
%

\section{Groupes, anneaux et modules gradués}\label{sec:gradue}

\subsection{Groupes gradués}

Un groupe abélien $G$ gradué (sur $\Z$) est un groupe abélien avec neutre $0$ qui se décompose en somme directe
$$G = \bigoplus_{i \in \Z} G_i.$$
On définit alors l'application degré pour tout $x\in G_i\setminus \{0\}$ comme 
$$\deg(x) = i$$
et on appelle de tels $x$ les éléments \emph{homogènes} de $G$.
On définit le rang gradué $\rank_q (G)$ comme le polynôme de Laurent obtenu en prenant
$$\rank_q (G) := \sum_{i \in \Z} q^i \rank(G_i).$$

\subsection{Anneaux gradués}

Un anneau gradué $A$ est un anneau qui, vu comme groupe abélien, est gradué
$$A = \bigoplus_{i\in \Z} A_i$$
et tel que pour tout $i$ et $j$ dans $\Z$ on a
$$A_i.A_j \subset A_{i+j}.$$
Autrement dit pour des éléments homogènes $x,y \in A$, on a $\deg(xy) = \deg(x) + \deg(y)$.

\begin{exemple}
L'anneau des polynômes en une variable $X$ est gradué par 
$$\deg(k_0 + k_1 X + \dots + k_n X^n) = n.$$
\end{exemple}

\subsection{Modules gradués}

Un module gradué $M$ est un module sur un anneau gradué $A$ qui, vu comme groupe abélien, est gradué
$$M = \bigoplus_{i\in \Z} M_i$$
et tel que pour tout $i$ et $j$ dans $\Z$ on a
$$A_i.M_j \subset M_{i+j}.$$
On dit qu'un homomorphisme $f : M_1 \rightarrow M_2$ entre deux modules gradués est de degré $n$ si pour tout $x \in M_1$ homogène on a $f(x) \in M_2$ homogène et 
$$\deg(f(x)) = \deg(x) + n.$$
On note alors $\deg f$ le degré du morphisme.

\begin{definition}
On définit la notion de décalage par $n\in \Z$ d'un module gradué $M$, noté $M\{n\}$, comme
\begin{align*}
M\{n\} = \bigoplus_{k\in \Z} M\{n\}_k &,& &\text{où}  & M\{n\}_k = M_{k-n}.
\end{align*}
\end{definition}
Autrement dit, on a $M\{n\} \simeq M$ vu en tant que modules non-gradués et pour tout $x \in M$ homogène on obtient
$$\deg_{M\{n\}} (x) = \deg_M(x) + n,$$
c'est-à-dire qu'on décale les degrés des éléments de $M$ par $n$.

Par ailleurs, on peut définir une graduation sur le produit tensoriel de deux modules gradués $M_1$ et $M_2$ sur un anneau $A$, ce qui forme un module $M_1 \otimes_A M_2$ en posant pour $x_1 \in M_1$ et $x_2 \in M_2$
$$\deg_{M_1 \otimes_A M_2} (x_1 \otimes x_2) = \deg_{M_1} (x_1) + \deg_{M_2} (x_2).$$

\begin{remarque}
Ici on ne considère que des graduations sur $\Z$ mais on peut définir une graduations sur n'importe quel groupe en demandant que le degré du produit soit donné par la composition des degrés. On peut même étendre cette définition sur des groupoïdes (voir Section \ref{sec:groupoide}) en imposant que la multiplication soit non-nulle si et seulement si les degrés sont composables dans le groupoïde.
\end{remarque}

\section{Algèbres extérieures de modules}\label{sec:prodext}

En général, la notion d'algèbre extérieure est définie sur un espace vectoriel. On définit ici une généralisation de cette notion pour un module.

\begin{definition}Soit $M$ un module sur un anneau commutatif unitaire $A$. On définit \emph{l'algèbre extérieure} sur le module $M$ comme
$$\Ext^*M = \frac{A \oplus \bigoplus_{k=1}^\infty M^{\otimes k}}{I}$$
avec le produit tensoriel pris sur $A$ et
$$I = \langle \{x \otimes x | x\in M\} \rangle$$
qui est l'idéal bilatère engendré par les éléments de l'ensemble.\\
On définit la multiplication sur l'algèbre extérieure
$$\wedge : \Ext^*M \times \Ext^*M \rightarrow \Ext^*M$$
par
$$(x+I) \wedge (y+I) = (x \otimes y) + I.$$
\end{definition}
On remarque que cette définition de multiplication donne alors pour $x \in \Ext^*M$ et $k \in A$
$$x \wedge k = k \wedge x =  kx.$$
On remarque aussi que pour $x,y \in M$ on a 
$$x \wedge y = - y\wedge x$$
puisque
$$0 = (x+y)\wedge(x+y) = x\wedge x + x \wedge y + y \wedge x + y \wedge y =  x \wedge y + y \wedge x.$$

\begin{definition}\label{def:contractdual}
Pour un élément $a \in M$ on définit l'application de \emph{contraction par le dual} de $a$ comme 
$$a^* : \Ext^*M \rightarrow \Ext^*M$$
et qui vaut pour tout $x_1,\dots,x_n,y_1,\dots,y_m \in M$, $k \in A$, avec $x_i \ne a$ et $y_j \ne a$,
$$a^*(x_1 \wedge \dots \wedge x_n \wedge ka \wedge y_1 \wedge \dots \wedge y_m) = k(-1)^n (x_1 \wedge \dots \wedge x_n \wedge y_1 \wedge \dots \wedge y_m)$$
et 
$$a^*(x_1\wedge \dots \wedge x_n) = 0.$$
\end{definition}
%
%
%

Si le module est gradué, on a une graduation induite sur cette algèbre en donnant à un produit extérieur la somme des degrés
$$\deg(a_1 \wedge \dots \wedge a_n) = \deg(a_1) + \dots + \deg(a_n).$$
Sinon, on a une graduation naturelle sur cette algèbre en donnant un degré $0$ aux éléments de $A$ et un degré $k$ pour les éléments de $M^{\otimes k} + I$, c'est-à-dire la graduation induite en donnant à tout élément de $M$ un degré $1$.


\begin{exemple}
On considère $G$ le groupe abélien libre engendré par l'ensemble $\{a,b\}$ vu comme module sur $\Z$. On a alors
$$\Ext^*G = \langle 1, a, b, a\wedge b \rangle$$
avec $\deg(1) = 0, \deg(a) = \deg(b) = 1$ et $\deg(a\wedge b) = 2$.
\end{exemple}

%
%

\section{Anneau de cohomologie}\label{sec:cohomology}

En premier lieu, on fait quelques rappels sur les théories d'homologie et de cohomologie singulières d'un espace topologique, tous les détails se trouvant dans \cite{Hatcher} et ensuite on définit l'anneau de cohomologie à proprement parlé.

\subsection{Homologie et cohomologie singulières}

\begin{definition}
Un \emph{$n$-simplexe singulier} $\sigma$ dans un espace topologique $X$ est une application continue
$$\sigma : \Delta^n \rightarrow X$$
avec $\Delta^n$ le $n$-ème simplexe topologique
$$\Delta^n := \left\{ (t_0, \dots, t_n) \in \R^{n+1} \big| \sum_i t_i = 1, t_i \ge 0\right\}$$
où on ordonne $v_i \rightarrow v_j$ si $i < j$ pour $[v_0, \dots, v_n]$ les sommets du simplexe.\\
On pose $C_n(X)$ le groupe abélien libre engendré par les $n$-simplexes singuliers et on appelle ses éléments des \emph{$n$-chaines}.\\
On définit l'\emph{opérateur de bord} $\partial_n : C_n(X) \rightarrow C_{n-1}(X)$
$$\partial_n(\sigma) := \sum_i (-1)^i \sigma|[v_0, \dots, \widehat v_i, \dots, v_n]$$
où $[v_0, \dots, \widehat v_i, \dots, v_n]$ signifie qu'on prend le $n$-simplexe engendré par tous les $v_j$ sauf $v_i$.
\end{definition}

On peut montrer que $\partial_n \circ \partial_{n+1} = 0$ pour tout $n$ de sorte que $\img \partial_{n+1} \subset \ker \partial_n$ et on définit les \emph{groupes d'homologie singulière} de $X$ comme
$$H_n(X) := \frac{\ker \partial_n}{\img \partial_{n+1}}.$$
On peut montrer que ces groupes sont isomorphes pour deux espaces topologiques homotopes.

\begin{proposition}
Soit $f : X \rightarrow Y$ une fonction continue entre des espaces topologiques $X$ et $Y$, alors il y a un morphisme de groupes induit
$$f_* : H_n(X) \rightarrow H_n(Y)$$
qui est l'application induite par le composition d'un $n$-simplexe singulier et $f$
$$f_*(\Delta^n \rightarrow X) = \Delta^n \rightarrow X  \overset{f}{\rightarrow} Y.$$  
\end{proposition}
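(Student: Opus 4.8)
Le dernier énoncé à démontrer est le lemme donnant l'isomorphisme d'anneaux (non-gradués) entre $H(T_a)$ et $a(OH^n_C)a$, l'isomorphisme de groupes entre $H(T_a \cap T_b)$ et $a(OH^n_C)b$, ainsi que la commutativité du diagramme reliant les morphismes $\psi$ (induits par inclusion) aux morphismes $\gamma$ (multiplication par les unités $_a1_b$).

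\textbf{Approche.} Le plan est de construire d'abord un morphisme au niveau des cha\^{\i}nes, puis de montrer qu'il commute avec l'op\'erateur de bord, ce qui garantit qu'il passe au quotient et induit $f_*$ sur les groupes d'homologie. Je d\'efinirais donc $f_\# : C_n(X) \rightarrow C_n(Y)$ sur les g\'en\'erateurs, c'est-\`a-dire sur les $n$-simplexes singuliers $\sigma : \Delta^n \rightarrow X$, en posant $f_\#(\sigma) := f \circ \sigma$, puis en l'\'etendant $\Z$-lin\'eairement \`a toutes les $n$-cha\^{\i}nes. Comme $C_n(X)$ est le groupe ab\'elien libre engendr\'e par les $n$-simplexes singuliers, cette d\'efinition sur les g\'en\'erateurs suffit \`a d\'eterminer $f_\#$ de fa\c{c}on unique, et la formule de l'\'enonc\'e d\'ecrit pr\'ecis\'ement l'action de ce morphisme sur un simplexe repr\'esentant.

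\textbf{\'Etape cl\'e.} Le point central est de v\'erifier que $f_\#$ est un morphisme de cha\^{\i}nes, c'est-\`a-dire que $\partial_n \circ f_\# = f_\# \circ \partial_n$ pour tout $n$. Je m'appuierais sur l'identit\'e \'el\'ementaire
$$(f \circ \sigma)\,|\,[v_0, \dots, \widehat v_i, \dots, v_n] = f \circ \bigl(\sigma\,|\,[v_0, \dots, \widehat v_i, \dots, v_n]\bigr),$$
qui exprime que restreindre $f \circ \sigma$ \`a une face revient simplement \`a composer $f$ avec la restriction correspondante de $\sigma$, ces deux simplexes ayant le m\^eme domaine. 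En sommant avec les signes $(-1)^i$, on obtient sur tout g\'en\'erateur $\sigma$ l'\'egalit\'e $\partial_n(f_\#(\sigma)) = f_\#(\partial_n \sigma)$, puis sur toute cha\^{\i}ne par lin\'earit\'e. C'est ici que r\'eside l'essentiel du contenu : tout le reste est formel, et la seule attention \`a porter concerne la compatibilit\'e de la postcomposition par $f$ avec l'ordre des sommets, de sorte que les signes $(-1)^i$ soient transport\'es \`a l'identique par $f_\#$.

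\textbf{Passage au quotient.} Une fois \'etabli que $f_\#$ est un morphisme de cha\^{\i}nes, la conclusion est automatique. Si $z \in \ker \partial_n$ est un cycle, alors $\partial_n(f_\#(z)) = f_\#(\partial_n z) = 0$, donc $f_\#(z)$ est encore un cycle ; et si $b = \partial_{n+1} c$ est un bord, alors $f_\#(b) = f_\#(\partial_{n+1} c) = \partial_{n+1}(f_\#(c))$ est encore un bord. Ainsi $f_\#$ envoie $\ker \partial_n$ dans $\ker \partial_n$ et $\img \partial_{n+1}$ dans $\img \partial_{n+1}$, et induit donc par passage au quotient une application bien d\'efinie
$$f_* : H_n(X) = \frac{\ker \partial_n}{\img \partial_{n+1}} \rightarrow H_n(Y) = \frac{\ker \partial_n}{\img \partial_{n+1}}$$
sur les classes d'homologie. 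Cette application est un morphisme de groupes puisque $f_\#$ en est un, ce qui ach\`eve la preuve ; le seul obstacle, tr\`es modeste, \'etant la v\'erification de l'\'etape cl\'e ci-dessus.
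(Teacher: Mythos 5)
Your proof is correct, and it is the standard textbook argument: define the chain-level map $f_\#$ on generators by $f_\#(\sigma) = f \circ \sigma$, extend $\Z$-linearly, verify face by face that $\partial_n \circ f_\# = f_\# \circ \partial_n$, and conclude that cycles go to cycles and boundaries to boundaries, so the map descends to homology. For comparison: the paper does not prove this proposition at all --- it is stated in the appendix as a recalled fact, with all details explicitly deferred to Hatcher --- so your write-up simply supplies the omitted standard verification, and it does so correctly; the key identity $(f \circ \sigma)|[v_0, \dots, \widehat v_i, \dots, v_n] = f \circ (\sigma|[v_0, \dots, \widehat v_i, \dots, v_n])$ is indeed the only point of substance. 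One flaw you should fix, though it is editorial rather than mathematical: your opening paragraph (``Contexte'') describes an entirely different statement, namely the lemma of Chapter 3 identifying $H(T_a)$ with $a(OH^n_C)a$ and $H(T_a \cap T_b)$ with $a(OH^n_C)b$ via the maps $\psi$ and $\gamma$. The statement actually being proved --- and the one the body of your argument correctly addresses --- is the functoriality of singular homology, so that paragraph should be deleted or rewritten to match.
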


\begin{definition}
Pour un espace topologique $X$, on définit le groupe des \emph{$n$-cochaines} comme le dual du groupe des $n$-chaines
$$C^n(X) := \Hom(C_n(X), \Z)$$
c'est-à-dire qu'une $n$-cochaine associe à chaque $n$-simplexe singulier un entier.\\
On définit l'\emph{opérateur de cobord } $\delta_n : C^{n-1}(X) \rightarrow  C^n(X)$ comme étant le dual de $\partial_{n}$, donc
$$\delta_{n} \phi(\sigma) = \sum_i (-1)^i \phi(\sigma|[v_0, \dots, \widehat {v_i}, \dots, v_{n}])$$
pour toute $(n-1)$-cochaine $\phi$ et tout $n$-simplexe singulier $\sigma$.
\end{definition}
La définition se généralise en prenant le dual sur un groupe abélien quelconque au lieu de $\Z$, mais on n'en a pas besoin dans ce travail. Comme $\partial_{n} \circ \partial_{n+1} = 0$, on obtient que $\delta_{n+1} \circ \delta_{n} = 0$ et on définit les \emph{groupes de cohomologie singulière} de $X$ comme
$$H^n(X) := \frac{\ker \delta_{n+1}}{\img \delta_{n}}.$$
On appelle les éléments de $\ker \delta_{n+1}$ des \emph{cocycles} et les éléments de $\img \delta_{n}$ des \emph{cobords}.

\begin{proposition}
Soit $f : X \rightarrow Y$ une fonction continue entre des espaces topologiques $X$ et $Y$, alors il y a un morphisme de groupes induit
$$f^* : H^n(Y) \rightarrow H^n(X)$$
qui est l'application induite par le composition de $f$ et d'une $n$-cochaine
$$f^*(\phi) : (\Delta^n \rightarrow X) \mapsto \phi(\Delta^n \rightarrow X \overset{f}{\rightarrow} Y).$$
\end{proposition}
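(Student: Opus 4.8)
Le plan est de montrer que $f$ induit d'abord un morphisme de cha\^ines sur les cha\^ines singuli\`eres, puis de dualiser cette construction pour obtenir un morphisme de coha\^ines qui commute avec l'op\'erateur de cobord et qui descend donc aux groupes de cohomologie. C'est exactement la construction duale de celle du morphisme $f_*$ sur l'homologie rappel\'e juste avant dans cette section.

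D'abord, je d\'efinirais l'application $f_\# : C_n(X) \rightarrow C_n(Y)$ par post-composition, c'est-\`a-dire $f_\#(\sigma) = f \circ \sigma$ pour tout $n$-simplexe singulier $\sigma : \Delta^n \rightarrow X$, \'etendue ensuite par lin\'earit\'e \`a tout $C_n(X)$. L'\'etape cl\'e est de v\'erifier que $f_\#$ commute avec l'op\'erateur de bord, c'est-\`a-dire $\partial_n \circ f_\# = f_\# \circ \partial_n$. Cela d\'ecoule directement du fait que la restriction aux faces commute avec la post-composition par $f$ :
$$(f \circ \sigma)|[v_0, \dots, \widehat{v_i}, \dots, v_n] = f \circ \bigl(\sigma|[v_0, \dots, \widehat{v_i}, \dots, v_n]\bigr),$$
de sorte que les deux expressions de $\partial_n(f_\#(\sigma))$ et $f_\#(\partial_n(\sigma))$ co\"incident terme \`a terme avec les m\^emes signes $(-1)^i$.

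Ensuite, je dualiserais cette construction. En posant $f^\# := \Hom(f_\#, \Z) : C^n(Y) \rightarrow C^n(X)$, on a pr\'ecis\'ement $f^\#(\phi) = \phi \circ f_\#$, ce qui redonne la formule de l'\'enonc\'e puisque $f_\#(\sigma) = f \circ \sigma$. La commutation $\delta \circ f^\# = f^\# \circ \delta$ s'obtient alors imm\'ediatement par dualit\'e \`a partir de la relation $\partial \circ f_\# = f_\# \circ \partial$, l'op\'erateur de cobord \'etant par d\'efinition le dual de l'op\'erateur de bord.

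Finalement, comme $f^\#$ est un morphisme de coha\^ines, il envoie les cocycles (\'el\'ements de $\ker \delta$) sur des cocycles et les cobords (\'el\'ements de $\img \delta$) sur des cobords, et induit donc un morphisme de groupes bien d\'efini sur les quotients
$$f^* : H^n(Y) = \frac{\ker \delta}{\img \delta} \rightarrow H^n(X) = \frac{\ker \delta}{\img \delta},$$
qui est le morphisme recherch\'e. Il n'y a pas de v\'eritable obstacle ici : la seule v\'erification non triviale est la commutation de $f_\#$ avec le bord, qui est un calcul de routine reposant uniquement sur la compatibilit\'e entre restriction aux faces et post-composition par $f$.
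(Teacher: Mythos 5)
Votre preuve est correcte : c'est l'argument standard (morphisme de cha\^{\i}nes $f_\#$ donn\'e par post-composition, v\'erification que $f_\#$ commute avec $\partial$, dualisation en $f^\#$ qui commute avec $\delta$ par d\'efinition du cobord comme dual du bord, puis passage aux quotients). Le texte ne d\'emontre pas cette proposition --- elle est rappel\'ee dans les annexes en renvoyant \`a Hatcher pour tous les d\'etails --- et votre argument est pr\'ecis\'ement celui de cette r\'ef\'erence.
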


\begin{proposition}
Pour tout $n$ on a 
$$\rank(H^n(X)) = \rank(H_n(X)).$$
\end{proposition}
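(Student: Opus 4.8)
The final proposition states that $\rank(H^n(X)) = \rank(H_n(X))$ for any topological space $X$.

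Le plan est de s'appuyer sur le th�or�me des coefficients universels pour la cohomologie, dont une d�monstration se trouve dans \cite[Section 3.1]{Hatcher}. Ce th�or�me fournit, pour tout $n$, une suite exacte courte scind�e
$$0 \rightarrow \ext(H_{n-1}(X), \Z) \rightarrow H^n(X) \rightarrow \Hom(H_n(X), \Z) \rightarrow 0,$$
de sorte que l'on a un isomorphisme de groupes ab�liens $H^n(X) \simeq_{ab} \Hom(H_n(X), \Z) \oplus \ext(H_{n-1}(X), \Z)$. Puisque le rang est additif sur les sommes directes, il suffira alors de calculer le rang de chacun des deux facteurs s�par�ment.

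Je montrerais ensuite que le facteur $\ext(H_{n-1}(X), \Z)$ ne contribue pas au rang. En effet, en d�composant un groupe ab�lien de type fini $G \simeq_{ab} \Z^r \oplus T$, o� $T$ d�signe sa partie de torsion, on obtient d'une part $\ext(G, \Z) \simeq_{ab} T$, qui est un groupe de torsion et donc de rang nul, et d'autre part $\Hom(G, \Z) \simeq_{ab} \Z^r$, puisque tout homomorphisme de $T$ vers $\Z$ est nul et que $\Hom(\Z^r, \Z) \simeq_{ab} \Z^r$. On en d�duit l'�galit� $\rank(\Hom(H_n(X), \Z)) = \rank(H_n(X))$.

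En combinant ces deux observations, je conclurais par le calcul
$$\rank(H^n(X)) = \rank(\Hom(H_n(X), \Z)) + \rank(\ext(H_{n-1}(X), \Z)) = \rank(H_n(X)),$$
ce qui �tablit le r�sultat voulu. Le point le plus d�licat n'est pas d'ordre g�om�trique mais alg�brique : il s'agit de l'identification de $\ext(G, \Z)$ avec la torsion de $G$ et de $\Hom(G, \Z)$ avec sa partie libre, identifications qui reposent sur la finitude de type des groupes d'homologie en jeu. Cette hypoth�se est satisfaite pour tous les espaces consid�r�s dans ce travail (vari�t�s compactes et CW-complexes de type fini tels que $\widetilde T$), ce qui l�gitime l'usage du th�or�me des coefficients universels sous la forme ci-dessus.
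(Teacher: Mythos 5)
Votre preuve est correcte et suit essentiellement la m\^eme d\'emarche que celle du texte : th\'eor\`eme des coefficients universels donnant la suite exacte courte scind\'ee, puis identification de $\Hom(H_n(X),\Z)$ avec la partie libre et de $\ext(H_{n-1}(X),\Z)$ avec la torsion, de rang nul. Votre r\'edaction est m\^eme un peu plus soign\'ee que celle du texte, qui affirme abusivement $\ext(H_{n-1}(X),\Z) = 0$ alors que seul son rang est nul, et vous avez raison de souligner explicitement que l'hypoth\`ese de type fini des groupes d'homologie est le point qui l\'egitime ces identifications.
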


\begin{proof}
Cela se prouve en utilisant le théorème des coefficients universels et on renvoie vers \cite[Section 3.1]{Hatcher} pour plus de détails sur celui-ci, notamment pour la construction de $\ext(-,\Z)$. Ce théorème donne une suite exacte scindée
$$0 \rightarrow \ext(H_{n-1}(X),\Z) \rightarrow H^n(X) \rightarrow \Hom(H_n(X), \Z) \rightarrow 0$$
et donc
$$\rank(H^n(X)) = \rank(\Hom(H_n(X), \Z)) + \rank(\ext(H_{n-1}(X),\Z)).$$
Puisque $H_n(X)$ est un groupe abélien de type fini on a 
$$\rank(\Hom(H_n(X), \Z)) = \rank(H_n(X)).$$
Par ailleurs
$$\ext(H_{n-1}(X),\Z) = 0$$
puisque si on décompose $H_{n-1}(X) = F_{n-1}+T_{n-1}$ avec $F_{n-1}$ sa partie sans torsion et $T_{n-1}$ sa partie avec torsion on obtient
$$\ext(H_{n-1}(X),\Z) = \ext(F_{n-1},\Z) \oplus \ext(T_{n-1}(X),\Z) = \ext(T_{n-1}(X),\Z) \simeq T_{n-1}$$
et, $T_{n-1}$ ne contenant pas de sous-groupe libre non-trivial, son rang est nul. On en conclut le résultat voulu.
\end{proof}

\begin{definition}
Un sous-espace $A$ d'un espace topologique $X$ est une \emph{déformation rétracte} de celui-ci s'il existe une application continue 
$$r : X \rightarrow I$$
telle que $r \circ i = Id_A$ pour $i : A \hookrightarrow X$ l'inclusion. 
\end{definition}

\begin{proposition}\label{prop:retracte}
Si $A$ est une déformation rétracte de $X$ alors 
\begin{align*}
H_k(X) &\simeq H_k(A), & \text{et}& &  H^k(X) &\simeq H^k(A).
\end{align*}
\end{proposition}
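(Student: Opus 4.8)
Le plan est de ramener l'\'enonc\'e \`a l'invariance par homotopie de l'homologie et de la cohomologie singuli\`eres. La premi\`ere \'etape consiste \`a observer qu'une d\'eformation r\'etracte fournit une \'equivalence d'homotopie entre $A$ et $X$. En notant $i : A \hookrightarrow X$ l'inclusion et $r : X \rightarrow A$ la r\'etraction, on a d'une part $r \circ i = \Id_A$ et d'autre part, puisqu'il s'agit d'une \emph{d\'eformation} r\'etracte et non d'une simple r\'etraction, une homotopie entre $\Id_X$ et $i \circ r$, c'est-\`a-dire $i \circ r \simeq \Id_X$. Les applications $i$ et $r$ sont donc des inverses homotopiques l'une de l'autre.

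Ensuite, j'utiliserais la fonctorialit\'e d\'ej\`a \'etablie dans cette section: $i$ et $r$ induisent des morphismes $i_*, r_*$ en homologie et $i^*, r^*$ en cohomologie, compatibles avec la composition (de fa\c{c}on covariante pour $H_k$ et contravariante pour $H^k$). Le point central, et la v\'eritable difficult\'e, est l'invariance par homotopie: deux applications continues homotopes induisent le m\^eme morphisme sur les groupes d'homologie et de cohomologie. Ce r\'esultat, non trivial, se d\'emontre par construction d'une homotopie de cha\^ines (l'op\'erateur prisme) et je le citerais de \cite{Hatcher}.

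La conclusion est alors imm\'ediate. En homologie, la fonctorialit\'e donne $r_* \circ i_* = (r \circ i)_* = (\Id_A)_* = \Id_{H_k(A)}$, tandis que l'invariance par homotopie appliqu\'ee \`a $i \circ r \simeq \Id_X$ fournit $i_* \circ r_* = (i \circ r)_* = (\Id_X)_* = \Id_{H_k(X)}$. Ainsi $i_*$ est un isomorphisme d'inverse $r_*$. Le m\^eme argument en cohomologie, en tenant compte de la contravariance, donne $i^* \circ r^* = (r \circ i)^* = \Id_{H^k(A)}$ et $r^* \circ i^* = (i \circ r)^* = \Id_{H^k(X)}$, d'o\`u l'isomorphisme $H^k(X) \simeq H^k(A)$. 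L'obstacle principal reste l'invariance par homotopie elle-m\^eme; une fois celle-ci admise, tout le reste n'est qu'un jeu formel de fonctorialit\'e. Je noterais enfin que la d\'efinition de d\'eformation r\'etracte doit inclure explicitement l'homotopie $i \circ r \simeq \Id_X$, car la seule relation $r \circ i = \Id_A$ (qui caract\'erise une r\'etraction) ne suffit pas \`a garantir l'isomorphisme.
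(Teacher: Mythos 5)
Votre preuve est correcte et c'est l'argument standard : la d\'eformation r\'etracte fait de $i$ et $r$ des inverses homotopiques, puis la fonctorialit\'e et l'invariance par homotopie de l'homologie et de la cohomologie singuli\`eres donnent imm\'ediatement les deux isomorphismes. Le texte, lui, n'offre aucune d\'emonstration : la proposition est \'enonc\'ee sans preuve dans les annexes, comme un simple rappel renvoyant globalement \`a \cite{Hatcher} ; votre r\'edaction fournit donc exactement la preuve qui manque, et il n'y a pas d'approche alternative \`a comparer.

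Votre remarque finale m\'erite d'\^etre soulign\'ee, car elle rel\`eve un vrai d\'efaut du texte : telle qu'\'ecrite, la d\'efinition ne demande que $r \circ i = \Id_A$ (avec en outre une coquille, $r : X \rightarrow I$ au lieu de $r : X \rightarrow A$), ce qui d\'efinit une r\'etraction et non une d\'eformation r\'etracte. Sous cette seule hypoth\`ese la proposition est fausse : un point $p$ de $S^1$ en est une r\'etraction, mais $H_1(S^1) \simeq \Z$ tandis que $H_1(\{p\}) = 0$. L'homotopie $i \circ r \simeq \Id_X$ est donc indispensable, et c'est bien la notion forte qui est utilis\'ee dans le corps du texte, par exemple dans la Proposition \ref{prop:voisretract}, o\`u la r\'etraction construite envoie $]-\epsilon, \epsilon[$ sur $\{0\}$ par une homotopie \'evidente.
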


\subsection{Anneau de cohomologie}

Tout comme pour la sous-section précédente, tous les détails et toutes les preuves des propositions se trouvent dans \cite{Hatcher}. Tout d'abord, il faut introduire un produit sur les groupes de cohomologie d'un espace topologique $X$
$$H^i(X) \times H^j(X) \rightarrow H^{i+j}(X).$$
Ce produit est induit par un produit de cochaines donné par, pour $\phi$ et $\psi$ de dimensions respectives $i$ et $j$, la cochaine $\phi \smile \psi$ de dimension $i+j$ telle que
$$(\phi \smile \psi)(\sigma) : \phi(\sigma | _{[v_0, \dots, v_i]}) \psi(\sigma |_{[v_{i+1}, \dots, v_{i+k}]})$$
pour tout simplexe singulier $\sigma : \Delta^{i+j} \rightarrow X$.

\begin{lemme}
Pour toutes cochaines $\phi$ et $\psi$ de dimensions respectives $i$ et $j$ on a
$$\delta_{i+j+1}(\phi \smile \psi) = \delta_{i+1} (\phi) \smile \psi + (-1)^i\phi \smile \delta_{j+1} (\psi). $$
\end{lemme}

Le produit de deux cocycles est alors un cocycle (les deux cobords étant nuls et le produit par une cochaine nulle donnant une cochaine nulle) et le produit d'un cocycle et d'un cobord est un cobord puisque pour $\delta(\psi) = 0$ on a
$$ \delta(\phi) \smile \psi = \pm \delta(\phi \smile \psi)$$
et de même pour $\delta(\phi) = 0$ on a
$$ \phi \smile \delta(\psi) = \delta(\phi \smile \psi).$$
Cela montre que le produit sur les cochaines induit un produit sur les groupes de cohomologie. Ce produit est la base pour définir l'anneau de cohomologie de $X$.

\begin{definition}
Pour un espace topologique $X$ on définit l'anneau de cohomologie $H(X)$ comme le groupe abélien gradué
$$H(X) := \bigoplus_k H^k(X)$$
muni de la multiplication 
$$\left(\sum_i \alpha_i \right)\left(\sum_j \beta_j\right) = \sum_{i,j} \alpha_i \smile \beta_i.$$
\end{definition}
On montre aisément que cela forme bien un anneau associatif unitaire.

\begin{exemple}\label{ex:torecohomology}
On peut montrer (\cite[Exemple 3.16]{Hatcher}) que l'anneau de cohomologie du tore de dimension $n$ est donné par le produit extérieur sur $n$ éléments
$$H(T^n) \simeq \Ext^* \Z^{n}$$
avec les éléments générateurs du groupe libre ayant un degré $1$.
\end{exemple}

\begin{exemple}
L'anneau de cohomologie de l'espace projectif complexe de dimension $n$ est donné par les polynômes
$$H(\C P^n) \simeq \frac{\Z[x]}{x^{n+1}}$$
avec $\deg(x) = 2$.
\end{exemple}


\section{CW-complexe} \label{def:decomp}

On fait quelques rappels sur les $CW$-complexes, en commençant par leur définition et en expliquant l'homologie cellulaire ensuite. Toute comme pour la section précédente, tous les détails se trouvent dans \cite{Hatcher}.

\subsection{Définition} 
 
Un \emph{$CW$-complexe} est une construction inductive définie comme suit :

\begin{enumerate}
\item On commence par un ensemble discret de points $X^0$ qu'on appelle des cellules de dimension $0$ et notées $e^0_\alpha$ pour $\alpha \in X^0$.
\item On construit $X^n$ à partir de $X^{n-1}$ en attachant des cellules $e^n_\alpha$ de dimension $n$ par des applications continues
$$\phi_\alpha : S^{n-1} \rightarrow X^{n-1}$$
c'est-à-dire que $X^n$ est l'espace quotient de l'union disjointe de $X^{n-1}$ et d'une collection de disques de dimension $n$, $\sqcup_\alpha D_\alpha^n$, sous l'identification $x \sim \phi_a(x)$ pour tout $x \in \partial D_\alpha^n$.
\item On peut soit arrêter ce processus pour un $n$ fini, donnant $X^n$ pour un certain $n$, soit prendre l'union de tous les $X^n$ munie de la topologie faible, c'est-à-dire qu'un ensemble est ouvert si son intersection avec tout  $X^n$ est ouverte dans $X^n$.
\end{enumerate}
On appelle \emph{décomposition cellulaire} d'un espace $X$ un $CW$-complexe $X^n$ tel que $X^n \simeq X$.

\begin{exemple}
On construit une décomposition cellulaire du tore $T^2$.
\begin{enumerate}
\item On commence par fixer un point $p$ donnant $X^0$.
\item On attache deux cellules $e^1_1, e^1_2 \simeq [0,1]$ de dimension 1 dont les bords sont attachés à $p$, donnant $X^1$ (qui est donc un bouquet de deux cercles).
\item On attache une cellule $e^2_1 \simeq [0,1] \times [0,1]$ de dimension 2 dont le bord est attaché par
\begin{align*}
\{0\}\times [0,1] \rightarrow e^1_1 &: (0,y) \mapsto y, \\
\{1\}\times [0,1] \rightarrow e^1_1 &: (1,y) \mapsto y, \\
[0,1]\times\{0\} \rightarrow e^1_2 &: (x,0) \mapsto x, \\
[0,1] \times \{1\} \rightarrow e^1_2 &: (x,1) \mapsto x. 
\end{align*}
\end{enumerate}
Si on considère  $T^2$ comme un pavé $[0,1] \times [0,1]$ où on identifie les côtés opposés deux à deux, en envoyant $p$ vers le point bleu, $e^1_1$ vers le segment vert et $e^1_2$ vers le segment rouge et $e^2_1$ vers le pavé entier, on a visuellement que la décomposition cellulaire donne la Figure \ref{fig:tore2decomp}.

\begin{figure}[h]
    \center
    \includegraphics[width=3cm]{Images_arxiv/Tore_decomp_2.png}
    \caption{\label{fig:tore2decomp} On décompose $T^2$ en cellules : un point central en bleu $(p,p)$, un segment de droite $(p,-) \setminus \{(p,p)\}$, un segment de droite $(-,p) \setminus \{(p,p)\}$ et le restant de la surface.}
\end{figure}

\end{exemple}

\subsection{Homologie cellulaire}

Tout comme on définit l'homologie singulière à partir des simplexes singuliers, il est possible de définir une homologie cellulaire à partir des cellules d'un $CW$-complexe. Formellement, on la définit à partir des groupes d'homologie singulière relatifs $H_n(X^n, X^{n-1})$, mais ici on la définit directement à partir de chaines de cellules et d'un opérateur de bord défini sur celles-ci.

\begin{definition}
On définit pour un $CW-$complexe $X$ le groupe des chaines cellulaires de dimension $n$, $C_n^{CW}(X)$, comme le groupe abélien libre engendré par les cellules de dimension $n$ de $X$.\\
On définit l'opérateur de bord $d_n : C_n^{CW}(X) \rightarrow C_{n-1}^{CW}(X)$ comme
$$d_n(e_\alpha^n) = \sum_\beta d_{\alpha\beta} e^{n-1}_\beta$$
où $d_{\alpha\beta}$ est le degré de l'application 
$$S^{n-1}_\alpha \overset{\phi_a}{\rightarrow} X^{n-1} \rightarrow S^{n-1}_\beta$$
où la seconde application est celle qui identifie $X^{n-1} \setminus \{e^{n-1}_\beta\}$ en un point.
\end{definition}
On peut montrer que $d_{n} \circ d_{n+1} = 0$ et donc on peut définir des groupes d'homologie cellulaire
$$H_n^{CW} = \frac{\ker d_n}{\img d_{n+1}}.$$
On peut montrer que ces groupes sont isomorphes à ceux de l'homologie singulière :
\begin{theoreme}\emph{(\cite[Théorème 2.35, p. 139]{Hatcher})}
Soit $X$ un $CW$-complexe. Alors pour tout $n$ on a
$$H_n^{CW}(X) \simeq H_n(X).$$
\end{theoreme}
Cela donne des informations intéressantes sur l'homologie singulière d'un espace topologique $X$, notamment le fait que le rang du $n$-ème groupe d'homologie est borné par le nombre de cellules de dimension $n$ de toute décomposition cellulaire de $X$.

\begin{exemple}\label{ex:torehomology}
Il est connu que le tore $T^n$ peut être vu comme une hyperboite de dimension $n$ où on identifie deux à deux les faces opposées. On considère une décomposition cellulaire donnée par un point de base qui est un sommet de l'hyperboite pour $X^0$. Ensuite on attache $n$ cellules de dimension $1$, une pour chaque arête de l'hyperboite (à cause des identifications des faces, il y a des arêtes identifiées ensemble et donc on a $n$ différentes). Ensuite, chaque cellule de dimension $k$ est donnée par un choix de $k$ arêtes parmi les $n$ (par exemple $2$ arêtes donnent une face de dimension $2$ avec comme bord la première arête suivi de la seconde, moins la première et moins la seconde). On a donc une décomposition en $\begin{pmatrix} n \\ k \end{pmatrix}$ cellules de dimension $k$. On remarque que l'opérateur de bord est nul (le bord est donné par des paires d'hyperfaces qui s'annulent deux à deux) et donc on obtient que le groupe d'homologie $H_k(T^n)$ est le groupe abélien libre engendré par les cellules de dimension $k$, donc de rang $\begin{pmatrix} n \\ k \end{pmatrix}$.
\end{exemple}

\section{Groupoïdes et cohomologie de catégories} \label{sec:groupoide} 

Le but de cette section est de donner les idées derrière la notion de groupoïde et cohomologie de catégories. On renvoie vers \cite{simphomtheory}, \cite{introtohom} et \cite{may} pour plus de détails, on se limite ici à une approche un peu simple qui donne les définitions et propriétés nécessaires pour ce qu'on en fait dans ce travail. Un groupoïde est une sorte de groupe où l'opération de composition n'est définie que partiellement, c'est-à-dire qu'on ne peut composer que certains éléments du groupoïde. Tout comme un groupe peut s'exprimer comme une catégorie avec un seul élément (les flèches étant les éléments du groupe qu'on compose sur un unique objet abstrait), on a la définition suivante pour un groupoïde :
\begin{definition}
Un \emph{groupoïde} $G$ est une petite catégorie dans laquelle chaque flèche est un isomorphisme (donc possède un inverse).
\end{definition}
Il existe aussi une définition algébrique où on définit le groupoïde comme un ensemble munit d'une opération d'inverse et d'une composition partielle mais on n'en a pas besoin.
Afin de définir une notion de cohomologie sur les groupoïdes, il nous faut définir une notion de simplexe, ce qu'on appelle le nerf d'une catégorie. L'idée est simple : si on a deux morphismes composable $A_0 \xrightarrow{f} A_1$ et $A_1 \xrightarrow{g} A_2$ alors on a un morphisme $A_0 \xrightarrow{gf} A_2$, formant donc un triangle
$$\xymatrix{
A_1 \ar[dr]^{g} &\\
A_0 \ar[u]^{f} \ar[r]_{gf} & A_2
}$$
ce qui est semblable à un $2-$simplexe.

\begin{definition}
Le \emph{nerf} $N(D)$ d'une petite catégorie $D$ est donné par l'ensemble des $N_n(D)$ où on définit $N_0(D)$ comme l'ensemble des objets de $D$ et $N_n(D)$ comme l'ensemble des compositions de $n$ morphismes
$$A_0 \rightarrow A_1 \rightarrow \dots \rightarrow A_n$$
munis des opérateurs de faces (donc qui prennent chacune une face du simplexe)
$$\delta_i^n : N_n(D) \rightarrow N_{n-1}(D)$$
envoyant pour $1 \le i \le n-1$ 
$$A_0 \rightarrow \dots  \rightarrow A_{i-1} \rightarrow A_i \rightarrow A_{i+1} \rightarrow \dots \rightarrow A_n$$
sur
$$A_0 \rightarrow \dots  \rightarrow A_{i-1} \rightarrow A_{i+1} \rightarrow \dots \rightarrow A_n$$
en composant les morphismes $ A_{i-1} \rightarrow A_i $ et $A_i \rightarrow A_{i+1}$ et on définit $\delta_i^n$ pour $i \in \{0,n\}$ comme la composition où on retire l'élément $A_i$. On définit aussi les opérateurs de dégénérescence $s_i^n : N_n(D) \rightarrow N_{n+1}(D)$ où on ajoute l'identité sur $A_i$.
\end{definition}

On peut alors maintenant définir une théorie de cohomologie simpliciale pour une catégorie $D$ (et donc pour un groupoïde) sur un groupe abélien $A$. On définit d'abord les \emph{complexes de cochaines} comme 
$$C^n := \Hom(\Z[N_n(D)],A)$$
avec l'\emph{opérateur de cobord} $d^n :  C^{n-1} \rightarrow C^{n}$ donné pour un certain $\phi \in C^{n-1}$ par
$$d^n \phi := \sum_{i=0}^{n} (-1)^i (\phi \circ \delta^n_i).$$
On peut alors facilement calculer que $d_{n+1} \circ d_n = 0$ et on définit le $n$-ème \emph{groupe de cohomologie simpliciale} par
$$H^n(D, A) := \frac{\ker d_{n+1}}{\img d_n}.$$
Afin de calculer les groupes de cohomologie d'une catégorie, il existe un outil utile donné par la réalisation géométrique et le théorème qui suit. Tout cela existe de façon générale pour des ensembles simpliciaux, mais on en a pas besoin pour cette discussion et on renvoie vers \cite[Chapitre 3]{may} pour de plus amples détails.

\begin{definition}
La \emph{réalisation géométrique} $|N(D)|$ du nerf $N(D)$ d'une catégorie $D$ est défini par
$$|N(D)| := \frac{\coprod_{n \in \N} N_n(D) \times \Delta_n}{\sim}$$
où $\Delta_n$ est donné par l'union disjointe d'autant de $n-$simplexes que d'éléments dans $N_n(D)$ et $\sim$ est défini par
\begin{align*}
(\delta_i^n(s), x) &\sim (s, \delta_i^n(x)),  & (s_i^n(s), x) &\sim (s, s_i^n(x)),
\end{align*}
avec $\delta_i^n(x)$ la $i$-ème face du $n$-simplexe et $s_i^n(x)$ le $(n+1)$-simplexe écrasé sur sa $i$-ème face.
\end{definition}
Autrement dit, on identifie chaque simplexe du nerf à un simplexe topologique avec les faces attachées comme il faut.

\begin{theoreme}\emph{(\cite[Proposition 16.2, p. 63]{may})}
Pour une catégorie petite $D$ et un groupe abélien $A$ il y a un isomorphisme
$$H^n\bigl(|N(D)|, A\bigr) \simeq H^n\bigl(D,A\bigr).$$
\end{theoreme}
Cela signifie donc que pour calculer la cohomologie simpliciale d'une catégorie, il suffit de connaitre la cohomologie simpliciale (ou singulière) de la réalisation géométrique de son nerf.

\begin{exemple}
On considère la catégorie $\Z$ possédant un seul objet $\ast$ et dont les flèches sont indexées par les entiers avec la composition donnée par la somme
$$\ast \xrightarrow{a} \ast \xrightarrow{b} \ast = \ast \xrightarrow{a+b} \ast$$
pour tout $a,b \in \Z$. Puisque $\Z$ est un groupe, on appelle sa réalisation géométrique son espace de classification, noté $B\Z$. On peut montrer que $B\Z \simeq S^1$ puisque 
$$\ast \xrightarrow{n} \ast = \ast \xrightarrow{1} \ast \xrightarrow{1} \dots \xrightarrow{1} \ast$$
signifiant qu'on a un $1$-cycle qui génère tous les $n$-cycles. On obtient alors que pour tout groupe abélien $A$
$$H^n(\Z, A) \simeq H^n(B\Z,A) \simeq H^n(S^1,A).$$
\end{exemple}

\begin{exemple}\label{ex:groupoidsimplexe}
On considère un groupoïde $G$ possédant $m+1$ objets et tel que pour toute paire d'objets $A, B \in G$ il existe une unique flèche $A \rightarrow B$ (qui est donc l'inverse de l'unique flèche $B \rightarrow A$). Autrement dit, $G$ a une forme de $m$-simplexe, ce qui donne la réalisation géométrique
$$|N(G)| \simeq \Delta^{m}$$
et pour tout groupe abélien $A$ on obtient
\begin{align*}
H^0(G,A) &\simeq H^0(\Delta^{m},A) \simeq \Z,\\
H^k(G, A) &\simeq H^k(\Delta^{m}, A) \simeq \{0\}, & 1 \le k \le m-1, \\
H^{m}(G, A) &\simeq H^{m}(\Delta^{m}, A) \simeq \Z, \\
H^k(G, A) &\simeq H^k(\Delta^{m}, A) \simeq \{0\}, & k \ge m+1. \\
\end{align*}
\end{exemple}


\begin{bibdiv}

\begin{biblist}

\bib{TQFT}{article}
{
  author={Abrams, Lowell},
  title={Two-dimensional topological quantum field theories and Frobenius
algebras.},
  journal={J. Knot Theory Ramifications}
  date={1996},
  pages={569-587},
  volume={5}
}

\bib{khstrongerJ}{article}
{
  author={Bar-Natan, Dror},
  title={On Khovanov's categorification of the Jones polynomial},
  journal={Algebr. \& Geom. Topol.},
  volume={2},
  pages={337-370},
  date={2002},
  eprinttype  = {arxiv},
  eprint      = {math.QA/0201043v3}
}

\bib{bloom}{article}
{
  author={Bloom, Jonathan M.},
  title={Odd Khovanov homology is mutation invariant},
  journal={Math. Res. Lett. }
  volume={17},
  number={1},
  pages={1-10},
  date={2010},
  eprinttype  = {arxiv},
  eprint      = {math.QA/0903.3746}
}

\bib{quantumj}{book}
{
  author={Carter, J. Scott},
  author={Flath, Daniel E.},
  author={Saito, Masahico},
  title={The Classical and Quantum $6j$ -symbols},
  publisher={Princeton University Press},
  serie={Mathematical Notes},
  volume ={43},
  year={1995}
  
}

\bib{categorification}{article}
{
  author={Crane, Louis},
  author={Frenkel, Igor B.},
  title={Four-dimensional topological quantum field theory, Hopf categories, and the canonical bases},
  journal={J. Math. Phys.},
  volume={35},
  number={10},
  pages={5136-5154},
  date={1994}
}

\bib{clockcate}{article}
{
  author={Crane, Louis},
  title={Clock and category: is quantum gravity algebraic? },
  journal={J. Math. Phys.},
  volume={36},
  number={11},
  date={1995},
  pages={6180-6193}
}

\bib{ConciniProcesi}{article}
{
  author={de Concini, C.},
  author={Procesi, C.},
  title={Symmetric functions, conjugacy classes and the flag variety},
  journal={Invent. Math.},
  volume={64},
  number={2},
  date={1981},
  pages={203-219}
}

\bib{Stroppel}{article}
{
  author={Ehrig, Michael},
  author={Stroppel, Catharina}
  title={Diagrams for perverse sheaves on isotropic Grassmannians and the supergroup $SOSP(m|2n)$},
  eprinttype  = {arxiv},
  eprint      = {arXiv:1306.4043v1}
}

\bib{simphomtheory}{book}
{
  author={Goerss, Paul G.},
  author={Jardine, John F.},
  title={Simplicial Homotopy Theory},
  publisher={Modern Birkhäuser Classics},
  date={2009, Reprint of 1999}
}

\bib{Hatcher}{book}{
   author={Hatcher, Allen},
   title={ Algebraic Topology},
   publisher={Cambridge University Press},
   date={2002},
}

\bib{Jones}{article}
{
  author={Jones, V. F. R.},
  title={A polynomial invariant for knots via Von Neumann algebras},
  journal={Bull. Amer. Math. Soc.},
  volume={12},
  number={1},
  date={1985},
  pages={103-111}
}

\bib{quantumsl}{incollection}
{
  author={Kirillov, A. N.},
  author={Reshetikhin, N. Yu.},
  title={Representations of the algebra $U_q(sl(2))$, $q$-orthogonal polynomials and invariants of links},
  pages={285-339},
  booktitle={Infinite-dimensional Lie algebras and groups},
  editor={Kak, V. G.}
  publisher={World Sci. Publ.},
  year={1989},
 
}

\bib{KhovanovHomology}{article}
{
  author={Khovanov, Mikhail},
  title={A categorification of the Jones polynomial},
  journal={Duke Math J.},
  volume={101},
  number={3},
  date={2000},
  pages={359-426},
  eprinttype  = {arxiv},
  eprint      = {math.QA/9908171}
}

\bib{Khovanov}{article}{
   author={Khovanov, Mikhail},
   title={A functor-valued invariant of tangles},
   journal={Algebr. Geom. Topol.},
   volume={2},
   date={2002},
   pages={665-741}
}

\bib{HnCenter}{article}{
   author={Khovanov, Mikhail},
   title={Crossingless matchings and the cohomology of (n, n) Springer varieties},
   journal={Commun. Contemp. Math.},
   volume={6},
   number={4},
   date={2004},
   pages={561-577}
}

\bib{Cobordismes}{book}
{
  author={Kock, J.},
  title={ Frobenius algebras and 2D topological quantum field theories},
  publisher={Cambridge University Press},
  date={2003},
   volume={ No. 59 of LMSST}
}

\bib{catalan}{book}
{
  author={Koshy},
  title={Catalan numbers with applications},
  publisher={Oxford University Press},
  date={2009},
}

\bib{unknot}{article}
{
  author={Kronheimer, P. B.},
  author={Mrowka, T. S.},
  title={Khovanov   homology   is  an  unknot-detector},
  journal={Publ. Math. Inst. Hautes \'Etudes Sci.},
  volume={113},
  pages={97-208},
  date={2011},
  eprinttype  = {arxiv},
  eprint      = {math.QA/1005.4346}
}

\bib{spider2}{article}
{
  author={Kuperberg, Greg},
  title={Spiders for rank 2 Lie algebras},
  journal={Comm. Math. Phys.},
  volume={180},
  number = {1},
  pages={109-151},
  date={1996},
  eprinttype  = {arxiv},
  eprint      = {math.QA/9903003}
}

\bib{OddSpringer}{article}{
   author={Lauda, Aaron D.},
   author={Russell, Heather M.},
   title={Oddification of the Cohomology of Type A Springer Varieties},
   journal={Int. Math. Res. Not.},
   volume={17},
   date={2014},
   pages={4822-4854}
}

\bib{may}{book}
{
  author={May, Peter J.},
  title={Simplicial Objects in Algebraic Topology},
  publisher={The University of Chicago Press},
  date={1967}
}

\bib{OddKhovanov}{article}{
   author={Ozsvath, Peter},
   author={ Rasmussen, Jacob},
   author={Szabo, Zoltan},
   title={Odd Khovanov homology},
   journal={Alg. Geom. Topol.},
   volume={13},
   date={2013},
   number={3},
   pages={1465-1488}
}

\bib{Putyra}{thesis}{
   author={Putyra, Krzysztof},
   title={Cobordisms with chronologies
and a generalisation of
the Khovanov complex},
   year={2008},
   school={Jagiellonian University}
}

\bib{Covering}{article}{
   author={Putyra, Krzysztof},
   title={A 2-category of chronological cobordisms and odd Khovanov homology},
   date={2014},
   journal={Banach Center Publ.},
   volume={103},
   pages={291-355},
   eprinttype  = {arxiv},
   eprint      = {math.QA/1310.1895v2}
}

\bib{preuveLemme}{article}
{
  author={Russell, Heather M.},
  title={A topological construction for all two-row Springer varieties},
  journal={Pacific J. Math.},
  volume={253},
  number={1},
  date={2011},
  pages={221-255}
}

\bib{shumakovich}{article}
{
  author={Shumakovitch, Alexander},
  title={Patterns in odd Khovanov homology},
  journal={J. Knot Theory Ramifications},
  volume={20},
  number={1},
  pages={203-222},
  date={2011},
  eprinttype  = {arxiv},
  eprint      = {arXiv:1101.5607}
}

\bib{superalgebra}{book}{
   author={Varadarajan, Veeravalli S.},
   title={ Supersymmetry for Mathematicians: An Introduction},
   publisher={American Mathematical Society},
   series={ Courant Lecture Notes in Mathematics },
   volume={11},
   date={2004},
}

\bib{introtohom}{book}
{
  author={Weibel, Charles A.},
  title={An introduction to homological algebra},
  publisher={Cambridge University Press},
  serie={Cambridge Studies in
Advanced Mathematics},
  volume={38},
  date={1994}
}

\bib{wehrli}{article}
{
  author={Wehrli, Stephan},
  title={Khovanov Homology and Conway
Mutation},
  date={2003},
   eprinttype  = {arxiv},
   eprint      = {arXiv:0301312v1}
}

\markboth{\textbf{Bibliographie}}{}
 
\end{biblist}
 
\end{bibdiv}

\end{document}